\newtheorem{theorem}{Theorem}[section]
\newtheorem{lemma}[theorem]{Lemma}
\newtheorem{definition}[theorem]{Definition}
\newtheorem{proposition}[theorem]{Proposition}
\newtheorem{corollary}[theorem]{Corollary}
\newtheorem{remark}[theorem]{Remark}
\newtheorem{example}[theorem]{Example}
\numberwithin{equation}{section}
\newcommand{\bR}{\mathbf{R}}
\newcommand{\bL}{\mathbf{L}}
\def\pre-tr{\operatorname{pre-tr}}
\newcommand{\Tot}{\operatorname{Tot}}
\newcommand{\Perf}{\operatorname{Perf}}
\newcommand{\Perfotimes}{\odot}
\newcommand{\Ext}{\operatorname{Ext}}
\newcommand{\res}{\operatorname{res}}
\newcommand{\Spec}{\operatorname{Spec}}
\newcommand{\Vect}{{\operatorname{Vect}}}
\newcommand{\fd}{{\operatorname{fd}}}
\newcommand{\id}{\operatorname{id}}
\newcommand{\dg}{{\operatorname{dg}}}
\DeclareMathAlphabet{\mathpzc}{OT1}{pzc}{m}{it}
\newcommand{\groundring}{\mathsf{k}}
\newcommand{\matfak}[4]{{\xymatrix@C3ex{{{#1}} \ar@<0.3ex>[r]^-{{#2}} & {{#3}} \ar@<0.3ex>[l]^-{{#4}}}}}
\newcommand{\op}{\operatorname}
\newcommand{\ra}{\rightarrow}
\newcommand{\la}{\leftarrow}
\newcommand{\sra}{\twoheadrightarrow}
\newcommand{\hra}{\hookrightarrow}
\newcommand{\xra}[1]{\xrightarrow{#1}}
\newcommand{\xla}[1]{\xleftarrow{#1}}
\newcommand{\sira}{\xra{\sim}}
\newcommand{\xsira}[1]{\xrightarrow[\sim]{#1}}
\newcommand{\sila}{\overset{\sim}{\leftarrow}}
\newcommand{\xsila}[1]{\xleftarrow[\sim]{#1}}
\newcommand{\mar}{\ar@{|->}}
\newcommand{\sar}{\ar@{->>}}
\newcommand{\iar}{\ar@{^{(}->}}
\newcommand{\gar}{\ar@{=}}
\newcommand{\gleichar}{\ar@{}|{=}}
\newcommand{\congar}{\ar@{}|{\cong}}
\newcommand{\Bl}[1]{{\mathbb{#1}}}
\newcommand{\DZ}{\Bl{Z}}
\newcommand{\DN}{\Bl{N}}
\newcommand{\DA}{{\Bl{A}}}
\newcommand{\DP}{{\Bl{P}}}
\newcommand{\DL}{\Bl{L}}
\newcommand{\Hom}{\op{Hom}}
\newcommand{\calMod}{\mathcal{M}od}
\newcommand{\Yoneda}[1]{{\widehat{#1}}}
\newcommand{\Sh}{{\op{Sh}}}
\newcommand{\Mod}{{\op{Mod}}}
\newcommand{\Qcoh}{{\op{Qcoh}}}
\newcommand{\Coh}{{\op{Coh}}}
\newcommand{\mfPerf}{{{\mathfrak P}{\mathfrak e}{\mathfrak r}{\mathfrak f}}}
\newcommand{\Sg}{{\op{Sg}}}
\newcommand{\MF}{\op{MF}}
\newcommand{\InjSh}{{\op{InjSh}}}
\newcommand{\InjQcoh}{{\op{InjQcoh}}}
\newcommand{\Acycl}{\op{Acycl}}
\newcommand{\Cechmor}{{\mathrm{\check{C}mor}}}
\newcommand{\Cechobj}{{\mathrm{\check{C}ob}}}
\newcommand{\Cechobjbox}{{\mathrm{\check{C}ob}\boxtimes}}
\newcommand{\ord}{{\op{ord}}}
\newcommand{\Blow}{{\op{Bl}}}
\newcommand{\DSh}{\op{DSh}}
\newcommand{\DQcoh}{\op{DQcoh}}
\newcommand{\DCoh}{\op{DCoh}}
\newcommand{\bfMF}{\mathbf{MF}}
\newcommand{\AcyclMF}{\op{AcyclMF}}
\newcommand{\co}{{\op{co}}}
\newcommand{\sheafHom}{{\mkern3mu\mathcal{H}{om}\mkern3mu}}
\newcommand{\End}{\op{End}}
\newcommand{\pr}{\op{pr}}
\newcommand{\ol}[1]{{\overline{#1}}}
\newcommand{\ul}[1]{{\underline{#1}}}
\newcommand{\leftadjointtores}{\op{prod}}
\newcommand{\pro}{\leftadjointtores}
\newcommand{\coker}{\op{cok}}
\newcommand{\cokern}{\op{cok}}
\newcommand{\cek}{\vee}
\newcommand{\inv}{^{-1}}
\newcommand{\can}{\op{can}}
\newcommand{\per}{\op{per}}
\newcommand{\sat}{\op{sat}}
\newcommand{\prsm}{\op{prsm}}
\newcommand{\prsmalg}{\op{prsmalg}}
\newcommand{\dgcat}{\op{dgcat}}
\newcommand{\Heq}{\mathsf{Heq}}
\newcommand{\Hmo}{\mathsf{Hmo}}
\newcommand{\tzmat}[4]{{\left[\begin{smallmatrix} {#1} & {#2} \\ {#3} & {#4} \end{smallmatrix}\right]}}
\newcommand{\roundtzmat}[4]{{\left(\begin{smallmatrix} {#1} & {#2} \\ {#3} & {#4} \end{smallmatrix}\right)}}
\newcommand{\tildew}[1]{\widetilde{#1}}
\newcommand{\hatw}[1]{\widehat{#1}}
\newcommand{\Var}{{\op{Var}}}
\newcommand{\sm}{{\op{sm}}}
\newcommand{\bl}{{\op{bl}}}
\newcommand{\comp}{\circ}
\newcommand{\opp}{{\op{op}}}
\newcommand{\define}[1]{{\textbf{#1}}}
\newcommand{\keepdontpublish}[1]
{\ifthenelse{\boolean{showkeepdontpublishbool}}
  {
    \rule{0cm}{0.05cm} \\
    \rule{15cm}{0.05cm} \\
    {#1} \rule{0cm}{0.05cm} \\ 
    \rule{15cm}{0.05cm} \\
    \rule{1mm}{0mm}
  }
} 
\newcommand{\comline}[1]{\ifthenelse{\boolean{comment}}{{\bf
      \noindent\shortstack[r]{\rule{12cm}{0.02cm}\\#1\\\rule{12cm}{0.02cm}
        \pagestyle{myheadings}\markboth{Links}{Rechts} 
        }}\\}}       
\newcommand{\comtxt}[1]{\ifthenelse{\boolean{comment}}{{\bf #1}}} 
\newcommand{\comtxtn}[1]{\ifthenelse{\boolean{comment}}{{\bf #1} \\ }} 
\newcommand{\ncomtxt}[1]{\ifthenelse{\boolean{comment}}{\\ {\bf #1}}} 
\newcommand{\ncomtxtn}[1]{\ifthenelse{\boolean{comment}}{\\ {\bf #1} \\ }} 
\newcommand{\commar}[1]{\ifthenelse{\boolean{comment}}{\marginpar{{#1}}}{}}
\newcommand{\kommentar}[1]{}
\numberwithin{equation}{section}
\newcommand{\Cone}{\op{Cone}}
\renewcommand{\epsilon}{{\varepsilon}}
\newcommand{\oktaeder}[6]
{
  \xymatrix@dr{
    & {[1]{#1}} \ar[r] &  {[1]{#2}} \ar[r] & {[1]{#3}}\\
    {#3} \ar@(ur,ul)[ru] \ar[r] &
    {#5}
    \ar[r] \ar[u] & 
    {#6} 
    \ar@(dr,dl)[ru] \ar[u]\\ 
    {#2} \ar[u] \ar[r] & {#4} \ar@(dr,dl)[ru] \ar[u]\\
    {#1} \ar[u] \ar@(dr,dl)[ru]
  }
}
\newcommand{\oktaedergepunktetdown}[6]
{
  \xymatrix@dr{
    & {[1]{#1}} \ar[r] &  {[1]{#2}} \ar[r] & {[1]{#3}}\\
    {#3} \ar@(ur,ul)[ru] \ar@{..>}[r] &
    {#5}
    \ar@{..>}[r] \ar[u] & 
    {#6} 
    \ar@(dr,dl)[ru] \ar[u]\\ 
    {#2} \ar[u] \ar[r] & {#4} \ar@(dr,dl)[ru] \ar[u]\\
    {#1} \ar[u] \ar@(dr,dl)[ru]
  }
}
\newcommand{\oktaedergepunktetdownmitopt}[7]
{
  \xymatrix@dr#7{
    & {[1]{#1}} \ar[r] &  {[1]{#2}} \ar[r] & {[1]{#3}}\\
    {#3} \ar@(ur,ul)[ru] \ar@{..>}[r] &
    {#5}
    \ar@{..>}[r] \ar[u] & 
    {#6} 
    \ar@(dr,dl)[ru] \ar[u]\\ 
    {#2} \ar[u] \ar[r] & {#4} \ar@(dr,dl)[ru] \ar[u]\\
    {#1} \ar[u] \ar@(dr,dl)[ru]
  }
}
\newcommand{\oktaedergepunktetup}[6]
{
  \xymatrix@dr{
    & {[1]{#1}} \ar[r] &  {[1]{#2}} \ar[r] & {[1]{#3}}\\
    {#3} \ar@(ur,ul)[ru] \ar[r] &
    {#5}
    \ar[r] \ar@{..>}[u] & 
    {#6} 
    \ar@(dr,dl)[ru] \ar[u]\\ 
    {#2} \ar[u] \ar[r] & {#4} \ar@(dr,dl)[ru] \ar@{..>}[u]\\
    {#1} \ar[u] \ar@(dr,dl)[ru]
  }
}
\newcommand{\oktaederalles}[9]
{
  \xymatrix@dr{
    & {#7} \ar[r] &  {#8} \ar[r] & {#9}\\
    {#3} \ar@(ur,ul)[ru] \ar[r] &
    {#5}
    \ar[r] \ar[u] & 
    {#6} 
    \ar@(dr,dl)[ru] \ar[u]\\ 
    {#2} \ar[u] \ar[r] & {#4} \ar@(dr,dl)[ru] \ar[u]\\
    {#1} \ar[u] \ar@(dr,dl)[ru]
  }
}
\newcommand{\oktaedertemplatedotted}
{
  \xymatrix@dr{
    && {G} \ar[r]^-{g} 
    & {H} \ar[r]^-{h} 
    & {I} 
    \\ 
    {T4:} 
    & {C} \ar@(ur,ul)[ru]^-{c_1}
    \ar@{..>}[r]^-{c_2} 
    & {E} \ar@{..>}[r]^-{e_1} \ar[u]^-{e_2} 
    & {F}  \ar@(dr,dl)[ru]^-{f_1} \ar[u]_-{f_2}
    \\ 
    {T3:} 
    & {B} \ar[u]^-{b_1} \ar[r]^-{b_2} 
    & {D} \ar@(dr,dl)[ru]^-{d_1} \ar[u]_-{d_2} 
    \\  
    {T2:} 
    & {A} \ar[u]^-{a_1} \ar@(dr,dl)[ru]^-{a_2}
    \ar@{}[ru]|{\triangle}
    \\
    & {T1:}
  }
}
\newcommand{\oktaedertemplate}
{
  \xymatrix@dr{
    && {G} \ar[r]^-{g} 
    & {H} \ar[r]^-{h} 
    & {I} 
    \\ 
    {T4:} 
    & {C} \ar@(ur,ul)[ru]^-{c_1}
    \ar[r]^-{c_2} 
    & {E} \ar[r]^-{e_1} \ar[u]^-{e_2} 
    & {F}  \ar@(dr,dl)[ru]^-{f_1} \ar[u]_-{f_2}
    \\ 
    {T3:} 
    & {B} \ar[u]^-{b_1} \ar[r]^-{b_2} 
    & {D} \ar@(dr,dl)[ru]^-{d_1} \ar[u]_-{d_2} 
    \\  
    {T2:} &{A} \ar[u]^-{a_1} \ar@(dr,dl)[ru]^-{a_2}
    \ar@{}[ru]|{\triangle}
    \\
    & {T1:}
  }
}
\newcommand{\symeighttransone}[9]
{
  { } \ar@{}[r]^{#9} & { }\\
  {#1} \ar@(u,d)[ruu] & {#2} \ar@(u,d)[luu] & {#3} \ar[uu] & {#4} \ar[uu] & 
  {#5} \ar[uu] & {#6} \ar[uu] & {#7} \ar[uu] & {#8} \ar[uu] \\ 
}
\newcommand{\symeighttranstwo}[9]
{
  & { } \ar@{}[r]^{#9} & { }\\
  {#1} \ar[uu] & {#2} \ar@(u,d)[ruu] & {#3} \ar@(u,d)[luu] & {#4} \ar[uu] & 
  {#5} \ar[uu] & {#6} \ar[uu] & {#7} \ar[uu] & {#8} \ar[uu] \\ 
}
\newcommand{\symeighttransthree}[9]
{
  && { } \ar@{}[r]^{#9} & { }\\
  {#1} \ar[uu] &  {#2} \ar[uu] & {#3} \ar@(u,d)[ruu] & {#4} \ar@(u,d)[luu] &
  {#5} \ar[uu] & {#6} \ar[uu] & {#7} \ar[uu] & {#8} \ar[uu] \\ 
}
\newcommand{\symeighttransfour}[9]
{
  &&& { } \ar@{}[r]^{#9} & { }\\
  {#1} \ar[uu] &  {#2} \ar[uu] &  {#3} \ar[uu] & {#4} \ar@(u,d)[ruu] & 
  {#5} \ar@(u,d)[luu] & {#6} \ar[uu] & {#7} \ar[uu] & {#8} \ar[uu] \\
}
\newcommand{\symeighttransfive}[9]
{
  &&&& { } \ar@{}[r]^{#9} & { }\\
  {#1} \ar[uu] &  {#2} \ar[uu] &  {#3} \ar[uu] & {#4} \ar[uu] & 
  {#5} \ar@(u,d)[ruu] & {#6} \ar@(u,d)[luu] & {#7} \ar[uu] & {#8}
  \ar[uu] \\
}
\newcommand{\symeighttranssix}[9]
{
  &&&&& { } \ar@{}[r]^{#9} & { }\\
  {#1} \ar[uu] & {#2} \ar[uu] &  {#3} \ar[uu] & {#4} \ar[uu] & 
  {#5} \ar[uu] & 
  {#6} \ar@(u,d)[ruu] & {#7} \ar@(u,d)[luu] & {#8} \ar[uu] \\
}
\newcommand{\symeighttransseven}[9]
{
  &&&&&& { } \ar@{}[r]^{#9} & { }\\
  {#1} \ar[uu] & {#2} \ar[uu] &  {#3} \ar[uu] & {#4} \ar[uu] & 
  {#5} \ar[uu] & 
  {#6} \ar[uu] & 
  {#7} \ar@(u,d)[ruu] & {#8} \ar@(u,d)[luu] \\
}
\newcommand{\symdowneighttransone}[9]
{
  {#1} \ar@(d,u)[rdd] & {#2} \ar@(d,u)[ldd] & {#3} \ar[dd] & {#4} \ar[dd] & 
  {#5} \ar[dd] & {#6} \ar[dd] & {#7} \ar[dd] & {#8} \ar[dd] \\ 
  { } \ar@{}[r]^{#9} & { }\\
}
\newcommand{\symdowneighttranstwo}[9]
{
  {#1} \ar[dd] & {#2} \ar@(d,u)[rdd] & {#3} \ar@(d,u)[ldd] & {#4} \ar[dd] & 
  {#5} \ar[dd] & {#6} \ar[dd] & {#7} \ar[dd] & {#8} \ar[dd] \\ 
  & { } \ar@{}[r]^{#9} & { }\\
}
\newcommand{\symdowneighttransthree}[9]
{
  {#1} \ar[dd] &  {#2} \ar[dd] & {#3} \ar@(d,u)[rdd] & {#4}
  \ar@(d,u)[ldd] &
  {#5} \ar[dd] & {#6} \ar[dd] & {#7} \ar[dd] & {#8} \ar[dd] \\ 
  && { } \ar@{}[r]^{#9} & { }\\
}
\newcommand{\symdowneighttransfour}[9]
{
  {#1} \ar[dd] &  {#2} \ar[dd] &  {#3} \ar[dd] & {#4} \ar@(d,u)[rdd] & 
  {#5} \ar@(d,u)[ldd] & {#6} \ar[dd] & {#7} \ar[dd] & {#8} \ar[dd] \\
  &&& { } \ar@{}[r]^{#9} & { }\\
}
\newcommand{\symdowneighttransfive}[9]
{
  {#1} \ar[dd] &  {#2} \ar[dd] &  {#3} \ar[dd] & {#4} \ar[dd] & 
  {#5} \ar@(d,u)[rdd] & {#6} \ar@(d,u)[ldd] & {#7} \ar[dd] & {#8}
  \ar[dd] \\
  &&&& { } \ar@{}[r]^{#9} & { }\\
}
\newcommand{\symdowneighttranssix}[9]
{
  {#1} \ar[dd] & {#2} \ar[dd] &  {#3} \ar[dd] & {#4} \ar[dd] & 
  {#5} \ar[dd] & 
  {#6} \ar@(d,u)[rdd] & {#7} \ar@(d,u)[ldd] & {#8} \ar[dd] \\
  &&&&& { } \ar@{}[r]^{#9} & { }\\
}
\newcommand{\symdowneighttransseven}[9]
{
  {#1} \ar[dd] & {#2} \ar[dd] &  {#3} \ar[dd] & {#4} \ar[dd] & 
  {#5} \ar[dd] & 
  {#6} \ar[dd] & 
  {#7} \ar@(d,u)[rdd] & {#8} \ar@(d,u)[ldd] \\
  &&&&&& { } \ar@{}[r]^{#9} & { }\\
}
\newcommand{\quiverAzwei}[7]
{
  \xymatrix@R0pc@C2pc{
    {{#2}} \ar[r]^{#4} \ar@{}[r]^>(0){#1}_>(0){#3} &
    {{#6}}  \ar@{}[l]_>(0){#5}^>(0){#7}
  }
}
\newcommand{\quiverAeins}[3]
{
  \xymatrix@R0pc@C0pt{
    {{#2}} \ar@{}[r]^>(0){#1}_>(0){#3} & {}
  }
}
\newcommand{\bruhatAzwei}[9]
{
  \xymatrix@C17.3pt@R10pt{
    & 
    {{#1}} \ar[rd]^-{{#9}} \ar[ld]_-{{#7}} \ar[dddd]|(0.25){{#8}} \\
    {{#2}} \ar[dd]_-{{#8}} \ar[rrdd]|(0.25){{{#9}}} &&
    {{#3}} \ar[dd]^-{{#8}} \ar[lldd]|(0.25){{#7}} \\ 
    \\
    {{#4}} \ar[rd]_-{{#9}} &&
    {{#5}} \ar[ld]^-{{#7}} \\
    &
    {{#6}}
  }
}
\newcommand{\weylAzwei}[6]
{
  \xymatrix@C17.3pt@R10pt{
    & 
    {{#1}} \ar[rd] \ar[ld] \ar[dddd] \\
    {{#2}} \ar[dd] \ar[rrdd] &&
    {{#3}} \ar[dd] \ar[lldd] \\ 
    \\
    {{#4}} \ar[rd] &&
    {{#5}} \ar[ld] \\
    &
    {{#6}}
  }
}
\newcommand{\pretr}{{\op{pre-tr}}}
\newcommand{\tria}{\op{tria}}
\newcommand{\thick}{\op{thick}}
\newcommand{\Sing}{{\op{Sing}}}
\newcommand{\sing}{{\op{sing}}}
\newcommand{\Crit}{{\op{Crit}}}
\author{Valery A.~Lunts \and Olaf M.~Schn{\"u}rer}
\address{
  Department of Mathematics\\
  Indiana University\\
  Rawles Hall\\
  831 East 3rd Street\\
  Bloomington, IN 47405\\
  USA
}
\email{vlunts@indiana.edu} 
\address{
  Mathematisches Institut\\ 
  Universit{\"a}t Bonn\\
  Endenicher Allee 60\\
  53115 Bonn\\
  Germany
}
\email{olaf.schnuerer@math.uni-bonn.de}
\title{Matrix factorizations and motivic measures}
\thanks{}
\begin{document}

\begin{abstract}
  This article is the continuation of
  \cite{valery-olaf-matfak-semi-orth-decomp}.  We use categories
  of matrix factorizations to define a morphism of rings (= a
  Landau-Ginzburg motivic measure) from the (motivic) Grothendieck ring of
  varieties over $\DA^1$ to the Grothendieck ring of saturated dg
  categories (with relations coming from semi-orthogonal
  decompositions into admissible subcategories).  Our Landau-Ginzburg motivic
  measure is the analog for matrix factorizations of the motivic
  measure in \cite{bondal-larsen-lunts-grothendieck-ring} whose
  definition involved bounded derived categories of coherent
  sheaves. On the way we
  prove smoothness and a Thom-Sebastiani theorem for enhancements
  of categories of matrix factorizations.
\end{abstract}

\maketitle

\tableofcontents

\section{Introduction}
\label{sec:introduction}

This article is the partner of
\cite{valery-olaf-matfak-semi-orth-decomp}.  The mutual goal of
these two articles is the construction of
an interesting Landau-Ginzburg motivic
measure: a ring morphism from the (motivic) Grothendieck ring of
varieties over $\DA^1$ to another ring. The terminology
Landau-Ginzburg comes from physics where a morphism $W \colon  X \ra
\DA^1$ is considered as a superpotential on a variety $X.$


Let $k$ be an algebraically closed field of characteristic zero.
Let $X$ be a smooth variety (over $k$) and $W \colon  X \ra \DA^1=\DA^1_k$ a morphism (also
viewed as an element of $\Gamma(X,\mathcal{O}_X)$). We denote the
category of matrix factorizations of $W$ by $\bfMF(X,W)$
(see \cite{valery-olaf-matfak-semi-orth-decomp}).
Taking the product over all the categories $\bfMF(X,W-a),$ for $a
\in k,$ defines the
singularity category 
$\bfMF(W)$ of $W,$
\begin{equation*}
  \bfMF(W):=\prod_{a \in k} \textbf{MF}(X, W-a).
\end{equation*}
Only finitely many factors of this product are non-zero, and
$\bfMF(W)$ vanishes if and only if $W$ is a smooth morphism (see
Lemma~\ref{l:D-Sg-W-vanishes}).
Let $\bfMF(W)^\dg$ be a suitable enhancement
(in the differential $\DZ_2$-graded setting, where $\DZ_2=\DZ/2\DZ$)
of $\bfMF(W),$ and let $\bfMF(W)^{\dg,\natural}$ be the
corresponding enhancement 
of 
the Karoubi envelope of $\bfMF(W).$

The (motivic) Grothendieck group $K_0(\Var_{\DA^1})$ of varieties
over $\DA^1$
is defined as the free abelian group on
isomorphism classes $[X]_{\DA^1}=[X,W]$ of varieties $W \colon  X \ra
\DA^1$ over 
$\DA^1$ subject to the relations $[X]_{\DA^1}=[X-Y]_{\DA^1} +
[Y]_{\DA^1}$ whenever $Y \subset X$ is a closed subvariety.
Given $W \colon  X \ra \DA^1$ and $V \colon Y \ra \DA^1$ we define $W * V \colon  X
\times Y \ra \DA^1$ by $(W*V)(x,y)=W(x)+V(y).$ This operation
turns $K_0(\Var_{\DA^1})$ into a commutative ring.


We denote by $K_0(\sat_k^{\DZ_2})$ the
Grothendieck group of saturated dg categories
(see Definition~\ref{def:Grothendieck-group-of-saturated-dg-cats}),
i.\,e.\ 
the free abelian group on
quasi-equivalence classes of saturated
(= proper, smooth and triangulated)
dg (= differential
$\DZ_2$-graded)
categories with
relations coming from semi-orthogonal decompositions into
admissible subcategories on the level of homotopy categories.
The tensor product of dg categories gives rise to a ring
structure on $K_0(\sat_k^{\DZ_2}).$ One may think of
$K_0(\sat_k^{\DZ_2})$ as a Grothendieck 
ring of suitable pretriangulated dg categories.
Now we can state our main result.

\begin{theorem}
  [{see Theorem~\ref{t:category-of-singularities-induces-ring-morphism}}]
  \label{t:category-of-singularities-induces-ring-morphism-intro}
  There is a unique morphism
  \begin{equation}
    \label{eq:MF-mot-meas}
    \mu \colon  K_0(\Var_{\DA^1})\ra K_0(\sat_k^{\DZ_2})
  \end{equation}
  of rings (= a Landau-Ginzburg motivic measure) that maps
  $[X,W]$ to 
  the class of $\bfMF(W)^{\dg,\natural}$ whenever $X$ is a smooth
  variety and
  $W \colon  X \ra \DA^1$ is a proper morphism. 

  In particular, $\mu$ is a morphism of abelian groups and maps 
  $[X,W]$ to 
  the class of $\bfMF(W)^{\dg,\natural}$ whenever $X$ is a smooth
  (connected) variety and
  $W \colon  X \ra \DA^1$ is a projective morphism. These two properties
  determine $\mu$ uniquely.
\end{theorem}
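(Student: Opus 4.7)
The plan is to use a Bittner-style presentation of $K_0(\Var_{\DA^1})$ to reduce the construction of $\mu$ to the case of smooth connected projective morphisms, and then to verify both the additive blow-up relations and multiplicativity via semi-orthogonal decompositions on the matrix factorization side. Specifically, I would first establish that $K_0(\Var_{\DA^1})$ is generated as an abelian group by classes $[X,W]$ with $X$ smooth connected and $W \colon X \to \DA^1$ projective, subject to $[\emptyset]_{\DA^1}=0$ and the blow-up relation $[\tilde X, W\circ\pi] - [E, W\circ\pi|_E] = [X,W] - [Y, W|_Y]$ for every blow-up $\pi \colon \tilde X \to X$ of a smooth connected $X$ along a smooth closed subvariety $Y \subset X$ with exceptional divisor $E$. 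This follows from Bittner's original argument applied relatively to $\DA^1$: Nagata compactification, resolution of singularities, and weak factorization all go through unchanged, since nothing in this step interacts nontrivially with the base $\DA^1$.

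Next I would define $\tilde\mu[X,W] := [\bfMF(W)^{\dg,\natural}]$ on these generators and check that the right-hand side is saturated. Smoothness of $\bfMF(W)^{\dg,\natural}$ is exactly the smoothness theorem for dg enhancements of matrix factorization categories established in this paper. Properness follows from projectivity of $W$ together with Lemma~\ref{l:D-Sg-W-vanishes}: only finitely many $a \in k$ yield nonzero factors $\bfMF(X, W-a)$, and each such factor is the matrix factorization category of the proper scheme $W^{-1}(a)$, whose dg enhancement is proper. Triangulatedness is built into the passage to $(-)^{\dg,\natural}$.

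The main obstacle is verifying the blow-up relation, and here the companion paper \cite{valery-olaf-matfak-semi-orth-decomp} is essential. Its semi-orthogonal decomposition for matrix factorization categories of blow-ups lifts to the dg enhancement $(-)^{\dg,\natural}$ and expresses $\bfMF(\tilde X, W\circ\pi)^{\dg,\natural}$ as an admissible semi-orthogonal assembly of one copy of $\bfMF(X,W)^{\dg,\natural}$ together with several twisted copies of $\bfMF(Y, W|_Y)^{\dg,\natural}$, and expresses $\bfMF(E, W\circ\pi|_E)^{\dg,\natural}$ by an analogous decomposition in terms of $\bfMF(Y, W|_Y)^{\dg,\natural}$ alone. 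Because the relations defining $K_0(\sat_k^{\DZ_2})$ make the classes of admissible summands in a semi-orthogonal decomposition add, subtracting the two identities in $K_0(\sat_k^{\DZ_2})$ yields exactly the Bittner relation, so $\tilde\mu$ descends to a well-defined group homomorphism $\mu \colon K_0(\Var_{\DA^1}) \to K_0(\sat_k^{\DZ_2})$.

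Finally, multiplicativity is verified on the Bittner generators: for smooth projective $[X,W]$ and $[Y,V]$ the product $(X\times Y, W*V)$ is again smooth projective over $\DA^1$, and the Thom-Sebastiani theorem for dg enhancements of matrix factorization categories --- the second main technical result of this paper --- provides a quasi-equivalence $\bfMF(X\times Y, W*V)^{\dg,\natural} \simeq \bfMF(X,W)^{\dg,\natural} \otimes \bfMF(Y,V)^{\dg,\natural}$, yielding the required equality of classes in $K_0(\sat_k^{\DZ_2})$. Both uniqueness claims are then immediate, as the (smooth connected, projective) Bittner generators alone pin down $\mu$ on all of $K_0(\Var_{\DA^1})$; the extension of the formula to proper $W$ on a smooth but possibly disconnected $X$ follows because such $[X,W]$ is already a sum of Bittner generators after further Bittner reduction, and the resulting identity $\mu[X,W] = [\bfMF(W)^{\dg,\natural}]$ is compatible with the semi-orthogonal decompositions used to produce that reduction.
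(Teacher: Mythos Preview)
Your additive argument is essentially the paper's: Bittner's presentation of $K_0(\Var_{\DA^1})$, saturatedness of $\bfMF(W)^{\dg,\natural}$ via the smoothness theorem and properness, and the blow-up semi-orthogonal decompositions from the companion paper. That part is fine.

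The gap is in your multiplicativity step. You assert that for smooth $X,Y$ with projective $W\colon X\to\DA^1$ and $V\colon Y\to\DA^1$, the product $(X\times Y, W*V)$ is again projective over $\DA^1$. This is false: take $X=Y=\DA^1$ and $W=V=\id_{\DA^1}$; then $W*V\colon\DA^2\to\DA^1$ is the addition map, whose fibers are copies of $\DA^1$, so it is not projective (or proper). Since the Bittner generators do not form a submonoid for the $*$-product, you cannot read off $\mu([X\times Y,W*V])$ directly from the defining formula, and Thom--Sebastiani alone only gives you $\mu([X,W])\bullet\mu([Y,V])=\ul{\bfMF(W*V)^{\dg,\natural}}$, not that this equals $\mu([X\times Y,W*V])$.

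The paper closes exactly this gap with a compactification construction (Proposition~\ref{p:compactification}): one embeds $X\times Y$ as an open subset of a smooth quasi-projective $Z$ with a projective morphism $h\colon Z\to\DA^1$ extending $W*V$, such that the boundary $Z\setminus(X\times Y)$ is a simple normal crossing divisor $\bigcup D_i$, all critical points of $h$ lie in $X\times Y$, and each restriction $h|_{D_{i_1}\cap\cdots\cap D_{i_p}}$ is smooth and projective. One then writes $[X\times Y,W*V]$ as an alternating sum of $[Z,h]$ and the $[D_{i_1\cdots i_p},h_{i_1\cdots i_p}]$ in $K_0(\Var_{\DA^1})$; the boundary terms have vanishing $\bfMF$ by Lemma~\ref{l:D-Sg-W-vanishes}, and $\bfMF(h)^{\dg,\natural}\simeq\bfMF(W*V)^{\dg,\natural}$ because the singular loci of the fibers agree. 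This is the missing idea in your argument.
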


Let us sketch the main steps of the proof of this theorem.

We first show that $\bfMF(W)^\dg$ and $\bfMF(W)^{\dg,\natural}$
are smooth dg 
categories (Theorem~\ref{t:MF-DSgW-Cechobj-smooth}), and proper
if $W$ is proper (Proposition~\ref{p:PerfWdg-proper}). Hence, for proper $W,$ 
$\bfMF(W)^{\dg,\natural}$ is a saturated dg category and
defines an element of $K_0(\sat_k^{\DZ_2}).$
The proof of smoothness takes advantage of good properties of
object oriented \v{C}ech enhancements of matrix factorization
categories; for example, the standard duality and external tensor
products admit natural lifts to these enhancements.
On the way we show a Thom-Sebastiani Theorem
(Theorem~\ref{t:thom-sebastiani}); it says that 
given smooth varieties $X$ and $Y$ with morphisms $W \colon  X\ra
\DA^1$ and $V \colon  Y \ra \DA^1,$ the   
two dg categories
$\bfMF(W)^{\dg} \otimes \bfMF(V)^{\dg}$
and $\bfMF(W*V)^{\dg}$
are Morita equivalent. If $W$ is
proper, properness follows essentially from \cite[Cor.~1.24]{orlov-tri-cat-of-sings-and-d-branes}.

According to
\cite[Theorem~5.1]{bittner-euler-characteristic}),
$K_0(\Var_{\DA^1})$ has a
presentation with generators
the isomorphism classes $[X, W],$ where $X$ is a smooth variety
and $W$ 
is a proper (or projective) morphism, and relations coming from
blowing-ups.
Using this, the semi-orthogonal decompositions for projective
space bundles and blowing-ups we established in
\cite[Theorems~\ref{semi:t:semi-orthog-1-mf} and \ref{semi:t:semi-orthog-2-mf}]{valery-olaf-matfak-semi-orth-decomp}
imply that there is a morphism of abelian
groups $\mu \colon  K_0(\Var_{\DA^1}) \ra K_0(\sat_k^{\DZ_2})$
sending $[X,W]$ to the class of $\bfMF(W)^{\dg,\natural}$ for
smooth $X$
and proper $W,$ and that this morphism is already uniquely
determined by its values 
on $[X,W]$ for smooth $X$ and projective $W.$

It remains to show multiplicativity of $\mu.$
If $W \colon  X \ra \DA^1$ and $V \colon  Y \ra \DA^1$ are proper,
the product of the classes of 
$\bfMF(W)^{\dg,\natural}$ and $\bfMF(V)^{\dg,\natural}$
in 
$K_0(\sat_k^{\DZ_2})$
is isomorphic 
to the class of 
$\bfMF(W*V)^{\dg,\natural},$
by the Thom-Sebastiani Theorem~\ref{t:thom-sebastiani}.
However, $W * V$ is not proper in general, so it is a priori not
clear that $\mu$ maps $[X \times Y, W*V]$ to the
class of 
$\bfMF(W*V)^{\dg,\natural}.$
To ensure this we
furthermore have to compactify the morphism $W*V$ in a nice way
(Proposition~\ref{p:compactification}) in order to obtain
multiplicativity. This finishes the sketch of proof of Theorem~\ref{t:category-of-singularities-induces-ring-morphism-intro}.


The Landau-Ginzburg measure \eqref{eq:MF-mot-meas} 
sends $\DL_{(\DA^1,0)}:=[\DA^1,0]$ to 1. It
is the analog of the motivic measure constructed in
\cite{bondal-larsen-lunts-grothendieck-ring} using bounded
derived categories of coherent sheaves. 
We refer the reader to 
the introduction of \cite{valery-olaf-matfak-semi-orth-decomp}
for more details. There we discuss also our related work.

For the convenience of readers who are more familiar with
smooth and proper dg algebras than with saturated dg categories
let us explain that the Grothendieck group $K_0(\sat_k^{\DZ_2})$
can also be described using proper and smooth dg algebras.
We call two proper and smooth dg algebras "dg Morita equivalent"
if their derived categories are connected by a zig-zag of tensor
equivalences (cf.\ Remark~\ref{rem:dg-Morita-equivalence-and-isomorphic-in-Hmo}).
Define the Grothendieck group $K_0(\prsmalg_k^{\DZ_2})$ of
proper and smooth dg algebras
as the quotient of the free abelian group on dg Morita
equivalence classes $\ul{A}$ of proper and smooth dg algebras 
$A$ by the subgroup generated by
the elements 
$\ul{R} - (\ul{A} + \ul{B})$
whenever $R$ is a proper and smooth dg algebra such that
there are dg algebras $A$ and $B$ together with a
dg $A \otimes B^\opp$-module 
$N=\leftidx{_B}{N}{_A}$ such that 
$R=\roundtzmat A0NB$ (see
Def.~\ref{def:Grothendieck-group-of-proper-smooth-dg-alg}).
The tensor product of dg algebras turns
$K_0(\prsmalg_k^{\DZ_2})$ into a ring.
Under our assumption that $k$ is a field
we show 
that mapping a proper
and smooth dg algebra $A$ to its triangulated envelope $\Perf(A)$
induces an isomorphism
\begin{equation*}
  K_0(\prsmalg_k^{\DZ_2})
  \sira K_0(\sat_k^{\DZ_2})
\end{equation*}
of rings (Proposition~\ref{p:modified-K0} and Remark~\ref{rem:K0s-combined}).
Using this isomorphism, the Landau-Ginzburg motivic measure 
\eqref{eq:MF-mot-meas} may be viewed as a morphism of rings
\begin{equation*}
  \mu \colon  K_0(\Var_{\DA^1}) \ra K_0(\prsmalg_k^{\DZ_2}).
\end{equation*}
In this interpretation, $\mu$ can be described more concretely as
follows. Given a smooth variety $X$ and a proper morphism
$W\colon X \ra \DA^1,$ choose a classical generator in each
category $\bfMF(X, W-a)$ and let $A_a$ be its endomorphism dg
algebra (computed in a suitable enhancement).  Then the image of
$[X,W]$ under $\mu$ is the class of $\prod_{a \in k} A_a.$

Section~\ref{sec:groth-ring-saturated-dg-cats} contains the
definitions of various Grothendieck rings of dg categories. We
even work over an arbitrary commutative ground ring $\groundring$ 
there. Besides the Grothendieck rings
$K_0(\sat_\groundring^{\DZ_2})$ and
$K_0(\prsmalg_\groundring^{\DZ_2})$ explained above we also
introduce the modified Grothendieck ring
$K'_0(\sat_\groundring^{\DZ_2})$ of saturated dg categories and
the Grothendieck ring $K_0(\prsm_\groundring^{\DZ_2})$ of proper
and smooth dg categories.  There are canonical morphisms
\begin{equation*}
  K_0(\sat_\groundring^{\DZ_2})
  \sra
  K'_0(\sat_\groundring^{\DZ_2})
  \sila
  K_0(\prsm_\groundring^{\DZ_2})
  \sila
  K_0(\prsmalg_\groundring^{\DZ_2})
\end{equation*}
of rings. The first morphism is the obvious surjection:
the definition of $K'_0(\sat_\groundring^{\DZ_2})$ is obtained from that of
$K_0(\sat_\groundring^{\DZ_2})$ by dropping the words "into admissible
subcategories". It is an isomorphism if $\groundring$ is a field (Proposition~\ref{p:modified-K0}).
The other two morphisms are isomorphisms (see
Remark~\ref{rem:K0s-combined}).

\subsection*{Acknowledgments}
\label{sec:acknowledgements}

We thank Maxim Kontsevich, J{\'a}nos Koll{\'a}r, 
Gon\c{c}alo Tabuada, 
Daniel Pomerleano, and Anatoly Preygel for their help and useful
discussions.

The second author was
supported by a postdoctoral fellowship of 
the German Academic Exchange Service (DAAD)  
and partially supported by
the Collaborative Research Center SFB Transregio 45
and 
the priority program SPP 1388 
of the
German Science foundation (DFG). He thanks these institutions.

\section{Grothendieck rings of dg categories}
\label{sec:groth-ring-saturated-dg-cats}

Our aim is to define and compare some Grothendieck rings of saturated dg
$\groundring$-categories (or proper and smooth dg
$\groundring$-algebras) where $\groundring$ is a commutative
ring.
Initially we follow
\cite{bondal-larsen-lunts-grothendieck-ring} but pay 
more attention to finiteness conditions and work
over an arbitrary commutative ring.
We use notions and results from
\cite{keller-on-dg-categories-ICM,
  toen-finitude-homotopique-propre-lisse,toen-vaquie-moduli-objects-dg-cats}.
For the convenience of the reader we repeat some proofs.
In this section dg stands for "differential $\DZ$-graded".

\begin{remark}
  \label{rem:more-general}
  All results of this section (and the results we cite) are also
  true in the 
  differential $\DZ_n$-graded setting (where $\DZ_n:=\DZ/n\DZ$), for
  any $n \in \DZ,$ 
  unless said otherwise (the
  standard notions we use have their obvious counterpart in this
  setting).
  The proofs are easily adapted.  
  We could
  even work over a graded 
  commutative differential $\DZ_n$-graded $\groundring$-algebra
  $K$ as in 
  \cite{valery-olaf-smoothness-equivariant}.

  In fact, in the rest of this article, we only need the
  differential $\DZ_2$-graded setting for $\groundring$ a field.
  To exclude misunderstandings, a dg module in this setting is a
  $\DZ_2$-graded $\groundring$-module $V=V_0 \oplus V_1$ together
  with a differential $d \colon V \ra V$ of degree 1, i.\,e.\
  $\groundring$-linear maps $d_i \colon V_i \ra V_{i+1}$ for $i \in \DZ_2$
  such that $d_{i+1}d_i=0.$ 

  We choose to explain
  the case of an arbitrary commutative ring $\groundring$
  since it is only 
  slightly more difficult than that of a field. 
\end{remark}






\subsection{Dg categories and their module categories}
\label{sec:dg-categories-their}

Our notation coincides with that of
\cite[sections~2 and 3]{valery-olaf-smoothness-equivariant} 
(if one puts
$K=\groundring$ there).

Let $\groundring$ be a commutative ring.
We denote by
$C(\groundring)$ the 
category whose objects are dg
\mbox{($\groundring$-)modules}.
Morphisms in $C(\groundring)$ are
degree zero morphisms that commute with the respective
differentials. 
Note that the category $C(\groundring)$ is abelian and closed
symmetric monoidal with the obvious tensor product
$\otimes:=\otimes_\groundring.$

If $\mathcal{A}$ is a dg category (= a category enriched in
$C(\groundring)$) we denote the category with the same objects and closed
degree zero morphism (resp.\ closed degree zero morphisms up to
homotopy) by $Z_0(\mathcal{A})$ (resp.\ $[\mathcal{A}].$)
For example, there is an obvious dg category
$\calMod(\groundring)$ such that
$Z_0(\calMod(\groundring))=C(\groundring).$
We denote the category of small dg categories
by $\dgcat_\groundring.$

Let $\mathcal{A}$ be a small dg category. We denote by
$\calMod(\mathcal{A})$ the dg category of (right) dg
$\mathcal{A}$-modules (= dg functors
$\mathcal{A}^\opp \ra \calMod(\groundring)$).
The dg functor $Y \colon  \mathcal{A}
\ra \calMod(\mathcal{A}),$ $A \mapsto
Y(A):=\Yoneda{A}:=\mathcal{A}(-,A),$ is full and faithful and
called Yoneda embedding. 
We write $C(\mathcal{A}):=Z_0(\calMod(\mathcal{A}))$ and
$\mathcal{H}(\mathcal{A}):=[\calMod(\mathcal{A})]$ (resp.\
$D(\mathcal{A})$) for the
homotopy (resp.\ derived) category of dg
$\mathcal{\mathcal{A}}$-modules. 
We equip $C(\mathcal{A})$ with the (cofibrantly generated)
projective model structure 
(cf.\ \cite[Thm.~2.2]{valery-olaf-smoothness-equivariant}).
Its weak equivalences are the quasi-isomorphisms, and its
fibrations are the epimorphisms.

Let $\calMod(\mathcal{A})_{cf} \subset \calMod(\mathcal{A})$ be the
full dg subcategory of all cofibrant objects 
in $C(\mathcal{A})$ (all objects
are fibrant).
We denote by $\ol{\mathcal{A}} \subset
\calMod(\mathcal{A})$ the smallest strict full dg subcategory which
contains the zero module, all $\Yoneda{A},$ for $A \in \mathcal{A},$
and is closed under cones of closed degree zero morphisms
(and then also under all shifts).
Any object of $\ol{\mathcal{A}}$ is a semi-free
dg $\mathcal{A}$-module and hence cofibrant.
The situation is illustrated by
the diagram
\begin{equation*}
  {\mathcal{A}} \xra{Y}
  {\ol{\mathcal{A}} \subset} 
  {\calMod(\mathcal{A})_{cf}} \subset
  \calMod(\mathcal{A}).
\end{equation*}
The three categories on the right are 
(strongly) pretriangulated.
The canonical full and faithful
dg functor
$\mathcal{A}^{\pretr} \ra \calMod(\mathcal{A})$
from \cite[\S1, \S4]{bondal-kapranov-enhancements}
(an extended version of the Yoneda embedding)
has precisely $\ol{\mathcal{A}}$ as its essential image, so
$\mathcal{A}^\pretr$ and $\ol{\mathcal{A}}$ are dg
equivalent. So $\ol{\mathcal{A}}$ is a pretriangulated envelope
of $\mathcal{A}.$
We pass to the respective homotopy categories and obtain the first row
in the commutative diagram
\begin{equation*}
  \xymatrix{
    {[\mathcal{A}]} \iar[r]^-{[Y]} &
    {[\ol{\mathcal{A}}]} \ar@{}[r]|-{\subset}
    \iar[d]^{\sim}
    &
    {[\calMod(\mathcal{A})_{cf}]} \ar@{}[r]|-{\subset} \ar[rd]^{\sim}&
    {\mathcal{H}(\mathcal{A})} \ar[d] \\
    &
    {\tria(\mathcal{A})} \ar@{}[r]|-{\subset} &
    {\thick(\mathcal{A})} \ar@{}[u]|-{\cup} \ar[rd]^{\sim} &
    {D(\mathcal{A})} \\
    &&&
    {\per(\mathcal{A}).} \ar@{}[u]|-{\cup}
  }
\end{equation*}
Here $\tria(\mathcal{A})$ is defined to be the smallest strict
full triangulated subcategory of $[\calMod(\mathcal{A})_{cf}]$
that contains all $\Yoneda{A},$ for $A \in \mathcal{A},$ 
$\thick(\mathcal{A})$ is in addition required to be closed under
direct summands in $[\calMod(\mathcal{A})_{cf}],$
and 
$\per(\mathcal{A})$ is defined to be the thick envelope of
$\{\Yoneda{A}\mid A \in \mathcal{A}\}$ in $D(A).$
The three
indicated triangulated equivalences are well-known (or obvious).
They show that $\ol{\mathcal{A}}$ (together with equivalence
$[\ol{\mathcal{A}}] \ra \tria(\mathcal{A})$) is an enhancement of
$\tria(\mathcal{A}),$ and that $\calMod(\mathcal{A})_{cf}$ is an
enhancement of $[\calMod(\mathcal{A})_{cf}]$ and
$D(\mathcal{A}).$ We define $\Perf(\mathcal{A})$ to be the full
subcategory of $\calMod(\mathcal{A})_{cf}$ whose objects coincide
with those of $\thick(\mathcal{A}).$ Then $\Perf(\mathcal{A})$ is
(strongly) pretriangulated and an enhancement of
$\thick(\mathcal{A})$ and $\per(\mathcal{A}).$


The categories $[\calMod(\mathcal{A})_{cf}]$ and $D(\mathcal{A})$ have
arbitrary (in particular countable) coproducts. Hence they are Karoubian
(= idempotent complete), and 
so are
$\thick(\mathcal{A})$ and $\per(\mathcal{A}).$
In particular $\thick(\mathcal{A})$ can be viewed as the
Karoubi envelope (= idempotent completion) of
$\tria(\mathcal{A}).$ 
Note also that
$D(\mathcal{A})$ is compactly generated and that the subcategory
$D(\mathcal{A})^c$ of compact
objects in $D(\mathcal{A})$ is precisely 
$\per(\mathcal{A}),$ i.\,e.\
$D(\mathcal{A})^c=\per(\mathcal{A})$ (cf.\ the discussion
around equation (2.4) in 
\cite{valery-olaf-smoothness-equivariant}).

\subsection{Triangulated dg categories}
\label{sec:triang-dg-categories}

Recall that a dg functor $F \colon \mathcal{A} \ra \mathcal{B}$ is a
\define{quasi-equivalence} if
\begin{enumerate}[label=(qe{\arabic*})]
\item
  \label{enum:quasi-equi-qiso-homs}
  for all objects $a_1, a_2 \in \mathcal{A},$ the morphism
  $F \colon  \mathcal{A}(a_1, a_2) \ra \mathcal{B}(Fa_1, Fa_2)$
  is a quasi-isomorphism, and
\item
  \label{enum:quasi-equi-H0-essentially-epi}
  the induced functor $[F] \colon  [\mathcal{A}] \ra[\mathcal{B}]$ on
  homotopy categories is
  essentially surjective.
\end{enumerate}
If \ref{enum:quasi-equi-qiso-homs} holds, then
\ref{enum:quasi-equi-H0-essentially-epi} is equivalent to the condition
that $[F] \colon  [\mathcal{A}] \ra[\mathcal{B}]$ is an equivalence.

\begin{definition}
  [{\cite[Def.~2.4.5]{toen-vaquie-moduli-objects-dg-cats}}]
  \label{d:triangulated-dg-category}
  A dg category $\mathcal{A}$ is \define{triangulated}
  if the
  Yoneda embedding induces a quasi-equivalence
  $\mathcal{A} \ra \Perf(\mathcal{A}).$
  (It is enough to require that
  $[\mathcal{A}] \ra [\Perf(\mathcal{A})]$ is essentially surjective.)
\end{definition}

\begin{lemma}
  \label{l:triang-versus-pretriang-and-karoubi}
  A dg category $\mathcal{A}$ is triangulated if and only if it is
  pretriangulated and $[\mathcal{A}]$ is Karoubian.
\end{lemma}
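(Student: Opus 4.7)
The plan is to unwind both conditions through the tower of full subcategories
\[
  \ol{\mathcal{A}} \subset \Perf(\mathcal{A}) \subset \calMod(\mathcal{A})_{cf}
\]
whose homotopy categories are $\tria(\mathcal{A}) \subset \thick(\mathcal{A}) \subset \mathcal{H}(\mathcal{A})$, and to exploit that $\thick(\mathcal{A}) = \per(\mathcal{A})$ is the Karoubi envelope of $\tria(\mathcal{A})$. Since the Yoneda embedding $\mathcal{A} \to \calMod(\mathcal{A})$ is full and faithful at the dg level, condition \ref{enum:quasi-equi-qiso-homs} for $\mathcal{A} \to \Perf(\mathcal{A})$ is automatic. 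Thus $\mathcal{A}$ is triangulated if and only if the induced functor $[Y]\colon [\mathcal{A}] \to [\Perf(\mathcal{A})] = \thick(\mathcal{A})$ is essentially surjective, equivalently (by full faithfulness) an equivalence. Similarly, $\mathcal{A}$ is pretriangulated exactly when $[Y]\colon [\mathcal{A}] \to [\ol{\mathcal{A}}] = \tria(\mathcal{A})$ is an equivalence.

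For the forward direction, assume $[Y]\colon [\mathcal{A}] \sira \thick(\mathcal{A})$. The image of $[Y]$ is contained in $\tria(\mathcal{A})$, so $[Y]$ factors as
\[
  [\mathcal{A}] \xra{[Y]_1} \tria(\mathcal{A}) \hra \thick(\mathcal{A}).
\]
Both the composition and the inclusion are fully faithful, hence $[Y]_1$ is fully faithful. To see it is essentially surjective, take $X \in \tria(\mathcal{A}) \subset \thick(\mathcal{A})$; by assumption $X \cong [Y](A)$ in $\thick(\mathcal{A})$ for some $A \in \mathcal{A}$, but $\tria(\mathcal{A})$ is a full subcategory of $\thick(\mathcal{A})$, so this isomorphism already lives in $\tria(\mathcal{A})$. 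Hence $\mathcal{A}$ is pretriangulated. Moreover $[\mathcal{A}] \simeq \thick(\mathcal{A})$, which is Karoubian, so $[\mathcal{A}]$ is Karoubian.

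For the converse, assume $\mathcal{A}$ is pretriangulated and $[\mathcal{A}]$ is Karoubian. Then $\tria(\mathcal{A}) \simeq [\mathcal{A}]$ is Karoubian, and since $\thick(\mathcal{A})$ is the Karoubi envelope of $\tria(\mathcal{A})$ inside $[\calMod(\mathcal{A})_{cf}]$, the inclusion $\tria(\mathcal{A}) \hra \thick(\mathcal{A})$ must be an equivalence. Composing with $[\mathcal{A}] \sira \tria(\mathcal{A})$ yields $[\mathcal{A}] \sira \thick(\mathcal{A}) = [\Perf(\mathcal{A})]$, so $\mathcal{A} \to \Perf(\mathcal{A})$ is a quasi-equivalence.

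There is no serious obstacle; the only point that requires any care is the bookkeeping in the forward direction, where one must check that the equivalence $[\mathcal{A}] \simeq \thick(\mathcal{A})$ descends to an equivalence $[\mathcal{A}] \simeq \tria(\mathcal{A})$. This amounts to the elementary observation recorded above that $\tria(\mathcal{A})$ is a full subcategory of $\thick(\mathcal{A})$ containing the image of $[Y]$.
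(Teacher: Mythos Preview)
Your proof is correct and follows essentially the same approach as the paper's: both use the factorization $\mathcal{A} \hookrightarrow \ol{\mathcal{A}} \subset \Perf(\mathcal{A})$, reduce the quasi-equivalence conditions to equivalences on homotopy categories via full faithfulness of the Yoneda embedding, and exploit that $\thick(\mathcal{A})$ is the Karoubi envelope of $\tria(\mathcal{A})$. Your forward direction is slightly more explicit than the paper's (which simply asserts that $\mathcal{A} \to \ol{\mathcal{A}}$ is ``clearly'' a quasi-equivalence), but the arguments are otherwise identical.
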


\begin{proof}
  Note that $\mathcal{A} \ra
  \Perf(\mathcal{A})$ factors as $\mathcal{A} \hra \ol{\mathcal{A}}
  \subset \Perf(\mathcal{A}).$

  Assume that $\mathcal{A}$ is triangulated. Then it is clear
  that $\mathcal{A} \ra \ol{\mathcal{A}}$ is a quasi-equivalence
  which precisely means that $\mathcal{A}$ is pretriangulated.
  Since $[\mathcal{A}] \ra [\Perf(\mathcal{A})]$ is an
  equivalence and $[\Perf(\mathcal{A})]=\thick(\mathcal{A})$ is
  Karoubian, the same is true for $[\mathcal{A}].$

  Conversely, if $\mathcal{A}$ is pretriangulated and
  $[\mathcal{A}]$ is Karoubian, then
  $[\mathcal{A}] \sira [\ol{\mathcal{A}}] \sira \tria(\mathcal{A}),$
  so $\tria(\mathcal{A})$ is Karoubian and coincides with its
  Karoubi envelope $\thick(\mathcal{A})=[\Perf(\mathcal{A})].$
  This implies that $[\mathcal{A}] \ra [\Perf(\mathcal{A})]$ is an
  equivalence, so $\mathcal{A}$ is triangulated.
\end{proof}

\begin{corollary}
  [{\cite[Lemma.~2.6]{toen-vaquie-moduli-objects-dg-cats}}]
  \label{c:triang-versus-pretriang-and-karoubi}
  Let $\mathcal{A}$ be a dg category. Then
  $\Perf(\mathcal{A})$ is a triangulated dg category,
  i.\,e.\ the morphism
  $\Perf(\mathcal{A}) \ra \Perf(\Perf(\mathcal{A}))$
  induced by the Yoneda functor is a quasi-equivalence.
\end{corollary}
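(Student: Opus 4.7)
The plan is to invoke Lemma~\ref{l:triang-versus-pretriang-and-karoubi}: it suffices to establish that $\Perf(\mathcal{A})$ is pretriangulated and that $[\Perf(\mathcal{A})]$ is Karoubian. Both properties have, in essence, already been recorded in the paragraph of Section~\ref{sec:dg-categories-their} where $\Perf(\mathcal{A})$ was introduced; my task is merely to assemble these observations and check nothing subtle is hiding.

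For pretriangulatedness, I would observe that $\Perf(\mathcal{A})$ is the strict full dg subcategory of $\calMod(\mathcal{A})_{cf}$ on those cofibrant modules whose image in $[\calMod(\mathcal{A})_{cf}]$ lies in $\thick(\mathcal{A})$. Since $\calMod(\mathcal{A})_{cf}$ is (strongly) pretriangulated, all shifts and cones of closed degree zero morphisms exist in $\calMod(\mathcal{A})_{cf}$. Because $\thick(\mathcal{A})$ is a triangulated subcategory of $[\calMod(\mathcal{A})_{cf}]$, these shifts and cones still represent objects of $\thick(\mathcal{A})$, hence lie in $\Perf(\mathcal{A})$. So $\Perf(\mathcal{A})$ inherits strong pretriangulatedness from $\calMod(\mathcal{A})_{cf}$, and the equivalence $[\Perf(\mathcal{A})] \sira \thick(\mathcal{A})$ holds by construction.

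For Karoubianness, I note that $[\calMod(\mathcal{A})_{cf}]$ has arbitrary coproducts and is therefore idempotent complete, so every idempotent in $\thick(\mathcal{A})$ splits inside $[\calMod(\mathcal{A})_{cf}]$; the resulting direct summands land back in $\thick(\mathcal{A})$ by the very closure-under-summands clause in the definition of $\thick(\mathcal{A})$. Thus $[\Perf(\mathcal{A})] = \thick(\mathcal{A})$ is Karoubian. Combining these two points with Lemma~\ref{l:triang-versus-pretriang-and-karoubi} yields that $\Perf(\mathcal{A})$ is triangulated, which by Definition~\ref{d:triangulated-dg-category} is precisely the assertion that $\Perf(\mathcal{A}) \ra \Perf(\Perf(\mathcal{A}))$ is a quasi-equivalence.

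The only place to be careful is the verification that the cone of a closed degree zero morphism between two objects of $\Perf(\mathcal{A})$ (computed in $\calMod(\mathcal{A})_{cf}$) indeed represents an object of $\thick(\mathcal{A})$; this is really just the statement that $\thick(\mathcal{A}) \subset [\calMod(\mathcal{A})_{cf}]$ is closed under cones, which is true since $\thick(\mathcal{A})$ is a triangulated subcategory by definition. So the argument is essentially a bookkeeping exercise on top of Lemma~\ref{l:triang-versus-pretriang-and-karoubi}, with no genuine obstacle.
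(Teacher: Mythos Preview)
Your proof is correct and follows exactly the same approach as the paper: invoke Lemma~\ref{l:triang-versus-pretriang-and-karoubi} and use that $\Perf(\mathcal{A})$ is pretriangulated and $[\Perf(\mathcal{A})]=\thick(\mathcal{A})$ is Karoubian. The paper's proof is terser only because it cites these two facts as already ``observed above'' in Section~\ref{sec:dg-categories-their}, whereas you spell them out.
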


\begin{proof}
  We have observed above
  that $\Perf(\mathcal{A})$ is pretriangulated, and that
  $[\Perf(\mathcal{A})]=\thick(\mathcal{A})$ is Karoubian.
\end{proof}

So passing from a dg category $\mathcal{A}$ to
$\Perf(\mathcal{A})$ means taking a triangulated envelope.

\subsection{Some general results}
\label{sec:some-general-results}

\begin{lemma}
  \label{l:dg-functor-qequi-test}
  Let $F \colon  \mathcal{A} \ra \mathcal{B}$ be a dg functor and
  assume that the following condition on $\mathcal{A}$ holds: For
  all $X \in \mathcal{A}$ and $r \in \DZ$ there is an object $Z
  \in \mathcal{A}$ and morphisms $f \in \mathcal{A}(X,Z)^{-r}$
  and $g \in \mathcal{A}(Z,X)^{r}$ such that $df=0,$ $dg=0,$ and
  $fg$ is homotopic to $\id_Z$ and $gf$ is homotopic to $\id_X.$
  In other words, the essential image of $[\mathcal{A}]$ in
  $[\calMod(\mathcal{A})]$ is closed under all shifts.
  (This condition is for example
  satisfied if $\mathcal{A}$ is pretriangulated or closed under
  all shifts.)

  Then $F$ is a quasi-equivalence if and only if
  $[F] \colon [\mathcal{A}] \ra [\mathcal{B}]$ is an equivalence.
\end{lemma}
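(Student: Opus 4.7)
The forward implication is immediate from the definitions: if $F$ is a quasi-equivalence, then \ref{enum:quasi-equi-qiso-homs} implies in particular that $H^0$ is preserved on Hom complexes, so $[F]$ is fully faithful, and \ref{enum:quasi-equi-H0-essentially-epi} supplies essential surjectivity. Assume now conversely that $[F]\colon[\mathcal{A}]\to[\mathcal{B}]$ is an equivalence. Essential surjectivity of $[F]$ gives \ref{enum:quasi-equi-H0-essentially-epi} for free, so the entire content is to verify \ref{enum:quasi-equi-qiso-homs}, i.\,e.\ that the morphism of complexes
\begin{equation*}
  F_{a_1,a_2}\colon \mathcal{A}(a_1,a_2)\to\mathcal{B}(Fa_1,Fa_2)
\end{equation*}
is a quasi-isomorphism for all $a_1,a_2\in\mathcal{A}$. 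On $H^0$ this is exactly fully faithfulness of $[F]$, which holds by assumption. The plan for $H^r$ with $r\neq 0$ is to shift everything back to $H^0$ by means of the hypothesis on $\mathcal{A}$.

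Fix $a_1,a_2\in\mathcal{A}$ and $r\in\DZ$. Apply the shift hypothesis to $X=a_2$ to obtain an object $Z\in\mathcal{A}$ together with closed $f\in\mathcal{A}(a_2,Z)^{-r}$ and closed $g\in\mathcal{A}(Z,a_2)^{r}$ such that $fg$ and $gf$ are homotopic to the respective identities; equivalently, $f$ exhibits $Z$ as a representative of $a_2[r]$ in $\mathcal{H}(\mathcal{A})$. Post-composition with $f$ and with $g$ defines morphisms of complexes
\begin{equation*}
  f_{*}\colon \mathcal{A}(a_1,a_2)\to\mathcal{A}(a_1,Z)[-r],\qquad g_{*}\colon \mathcal{A}(a_1,Z)[-r]\to\mathcal{A}(a_1,a_2),
\end{equation*}
(they commute with the differential because $f$ and $g$ are closed), and the homotopies $gf\simeq\id_{a_2}$, $fg\simeq\id_Z$ yield homotopies $g_{*}f_{*}\simeq\id$ and $f_{*}g_{*}\simeq\id$. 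In particular, passing to $H^r$ on the left hand side gives a canonical isomorphism $H^{r}(\mathcal{A}(a_1,a_2))\isomoto H^{0}(\mathcal{A}(a_1,Z))$. The functor $F$ carries the data $(Z,f,g)$ to the analogous data $(FZ,Ff,Fg)$ on the $\mathcal{B}$-side (it preserves closed morphisms, degrees, compositions, and homotopies), so the same construction produces $H^{r}(\mathcal{B}(Fa_1,Fa_2))\isomoto H^{0}(\mathcal{B}(Fa_1,FZ))$, and these isomorphisms fit into a commutative square with the vertical maps induced by $F$.

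Fully faithfulness of $[F]$ gives an isomorphism on the lower $H^0$, whence the square forces the top horizontal map $H^{r}(F_{a_1,a_2})$ to be an isomorphism as well. Since $r$ was arbitrary, $F_{a_1,a_2}$ is a quasi-isomorphism, establishing \ref{enum:quasi-equi-qiso-homs}. This completes the proof modulo the remark that the shift hypothesis is automatic when $\mathcal{A}$ is pretriangulated (shifts then exist as genuine objects of $\mathcal{A}$) or when $\mathcal{A}$ is closed under shifts. The only point that requires mild care is the degree bookkeeping in $f_{*},g_{*}$, but there is no real obstacle.
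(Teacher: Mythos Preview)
Your proof is correct and follows essentially the same route as the paper: both fix $r$, invoke the shift hypothesis to produce $Z,f,g$, and use post-composition with $f$ to build a commutative square identifying $H^r$ of the Hom-complexes with $H^0$ of Hom into $Z$, where fully faithfulness of $[F]$ applies. The paper's presentation is slightly terser (it draws the square $[r]\mathcal{A}(A,X)\xrightarrow{f\circ ?}\mathcal{A}(A,Z)$ directly rather than phrasing $f_*,g_*$ as mutually inverse homotopy equivalences), but the argument is the same.
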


\begin{proof}
  We prove the non-trivial implication.
  Assume that $[F]$ is an equivalence. Then obviously
  \ref{enum:quasi-equi-H0-essentially-epi} is satisfied.
  Let $A, X \in \mathcal{A}.$
  Let $r \in \DZ$ and let $Z,$ $f,$ $g$ be as above.
  Consider the commutative diagram
  \begin{equation*}
    \xymatrix{
      {[r]\mathcal{A}(A,X)}
      \ar[r]^-{f \comp ?} \ar[d]^{[r]F}
      &
      {\mathcal{A}(A,Z)} \ar[d]^F
      \\
      {[r]\mathcal{B}(FA,FX)}
      \ar[r]^-{F(f) \comp ?}
      &
      {\mathcal{B}(FA,FZ)}
    }
  \end{equation*}
  in $C(\groundring).$ If we apply $H^0$ to this diagram, the
  horizontal arrows and the vertical arrow on the right become
  isomorphisms. Hence the same is true for the vertical arrow on
  the left, i.\,e.\
  $H^r(F) \colon  H^r(\mathcal{A}(A,X)) \ra H^r(\mathcal{A}(FA,FX))$
  is an isomorphism. This proves \ref{enum:quasi-equi-qiso-homs}.
\end{proof}

Any dg functor $f \colon  \mathcal{A} \ra \mathcal{B}$ gives rise
to the dg functor $f^*= \pro_\mathcal{A}^\mathcal{B} = (- \otimes_\mathcal{A}
\mathcal{B}) \colon \calMod(\mathcal{A}) \ra \calMod(\mathcal{B})$
called extension of scalars functor, and we
have a commutative diagram
\begin{equation}
  \label{eq:yoneda-extension-of-scalars}
  \xymatrix{
    {\mathcal{A}} \ar[d]^-{f} \ar[r]^-{Y} &
    {\calMod(\mathcal{A})} \ar[d]^-{f^*}\\
    {\mathcal{B}} \ar[r]^-{Y} &
    {\calMod(\mathcal{B})}
  }
\end{equation}
since $f^*(\Yoneda{A}) =\Yoneda{f(A)}$ for all $A \in \mathcal{A}.$

\begin{lemma}
  \label{l:dg-functors-restricts-to-perfects-embedding-and-qequiv}
  Let $f \colon  \mathcal{A} \ra \mathcal{B}$ be a morphism in
  $\dgcat_\groundring.$ Then:
  \begin{enumerate}
  \item
    \label{enum:restricts-to-Perf}
    The extension of scalars functor $f^*$
    induces dg
    functors
    $f^* \colon \calMod(\mathcal{A})_{cf} \ra \calMod(\mathcal{B})_{cf},$
    $f^* \colon \Perf(\mathcal{A}) \ra \Perf(\mathcal{B}),$
    and
    $f^* \colon \ol{\mathcal{A}} \ra \ol{\mathcal{B}}.$
  \item
    \label{enum:embeddings}
    If $f$ is full and faithful, then all these functors $f^*$ are
    full and faithful.
  \item
    \label{enum:qequi-induces-qequi-between-perfs}
    If $f$ is a quasi-equivalence, then
    $f^* \colon \calMod(\mathcal{A})_{cf} \ra \calMod(\mathcal{B})_{cf},$
    $f^* \colon \Perf(\mathcal{A}) \ra \Perf(\mathcal{B}),$
    and
    $f^* \colon \ol{\mathcal{A}} \ra \ol{\mathcal{B}}$ are
    quasi-equivalences.
 \end{enumerate}
\end{lemma}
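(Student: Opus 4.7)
The plan is to build on the identity $f^*(\Yoneda{A}) = \Yoneda{fA}$ from \eqref{eq:yoneda-extension-of-scalars} together with the formal properties of the adjunction $(f^*, f_*)$ on module categories, propagating statements from representables through the pretriangulated and Karoubian closures that define $\ol{\mathcal{A}}$ and $\Perf(\mathcal{A})$.

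For \ref{enum:restricts-to-Perf}, I would use that $f^* = -\otimes_{\mathcal{A}} \mathcal{B}$ is left adjoint to the restriction functor $f_*$ and is a left Quillen functor for the projective model structures, hence preserves cofibrant objects, giving $f^*\colon \calMod(\mathcal{A})_{cf} \to \calMod(\mathcal{B})_{cf}$. Being a dg functor, $f^*$ commutes with shifts and mapping cones; combined with \eqref{eq:yoneda-extension-of-scalars} this shows that $f^*$ carries the generators and the closure operations defining $\ol{\mathcal{A}}$ into those defining $\ol{\mathcal{B}}$, yielding $f^*\colon \ol{\mathcal{A}} \to \ol{\mathcal{B}}$. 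On the homotopy category $[\calMod(\mathcal{A})_{cf}]$ the induced triangulated functor preserves coproducts (being a left adjoint), hence direct summands in the Karoubian target, so it sends $\tria(\mathcal{A})$ into $\tria(\mathcal{B})$ and then $\thick(\mathcal{A})$ into $\thick(\mathcal{B})$, giving $f^*\colon \Perf(\mathcal{A}) \to \Perf(\mathcal{B})$.

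For \ref{enum:embeddings}, the Yoneda computation
\[
\calMod(\mathcal{B})(f^*\Yoneda{A}, f^*\Yoneda{A'}) = \calMod(\mathcal{B})(\Yoneda{fA}, \Yoneda{fA'}) = \mathcal{B}(fA, fA')
\]
shows that on Hom-complexes between representables $f^*$ induces literally the map $f \colon \mathcal{A}(A, A') \to \mathcal{B}(fA, fA')$, which is an isomorphism of complexes by hypothesis. Since $\calMod(\mathcal{A})(-, -)$ sends mapping cones inside $\ol{\mathcal{A}}$ to cone triangles of hom-complexes in either variable, and $f^*$ respects these cones, a two-variable dévissage through shifts and cones propagates the Hom-isomorphism from representables to all of $\ol{\mathcal{A}}$. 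To reach $\calMod(\mathcal{A})_{cf}$ (and thereby $\Perf(\mathcal{A})$), I would first check that the unit $\eta_M \colon M \to f_* f^* M$ is the identity on representables, since $(f_*\Yoneda{fA})(A') = \mathcal{B}(fA', fA) = \mathcal{A}(A', A) = \Yoneda{A}(A')$ under full faithfulness of $f$, and then extend by dévissage along cones, shifts, coproducts, and retracts to show $\eta_M$ is an isomorphism for all cofibrant $M$. The desired bijection on Hom-complexes then follows from the adjunction identification $\calMod(\mathcal{B})(f^*M, f^*N) \cong \calMod(\mathcal{A})(M, f_* f^* N) \cong \calMod(\mathcal{A})(M, N)$.

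For \ref{enum:qequi-induces-qequi-between-perfs}, I would invoke the standard fact (see for instance \cite{toen-vaquie-moduli-objects-dg-cats, keller-on-dg-categories-ICM}) that a quasi-equivalence $f$ induces a Quillen equivalence $(f^*, f_*)$ between the projective model categories on $C(\mathcal{A})$ and $C(\mathcal{B})$; this directly yields that $f^* \colon \calMod(\mathcal{A})_{cf} \to \calMod(\mathcal{B})_{cf}$ is a quasi-equivalence. The quasi-isomorphism on Hom-complexes then restricts for free to $\ol{\mathcal{A}}$ and $\Perf(\mathcal{A})$, while essential surjectivity of $[f^*]$ on $\tria(\mathcal{B})$ (resp.\ $\thick(\mathcal{B})$) follows from essential surjectivity of $[f]$ combined with the fact that both subcategories are generated by representables via operations that $[f^*]$ preserves. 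The main technical obstacle is the dévissage in part \ref{enum:embeddings} that extends full faithfulness and the unit isomorphism from representables to arbitrary cofibrant modules: controlling the behaviour of $f_*$ under the transfinite colimits and retracts defining cofibrant replacements is where I expect the most care to be required.
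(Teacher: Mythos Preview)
Your proposal is correct. For part~\ref{enum:restricts-to-Perf} you essentially reproduce the paper's argument. For part~\ref{enum:qequi-induces-qequi-between-perfs}, your route via the Quillen equivalence $(f^*,f_*)$ is standard and equivalent in strength to what the paper does; the paper instead first establishes the quasi-equivalence $f^*\colon \ol{\mathcal{A}}\to\ol{\mathcal{B}}$ (citing \cite{drinfeld-dg-quotients}), from this deduces the equivalence on $\tria$, then on the Karoubi envelopes $\thick$, then on $D(\mathcal{A})\to D(\mathcal{B})$ (citing \cite{Keller-deriving-dg-cat}), and finally invokes Lemma~\ref{l:dg-functor-qequi-test} to upgrade these to quasi-equivalences on $\Perf$ and $\calMod(-)_{cf}$.

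The genuine difference is in part~\ref{enum:embeddings}. You propose a d\'evissage: check the unit $\eta_M\colon M\to f_*f^*M$ is an isomorphism on representables, then propagate through cones, shifts, coproducts, and retracts to reach all cofibrant modules. This works (restriction $f_*$ commutes with all colimits computed pointwise, so your worry about transfinite colimits is not in fact an obstacle), but the paper bypasses it entirely: it proves $\eta_X$ is an isomorphism for \emph{every} $X\in\calMod(\mathcal{A})$ in one stroke, by writing $(\res_\mathcal{A}^\mathcal{B}\pro_\mathcal{A}^\mathcal{B}(X))(A)$ as the explicit coequalizer defining $X\otimes_\mathcal{A}\mathcal{B}$ evaluated at $f(A)$, and observing that full faithfulness of $f$ identifies this coequalizer with the one computing $X(A)$ itself. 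This gives full faithfulness of $f^*$ on all of $\calMod(\mathcal{A})$ at once, from which the claims for the full subcategories $\ol{\mathcal{A}}$, $\Perf(\mathcal{A})$, $\calMod(\mathcal{A})_{cf}$ are immediate. The paper's approach is shorter and avoids the inductive bookkeeping you anticipated; your approach has the minor advantage of staying within the subcategories of interest rather than appealing to the ambient module category.
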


\begin{proof}
  We prove \ref{enum:restricts-to-Perf}.
  Note first that $f^*$ (viewed as a functor $C(\mathcal{A}) \ra
  C(\mathcal{B})$ where both categories are viewed as model categories
  with the projective model structure) maps cofibrations to cofibrations since its right
  adjoint $f_*=\res^\mathcal{B}_\mathcal{A}$ maps trivial fibrations (=
  epimorphic quasi-isomorphisms)
  to trivial
  fibrations. In particular $f^*$ induces a dg functor
  $f^* \colon \calMod(\mathcal{A})_{cf} \ra \calMod(\mathcal{B})_{cf}.$ For the
  remaining statements of
  \ref{enum:restricts-to-Perf} use
  $f^*(\Yoneda{A}) =\Yoneda{f(A)}$ for all $A \in \mathcal{A}$
  and the fact that a dg functor preserves shifts and cones of
  closed degree zero morphisms (see
  \cite[4.3]{bondal-larsen-lunts-grothendieck-ring}).

  In order to prove \ref{enum:embeddings} assume that $f$ is full and faithful.
  The right adjoint of
  \begin{equation}
    \label{eq:extension-of-scalars-embedding}
    f^*=\pro_\mathcal{A}^\mathcal{B} \colon  \calMod(\mathcal{A}) \ra \calMod(\mathcal{B})
  \end{equation}
  is
  restriction $f_*=\res_\mathcal{A}^\mathcal{B}.$ Hence $f^*$ is full and
  faithful if and only if the unit
  $X \ra
  \res_\mathcal{A}^\mathcal{B}(\pro_\mathcal{A}^\mathcal{B}(X))$ of
  this adjunction is an isomorphism for all $X \in
  \calMod(\mathcal{A}).$
  But
  \begin{multline*}
    (\res_\mathcal{A}^\mathcal{B}(\pro_\mathcal{A}^\mathcal{B} (X))) (A)
    = (\pro_\mathcal{A}^\mathcal{B} (X))(f(A))\\
    = \cokern\Big(
    \bigoplus_{A''', A'' \in \mathcal{A}}X(A''')\otimes \mathcal{A}(A'', A''') \otimes
    \mathcal{B}(f(A), f(A''))
    \ra
    \bigoplus_{A' \in \mathcal{A}} X(A')\otimes \mathcal{B}(f(A), f(A'))
    \Big)\\
    \xsila{f} \cokern\Big(
    \bigoplus_{A''', A'' \in \mathcal{A} }X(A''')\otimes \mathcal{A}(A'', A''') \otimes
    \mathcal{A}(A, A'')
    \ra
    \bigoplus_{A' \in \mathcal{A}} X(A')\otimes \mathcal{A}(A, A')
    \Big)\\
    \sira X(A),
  \end{multline*}
  where the first arrow is an isomorphism since $f$ is full and
  faithful, and the second arrow is the obvious evaluation
  morphism. 
  Under this identification the unit becomes the identity which shows
  that the functor $f^*$ in
  \eqref{eq:extension-of-scalars-embedding}
  is full and faithful. Then
  $f^*$ is obviously also full and faithful on
  all full subcategories of $\calMod(\mathcal{A}).$

  Let us prove
  \ref{enum:qequi-induces-qequi-between-perfs}. Assume that $f$
  is a quasi-equivalence.  View $\mathcal{A} \subset
  \calMod(\mathcal{A})$ and $\mathcal{B} \subset
  \calMod(\mathcal{B})$ as full dg subcategories via the Yoneda
  embedding. 
  It is easy to prove that $f^* \colon \ol{\mathcal{A}} \ra
  \ol{\mathcal{B}}$ is a quasi-equivalence (this statement
  corresponds to \cite[Prop.~2.5]{drinfeld-dg-quotients} under
  the dg equivalences $\mathcal{A}^\pretr \sira \ol{\mathcal{A}}$
  and $\mathcal{B}^\pretr \sira \ol{\mathcal{B}}$).  
  In particular, $[f^*] \colon [\ol{\mathcal{A}}] \ra
  [\ol{\mathcal{B}}]$ and hence $[f^*] \colon \tria(\mathcal{A})\ra
  \tria(\mathcal{B})$ are equivalences. It extends to an
  equivalence between the corresponding Karoubi 
  envelopes given by $[f^*] \colon \thick(\mathcal{A})\ra
  \thick(\mathcal{B}).$ 
  This also implies that $\bL f^*=(- \otimes^\bL _\mathcal{A}
  \mathcal{B}) \colon  D(\mathcal{A}) \ra D(\mathcal{B})$ is 
  an equivalence (\cite[4.2, Lemma]{Keller-deriving-dg-cat}). 
  Now
  Lemma~\ref{l:dg-functor-qequi-test} shows that $f^* \colon 
  \Perf(\mathcal{A}) \ra \Perf(\mathcal{B})$ and
  $f^* \colon \calMod(\mathcal{A})_{cf} \ra
  \calMod(\mathcal{B})_{cf}$ are quasi-equivalences.
\end{proof}

\begin{lemma}
  [{\cite[Lemma~4.16]{bondal-larsen-lunts-grothendieck-ring}}]
  \label{l:dense-in-homotopy-cat}
  Let $\mathcal{A}$ be a full dg subcategory
  of a dg category $\mathcal{B}.$ Assume that
  $[\mathcal{A}] \subset [\mathcal{B}]$ is dense in the sense that any object of
  $[\mathcal{B}]$ is a direct summand of an object of
  $[\mathcal{A}].$
  Then $[\ol{\mathcal{A}}]$ is dense in $[\ol{\mathcal{B}}]$
  where we view $\ol{\mathcal{A}}$ as a full dg subcategory of
  $\ol{\mathcal{B}}$ via
  Lemma~\ref{l:dg-functors-restricts-to-perfects-embedding-and-qequiv}.\ref{enum:embeddings}.
\end{lemma}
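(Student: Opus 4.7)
The plan is to show that the full subcategory $\mathcal{D} \subseteq [\ol{\mathcal{B}}]$ of objects that are direct summands of objects of $[\ol{\mathcal{A}}]$ (identified with its essential image under the fully faithful embedding provided by Lemma~\ref{l:dg-functors-restricts-to-perfects-embedding-and-qequiv}\ref{enum:embeddings}) coincides with all of $[\ol{\mathcal{B}}]$. Since $[\ol{\mathcal{B}}]$ is equivalent to $\tria(\mathcal{B})$, the smallest strict full triangulated subcategory of $[\calMod(\mathcal{B})_{cf}]$ generated by the Yoneda objects $\{\Yoneda{B} \mid B \in \mathcal{B}\}$, it is enough to verify three properties of $\mathcal{D}$: it contains every $\Yoneda{B}$, it is closed under shifts, and it is closed under cones.

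The first two properties will be essentially immediate. For the base case, the density hypothesis supplies for each $B \in \mathcal{B}$ an object $A \in \mathcal{A}$ realizing $B$ as a direct summand of $A$ in $[\mathcal{B}]$; applying Yoneda transports this to a realization of $\Yoneda{B}$ as a direct summand of $\Yoneda{A} \in [\ol{\mathcal{A}}]$. Closure of $\mathcal{D}$ under shifts reduces to closure of $[\ol{\mathcal{A}}]$ under shifts, which is automatic because $\ol{\mathcal{A}}$ is pretriangulated.

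The main step is closure under cones, and this is where I expect the only technical point. Given $f \colon X_1 \to X_2$ in $[\ol{\mathcal{B}}]$ with $X_i \in \mathcal{D}$, I would choose biproduct decompositions $Y_i \cong X_i \oplus X'_i$ in $[\ol{\mathcal{B}}]$ with $Y_i \in [\ol{\mathcal{A}}]$, then form the block morphism $\tilde f \colon Y_1 \to Y_2$ with diagonal blocks $f$ and $0 \colon X'_1 \to X'_2$. Full faithfulness of $[\ol{\mathcal{A}}] \hookrightarrow [\ol{\mathcal{B}}]$ guarantees that $\tilde f$ is a morphism of $[\ol{\mathcal{A}}]$, and since $[\ol{\mathcal{A}}]$ is triangulated (by pretriangulation of $\ol{\mathcal{A}}$), the cone $\cone(\tilde f)$ already lies in $[\ol{\mathcal{A}}]$. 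A routine computation in any triangulated category gives $\cone(\tilde f) \cong \cone(f) \oplus X'_1[1] \oplus X'_2$, thereby exhibiting $\cone(f)$ as a direct summand of $\cone(\tilde f) \in [\ol{\mathcal{A}}]$. This places $\cone(f)$ in $\mathcal{D}$ and completes the verification. The only delicate point is the implicit interpretation of ``direct summand'' as a genuine biproduct decomposition in $[\ol{\mathcal{B}}]$, which is the standard convention and is what makes the block-matrix argument available.
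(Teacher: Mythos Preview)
Your proof is correct and follows essentially the same approach as the paper's own proof. Both arguments verify that the class of objects in $[\ol{\mathcal{B}}]$ which are direct summands of objects in $[\ol{\mathcal{A}}]$ contains the Yoneda image of $\mathcal{B}$, is closed under shifts, and is closed under cones via the same block-morphism trick (replacing $f$ by $f\oplus 0$ on the ambient summands and using $\Cone(f\oplus 0)\cong \Cone(f)\oplus [1]X'_1\oplus X'_2$); the only cosmetic difference is that the paper phrases the cone step with an explicit closed degree-zero lift $g$ in $\ol{\mathcal{A}}$ rather than invoking triangulatedness of $[\ol{\mathcal{A}}]$.
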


\begin{proof}
  We view $\mathcal{A} \subset \ol{\mathcal{A}}$ and $\mathcal{B}
  \subset \ol{\mathcal{B}}$ as full dg
  subcategories via the Yoneda embedding.
  Any object of $[\mathcal{B}]$ is a direct summand of an object of
  $[\mathcal{A}]$ and hence also of an object of
  $[\ol{\mathcal{A}}].$
  If an object of $[\ol{\mathcal{B}}]$ is a direct summand of an
  object of $[\ol{\mathcal{A}}],$ then all its shifts have the same
  property.

  Assume that $f \colon  X \ra Y$ is a closed degree zero morphism in
  $\ol{\mathcal{B}},$ and that
  $X \oplus X' \cong M$
  and
  $Y\oplus Y' \cong N$ in $[\ol{\mathcal{B}}]$
  with $M \in [\ol{\mathcal{A}}]$ and $N \in [\ol{\mathcal{A}}].$
  Consider $f \oplus 0 \colon  X\oplus X' \ra Y \oplus Y'$ and let $g \colon M \ra
  N$ be a closed degree zero morphism 
  in $\ol{\mathcal{A}}$
  corresponding to $f \oplus 0$ 
  in $[\ol{\mathcal{B}}].$
  Then
  \begin{equation*}
    \Cone(g) \cong \Cone(f\oplus 0) \cong \Cone(f) \oplus
    Y' \oplus [1]X'
  \end{equation*}
  in $[\ol{\mathcal{B}}].$ Hence $\Cone(f)$ is a direct summand of
  the object $\Cone(g) \in [\ol{\mathcal{A}}].$

  Now use that any object of $[\ol{\mathcal{B}}]$ is built up from
  the objects of 
  $\mathcal{B}$ using
  shifts and cones of closed degree zero morphisms.
\end{proof}

\subsection{Perfection of tensor products}
\label{sec:perf-tens-prod}


\begin{proposition}
  [{cf.\ \cite[Prop.~4.17]{bondal-larsen-lunts-grothendieck-ring}}]
  \label{p:perf-to-perf-perf}
  Let $\mathcal{A}$ and $\mathcal{B}$ be dg categories.
  Then the dg functor
  $f \colon \mathcal{A} \otimes
  \mathcal{B} \ra \Perf(\mathcal{A}) \otimes \mathcal{B}$
  obtained from
  $\mathcal{A} \ra \Perf(\mathcal{A})$
  is full and faithful and
  induces (by extension of scalars along $f$) a quasi-equivalence
  $f^* \colon \Perf(\mathcal{A} \otimes \mathcal{B}) \ra
  \Perf(\Perf(\mathcal{A}) \otimes \mathcal{B})$
  of dg categories.
\end{proposition}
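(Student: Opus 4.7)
The plan is to reduce the quasi-equivalence claim to essential surjectivity of the induced functor on homotopy categories, and then to argue that the target is generated by objects already present in the essential image.

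First, I would verify that $f = g \otimes \id_{\mathcal{B}}$, where $g \colon \mathcal{A} \to \Perf(\mathcal{A})$ is the Yoneda embedding factoring through $\ol{\mathcal{A}}$. Since $g$ is full and faithful (dg Yoneda) and hom-spaces in a tensor product of dg categories are literal tensor products of the individual hom-spaces, $f$ inherits full-and-faithfulness. Lemma~\ref{l:dg-functors-restricts-to-perfects-embedding-and-qequiv}\ref{enum:embeddings} then gives that $f^* \colon \Perf(\mathcal{A} \otimes \mathcal{B}) \to \Perf(\Perf(\mathcal{A}) \otimes \mathcal{B})$ is full and faithful. Since $\Perf(\mathcal{A} \otimes \mathcal{B})$ is pretriangulated, Lemma~\ref{l:dg-functor-qequi-test} reduces the quasi-equivalence assertion to essential surjectivity of $[f^*] \colon \per(\mathcal{A} \otimes \mathcal{B}) \to \per(\Perf(\mathcal{A}) \otimes \mathcal{B})$. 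I would then note that the essential image of $[f^*]$ is automatically a thick subcategory of the target: it is triangulated because $f^*$ is an exact dg functor between pretriangulated dg categories, and closed under direct summands because $[f^*]$ is full and faithful with Karoubian source (any idempotent in the image lifts via fullness, is idempotent by faithfulness, splits in the Karoubian source, and its splitting maps to a splitting of the original).

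Now $\per(\Perf(\mathcal{A}) \otimes \mathcal{B})$ is by definition the thick envelope of the representables $\Yoneda{(P,B)}$ with $P \in \Perf(\mathcal{A})$ and $B \in \mathcal{B}$, so it suffices to place each such generator in the essential image. Fix $B \in \mathcal{B}$ and let $\mathcal{C}_B \subseteq [\Perf(\mathcal{A})] = \per(\mathcal{A})$ denote the full subcategory on those $P$ for which $\Yoneda{(P,B)}$ lies in the essential image of $[f^*]$. By the commutative square~\eqref{eq:yoneda-extension-of-scalars}, $[f^*](\Yoneda{(A,B)}) = \Yoneda{(g(A),B)} = \Yoneda{(\Yoneda{A},B)}$, so $\mathcal{C}_B$ contains the class $\{\Yoneda{A} : A \in \mathcal{A}\}$. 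The assignment $P \mapsto \Yoneda{(P,B)}$ is essentially external tensor product with the cofibrant module $\Yoneda{B}$, hence exact; combining this with the fullness of $[f^*]$ to lift morphisms in the target back to $\per(\mathcal{A}\otimes\mathcal{B})$ shows $\mathcal{C}_B$ is closed under shifts, cones, and direct summands. Thus $\mathcal{C}_B$ is a thick subcategory of $\per(\mathcal{A})$ containing the generators, and so equals all of $\per(\mathcal{A}) = [\Perf(\mathcal{A})]$, proving essential surjectivity.

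The main obstacle is the cone-lifting step: given a closed degree zero morphism $\varphi \colon \Yoneda{(P,B)} \to \Yoneda{(P',B)}$ with $P, P' \in \mathcal{C}_B$, one must use fullness of $[f^*]$ to lift $\varphi$ to a morphism $\tilde\varphi$ in $\per(\mathcal{A} \otimes \mathcal{B})$ between preimages of $\Yoneda{(P,B)}$ and $\Yoneda{(P',B)}$, and then identify $[f^*](\cone \tilde\varphi)$ naturally with $\Yoneda{(\cone\varphi, B)}$. This identification rests on the exactness of external tensor with the cofibrant representable $\Yoneda{B}$; the remaining bookkeeping is routine manipulation in triangulated categories.
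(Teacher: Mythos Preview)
Your argument is correct but takes a different route from the paper. The paper factors through $\ol{\mathcal{A}} \otimes \mathcal{B}$: it shows that $\ol{\mathcal{A} \otimes \mathcal{B}} \to \ol{\ol{\mathcal{A}} \otimes \mathcal{B}}$ is a dg equivalence, then applies Lemma~\ref{l:dense-in-homotopy-cat} to the dense inclusion $[\ol{\mathcal{A}} \otimes \mathcal{B}] \subset [\Perf(\mathcal{A}) \otimes \mathcal{B}]$ to obtain density at the level of pretriangulated hulls, and finally passes to Karoubi envelopes to turn the dense inclusion into an equality. You instead bypass $\ol{\mathcal{A}}$ and Lemma~\ref{l:dense-in-homotopy-cat} entirely, arguing directly that the essential image of $[f^*]$ is thick and that $\mathcal{C}_B$ is the preimage of this thick subcategory under the triangulated functor $\Phi_B \colon P \mapsto \Yoneda{(P,B)}$; this is more direct and avoids the auxiliary density lemma. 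Two minor points: your claim that the essential image is triangulated needs fullness of $[f^*]$ in addition to exactness (you have it, so this is only phrasing); and your final ``main obstacle'' paragraph is both unnecessary and contains a type error --- the expression ``$\Yoneda{(\cone\varphi, B)}$'' does not parse, since $\cone\varphi$ lives in $\per(\Perf(\mathcal{A})\otimes\mathcal{B})$, not in $\Perf(\mathcal{A})$. No lifting through $[f^*]$ is required: once you know the essential image is thick and $\Phi_B$ is triangulated, $\mathcal{C}_B = \Phi_B^{-1}(\text{ess.\ im.}\,[f^*])$ is automatically thick in $\per(\mathcal{A})$, and the argument concludes.
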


\begin{proof}
  The sequence
  $\mathcal{A} \hra \ol{\mathcal{A}} \subset \Perf(\mathcal{A})$
  of full and faithful dg functors yields a sequence
  \begin{equation*}
    \mathcal{A} \otimes \mathcal{B} \hra \ol{\mathcal{A}} \otimes
    \mathcal{B} \subset \Perf(\mathcal{A}) \otimes \mathcal{B}
  \end{equation*}
  of full and
  faithful dg functors whose composition is $f.$
  By
  Lemma~\ref{l:dg-functors-restricts-to-perfects-embedding-and-qequiv}.\ref{enum:embeddings}
  we obtain full and faithful
  dg functors
  \begin{equation*}
    \ol{\mathcal{A} \otimes \mathcal{B}}
    \hra \ol{\ol{\mathcal{A}} \otimes \mathcal{B}}
    \subset \ol{\Perf(\mathcal{A}) \otimes \mathcal{B}}.
  \end{equation*}
  The functor on the left is an equivalence of dg
  categories (and in particular a quasi-equivalence) since both
  categories are built up from the objects of $\mathcal{A}
  \otimes \mathcal{B}$ using shifts and cones of
  closed degree zero 
  morphisms. 
  The first row of the commutative diagram
  \begin{equation*}
    \xymatrix@R=3ex{
      {[\ol{\mathcal{A} \otimes \mathcal{B}}]}
      \ar[d]^-\sim
      \ar[r]^-\sim
      &
      {[\ol{\ol{\mathcal{A}} \otimes \mathcal{B}}]}
      \ar[d]^-\sim
      \ar@{}[r]|-{\subset}
      &
      {[\ol{\Perf(\mathcal{A}) \otimes \mathcal{B}}]}
      \ar[d]^-\sim
      \\
      {\tria(\mathcal{A} \otimes \mathcal{B})} \iar[d] \ar[r]^-\sim
      &
      {\tria(\ol{\mathcal{A}} \otimes \mathcal{B})} \iar[d]
      \ar@{}[r]|-{\subset}
      &
      {\tria(\Perf(\mathcal{A}) \otimes \mathcal{B})} \iar[d]
      \\
      {\thick(\mathcal{A} \otimes \mathcal{B})} \gar[d] \ar[r]^-\sim
      &
      {\thick(\ol{\mathcal{A}} \otimes \mathcal{B})}
      \gar[d] 
      \ar@{}[r]|-{=}
      &
      {\thick(\Perf(\mathcal{A}) \otimes \mathcal{B})} \gar[d]
      \\
      {[\Perf(\mathcal{A} \otimes \mathcal{B})]} \ar[r]^-\sim
      &
      {[\Perf(\ol{\mathcal{A}} \otimes \mathcal{B})]}
      \ar@{}[r]|-{=}
      &
      {[\Perf(\Perf(\mathcal{A}) \otimes \mathcal{B})]}
    }
  \end{equation*}
  is obtained by passing to the respective homotopy
  categories. Its left arrow is an equivalence, and we claim that
  its inclusion is dense: Since $[\ol{\mathcal{A}}] \sira
  \tria(\mathcal{A}),$ the inclusion $[\ol{\mathcal{A}}] \subset
  [\Perf(\mathcal{A})]=\thick(\mathcal{A})$ is dense; then
  $[\ol{\mathcal{A}} \otimes \mathcal{B}] \subset
  [\Perf(\mathcal{A}) \otimes \mathcal{B}]$ is dense, too, and
  Lemma~\ref{l:dense-in-homotopy-cat} shows our claim.  The
  second row is in the obvious way equivalent to the first one,
  and passing to the third row means taking the respective
  Karoubi envelopes; in particular, the dense inclusion becomes
  an equality.  The fourth row is equal to the third row, and
  Lemma~\ref{l:dg-functor-qequi-test} proves that both arrows in
  $\Perf(\mathcal{A} \otimes \mathcal{B}) \ra
  \Perf(\ol{\mathcal{A}} \otimes 
  \mathcal{B}) \ra  \Perf(\Perf(\mathcal{A}) \otimes \mathcal{B})$
  are quasi-equivalences. The composition of these arrows is
  $f^*.$ 
\end{proof}

We
equip $\dgcat_\groundring$ with the (cofibrantly
generated) model structure\footnote{
  In case that 
  $\groundring$
  is a field (which is all we need in this article), the rest of
  this section can be simplified: we don't need this model
  structure and can assume that $Q(\mathcal{A})=\mathcal{A}$ for
  any dg category $\mathcal{A}$ (since all we need is that
  $Q(\mathcal{A})$ is h-flat; but any
  dg module over a field $\groundring$ is $\groundring$-h-flat).
} 
from \cite{tabuada-model-structure-on-cat-of-dg-cats} (cf.\
\cite[section~2.7]{valery-olaf-smoothness-equivariant}). Its weak
equivalences are the quasi-equivalences, and the cofibrant dg
categories are precisely the retracts of semi-free dg categories.
We denote the homotopy category of
$\dgcat_\groundring$ with respect to these weak
equivalences by $\Heq_\groundring.$
We fix a cofibrant replacement functor $Q.$
If $\mathcal{A}$ and $\mathcal{B}$ are dg categories we define
$\mathcal{A} \otimes^\bL \mathcal{B}:= Q(\mathcal{A}) \otimes
Q(\mathcal{B})$ and 
\begin{equation}
  \label{eq:def-triang-tensor-product-of-dg-cats}
  \mathcal{A} 
  \Perfotimes \mathcal{B} := \Perf(\mathcal{A} \otimes^\bL
  \mathcal{B}). 
\end{equation}
One may consider 
$\mathcal{A} \Perfotimes \mathcal{B}$ as a triangulated
envelope of $\mathcal{A} \otimes^\bL
  \mathcal{B}$ (cf.\ Corollary~\ref{c:triang-versus-pretriang-and-karoubi}).

\begin{lemma}
  \label{l:Perfotimes-respects-qequivalences}
  Quasi-equivalences $\mathcal{A} \ra \mathcal{A}'$ and
  $\mathcal{B} \ra \mathcal{B}'$ give rise to a quasi-equivalence
  $\mathcal{A} \Perfotimes \mathcal{B} \ra \mathcal{A}'
  \Perfotimes \mathcal{B}'.$
\end{lemma}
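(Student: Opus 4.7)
My plan has three clean steps, each reducing to something already stated in the excerpt.

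First, I would pass to cofibrant replacements. Since $Q$ is a cofibrant replacement functor in the Tabuada model structure on $\dgcat_\groundring$, the given quasi-equivalences $\mathcal{A} \ra \mathcal{A}'$ and $\mathcal{B} \ra \mathcal{B}'$ lift to quasi-equivalences $Q(\mathcal{A}) \ra Q(\mathcal{A}')$ and $Q(\mathcal{B}) \ra Q(\mathcal{B}')$ between cofibrant objects.

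Second, I would show that the induced dg functor
\begin{equation*}
  Q(\mathcal{A}) \otimes Q(\mathcal{B}) \ra Q(\mathcal{A}') \otimes Q(\mathcal{B}')
\end{equation*}
is a quasi-equivalence. Essential surjectivity on homotopy categories is immediate from the corresponding property of each factor. For the quasi-isomorphism on hom complexes one uses the key property of cofibrant dg categories: their hom complexes are $\groundring$-h-flat, so that the tensor product of two quasi-isomorphisms of such hom complexes remains a quasi-isomorphism (equivalently, the Tabuada model structure is monoidal with respect to $\otimes$ on cofibrant objects, see \cite{toen-finitude-homotopique-propre-lisse} or \cite{tabuada-model-structure-on-cat-of-dg-cats}). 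This is the only nontrivial ingredient, and it is precisely the reason the definition \eqref{eq:def-triang-tensor-product-of-dg-cats} uses the cofibrant replacement in the first place.

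Third, I would apply Lemma~\ref{l:dg-functors-restricts-to-perfects-embedding-and-qequiv}.\ref{enum:qequi-induces-qequi-between-perfs} to the quasi-equivalence produced in the previous step. That lemma yields a quasi-equivalence
\begin{equation*}
  \Perf(Q(\mathcal{A}) \otimes Q(\mathcal{B})) \ra \Perf(Q(\mathcal{A}') \otimes Q(\mathcal{B}')),
\end{equation*}
which is precisely the desired map $\mathcal{A} \Perfotimes \mathcal{B} \ra \mathcal{A}' \Perfotimes \mathcal{B}'$.

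The only potential obstacle is the h-flatness/monoidality input in step two; everything else is formal. Since that monoidality of the Tabuada model structure on cofibrant dg categories is standard, a one-sentence citation suffices.
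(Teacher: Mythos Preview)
Your proposal is correct and follows essentially the same route as the paper: both arguments use that cofibrant dg categories are $\groundring$-h-flat to show that $Q(\mathcal{A}) \otimes Q(\mathcal{B}) \ra Q(\mathcal{A}') \otimes Q(\mathcal{B}')$ is a quasi-equivalence, and then invoke Lemma~\ref{l:dg-functors-restricts-to-perfects-embedding-and-qequiv}.\ref{enum:qequi-induces-qequi-between-perfs}. The only cosmetic difference is that the paper factors the map through $Q(\mathcal{A}') \otimes Q(\mathcal{B})$ and cites \cite[Lemmata~2.14 and 2.15]{valery-olaf-smoothness-equivariant} for each leg, rather than treating both factors at once.
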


\begin{proof}
  Observe that
  $\mathcal{A} \otimes^\bL \mathcal{B} = Q(\mathcal{A}) \otimes
  Q(\mathcal{B}) \ra Q(\mathcal{A}') \otimes Q(\mathcal{B})
  = \mathcal{A}' \otimes^\bL \mathcal{B}$ is a quasi-equivalence
  since the cofibrant dg 
  category $Q(\mathcal{B})$ is $\groundring$-h-flat
  (by \cite[Lemmata~2.14 and 2.15]{valery-olaf-smoothness-equivariant}).
  Hence we obtain a quasi-equivalence
  $\mathcal{A} \Perfotimes \mathcal{B} \ra \mathcal{A}'
  \Perfotimes \mathcal{B}$ 
  by
  Lemma~\ref{l:dg-functors-restricts-to-perfects-embedding-and-qequiv}.\ref{enum:qequi-induces-qequi-between-perfs}.
\end{proof}

\begin{proposition}
  \label{p:triangulated-tensor-product-of-cats-and-their-Perfs}
  Let $\mathcal{A}$ and $\mathcal{B}$ be dg categories.
  Then the natural morphism
  \begin{equation*}
    \mathcal{A} \Perfotimes \mathcal{B}
    = \Perf(\mathcal{A} \otimes^\bL \mathcal{B})
    \ra \Perf((\Perf(\mathcal{A})) \otimes^\bL
    (\Perf(\mathcal{B}))) 
    = \Perf(\mathcal{A}) \Perfotimes \Perf(\mathcal{B})
  \end{equation*}
  in $\dgcat_\groundring$ is a quasi-equivalence (and becomes an isomorphism
  in $\Heq_\groundring$).
\end{proposition}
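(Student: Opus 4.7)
The plan is to apply Proposition~\ref{p:perf-to-perf-perf} twice, once to introduce $\Perf$ in each factor, and then to reconcile cofibrant replacements via Lemma~\ref{l:Perfotimes-respects-qequivalences}. By definition $\mathcal{A} \Perfotimes \mathcal{B} = \Perf(Q(\mathcal{A}) \otimes Q(\mathcal{B}))$. Applying Proposition~\ref{p:perf-to-perf-perf} first to the pair $(Q(\mathcal{A}), Q(\mathcal{B}))$ and then, using symmetry of $\otimes$, to the pair $(Q(\mathcal{B}), \Perf(Q(\mathcal{A})))$, I obtain a composite quasi-equivalence
\[
\mathcal{A} \Perfotimes \mathcal{B} \;=\; \Perf(Q(\mathcal{A}) \otimes Q(\mathcal{B})) \xrightarrow{\sim} \Perf(\Perf(Q(\mathcal{A})) \otimes Q(\mathcal{B})) \xrightarrow{\sim} \Perf(\Perf(Q(\mathcal{A})) \otimes \Perf(Q(\mathcal{B}))).
\]

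The next step is to identify this target with $\Perf(\mathcal{A}) \Perfotimes \Perf(\mathcal{B})$. Lemma~\ref{l:dg-functors-restricts-to-perfects-embedding-and-qequiv}\ref{enum:qequi-induces-qequi-between-perfs} applied to the cofibrant replacements $Q(\mathcal{A}) \xrightarrow{\sim} \mathcal{A}$ and $Q(\mathcal{B}) \xrightarrow{\sim} \mathcal{B}$ produces quasi-equivalences $\Perf(Q(\mathcal{A})) \xrightarrow{\sim} \Perf(\mathcal{A})$ and $\Perf(Q(\mathcal{B})) \xrightarrow{\sim} \Perf(\mathcal{B})$, so Lemma~\ref{l:Perfotimes-respects-qequivalences} yields a quasi-equivalence
\[
\Perf(Q(\mathcal{A})) \Perfotimes \Perf(Q(\mathcal{B})) \xrightarrow{\sim} \Perf(\mathcal{A}) \Perfotimes \Perf(\mathcal{B}).
\]
It therefore remains to exhibit a quasi-equivalence between $\Perf(\Perf(Q(\mathcal{A})) \otimes \Perf(Q(\mathcal{B})))$ and $\Perf(Q(\mathcal{A})) \Perfotimes \Perf(Q(\mathcal{B})) = \Perf\bigl(Q(\Perf(Q(\mathcal{A}))) \otimes Q(\Perf(Q(\mathcal{B})))\bigr)$, i.e.\ to compare the ordinary tensor product of $\Perf(Q(\mathcal{A}))$ and $\Perf(Q(\mathcal{B}))$ with their derived one, and then to check by naturality of Yoneda and of $Q$ that the resulting composite zigzag represents the natural morphism of the statement in $\Heq_\groundring$.

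This last comparison is the main obstacle, since $\Perf(Q(\mathcal{A}))$ need not itself be a cofibrant dg category. I plan to handle it by a two-step zigzag
\[
Q(\Perf(Q(\mathcal{A}))) \otimes Q(\Perf(Q(\mathcal{B}))) \;\lra\; Q(\Perf(Q(\mathcal{A}))) \otimes \Perf(Q(\mathcal{B})) \;\lra\; \Perf(Q(\mathcal{A})) \otimes \Perf(Q(\mathcal{B})),
\]
in which at each step the unchanged factor is cofibrant and hence $\groundring$-h-flat by \cite[Lemmata~2.14 and 2.15]{valery-olaf-smoothness-equivariant}, so each arrow is a quasi-equivalence. (Establishing the same for $\Perf(Q(\mathcal{B}))$ as the unchanged factor requires a separate h-flatness check for $\Perf$ of a cofibrant dg category; alternatively, one can iterate Proposition~\ref{p:perf-to-perf-perf} applied to suitable $Q$-replacements to bypass this point entirely.) Applying $\Perf$ together with Lemma~\ref{l:dg-functors-restricts-to-perfects-embedding-and-qequiv}\ref{enum:qequi-induces-qequi-between-perfs} then completes the identification, after which a short diagram chase in $\Heq_\groundring$ confirms the compatibility with the natural morphism.
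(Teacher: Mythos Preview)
Your overall strategy coincides with the paper's: apply Proposition~\ref{p:perf-to-perf-perf} once for each factor. The difference is in the bookkeeping, and it is exactly where you locate your own ``main obstacle'' that the paper does things more efficiently.

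You first pass from $Q(\mathcal{A})$ to $\Perf(Q(\mathcal{A}))$ via the Yoneda embedding and only afterwards try to reconcile $\Perf(Q(\mathcal{A}))$ with $Q(\Perf(\mathcal{A}))$. This forces you into the zigzag whose second leg needs $\Perf(Q(\mathcal{B}))$ to be $\groundring$-h-flat, a fact you correctly flag as not immediate (and which is not established anywhere in the paper). Your fallback suggestion to ``iterate Proposition~\ref{p:perf-to-perf-perf} applied to suitable $Q$-replacements'' is not spelled out, and on top of that you still owe a diagram chase to identify the resulting zigzag with the \emph{natural} morphism of the statement rather than merely exhibiting an isomorphism in $\Heq_\groundring$.

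The paper sidesteps both problems by working with $Q(Y)\colon Q(\mathcal{A})\to Q(\Perf(\mathcal{A}))$ from the start. Tensoring the commutative square
\[
\xymatrix{
Q(\mathcal{A}) \ar[r]^-{Q(Y)} \ar[d] & Q(\Perf(\mathcal{A})) \ar[d]\\
\mathcal{A} \ar[r]^-{Y} & \Perf(\mathcal{A})
}
\]
with the h-flat $Q(\mathcal{B})$ and then applying $\Perf$ yields a square whose verticals are quasi-equivalences and whose bottom row is exactly Proposition~\ref{p:perf-to-perf-perf}; hence the top row $\Perf(\mathcal{A}\otimes^\bL\mathcal{B})\to\Perf(\Perf(\mathcal{A})\otimes^\bL\mathcal{B})$ is a quasi-equivalence. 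Repeating with the roles of the factors swapped handles the second step. At every stage one tensor factor is of the form $Q(-)$ and hence h-flat, so no extra h-flatness hypothesis is needed, and the argument tracks the natural morphism throughout.
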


\begin{proof}
  Let $Y \colon  \mathcal{A} \ra \Perf(\mathcal{A})$ be the full and faithful Yoneda dg
  functor.
  The cofibrant replacement functor $Q$ comes with a natural
  transformation $Q \ra \id$ and yields the
  commutative square
  \begin{equation*}
    \xymatrix{
      {Q(\mathcal{A})} \ar[r]^-{Q(Y)}
      \ar[d]
      &
      {Q(\Perf(\mathcal{A}))}
      \ar[d]
      \\
      {\mathcal{A}} \ar[r]^-{Y} &
      {\Perf(\mathcal{A})}
    }
  \end{equation*}
  whose vertical arrows are trivial fibrations.
  We tensor this diagram with $Q(\mathcal{B})$ and obtain the commutative square
  \begin{equation*}
    \xymatrix{
      {Q(\mathcal{A}) \otimes Q(\mathcal{B})} \ar[rr]^-{Q(Y) \otimes \id}
      \ar[d]
      & &
      {Q(\Perf(\mathcal{A})) \otimes Q(\mathcal{B})}
      \ar[d]
      \\
      {\mathcal{A} \otimes Q(\mathcal{B})} \ar[rr]^-{Y \otimes \id} & &
      {\Perf(\mathcal{A}) \otimes Q(\mathcal{B})}
    }
  \end{equation*}
  whose vertical arrows are still quasi-equivalences
  since
  $Q(\mathcal{B})$ is $\groundring$-h-flat
  (they are even
  trivial fibrations by the characterization of the trivial fibrations,
  see \cite[after Thm.~2.11]{valery-olaf-smoothness-equivariant}). 

  These morphisms of dg categories induce by extension of scalars
  a commutative diagram
  \begin{equation}
    \label{eq:triangulated-tensor-product-and-Perf-of-algebras}
    \xymatrix{
      {\Perf(Q(\mathcal{A}) \otimes Q(\mathcal{B}))} \ar[rr]^-{(Q(Y) \otimes \id)^*}
      \ar[d] & &
      {\Perf(Q(\Perf(\mathcal{A})) \otimes Q(\mathcal{B}))} \ar[d] \\
      {\Perf(\mathcal{A} \otimes Q(\mathcal{B}))} \ar[rr]^-{(Y \otimes \id)^*} & &
      {\Perf(\Perf(\mathcal{A}) \otimes Q(\mathcal{B}))}
    }
  \end{equation}
  whose vertical arrows are quasi-equivalences
  (Lemma~\ref{l:dg-functors-restricts-to-perfects-embedding-and-qequiv}.\ref{enum:qequi-induces-qequi-between-perfs}).
  The lower horizontal arrow is a quasi-equivalence by
  Proposition~\ref{p:perf-to-perf-perf}.
  This implies that the upper
  horizontal arrow
  \begin{equation*}
    {\Perf(\mathcal{A} \otimes^\bL \mathcal{B})} \xra{(Y \otimes^\bL \id)^*}
    {\Perf(\Perf(\mathcal{A}) \otimes^\bL \mathcal{B})}
  \end{equation*}
  is a quasi-equivalence. 
  The same reasoning shows that
  the morphism $(\id \otimes^\bL Y)^*$ from the right-hand side
  to $\Perf((\Perf(\mathcal{A})) \otimes^\bL \Perf(\mathcal{B}))$
  is a quasi-equivalence.
\end{proof}

\subsection{Proper, smooth, and saturated dg categories}
\label{sec:prop-smooth-satur}

\begin{definition}
  [{cf.\ \cite[Def.~2.4]{toen-vaquie-moduli-objects-dg-cats}, \cite[Def.~2.3]{toen-finitude-homotopique-propre-lisse}}]
  \label{d:loc-perf-cpt-gen-proper-smooth-(triang-)sat(-fin-type)}
  Let $\mathcal{A}$ be a dg category.
  \begin{enumerate}
  \item
    $\mathcal{A}$ is \textbf{locally ($\groundring$-)perfect}
    (or \textbf{locally ($\groundring$-)proper}) if $\mathcal{A}(A,A')$ is a
    perfect dg $\groundring$-module (i.\,e.\ in $\per(\groundring)$) for
    all $A, A' \in \mathcal{A}.$
  \item 
    $\mathcal{A}$ \textbf{has a compact generator} if the
    triangulated category 
    $D(\mathcal{A})$ has a compact generator. An equivalent
    condition is that
    $\per(\mathcal{A})$ has a classical generator
    (use
    \cite[Thm.~2.1.2]{bondal-vdbergh-generators}).
  \item
    $\mathcal{A}$ is \textbf{($\groundring$-)proper} if it is locally
    perfect and has 
    a compact generator.
  \item
    $\mathcal{A}$ is \textbf{($\groundring$-)smooth} if $\mathcal{A}$ considered
    as a dg
    $Q(\mathcal{A}) \otimes Q(\mathcal{A})^\opp$-module, is in
    $\per(Q(\mathcal{A}) \otimes Q(\mathcal{A})^\opp).$
  \item
    $\mathcal{A}$ is \textbf{($\groundring$-)saturated} if it is
    ($\groundring$-)proper, ($\groundring$-)smooth and 
    triangulated (see Def.~\ref{d:triangulated-dg-category}).
  \end{enumerate}
\end{definition}

If $A$ is a dg algebra, then $\Yoneda{A}$ is a compact generator
of $D(A),$ hence $A$ has a compact generator.
Hence $A$ is proper if and only if it is locally perfect, i.\,e.\
if $A$
is a perfect dg $\groundring$-module.
The same statement is true for $\mathcal{A}$ a dg category with
finitely many isoclasses of objects in $[\mathcal{A}]:$ $\bigoplus_{A \in [\mathcal{A}]/\cong} \Yoneda{A}$ is a
compact generator of $D(\mathcal{A}).$

\begin{lemma}
  \label{l:properties-dg-K-cat-preserved-under-quasi-equivalence}
  The notions introduced in
  Definitions~\ref{d:loc-perf-cpt-gen-proper-smooth-(triang-)sat(-fin-type)}
  and \ref{d:triangulated-dg-category}
  are all invariant under quasi-equivalences.
\end{lemma}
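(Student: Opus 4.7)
The plan is to handle each of the six properties listed in Definitions~\ref{d:triangulated-dg-category} and \ref{d:loc-perf-cpt-gen-proper-smooth-(triang-)sat(-fin-type)} separately, but with a single underlying principle: each property is formulated in terms of a dg category built functorially from $\mathcal{A}$ or one of its (derived) tensor products, and Lemma~\ref{l:dg-functors-restricts-to-perfects-embedding-and-qequiv}.\ref{enum:qequi-induces-qequi-between-perfs} already shows that those functorial constructions preserve quasi-equivalences. So throughout, let $f \colon \mathcal{A} \ra \mathcal{B}$ be a quasi-equivalence.

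For triangulatedness, I would consider the commutative square with horizontal Yoneda embeddings $\mathcal{A} \hra \Perf(\mathcal{A})$ and $\mathcal{B} \hra \Perf(\mathcal{B})$ and vertical arrows $f$ and $f^*$. By Lemma~\ref{l:dg-functors-restricts-to-perfects-embedding-and-qequiv}.\ref{enum:qequi-induces-qequi-between-perfs} the right-hand arrow is a quasi-equivalence, so the two-out-of-three property implies the top arrow is a quasi-equivalence iff the bottom one is. Local perfectness is handled by observing that $f$ gives quasi-isomorphisms on Hom complexes and that $[f]$ is essentially surjective, together with the fact that perfectness in $\per(\groundring)$ is invariant under quasi-isomorphism. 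The existence of a compact generator is preserved because Lemma~\ref{l:dg-functors-restricts-to-perfects-embedding-and-qequiv}.\ref{enum:qequi-induces-qequi-between-perfs} yields a triangulated equivalence $\per(\mathcal{A}) = [\Perf(\mathcal{A})] \simeq [\Perf(\mathcal{B})] = \per(\mathcal{B})$, which transports classical generators to classical generators. Properness is then immediate as a conjunction, and saturatedness reduces to properness, smoothness, and triangulatedness.

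The delicate case is smoothness, which is where I expect the only real work. Starting from $f$, I would lift to a quasi-equivalence $Q(f) \colon Q(\mathcal{A}) \ra Q(\mathcal{B})$ between cofibrant replacements. Since $Q(\mathcal{A})$ and $Q(\mathcal{B})$ are $\groundring$-h-flat (as exploited already in the proof of Lemma~\ref{l:Perfotimes-respects-qequivalences}), the induced dg functor
\begin{equation*}
g := Q(f) \otimes Q(f)^\opp \colon Q(\mathcal{A}) \otimes Q(\mathcal{A})^\opp \ra Q(\mathcal{B}) \otimes Q(\mathcal{B})^\opp
\end{equation*}
is again a quasi-equivalence, so by Lemma~\ref{l:dg-functors-restricts-to-perfects-embedding-and-qequiv}.\ref{enum:qequi-induces-qequi-between-perfs} the extension of scalars $g^*$ restricts to a quasi-equivalence on the $\Perf$ subcategories. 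The main obstacle is to verify that $g^*$ identifies the diagonal bimodule $\mathcal{A}$ (viewed as a $Q(\mathcal{A}) \otimes Q(\mathcal{A})^\opp$-module via the trivial fibration $Q(\mathcal{A}) \ra \mathcal{A}$) with the diagonal bimodule $\mathcal{B}$, up to quasi-isomorphism. I would establish this by chasing through the restriction/extension adjunctions, using that each of the restriction arrows $Q(\mathcal{A}) \ra \mathcal{A}$, $Q(\mathcal{B}) \ra \mathcal{B}$, $Q(f)$ and $f$ is a quasi-equivalence between h-flat dg categories, so that on the level of derived module categories both $g^*(\mathcal{A})$ and $\mathcal{B}$ are computed by the same derived tensor product. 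Once this identification is established, perfectness of $\mathcal{A}$ over $Q(\mathcal{A}) \otimes Q(\mathcal{A})^\opp$ transfers immediately to perfectness of $\mathcal{B}$ over $Q(\mathcal{B}) \otimes Q(\mathcal{B})^\opp$, completing the smoothness case and hence the lemma.
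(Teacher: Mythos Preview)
Your approach matches the paper's almost exactly for triangulatedness, local perfectness, compact generator, properness, and saturatedness; the paper uses the same Yoneda square for triangulatedness and the derived equivalence $D(\mathcal{A})\simeq D(\mathcal{B})$ (equivalently, your $\per(\mathcal{A})\simeq\per(\mathcal{B})$) for compact generators.

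The only substantive difference is smoothness. The paper does not argue this case at all: it simply cites \cite[Lemma~3.12]{valery-olaf-smoothness-equivariant}. Your sketch is essentially the content of that lemma, and the strategy is sound---pass to cofibrant replacements, tensor to get a quasi-equivalence $g$ on enveloping categories, and transport perfectness of the diagonal along the induced equivalence of perfect derived categories. One phrasing is inaccurate, though: you write that ``each of the restriction arrows $Q(\mathcal{A})\ra\mathcal{A}$, $Q(\mathcal{B})\ra\mathcal{B}$, $Q(f)$ and $f$ is a quasi-equivalence between h-flat dg categories,'' but $\mathcal{A}$ and $\mathcal{B}$ are not assumed h-flat over $\groundring$. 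What you actually need is only that $Q(\mathcal{A})$ and $Q(\mathcal{B})$ are h-flat, so that $g=Q(f)\otimes Q(f)^{\opp}$ is a quasi-equivalence and so that the diagonal $Q(\mathcal{A})$-bimodule (which is quasi-isomorphic, via restriction, to $\mathcal{A}$ viewed as a $Q(\mathcal{A})\otimes Q(\mathcal{A})^{\opp}$-module) is carried by $\bL g^*$ to the diagonal $Q(\mathcal{B})$-bimodule. That identification is the step you describe as ``chasing adjunctions''; it is routine but does require a line or two of care, which is presumably why the paper outsources it.
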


\begin{proof}
  Let  $F \colon \mathcal{A} \ra \mathcal{B}$ be a quasi-equivalence.

  Locally perfect: If all $\mathcal{B}(B, B')$ are perfect dg
  $\groundring$-modules, the same is true for all $\mathcal{A}(A,A').$
  If all $\mathcal{A}(A,A')$ are in $\per(\groundring),$ then all
  $\mathcal{B}(F(A), F(A'))$ are in $\per(\groundring).$ In order to show that
  all $\mathcal{B}(B, B')$ are perfect use that $[F]$ is an
  equivalence.

  Has a compact generator: It is well-known
  (cf.\ proof of
  Lemma~\ref{l:dg-functors-restricts-to-perfects-embedding-and-qequiv}.\ref{enum:qequi-induces-qequi-between-perfs})
  that $F$ induces an equivalence $\bL F^* \colon D(\mathcal{A}) \ra
  D(\mathcal{B})$ of triangulated categories.

  Proper: Clear from above.

  Smooth: See
  \cite[Lemma~3.12]{valery-olaf-smoothness-equivariant}. 

  Triangulated:
  By
  Lemma~\ref{l:dg-functors-restricts-to-perfects-embedding-and-qequiv}
  we have a commutative diagram
  \begin{equation*}
    \xymatrix{
      {\mathcal{A}} \ar[d]^-{f} \ar[r]^-{Y} &
      {\Perf(\mathcal{A})} \ar[d]^-{f^*}\\
      {\mathcal{B}} \ar[r]^-{Y} &
      {\Perf(\mathcal{B})}
    }
  \end{equation*}
  whose vertical arrows are quasi-equivalences.
  Hence the upper horizontal arrow is a quasi-equivalence if and only
  if the lower horizontal arrow is a quasi-equivalence.

  Saturated: Clear from above.
\end{proof}

\begin{lemma}
  [{\cite[Lemma~2.6]{toen-vaquie-moduli-objects-dg-cats}}]
  \label{l:A-and-PerfA}
  Let $\mathcal{A}$ be a dg category.
  Then $\mathcal{A}$ is locally perfect (resp.\ has a compact
  generator resp.\ is proper resp.\ is smooth) if and only if
  $\Perf(\mathcal{A})$ has the corresponding property.
\end{lemma}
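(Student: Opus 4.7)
The plan is to treat each of the four properties in turn, reducing them to facts about $\Perf$ already established in the excerpt.

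First, for \emph{locally perfect}: since the Yoneda embedding $Y \colon \mathcal{A} \to \Perf(\mathcal{A})$ is full and faithful, one direction is immediate. For the converse, assume $\mathcal{A}$ is locally perfect. Every object of $[\Perf(\mathcal{A})] = \thick(\mathcal{A})$ is a retract of an object built from $\{\Yoneda{A} \mid A \in \mathcal{A}\}$ by iterated shifts and cones of closed degree zero morphisms. Since $\per(\groundring)$ is a thick triangulated subcategory of $D(\groundring)$, the hom complexes of $\Perf(\mathcal{A})$ remain in $\per(\groundring)$.

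Second, for \emph{has a compact generator}: Corollary~\ref{c:triang-versus-pretriang-and-karoubi} tells us that the Yoneda embedding $\Perf(\mathcal{A}) \to \Perf(\Perf(\mathcal{A}))$ is a quasi-equivalence, hence passing to homotopy categories yields a triangulated equivalence
\begin{equation*}
\per(\mathcal{A}) \;=\; [\Perf(\mathcal{A})] \;\xrightarrow{\sim}\; [\Perf(\Perf(\mathcal{A}))] \;=\; \per(\Perf(\mathcal{A})).
\end{equation*}
Since $D(\mathcal{B})^c = \per(\mathcal{B})$ for any dg category $\mathcal{B}$, and since by \cite[Thm.~2.1.2]{bondal-vdbergh-generators} having a compact generator in $D(\mathcal{B})$ is equivalent to $\per(\mathcal{B})$ having a classical generator, both conditions are equivalent for $\mathcal{A}$ and for $\Perf(\mathcal{A})$. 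The \emph{proper} case now follows by combining the two cases above.

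Finally, for \emph{smooth}, I would apply Proposition~\ref{p:triangulated-tensor-product-of-cats-and-their-Perfs} with $\mathcal{B} = \mathcal{A}^\opp$ to obtain a quasi-equivalence
\begin{equation*}
\Perf(\mathcal{A} \otimes^{\bL} \mathcal{A}^\opp) \;\xrightarrow{\sim}\; \Perf(\Perf(\mathcal{A}) \otimes^{\bL} \Perf(\mathcal{A})^\opp),
\end{equation*}
realized by extension of scalars along the full and faithful dg functor $Y \otimes Y^\opp$. Passing to homotopy categories and then Karoubi envelopes, this gives an equivalence $\per(\mathcal{A}^e) \xrightarrow{\sim} \per(\Perf(\mathcal{A})^e)$ of triangulated categories, where $\mathcal{A}^e := \mathcal{A} \otimes^{\bL} \mathcal{A}^\opp$. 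The crux of the argument — and the part I expect to require the most care — is to verify that under this equivalence, the diagonal bimodule $\mathcal{A}$ corresponds to the diagonal bimodule $\Perf(\mathcal{A})$. This can be seen by invoking the compatibility \eqref{eq:yoneda-extension-of-scalars} of Yoneda with extension of scalars, which gives $(Y \otimes Y^\opp)^* (\Yoneda{(A,A')}) = \Yoneda{(Y(A),Y(A'))}$, together with the fact that the diagonal bimodule of $\Perf(\mathcal{A})$ restricts along $Y \otimes Y^\opp$ (up to quasi-isomorphism) to the diagonal bimodule of $\mathcal{A}$ — a bimodule-level reformulation of $Y$ being full and faithful. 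Once this identification is in place, the perfectness of one diagonal bimodule is equivalent to that of the other, completing the proof.
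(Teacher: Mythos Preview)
Your treatment of \emph{locally perfect}, \emph{compact generator}, and \emph{proper} is correct and matches the paper's argument closely. For the compact-generator step the paper takes a slightly more elaborate route via the commutative square
\begin{equation*}
\xymatrix{
{\mathcal{A}} \ar[d]^-{Y} \ar[r]^-{Y} &
{\Perf(\mathcal{A})} \ar[d]^-{Y^*}\\
{\Perf(\mathcal{A})} \ar[r]^-{Y} &
{\Perf(\Perf(\mathcal{A}))}
}
\end{equation*}
to deduce that the extension-of-scalars functor $Y^*$ is a quasi-equivalence (and hence that $\bL Y^* \colon D(\mathcal{A}) \to D(\Perf(\mathcal{A}))$ is an equivalence), but your direct appeal to Corollary~\ref{c:triang-versus-pretriang-and-karoubi} reaches the same conclusion.

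For \emph{smoothness} there is a genuine gap. Applying Proposition~\ref{p:triangulated-tensor-product-of-cats-and-their-Perfs} with $\mathcal{B}=\mathcal{A}^\opp$ produces a quasi-equivalence whose target is $\Perf\big(\Perf(\mathcal{A}) \otimes^\bL \Perf(\mathcal{A}^\opp)\big)$, not $\Perf\big(\Perf(\mathcal{A}) \otimes^\bL \Perf(\mathcal{A})^\opp\big)$ as you write: the morphism in that proposition is induced by the two Yoneda embeddings $\mathcal{A}\to\Perf(\mathcal{A})$ and $\mathcal{A}^\opp\to\Perf(\mathcal{A}^\opp)$, not by $Y\otimes Y^\opp$. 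Since nothing in the paper identifies $\Perf(\mathcal{A}^\opp)$ with $\Perf(\mathcal{A})^\opp$, you cannot directly read off smoothness of $\Perf(\mathcal{A})$ --- which concerns the diagonal over $\Perf(\mathcal{A})\otimes^\bL\Perf(\mathcal{A})^\opp$ --- from what Proposition~\ref{p:triangulated-tensor-product-of-cats-and-their-Perfs} actually gives. One could attempt to salvage the argument by proving an analogue of Proposition~\ref{p:perf-to-perf-perf} for the full and faithful functor $Y^\opp\colon\mathcal{A}^\opp\to\Perf(\mathcal{A})^\opp$ (the key ingredients, density and pretriangulatedness of the target, are available), but this is additional work you have not carried out. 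The paper takes a different route entirely: having established that $\bL Y^*\colon D(\mathcal{A})\to D(\Perf(\mathcal{A}))$ is an equivalence, it invokes \cite[Thm.~3.17]{valery-olaf-smoothness-equivariant}, a black-box Morita-invariance statement for smoothness, and is done.
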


\begin{proof}
  Locally perfect: The Yoneda functor $Y \colon  \mathcal{A} \ra
  \Perf(\mathcal{A})$ is full and faithful. Hence $\mathcal{A}$
  is certainly locally perfect if $\Perf(\mathcal{A})$ is locally
  perfect. Conversely assume that $\mathcal{A}$ is locally
  perfect. It is easy to see that $\ol{\mathcal{A}}(A,A')$ is a
  perfect dg module for all $A, A' \in \ol{\mathcal{A}}.$ If
  $U$ is in $\Perf(\mathcal{A}),$ then there is an object $U' \in
  \Perf(\mathcal{A})$ and an object $X \in \ol{\mathcal{A}}$ such
  that $U\oplus U' \cong X$ in $[\Perf(\mathcal{A})].$ Let $Y \in
  \ol{\mathcal{A}}.$ Then $\Perf(\mathcal{A})(U,Y)$ is a direct
  summand of $\Perf(\mathcal{A})(U\oplus U', Y)$ which is in
  $D(\groundring)$ (even in $[\calMod(\groundring)]$) isomorphic to
  $\Perf(\mathcal{A})(X, Y)=\ol{\mathcal{\mathcal{A}}}(X,Y).$
  Hence $\Perf(\mathcal{A})(U,Y)$ is a perfect dg $\groundring$-module.
  Similarly we show that $\Perf(\mathcal{A})(U,V)$ is a perfect
  dg $\groundring$-module for $V$ in $\Perf(\mathcal{A}).$ This implies
  that $\Perf(\mathcal{A})$ is locally perfect.

  Let us prove the remaining claims.
  The Yoneda functor $Y \colon  \mathcal{A} \ra \Perf(\mathcal{A})$
  gives rise to the commutative diagram
  \begin{equation*}
    \xymatrix{
      {\mathcal{A}} \ar[d]^-{Y} \ar[r]^-{Y} &
      {\Perf(\mathcal{A})} \ar[d]^-{Y^*}\\
      {\Perf(\mathcal{A})} \ar[r]^-{Y} &
      {\Perf(\Perf(\mathcal{A})),}
    }
  \end{equation*}
  by
  Lemma~\ref{l:dg-functors-restricts-to-perfects-embedding-and-qequiv}.\ref{enum:restricts-to-Perf}. 
  Since $\Perf(\mathcal{A})$ is triangulated (Corollary~\ref{c:triang-versus-pretriang-and-karoubi}), the lower
  horizontal arrow is a quasi-equivalence.
  Note that $[\Perf(\mathcal{A})] \sira \per(\mathcal{A})$ is
  classicaly generated by the objects in the image of the Yoneda
  functor $[Y] \colon [\mathcal{A}] \ra [\Perf(\mathcal{A})].$ These
  statements 
  imply that $Y^*$ induces an equivalence on homotopy
  categories and hence is a quasi-equivalence by
  Lemma~\ref{l:dg-functor-qequi-test}.
  In particular 
  $\bL Y^*=(- \otimes_\mathcal{A}^\bL \Perf(\mathcal{A})) \colon 
  \per(\mathcal{A}) \ra 
  \per(\Perf(\mathcal{A}))$
  is an equivalence, and hence also
    $\bL Y^*= (- \otimes_\mathcal{A}^\bL \Perf(\mathcal{A})) \colon 
    D(\mathcal{A}) \ra 
    D(\Perf(\mathcal{A}))$
  (use \cite[4.2, Lemma]{Keller-deriving-dg-cat}).

  This immediately implies the claims concerning compact
  generators and properness, and also the claim concering
  smoothness (by 
  \cite[Thm.~3.17]{valery-olaf-smoothness-equivariant}).
\end{proof}

\subsection{Smoothness and properness of tensor products}
\label{sec:smoothness-properness-tensor-products}

We start with some observations.
Let $\mathcal{R}$ and $\mathcal{S}$ be dg categories, and assume
that $\mathcal{R}$ is $\groundring$-h-flat (i.\,e.\ all morphism
spaces $\mathcal{R}(R,R')$ are $\groundring$-h-flat).
Then the obvious dg bifunctor
$\calMod(\mathcal{R}) \times \calMod(\mathcal{S}) \ra
\calMod(\mathcal{R} 
\otimes \mathcal{S}),$
$(X,Y) \mapsto X \otimes Y,$
induces the left derived functor
\begin{align}
  \label{eq:derived-tensor-over-K}
  D(\mathcal{R}) \times D(\mathcal{S}) & \ra D(\mathcal{R}
  \otimes \mathcal{S}),\\
  \notag  (X,Y) & \mapsto X\otimes^\bL Y := c(X) \otimes Y,
\end{align}
where $c(X) \ra X$ 
is a cofibrant resolution; note for this that
a cofibrant dg $\mathcal{R}$-module is a retract of a
semi-free dg $\mathcal{R}$-module 
\cite[Lemma~2.7]{valery-olaf-smoothness-equivariant}
and hence $\groundring$-h-flat
by our assumption on $\mathcal{R}.$
If $Y$ is $\groundring$-h-flat, then the obvious morphism 
$X\otimes^\bL Y \ra X \otimes Y$ is an isomorphism.
It is easy to see that the bifunctor
\eqref{eq:derived-tensor-over-K} induces a bifunctor
\begin{equation*}
  \per(\mathcal{R}) \times \per(\mathcal{S}) \ra
  \per(\mathcal{R} \otimes \mathcal{S}).
\end{equation*}
In particular (for $\mathcal{R}=\mathcal{S} =\groundring$), if
$X$ and $Y$ are perfect dg modules, then
$X\otimes^\bL Y \in \per(\groundring);$ 
if they are perfect and $Y$ is
$\groundring$-h-flat, then $X \otimes Y$ is in
$\per(\groundring).$

\begin{lemma}
  \label{l:tensor-product-of-smooth}
  Let $\mathcal{A}$ and $\mathcal{B}$ be smooth dg
  categories. Then
  $\mathcal{A} \otimes^\bL \mathcal{B}$ is smooth.
\end{lemma}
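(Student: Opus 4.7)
The plan is to reduce smoothness of $\mathcal{A} \otimes^\bL \mathcal{B}$ to a statement about (derived) tensor products of diagonal bimodules, and then invoke the observation preceding the lemma that perfect tensor perfect is perfect.

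Write $\mathcal{C}:=\mathcal{A}\otimes^\bL\mathcal{B}=Q(\mathcal{A})\otimes Q(\mathcal{B})$. Since $Q(\mathcal{A})$ and $Q(\mathcal{B})$ are cofibrant dg categories, they are $\groundring$-h-flat by \cite[Lemmata~2.14 and 2.15]{valery-olaf-smoothness-equivariant}, and hence so is their tensor product $\mathcal{C}$. In particular, $\mathcal{C}\otimes\mathcal{C}^{\opp}$ is quasi-equivalent to $Q(\mathcal{C})\otimes Q(\mathcal{C})^\opp$, so by Lemma~\ref{l:properties-dg-K-cat-preserved-under-quasi-equivalence} (smoothness is invariant under quasi-equivalences, via \cite[Lemma~3.12]{valery-olaf-smoothness-equivariant}) it suffices to show that the diagonal bimodule $\mathcal{C}$ is perfect over $\mathcal{C}^e := \mathcal{C}\otimes\mathcal{C}^\opp$.

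Set $\mathcal{A}^e := Q(\mathcal{A})\otimes Q(\mathcal{A})^\opp$ and $\mathcal{B}^e := Q(\mathcal{B})\otimes Q(\mathcal{B})^\opp$. Permuting the middle two tensor factors yields a canonical isomorphism of dg categories
\begin{equation*}
  \mathcal{C}^e
  =Q(\mathcal{A})\otimes Q(\mathcal{B})\otimes Q(\mathcal{A})^\opp\otimes Q(\mathcal{B})^\opp
  \isomoto \mathcal{A}^e\otimes\mathcal{B}^e.
\end{equation*}
Under this isomorphism, the diagonal $\mathcal{C}^e$-module $\mathcal{C}=Q(\mathcal{A})\otimes Q(\mathcal{B})$ visibly corresponds to the external tensor product of the diagonal bimodules of $Q(\mathcal{A})$ and $Q(\mathcal{B})$, viewed as an $\mathcal{A}^e\otimes\mathcal{B}^e$-module.

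By the smoothness assumption, the diagonal bimodule $\mathcal{A}$ is in $\per(\mathcal{A}^e)$ and likewise $\mathcal{B}\in\per(\mathcal{B}^e)$. Applying the observation preceding the lemma (which sends $\per(\mathcal{R})\times\per(\mathcal{S})\to\per(\mathcal{R}\otimes\mathcal{S})$ via $(X,Y)\mapsto X\otimes^\bL Y$) with $\mathcal{R}=\mathcal{A}^e$ and $\mathcal{S}=\mathcal{B}^e$, we conclude that $\mathcal{A}\otimes^\bL\mathcal{B}$ lies in $\per(\mathcal{A}^e\otimes\mathcal{B}^e)$. Moreover, since $\mathcal{A}$ and $\mathcal{B}$ are $\groundring$-h-flat (as modules over $\groundring$, via the cofibrancy of $Q(\mathcal{A})$ and $Q(\mathcal{B})$), the derived tensor product of modules coincides with the underived one, so this perfect object is precisely $\mathcal{C}$ with the bimodule structure described above. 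Hence $\mathcal{C}\in\per(\mathcal{C}^e)$, which is the required smoothness.

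The only subtle step, and the place where one must be careful, is the bookkeeping around the different roles of $Q$: verifying that the naive $\mathcal{C}^e$ may replace $Q(\mathcal{C})\otimes Q(\mathcal{C})^\opp$ (requiring h-flatness of $\mathcal{C}$), and that the derived tensor of modules $\mathcal{A}\otimes^\bL\mathcal{B}$ agrees with the underived one $\mathcal{A}\otimes\mathcal{B}=\mathcal{C}$ (requiring h-flatness of one of the bimodules as a $\groundring$-module). Both reductions are handled uniformly by the $\groundring$-h-flatness of cofibrant dg categories.
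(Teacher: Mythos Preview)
Your proof is correct and follows essentially the same approach as the paper's: use $\groundring$-h-flatness of cofibrant dg categories, identify $\mathcal{C}^e\cong\mathcal{A}^e\otimes\mathcal{B}^e$, and apply the perfect-tensor-perfect observation to the diagonal bimodules. The only cosmetic difference is that the paper cites \cite[Lemma~3.6]{valery-olaf-smoothness-equivariant} (smoothness can be checked via any $\groundring$-h-flat resolution) for the reduction to working over $\mathcal{C}^e$ rather than $Q(\mathcal{C})\otimes Q(\mathcal{C})^\opp$, whereas you route this through Lemma~\ref{l:properties-dg-K-cat-preserved-under-quasi-equivalence}; the latter is not quite the right reference for that step, but the underlying claim is the same.
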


\begin{proof}
  Let $Q(\mathcal{A}) \ra \mathcal{A}$ and $Q(\mathcal{B}) \ra
  \mathcal{B}$ be cofibrant resolutions. Then
  $Q(\mathcal{A}) \in \per(Q(\mathcal{A}) \otimes
  Q(\mathcal{A})^\opp)$
  and
  $Q(\mathcal{B}) \in \per(Q(\mathcal{B}) \otimes
  Q(\mathcal{B})^\opp)$ by assumption.

  Note that
  both diagonal dg bimodules $Q(\mathcal{A})$ and
  $Q(\mathcal{B})$ are 
  $\groundring$-h-flat, 
  and that both dg categories
  $\mathcal{R}=Q(\mathcal{A}) \otimes Q(\mathcal{A})^\opp$ and
  $\mathcal{S}=Q(\mathcal{B}) \otimes Q(\mathcal{B})^\opp$
  are $\groundring$-h-flat, 
  by
  \cite[Lemma~2.14]{valery-olaf-smoothness-equivariant}.
  Then, using the obvious isomorphism
  $\mathcal{R} \otimes \mathcal{S}
  \sira
  (Q(\mathcal{A}) \otimes Q(\mathcal{B})) \otimes
  (Q(\mathcal{A}) \otimes Q(\mathcal{B}))^\opp,$
  the above discussion shows that
  \begin{equation*}
    Q(\mathcal{A})\otimes Q(\mathcal{B}) \in
    \per(
    (Q(\mathcal{A}) \otimes Q(\mathcal{B})) \otimes
    (Q(\mathcal{A}) \otimes Q(\mathcal{B}))^\opp),
  \end{equation*}
  and this dg bimodule is the diagonal bimodule. Since
  $Q(\mathcal{A})\otimes Q(\mathcal{B})$ is $\groundring$-h-flat
  this implies that $\mathcal{A} \otimes^\bL \mathcal{B}=
  Q(\mathcal{A})\otimes Q(\mathcal{B})$ is smooth
  (since smoothness can be checked using a $\groundring$-h-flat
  resolution, by
  \cite[Lemma~3.6]{valery-olaf-smoothness-equivariant}).
\end{proof}

\begin{lemma}
  \label{l:tensor-product-of-locally-proper}
  Let $\mathcal{A}$ and $\mathcal{B}$ be locally perfect dg
  categories.  Then $\mathcal{A} \otimes^\bL \mathcal{B}$ is a
  locally perfect dg category.  In particular, if $A$ and $B$
  are proper dg algebras, then $A \otimes^\bL B$ is
  a proper dg algebra.
\end{lemma}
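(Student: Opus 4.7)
My plan is to unwind the definition of $\otimes^\bL$ and reduce to the factorwise discussion of derived tensor products preceding the lemma.

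First, I would replace $\mathcal{A}$ and $\mathcal{B}$ by their cofibrant replacements: by definition $\mathcal{A} \otimes^\bL \mathcal{B} = Q(\mathcal{A}) \otimes Q(\mathcal{B})$. Since local perfectness is invariant under quasi-equivalence (Lemma~\ref{l:properties-dg-K-cat-preserved-under-quasi-equivalence}), the cofibrant replacements $Q(\mathcal{A})$ and $Q(\mathcal{B})$ are again locally perfect. Moreover, by \cite[Lemmata~2.14, 2.15]{valery-olaf-smoothness-equivariant}, any cofibrant dg category is $\groundring$-h-flat; in particular all its morphism complexes are $\groundring$-h-flat dg $\groundring$-modules.

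Next, I would compute hom complexes factorwise: for objects $(A,B), (A',B') \in Q(\mathcal{A}) \otimes Q(\mathcal{B})$ we have
\begin{equation*}
  (Q(\mathcal{A}) \otimes Q(\mathcal{B}))\bigl((A,B),(A',B')\bigr)
  = Q(\mathcal{A})(A,A') \otimes Q(\mathcal{B})(B,B').
\end{equation*}
By the preceding paragraph, $Q(\mathcal{A})(A,A') \in \per(\groundring)$, $Q(\mathcal{B})(B,B') \in \per(\groundring)$, and the second factor is $\groundring$-h-flat. Applying the observation from the discussion right before the lemma (with $\mathcal{R}=\mathcal{S}=\groundring$), the underived tensor product of a perfect dg $\groundring$-module with a perfect $\groundring$-h-flat dg $\groundring$-module lies in $\per(\groundring)$. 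Hence every morphism complex of $\mathcal{A} \otimes^\bL \mathcal{B}$ is a perfect dg $\groundring$-module, proving local perfectness.

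For the second assertion, assume $A$ and $B$ are proper dg algebras. In particular they are locally perfect as dg categories with one object, so by the first part $A \otimes^\bL B$ is locally perfect. Since a (quasi-equivalent model by a) dg algebra has only one isomorphism class of objects in its homotopy category, the Yoneda module is automatically a compact generator, so $A \otimes^\bL B$ is proper. The only minor subtlety here, which I would address briefly, is ensuring that $A \otimes^\bL B$ can be represented by an honest dg algebra; this follows because we may take $Q(A)$ and $Q(B)$ inside the model category of dg algebras (or equivalently use any $\groundring$-h-flat dg algebra resolution of $A$ or $B$), and then the undervied tensor $Q(A) \otimes B$ is a dg algebra quasi-isomorphic to $A \otimes^\bL B$; properness is then preserved under quasi-equivalence by Lemma~\ref{l:properties-dg-K-cat-preserved-under-quasi-equivalence}. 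I expect no real obstacle: the whole proof is essentially bookkeeping around the pre-lemma discussion, with the $\groundring$-h-flatness of cofibrant dg categories doing all the work.
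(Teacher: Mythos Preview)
Your argument is correct and follows essentially the same route as the paper: pass to cofibrant replacements (which remain locally perfect by Lemma~\ref{l:properties-dg-K-cat-preserved-under-quasi-equivalence} and are $\groundring$-h-flat), then invoke the pre-lemma discussion that an underived tensor of a perfect dg module with a perfect $\groundring$-h-flat dg module is perfect. For the last claim the paper is terser than you: it simply recalls that a dg algebra (or more generally a dg category with finitely many isoclasses in its homotopy category) is proper iff it is locally perfect, so your extra discussion of representing $A \otimes^\bL B$ by an honest dg algebra is more than is strictly needed, though not incorrect.
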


\begin{proof}
  If $Q(\mathcal{A}) \ra \mathcal{A}$ and $Q(\mathcal{B}) \ra
  \mathcal{B}$ are cofibrant resolutions, both $Q(\mathcal{A})$
  and $Q(\mathcal{B})$ are locally perfect
  (Lemma~\ref{l:properties-dg-K-cat-preserved-under-quasi-equivalence}).
  Since both $Q(\mathcal{A})$ and $Q(\mathcal{B})$ are
  $\groundring$-h-flat
  (\cite[Lemma~2.14]{valery-olaf-smoothness-equivariant}), the
  above discussion shows that $\mathcal{A} \otimes^\bL
  \mathcal{B}= Q(\mathcal{A})\otimes Q(\mathcal{B})$ is locally
  perfect.  The last claim is immediate since a dg algebra is
  proper if and only if it is locally perfect.
\end{proof}

\subsection{Back to saturated dg categories}
\label{sec:back-saturated-dg}

Recall
from
Definition~\ref{d:loc-perf-cpt-gen-proper-smooth-(triang-)sat(-fin-type)}
that a dg category $T$ is saturated if it is
triangulated, smooth and proper.

\begin{proposition}
  [{cf.~\cite[Prop.~13]{toen-lectures-dg-cats}, and
    \cite[Lemma~2.6]{toen-vaquie-moduli-objects-dg-cats}}]
  \label{p:saturated-easy-characterization}
  A dg category $\mathcal{T}$ has a compact generator if and
  only if there is a
  dg algebra $A$ such that $\Perf(\mathcal{T})$ and
  $\Perf(A)$ are isomorphic in $\Heq_\groundring.$
  If such an $A$ is given, then $\Perf(\mathcal{T})$ is smooth (resp.\
  proper) 
  if and only if $A$ is smooth (resp.~proper).
  
  In particular, a dg category $\mathcal{T}$ is saturated if and only if
  there is a smooth and proper dg algebra $A$ such that $\mathcal{T}$ and
  $\Perf(A)$ are isomorphic in
  $\Heq_\groundring.$
\end{proposition}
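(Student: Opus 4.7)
The plan is to prove the first equivalence as the main technical step, and then deduce the second claim and the ``In particular'' statement by combining it with Lemma~\ref{l:A-and-PerfA} (transfer of locally perfect / compact generator / proper / smooth between $\mathcal{A}$ and $\Perf(\mathcal{A})$) and the invariance of all these properties under quasi-equivalences provided by Lemma~\ref{l:properties-dg-K-cat-preserved-under-quasi-equivalence}. The underlying idea is classical Morita theory: a compact generator $E$ of $\per(\mathcal{T})$, lifted to an object $\tilde E \in \Perf(\mathcal{T})$, has an endomorphism dg algebra $A$ whose category of perfect modules recovers $\Perf(\mathcal{T})$ up to quasi-equivalence.

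To prove the first equivalence, assume $\mathcal{T}$ has a compact generator. By Lemma~\ref{l:A-and-PerfA}, $\Perf(\mathcal{T})$ also has one, so we may choose $\tilde E \in \Perf(\mathcal{T})$ whose class classically generates $[\Perf(\mathcal{T})] = \thick(\mathcal{T})$. Set $A := \Perf(\mathcal{T})(\tilde E, \tilde E)$, viewed as a one-object dg category, with inclusion $i \colon A \hra \Perf(\mathcal{T})$. By Lemma~\ref{l:dg-functors-restricts-to-perfects-embedding-and-qequiv} the extension of scalars
\begin{equation*}
i^* \colon \Perf(A) \ra \Perf(\Perf(\mathcal{T}))
\end{equation*}
is full and faithful and sends $\Yoneda{A}$ to $\Yoneda{\tilde E}$. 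Its essential image on homotopy categories is therefore a thick subcategory of $[\Perf(\Perf(\mathcal{T}))] = \thick(\Perf(\mathcal{T}))$ containing $\Yoneda{\tilde E}$; since $\tilde E$ classically generates $\thick(\mathcal{T})$, the object $\Yoneda{\tilde E}$ classically generates $\thick(\Perf(\mathcal{T}))$, so $[i^*]$ is essentially surjective. Together with full faithfulness this makes $[i^*]$ an equivalence, and hence $i^*$ a quasi-equivalence by Lemma~\ref{l:dg-functor-qequi-test} (applicable since $\Perf(A)$ is pretriangulated). Composing with the quasi-equivalence $\Perf(\mathcal{T}) \sira \Perf(\Perf(\mathcal{T}))$ from Corollary~\ref{c:triang-versus-pretriang-and-karoubi} yields an isomorphism $\Perf(A) \cong \Perf(\mathcal{T})$ in $\Heq_\groundring$. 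Conversely, if such an $A$ is given, then $\Perf(A)$ has the compact generator $\Yoneda{A}$; by Lemma~\ref{l:properties-dg-K-cat-preserved-under-quasi-equivalence} so does $\Perf(\mathcal{T})$, and by Lemma~\ref{l:A-and-PerfA} so does $\mathcal{T}$.

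The second assertion is then immediate: smoothness (respectively properness) of $\Perf(A)$ transfers to $\Perf(\mathcal{T})$ through the zig-zag of quasi-equivalences realizing $\Perf(A) \cong \Perf(\mathcal{T})$ in $\Heq_\groundring$ by Lemma~\ref{l:properties-dg-K-cat-preserved-under-quasi-equivalence}, and Lemma~\ref{l:A-and-PerfA} identifies smoothness/properness of $\Perf(A)$ with the same property for $A$. For the ``In particular'': if $\mathcal{T}$ is saturated, then by Definition~\ref{d:triangulated-dg-category} the Yoneda functor $\mathcal{T} \sira \Perf(\mathcal{T})$ is a quasi-equivalence, so combining with the previous steps yields a smooth and proper dg algebra $A$ with $\mathcal{T} \cong \Perf(A)$ in $\Heq_\groundring$; conversely such an isomorphism forces $\mathcal{T}$ to be triangulated (since $\Perf(A)$ is, by Corollary~\ref{c:triang-versus-pretriang-and-karoubi}, and this property is quasi-equivalence invariant), as well as smooth and proper. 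The main obstacle is the Morita step in the middle paragraph: producing a genuine quasi-equivalence $\Perf(A) \sira \Perf(\Perf(\mathcal{T}))$ from the compact generator, rather than merely a triangulated equivalence on homotopy categories, which is exactly where the convenient test in Lemma~\ref{l:dg-functor-qequi-test} together with the density arguments behind Lemmas~\ref{l:dg-functors-restricts-to-perfects-embedding-and-qequiv} and \ref{l:A-and-PerfA} comes in.
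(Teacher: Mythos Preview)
Your proof is correct and follows essentially the same route as the paper: take the endomorphism dg algebra $A$ of a classical generator in $\Perf(\mathcal{T})$, use Lemma~\ref{l:dg-functors-restricts-to-perfects-embedding-and-qequiv} for full faithfulness of $i^*\colon \Perf(A)\to\Perf(\Perf(\mathcal{T}))$, Corollary~\ref{c:triang-versus-pretriang-and-karoubi} for the Yoneda quasi-equivalence, and Lemma~\ref{l:dg-functor-qequi-test} to upgrade to a quasi-equivalence; then invoke Lemmata~\ref{l:properties-dg-K-cat-preserved-under-quasi-equivalence} and~\ref{l:A-and-PerfA} for the remaining transfer claims. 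Two tiny points of phrasing: your assertion that the essential image of $[i^*]$ is \emph{thick} relies on $[\Perf(A)]$ being Karoubian (the paper makes this explicit), and ``composing'' with $\Perf(\mathcal{T})\sira\Perf(\Perf(\mathcal{T}))$ is really a zig-zag, since both quasi-equivalences land in $\Perf(\Perf(\mathcal{T}))$.
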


\begin{proof}
  If $A$ is a 
  dg algebra, then 
  $\Yoneda{A}$ is a classical generator of $[\Perf(A)] \sira
  \per(A).$ If $\Perf(\mathcal{T})$ and $\Perf(A)$ are isomorphic
  in $\Heq_\groundring$ then $[\Perf(\mathcal{T})]\cong
  [\Perf(A)],$ so $\mathcal{T}$ has a compact generator.

  Conversely, assume that $\mathcal{T}$ has a compact generator.
  Let $E \in \Perf(\mathcal{T})$ be such that $E$ is a classical 
  generator of $[\Perf(\mathcal{T})] \sira \per(\mathcal{T}).$
  Let $A:=(\Perf(\mathcal{T}))(E,E).$ We consider the dg algebra $A$ also
  as a 
  dg category with one object $\star.$ The obvious inclusion $i \colon A
  \ra \Perf(\mathcal{T}),$ $\star \mapsto E,$ gives
  by Lemma~\ref{l:dg-functors-restricts-to-perfects-embedding-and-qequiv}.\ref{enum:embeddings}
  rise to the commutative
  diagram
  \begin{equation*}
    \xymatrix{
      {A} \ar[d]^-{i} \ar[r]^-{Y} &
      {\Perf(A)} \ar[d]^-{i^*}\\
      {\Perf(\mathcal{T})} \ar[r]^-{Y} &
      {\Perf(\Perf(\mathcal{T}))}
    }
  \end{equation*}
  whose vertical arrows are full and faithful.
  The lower horizontal arrow is a quasi-isomorphism
  (Corollary~\ref{c:triang-versus-pretriang-and-karoubi});
  in particular, it induces a triangulated equivalence
  \begin{equation*}
    [\Perf(\mathcal{T})]
    \xsira{[Y]}
    [\Perf(\Perf(\mathcal{T}))].
  \end{equation*}
  This implies that $[\Perf(\Perf(\mathcal{T}))]$ is the thick envelope of
  $\Yoneda{E}=Y(E).$
  Note
  that $[\Perf(A)]=\thick(A)$ is the thick envelope of
  $\Yoneda{\star}$
  and
  that
  $i^*(\Yoneda{\star})=i^*(Y(\star)) =Y(i(\star))=\Yoneda{E}.$
  Since $[\Perf(A)]$ is Karoubian this implies that
  $[i^*] \colon [\Perf(A)] \ra [\Perf(\Perf(\mathcal{T}))]$ is a triangulated
  equivalence.
  Then Lemma~\ref{l:dg-functor-qequi-test} shows that
  the vertical arrow $i^*$ in the above commutative square is a
  quasi-equivalence. This shows that $\Perf(A)$ and $\Perf(\mathcal{T})$ are
  connected by a zig-zag of quasi-equivalences.

  Lemmata~\ref{l:properties-dg-K-cat-preserved-under-quasi-equivalence}
  and
  \ref{l:A-and-PerfA}
  then yield the second claim, and for the last claim 
  use additionally Corollary~\ref{c:triang-versus-pretriang-and-karoubi}.
\end{proof}

\begin{proposition}
  \label{p:triang-tensor-product-of-saturated-dg-K-cats}
  Let $\mathcal{S},$ $\mathcal{T}$ be saturated dg categories.
  Then $\mathcal{S} \Perfotimes \mathcal{T}$
  (defined in \eqref{eq:def-triang-tensor-product-of-dg-cats})
  is a saturated dg category.
\end{proposition}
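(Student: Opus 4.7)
The plan is to reduce everything to the dg algebra case, where Lemmata~\ref{l:tensor-product-of-smooth} and \ref{l:tensor-product-of-locally-proper} directly give smoothness and properness of the tensor product. Concretely, by Proposition~\ref{p:saturated-easy-characterization} I would choose smooth and proper dg algebras $A$ and $B$ such that $\mathcal{S} \cong \Perf(A)$ and $\mathcal{T} \cong \Perf(B)$ in $\Heq_\groundring$. Applying Lemma~\ref{l:Perfotimes-respects-qequivalences} to lifts of these isomorphisms (realized by zig-zags of quasi-equivalences) I obtain an isomorphism
\begin{equation*}
  \mathcal{S} \Perfotimes \mathcal{T} \cong \Perf(A) \Perfotimes \Perf(B)
\end{equation*}
in $\Heq_\groundring$. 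Then Proposition~\ref{p:triangulated-tensor-product-of-cats-and-their-Perfs} provides a further isomorphism
\begin{equation*}
  \Perf(A) \Perfotimes \Perf(B) \cong \Perf(A \otimes^\bL B)
\end{equation*}
in $\Heq_\groundring$, so altogether $\mathcal{S} \Perfotimes \mathcal{T} \cong \Perf(A \otimes^\bL B)$ in $\Heq_\groundring$.

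Next, I would verify that the dg algebra $A \otimes^\bL B$ is again smooth and proper: smoothness is immediate from Lemma~\ref{l:tensor-product-of-smooth}, and properness follows from Lemma~\ref{l:tensor-product-of-locally-proper} (which for dg algebras is exactly the statement that $A \otimes^\bL B$ is locally perfect, hence proper since any dg algebra has a compact generator, namely itself). Invoking Proposition~\ref{p:saturated-easy-characterization} in the other direction now yields that $\Perf(A \otimes^\bL B)$ is saturated. Since saturatedness is invariant under quasi-equivalences by Lemma~\ref{l:properties-dg-K-cat-preserved-under-quasi-equivalence}, and being isomorphic in $\Heq_\groundring$ is equivalent to being connected by a zig-zag of quasi-equivalences, it follows that $\mathcal{S} \Perfotimes \mathcal{T}$ is saturated.

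The main (minor) obstacle is bookkeeping: one has to move between $\mathcal{S}, \mathcal{T}$ and suitable dg algebra models through several Morita-type reductions, and to check that Proposition~\ref{p:triangulated-tensor-product-of-cats-and-their-Perfs} applies to both arguments simultaneously rather than only to one side. Apart from that, everything reduces to the three input lemmata (tensor product preserves smoothness, preserves local perfectness, and $\Perf$ of a dg category has the same properness/smoothness properties as the category itself via Lemma~\ref{l:A-and-PerfA}), together with the characterization of saturated dg categories as $\Perf$'s of smooth and proper dg algebras.
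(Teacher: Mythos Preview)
Your proposal is correct and follows essentially the same route as the paper's proof: reduce to dg algebras via Proposition~\ref{p:saturated-easy-characterization}, use Lemma~\ref{l:Perfotimes-respects-qequivalences} and Proposition~\ref{p:triangulated-tensor-product-of-cats-and-their-Perfs} to identify $\mathcal{S} \Perfotimes \mathcal{T}$ with $\Perf(A \otimes^\bL B)$, then apply Lemmata~\ref{l:tensor-product-of-smooth} and \ref{l:tensor-product-of-locally-proper} and invoke Proposition~\ref{p:saturated-easy-characterization} again. Your version is slightly more verbose in justifying the zig-zag bookkeeping, but the argument is the same.
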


Note that Lemmata~ \ref{l:A-and-PerfA},
\ref{l:tensor-product-of-smooth}, 
\ref{l:tensor-product-of-locally-proper}
and
Corollary~\ref{c:triang-versus-pretriang-and-karoubi}
show that $\mathcal{S} \Perfotimes \mathcal{T}$
is locally perfect, smooth, and triangulated.

\begin{proof}
  By Proposition~\ref{p:saturated-easy-characterization} there are
  smooth and proper dg algebras $A$ and $B$ such that
  $\mathcal{S} \cong \Perf(A)$ and $\mathcal{T} \cong \Perf(B)$
  in $\Heq_\groundring.$ 
  Then we have isomorphisms
  \begin{equation*}
    \mathcal{S} \Perfotimes \mathcal{T} \cong (\Perf(A)) \Perfotimes
    (\Perf(B)) \sila \Perf(A\otimes^\bL B)=
    A \Perfotimes B
  \end{equation*}
  in $\Heq_\groundring$
  by
  Lemma~\ref{l:Perfotimes-respects-qequivalences}
  and Proposition~\ref{p:triangulated-tensor-product-of-cats-and-their-Perfs}.
  Lemmata~\ref{l:tensor-product-of-smooth} and
  \ref{l:tensor-product-of-locally-proper}
  show that $A\otimes^\bL B$ is smooth and proper,
  so Proposition~\ref{p:saturated-easy-characterization}
  again proves the claim.
\end{proof}

For later use we include the following result which is 
similar to Proposition~\ref{p:saturated-easy-characterization}.
 
\begin{proposition}
  \label{p:pretriang-classical-generator}
  Let $\mathcal{E}$ be a pretriangulated dg category and let 
  $E \in \mathcal{E}$ be an object that becomes a classical 
  generator of $[\mathcal{E}].$ Consider the dg
  algebra 
  $A:=\mathcal{E}(E,E).$ Then there is a
  quasi-equivalence
  $\Perf(A) \ra \Perf(\mathcal{E})$
  of dg categories, and the dg functor $\mathcal{E}(E,-) \colon 
  \mathcal{E} \ra \calMod(A)$ induces a full and faithful
  triangulated functor $\mathcal{E}(E,-) \colon  [\mathcal{E}] \ra
  \per(A)$ that extends to an equivalence between the Karoubi
  envelope of $[\mathcal{E}]$ and $\per(A).$

  Moreover, $A$ is smooth (resp.\ proper) 
  if and only if $\mathcal{E}$ has this property 
  if and only if $\Perf(\mathcal{E})$ has this property.
  In particular, $A$ is smooth and proper if and only if
  $\Perf(\mathcal{E})$ is saturated.
\end{proposition}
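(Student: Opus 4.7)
My plan is to mimic the proof of Proposition~\ref{p:saturated-easy-characterization}, with the only new wrinkle being that $[\mathcal{E}]$ is only a dense triangulated subcategory of $[\Perf(\mathcal{E})]$ (since $\mathcal{E}$ is merely pretriangulated, not triangulated). I would view $A$ as a dg category with one object $\star$ and consider the dg functor $i \colon A \ra \mathcal{E}$, $\star \mapsto E$, which is full and faithful by construction. By Lemma~\ref{l:dg-functors-restricts-to-perfects-embedding-and-qequiv}\ref{enum:restricts-to-Perf}\ref{enum:embeddings}, extension of scalars yields a full and faithful dg functor $i^* \colon \Perf(A) \ra \Perf(\mathcal{E})$ satisfying $i^*(\Yoneda{\star}) = \Yoneda{E}$.

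The first main step is to show that $[i^*] \colon [\Perf(A)] \ra [\Perf(\mathcal{E})]$ is essentially surjective, so that $i^*$ is a quasi-equivalence by Lemma~\ref{l:dg-functor-qequi-test}. Since $i^*$ is full and faithful and $[\Perf(A)]$ is Karoubian, any idempotent in $[\Perf(\mathcal{E})]$ acting on an object in the essential image lifts, so the essential image is a thick triangulated subcategory of $[\Perf(\mathcal{E})]$ containing $\Yoneda{E}$. It therefore suffices to check that $[\Perf(\mathcal{E})] = \thick(\Yoneda{E})$. Here is where the hypothesis on $E$ enters: because $\mathcal{E}$ is pretriangulated, $[\mathcal{E}] \sira \tria(\mathcal{E})$, and since $E$ classically generates $[\mathcal{E}]$, the full and faithful embedding $[\mathcal{E}] \hookrightarrow [\Perf(\mathcal{E})]$ shows that each $\Yoneda{X}$ lies in the thick closure of $\Yoneda{E}$. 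Combined with $[\Perf(\mathcal{E})] = \thick(\mathcal{E})$ this gives $[\Perf(\mathcal{E})] = \thick(\Yoneda{E})$, as desired. I expect this to be the main conceptual step; the rest is formal.

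Next I would identify the stated functor $\mathcal{E}(E,-)$ with the restriction of scalars $i_* = \res_A^\mathcal{E} \colon \calMod(\mathcal{E}) \ra \calMod(A)$ precomposed with the Yoneda embedding $Y_\mathcal{E}$, via the computation $i_*(\Yoneda{X})(\star) = \Yoneda{X}(i(\star)) = \mathcal{E}(E,X)$. Since $i^*$ is a quasi-equivalence, the adjoint pair $(\bL i^*, i_*)$ between $D(A)$ and $D(\mathcal{E})$ is an equivalence (cf.\ \cite[4.2, Lemma]{Keller-deriving-dg-cat}), so $i_* \colon \per(\mathcal{E}) \sira \per(A)$ is a triangulated equivalence. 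Restricting this equivalence along the full and faithful inclusion $[\mathcal{E}] = \tria(\mathcal{E}) \subset \thick(\mathcal{E}) = [\Perf(\mathcal{E})] \sira \per(\mathcal{E})$ gives the desired full and faithful functor $[\mathcal{E}] \ra \per(A)$, and passing to Karoubi envelopes on the source recovers $[\Perf(\mathcal{E})] \sira \per(A)$.

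For the remaining assertions I would chain together the results already established. Lemma~\ref{l:A-and-PerfA} says $A$ is smooth (resp.\ proper) if and only if $\Perf(A)$ is smooth (resp.\ proper); Lemma~\ref{l:properties-dg-K-cat-preserved-under-quasi-equivalence} transfers smoothness and properness along the quasi-equivalence $\Perf(A) \sira \Perf(\mathcal{E})$; and a second application of Lemma~\ref{l:A-and-PerfA} relates smoothness/properness of $\Perf(\mathcal{E})$ to that of $\mathcal{E}$, proving the three-way equivalence. The final statement then follows: $A$ is smooth and proper iff $\Perf(A)$ is, and $\Perf(A)$ is triangulated by Corollary~\ref{c:triang-versus-pretriang-and-karoubi}, so $A$ smooth and proper iff $\Perf(A)$ is saturated iff (by the quasi-equivalence) $\Perf(\mathcal{E})$ is saturated.
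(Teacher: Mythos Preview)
Your proof is correct and follows essentially the same approach as the paper: both consider the full and faithful dg functor $i \colon A \ra \mathcal{E}$, use Lemma~\ref{l:dg-functors-restricts-to-perfects-embedding-and-qequiv} to get a full and faithful $i^* \colon \Perf(A) \ra \Perf(\mathcal{E})$, verify essential surjectivity on homotopy categories via the classical generator hypothesis and idempotent completeness of $\thick(A)$, and conclude with Lemma~\ref{l:dg-functor-qequi-test}. The only cosmetic difference is that the paper identifies $\mathcal{E}(E,-)$ directly as restriction along $i$ (the quasi-inverse to $[i^*]$ on $\thick$), whereas you pass through the derived adjunction $(\bL i^*, i_*)$; both yield the same conclusion, and the final paragraph invoking Lemmata~\ref{l:A-and-PerfA}, \ref{l:properties-dg-K-cat-preserved-under-quasi-equivalence} and Corollary~\ref{c:triang-versus-pretriang-and-karoubi} matches the paper exactly.
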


\begin{proof}
  We consider $A$ as a dg category. Mapping its unique object to
  $E$ defines a dg functor $i \colon  A \ra \mathcal{E}.$
  Lemma~\ref{l:dg-functors-restricts-to-perfects-embedding-and-qequiv}.\ref{enum:embeddings}
  shows that the induced extension of scalars functor 
  $i^* \colon  \Perf(A) \ra \Perf(\mathcal{E})$ is full and faithful.
  It maps $A$ to $\Yoneda{E}.$
  The induced functor $i^* \colon  \thick(A) \ra \thick(\mathcal{E})$ on
  homotopy categories is full and faithful, and moreover
  essentially surjective since $\Yoneda{E}$ is a classical
  generator of $\thick(\mathcal{E})$ and $\thick(A)$ is
  idempotent complete.
  Now Lemma~\ref{l:dg-functor-qequi-test} shows that
  $i^* \colon  \Perf(A) \ra \Perf(\mathcal{E})$ is a quasi-equivalence.


  A quasi-inverse of 
  $i^* \colon  \thick(A) \ra \thick(\mathcal{E})$
  is given by restriction along $i.$ 
  This restriction composed with $[\mathcal{E}] \ra
  \thick(\mathcal{E})$ is given by $\mathcal{E}(E,-).$
  Moreover, $\thick(\mathcal{E})$ is the
  Karoubi envelope of 
  $[\ol{\mathcal{E}}],$ $[\mathcal{E}]
  \ra 
  [\ol{\mathcal{E}}]$ is an equivalence since $\mathcal{E}$ is
  pretriangulated, and
  $\thick(A) \sira
  \per(A).$
  
  The remaining claims follow from the
  Lemmata~\ref{l:A-and-PerfA}
  and
  \ref{l:properties-dg-K-cat-preserved-under-quasi-equivalence}
  and
  Corollary~\ref{c:triang-versus-pretriang-and-karoubi}.
\end{proof}

\subsection{Semi-orthogonal decompositions}
\label{sec:semi-orth-decomp}

We refer the reader to
\cite[appendix~\ref{semi:sec:app:remind-admiss-subc}]{valery-olaf-matfak-semi-orth-decomp}
for the
definition and elementary properties of semi-orthogonal
decompositions.

The first part of the following result
says that a
semi-orthogonal decomposition  
of the homotopy category $[\mathcal{T}]$ of a pretriangulated dg
category $\mathcal{T}$ induces a semi-orthogonal decomposition of
$[\Perf(\mathcal{T})].$ This may be viewed as a dg lift of 
\cite[Cor.~\ref{semi:c:semi-od-and-karoubi-envelope}]{valery-olaf-matfak-semi-orth-decomp}.
Its formulation is a bit technical since
the components of a semi-orthogonal decomposition are required to
be strict subcategories.  
A related result is given in
Lemma~\ref{l:directed-induces-semi-od-on-Perf} below.
 
\begin{proposition}
  \label{p:semi-od-goes-to-Perf}
  Assume that $\mathcal{T}$ is a pretriangulated dg category with
  full dg subcategories $\mathcal{U}$ and $\mathcal{V}$ such that
  $[\mathcal{T}]=\langle [\mathcal{U}],[\mathcal{V}]\rangle$ is a
  semi-orthogonal decomposition (resp.\ a semi-orthogonal
  decomposition into admissible subcategories).
  
  Then there is an induced semi-orthogonal decomposition
  $[\Perf(\mathcal{T})]=\langle
  [\Perf(\mathcal{U})'],[\Perf(\mathcal{V})']\rangle$ 
  (into admissible subcategories). Here $\Perf(\mathcal{U})'$ is
  the full dg subcategory of $\Perf(\mathcal{T})$ such that 
  $[\Perf(\mathcal{U})']$ is the strict closure of
  $[\Perf(\mathcal{U})]$ in $[\Perf(\mathcal{T})].$ In
  particular, there is an obvious quasi-equivalence 
  $\Perf(\mathcal{U}) \ra \Perf(\mathcal{U})'.$
  The dg category $\Perf(\mathcal{V})'$ is defined similarly. 

  More generally, let $\mathcal{R}$ be a $\groundring$-h-flat dg
  category. Then there is a semi-orthogonal decomposition
  $[\Perf(\mathcal{R} \otimes \mathcal{T})]=\langle
  [\Perf(\mathcal{R} \otimes \mathcal{U})'],[\Perf(\mathcal{R}
  \otimes \mathcal{V})']\rangle$ 
  (into admissible subcategories) where the involved dg
  subcategories are defined in the obvious way.
\end{proposition}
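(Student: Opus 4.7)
The strategy is to first establish the semi-orthogonal decomposition on the strict full triangulated subcategory $\tria(\mathcal{R} \otimes \mathcal{T}) \subseteq [\Perf(\mathcal{R} \otimes \mathcal{T})]$ generated by the representables $\Yoneda{(R, T)}$, and then transfer it to the Karoubi envelope $\thick(\mathcal{R} \otimes \mathcal{T}) = [\Perf(\mathcal{R} \otimes \mathcal{T})]$ via \cite[Cor.~\ref{semi:c:semi-od-and-karoubi-envelope}]{valery-olaf-matfak-semi-orth-decomp}. The first assertion of the proposition is the specialization $\mathcal{R} = \groundring$.

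For semi-orthogonality, take $U \in \mathcal{U}$, $V \in \mathcal{V}$, and $R, R' \in \mathcal{R}$. Since $[\mathcal{U}]$ is closed under shifts in $[\mathcal{T}]$, the SOD of $[\mathcal{T}]$ yields $H^i(\mathcal{T}(V, U)) = \Hom_{[\mathcal{T}]}(V, U[i]) = 0$ for all $i \in \DZ$, so the dg $\groundring$-module $\mathcal{T}(V, U)$ is acyclic. By $\groundring$-h-flatness of $\mathcal{R}$, the tensor product $\mathcal{R}(R, R') \otimes_\groundring \mathcal{T}(V, U)$ remains acyclic, hence $\Hom^*_{[\mathcal{R} \otimes \mathcal{T}]}((R, V), (R', U)) = 0$. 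Standard d\'evissage in triangles propagates this vanishing to Hom-vanishing between the strict triangulated subcategories of $\tria(\mathcal{R} \otimes \mathcal{T})$ generated by $\mathcal{R} \otimes \mathcal{V}$ and $\mathcal{R} \otimes \mathcal{U}$.

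For generation, it suffices to express each representable $\Yoneda{(R, T)}$ as an extension within those two subcategories. Pretriangulatedness of $\mathcal{T}$ combined with the SOD of $[\mathcal{T}]$ provides a closed degree-zero morphism $u \colon U \to T$ with $U \in \mathcal{U}$ whose cone $V := \cone_{\mathcal{T}}(u) \in \mathcal{T}$ lies in $\mathcal{V}$. Using the external-product identification $\Yoneda{(R, T')} = \Yoneda{R} \boxtimes \Yoneda{T'}$ and the dg-functoriality of $\Yoneda{R} \boxtimes (-) \colon \calMod(\mathcal{T}) \to \calMod(\mathcal{R} \otimes \mathcal{T})$, one computes $\cone(\Yoneda{(R, u)}) = \Yoneda{R} \boxtimes \cone(\Yoneda{u}) \simeq \Yoneda{R} \boxtimes \Yoneda{V} = \Yoneda{(R, V)}$ in $\mathcal{H}(\mathcal{R} \otimes \mathcal{T})$, where the middle isomorphism uses $\cone(\Yoneda{u}) \simeq \Yoneda{V}$ in $\mathcal{H}(\mathcal{T})$ (a direct consequence of pretriangulatedness of $\mathcal{T}$). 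This yields a triangle $\Yoneda{(R, U)} \to \Yoneda{(R, T)} \to \Yoneda{(R, V)} \to$ in $\tria(\mathcal{R} \otimes \mathcal{T})$, completing the SOD on that level.

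Finally, \cite[Cor.~\ref{semi:c:semi-od-and-karoubi-envelope}]{valery-olaf-matfak-semi-orth-decomp} extends this SOD to $[\Perf(\mathcal{R} \otimes \mathcal{T})]$ and preserves admissibility. To identify the Karoubi component generated by $\mathcal{R} \otimes \mathcal{U}$ with $[\Perf(\mathcal{R} \otimes \mathcal{U})']$, note that Lemma~\ref{l:dg-functors-restricts-to-perfects-embedding-and-qequiv}.\ref{enum:embeddings} applied to the full and faithful embedding $\mathcal{R} \otimes \mathcal{U} \hookrightarrow \mathcal{R} \otimes \mathcal{T}$ produces a full and faithful $\Perf(\mathcal{R} \otimes \mathcal{U}) \hookrightarrow \Perf(\mathcal{R} \otimes \mathcal{T})$; its essential image on homotopy categories equals $\thick(\mathcal{R} \otimes \mathcal{U})$, which is both the Karoubi envelope of $\tria(\mathcal{R} \otimes \mathcal{U})$ in $[\Perf(\mathcal{R} \otimes \mathcal{T})]$ and, by definition, the underlying category of $[\Perf(\mathcal{R} \otimes \mathcal{U})']$; similarly for $\mathcal{V}$. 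The delicate step I expect to dwell on is the identification of cones in the generation argument: verifying $\cone(\Yoneda{(R, u)}) \simeq \Yoneda{(R, V)}$ really does use pretriangulatedness of $\mathcal{T}$ together with the compatibility of the Yoneda embedding with external tensor products and cones up to homotopy.
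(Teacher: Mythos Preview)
Your approach mirrors the paper's closely: establish semi-orthogonality of the representable generators via h-flatness and acyclicity of $\mathcal{T}(V,U)$, transport the SOD triangles of $[\mathcal{T}]$ along a dg functor to get triangles for the $\Yoneda{(R,T)}$, then pass to the idempotent completion. The paper packages the last two steps slightly differently---it invokes \cite[Lemma~\ref{semi:l:right-admissible-by-triangulated-and-classical-generators}]{valery-olaf-matfak-semi-orth-decomp} directly on $[\Perf(\mathcal{R}\otimes\mathcal{T})]$ rather than first working on $\tria$ and then citing \cite[Cor.~\ref{semi:c:semi-od-and-karoubi-envelope}]{valery-olaf-matfak-semi-orth-decomp}, and for admissibility it reruns the whole argument on the companion SOD $[\mathcal{T}]=\langle[\mathcal{U}]^\perp,[\mathcal{U}]\rangle$ rather than delegating this to the Karoubi-envelope corollary---but these are cosmetic differences.

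There is, however, a genuine slip in your generation step: the triangle is oriented the wrong way. In the SOD $[\mathcal{T}]=\langle[\mathcal{U}],[\mathcal{V}]\rangle$ with the convention you yourself use (namely $[\mathcal{T}]([\mathcal{V}],[\mathcal{U}])=0$), the decomposition triangle for $T\in\mathcal{T}$ has the form $V\to T\to U\to [1]V$ with $V\in\mathcal{V}$ and $U\in\mathcal{U}$; the SOD does \emph{not} supply a morphism $U\to T$ with cone in $\mathcal{V}$. The fix is simply to swap the roles: take a closed degree-zero morphism $V\to T$ in $\mathcal{T}$ with cone in $\mathcal{U}$, and then your cone computation (which is correct, and is exactly what the paper does via the dg functor $i_r\colon\mathcal{T}\to\mathcal{R}\otimes\mathcal{T}$, $t'\mapsto(r,t')$) produces the triangle $\Yoneda{(R,V)}\to\Yoneda{(R,T)}\to\Yoneda{(R,U)}\to$ in $[\Perf(\mathcal{R}\otimes\mathcal{T})]$, which is what is actually needed.
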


\begin{proof}
  The first claim is the special case $\mathcal{R}=\groundring$
  of the second claim which we prove now.
  Assume that
  $[\mathcal{T}]=\langle [\mathcal{U}],[\mathcal{V}]\rangle$ is a
  semi-orthogonal decomposition.
  The inclusions $\mathcal{U} \subset \mathcal{T}$ and $\mathcal{V} \subset \mathcal{T}$ give rise to full and
  faithful dg functors
  $\mathcal{R} \otimes \mathcal{U} \ra \mathcal{R} \otimes
    \mathcal{T},$
  $\mathcal{R} \otimes \mathcal{V} \ra \mathcal{R} \otimes
    \mathcal{T}.$
  Lemma~\ref{l:dg-functors-restricts-to-perfects-embedding-and-qequiv}.\ref{enum:embeddings}
  shows that the induced dg functors
  \begin{align*}
    \tildew{\mathcal{U}} := \Perf(\mathcal{R} \otimes \mathcal{U}) & \ra \tildew{\mathcal{T}} :=\Perf(\mathcal{R} \otimes \mathcal{T}),\\
    \tildew{\mathcal{V}} := \Perf(\mathcal{R} \otimes \mathcal{V}) & \ra \tildew{\mathcal{T}} = \Perf(\mathcal{R} \otimes \mathcal{T})
  \end{align*}
  are full and faithful. We view $\tildew{\mathcal{U}}$ and $\tildew{\mathcal{V}}$ as full dg
  subcategories of $\tildew{\mathcal{T}}.$
  From 
  $[\mathcal{T}]([\mathcal{V}],[\mathcal{U}])=0$
  we see that $\mathcal{T}(v,u)$ is acyclic for all
  $v \in \mathcal{V}$ and $u \in \mathcal{U}.$

  Let $r,r' \in \mathcal{R},$ $u \in \mathcal{U}$ and $v \in
  \mathcal{V}.$ Since $\mathcal{R}(r,r')$ is $\groundring$-h-flat,
  $\mathcal{R}(r,r')\otimes \mathcal{T}(v,u)$ is acyclic. This
  implies that
  \begin{equation*}
    [\tildew{\mathcal{T}}]\big(\Yoneda{(r,v)}, \Yoneda{(r',u)}\big)=
    [\mathcal{R} \otimes \mathcal{T}]((r,v),(r',u))= H^0(\mathcal{R}(r,r')\otimes \mathcal{T}(v,u))=0.
  \end{equation*}
  Since $[\tildew{\mathcal{U}}]=\thick(\mathcal{R}\otimes \mathcal{U})$ is classically generated
  by the objects $\Yoneda{(r,u)},$ for $r \in \mathcal{R}$ and $u \in \mathcal{U},$ and
  similarly for $[\tildew{\mathcal{V}}],$ we see that
  $[\tildew{\mathcal{T}}]([\tildew{\mathcal{V}}], [\tildew{\mathcal{U}}]) = 0.$

  Let $r \in \mathcal{R}$ and $t \in \mathcal{T}.$ There are $v \in \mathcal{V}$ and $u \in \mathcal{U}$ such
  that there is a triangle
  $v \ra t \ra u \ra [1]v$
  in $[\mathcal{T}].$ Consider the dg functor $i_r \colon  \mathcal{T} \ra \mathcal{R}
  \otimes \mathcal{T},$ $t' \mapsto (r,t').$ 
  It induces a
  commutative diagram
  \begin{equation*}
    \xymatrix{
      {\mathcal{T}} \ar[d]^-{i_r} \iar[r]^-{Y} &
      {\Perf(\mathcal{T})} \ar[d]^-{i_r^*}\\
      {\mathcal{R} \otimes \mathcal{T}} \iar[r]^-{Y} &
      {\tildew{\mathcal{T}}=\Perf(\mathcal{R}\otimes \mathcal{T})}
    }
  \end{equation*}
  (see Lemma~\ref{l:dg-functors-restricts-to-perfects-embedding-and-qequiv}.\ref{enum:restricts-to-Perf}). If we pass to homotopy categories,
  the upper horizontal and the right vertical functor are triangulated
  functors; they map the above triangle to the triangle
  \begin{equation*}
    \Yoneda{(r,v)} \ra \Yoneda{(r,t)} \ra \Yoneda{(r,u)} \ra [1]\Yoneda{(r,v)}
  \end{equation*}
  in $[\tildew{\mathcal{T}}].$
  Note that $[\tildew{\mathcal{T}}]$ is classically generated by
  the objects $\Yoneda{(r,t)},$ for $r \in \mathcal{R}$ and $t \in \mathcal{T},$ and that
  both $[\tildew{\mathcal{V}}]$ and $[\tildew{\mathcal{U}}]$ are Karoubian
  (by Lemma~\ref{l:triang-versus-pretriang-and-karoubi} and
  Corollary~\ref{c:triang-versus-pretriang-and-karoubi}).  Define
  $\tildew{\mathcal{V}}'$ to be the full dg subcategory of
  $\tildew{\mathcal{T}}$ such that $[\tildew{\mathcal{V}}']$ is
  the closure under isomorphisms of $[\tildew{\mathcal{V}}]$ in
  $[\tildew{\mathcal{T}}];$ define $\tildew{\mathcal{U}}'$
  similarly.  (Note that $\tildew{\mathcal{V}} \ra
  \tildew{\mathcal{V}}'$ is a quasi-equivalence by
  Lemma~\ref{l:dg-functor-qequi-test}.)
  
  From
  \cite[Lemma~\ref{semi:l:right-admissible-by-triangulated-and-classical-generators}.\ref{semi:enum:T-is-thick-envelope-of-E}]{valery-olaf-matfak-semi-orth-decomp}
  we see that 
  $[\tildew{\mathcal{T}}]=\langle [\tildew{U}'], [\tildew{V}']
  \rangle$ 
  is a semi-orthogonal decomposition.
  In particular, 
  $[\tildew{\mathcal{V}}']$
  is
  right admissible and
  $[\tildew{\mathcal{U}}']$ is left admissible in
  $[\tildew{\mathcal{T}}].$

  Assume now in addition that $[\mathcal{U}]$ 
  is right admissible in $[\mathcal{T}].$
  Then \cite[Lemma~\ref{semi:l:first-properties-semi-orthog-decomp}.\ref{semi:enum:admissible-and-semi-orth}]{valery-olaf-matfak-semi-orth-decomp}
  says that $[\mathcal{T}]=\langle [\mathcal{U}]^\perp,
  [\mathcal{U}] \rangle$ is a semi-orthogonal decomposition. 
  Let $\mathcal{U}^\dag$ be the full dg subcategory of $\mathcal{T}$ that has the same
  objects as $[\mathcal{U}]^\perp,$ so $[\mathcal{U}^\dag]=[\mathcal{U}]^\perp.$ 
  Then the above argument shows that $[\tildew{\mathcal{U}}']$ is
  right admissible.
  Similarly, left admissibility of $[\mathcal{V}]$ implies that
  $[\tildew{\mathcal{V}}']$ is left admissible.
\end{proof}

\begin{proposition}
  \label{p:semi-od-heredity}
  Let $\mathcal{T}$ be a pretriangulated dg category with full dg
  subcategories $\mathcal{U}$ and $\mathcal{V}$ such that
  $[\mathcal{T}]=\langle [\mathcal{U}],[\mathcal{V}]\rangle$ is a
  semi-orthogonal decomposition.  Then $\mathcal{U}$ and
  $\mathcal{V}$ are pretriangulated as well. Moreover, if
  $\mathcal{T}$ is triangulated (resp.\ is locally perfect resp.\
  has a compact generator resp.\ is smooth resp.\ is proper
  resp.\ is saturated) then $\mathcal{U}$ and
  $\mathcal{V}$ have 
  the same property.
\end{proposition}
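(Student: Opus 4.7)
\textit{Proof plan.} I establish each property in turn, from easiest to hardest; the only genuinely non-trivial step is smoothness.

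\textit{Pretriangulated, locally perfect, triangulated.} Because $[\mathcal{U}]$ is a strict full triangulated subcategory of the pretriangulated $[\mathcal{T}]$, shifts and cones from $[\mathcal{T}]$ can be lifted into $\mathcal{U}$: for a closed degree zero morphism $f$ in $\mathcal{U}$, pretriangulatedness of $\mathcal{T}$ gives $T \in \mathcal{T}$ with $Y(T) \cong \Cone(Y(f))$ in $[\calMod(\mathcal{T})]$, and since $[T] \in [\mathcal{U}]$ we may arrange $T \in \mathcal{U}$; shifts are analogous. Thus $\mathcal{U} \to \ol{\mathcal{U}}$ is a quasi-equivalence. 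Local perfectness is immediate from fullness $\mathcal{U}(U,U') = \mathcal{T}(U,U')$. Triangulatedness then follows from Lemma~\ref{l:triang-versus-pretriang-and-karoubi}, since $[\mathcal{U}]$ is Karoubian as an admissible subcategory of the Karoubian triangulated category $[\mathcal{T}]$.

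\textit{Has a compact generator, proper.} Apply Proposition~\ref{p:semi-od-goes-to-Perf} with $\mathcal{R} = \groundring$ to obtain an induced semi-orthogonal decomposition $[\Perf(\mathcal{T})] = \langle [\Perf(\mathcal{U})'], [\Perf(\mathcal{V})'] \rangle$ into admissible subcategories. The projection of any classical generator of $[\Perf(\mathcal{T})]$ to $[\Perf(\mathcal{U})']$ is again a classical generator (any object of the admissible component equals its own projection), and the quasi-equivalence $\Perf(\mathcal{U}) \xra{\sim} \Perf(\mathcal{U})'$ transports this to a classical generator of $\per(\mathcal{U})$, hence a compact generator of $D(\mathcal{U})$. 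Properness is then the combination of locally perfect and having a compact generator.

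\textit{Smooth and saturated (the main obstacle).} I reduce to dg algebras via Proposition~\ref{p:pretriang-classical-generator}: choose compact generators $U \in [\mathcal{U}]$ and $V \in [\mathcal{V}]$, so that $F := U \oplus V$ is a compact generator of $[\mathcal{T}]$. Semi-orthogonality makes $\mathcal{T}(V,U)$ acyclic, so $B := \mathcal{T}(F,F)$ is quasi-isomorphic to the block-triangular dg algebra
\[
  R := \begin{pmatrix} A & 0 \\ N & C \end{pmatrix}, \qquad A := \mathcal{U}(U,U),\ C := \mathcal{V}(V,V),\ N := \mathcal{T}(U,V).
\]
By Proposition~\ref{p:pretriang-classical-generator}, smoothness of $\mathcal{T}$ becomes smoothness of $B \simeq R$, and it remains to show that $R$ smooth implies $A$ smooth. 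This is a bimodule computation with the orthogonal idempotents $e_A, e_C \in R^0$: the algebra section $A \hra R$ and the projection $R \sra A$ realize $A$ as $e_A R e_A$, and perfectness of the diagonal $R$-bimodule forces, by restriction and extension of scalars along this section and projection, perfectness of the diagonal $A$-bimodule. A final application of Proposition~\ref{p:pretriang-classical-generator} then transfers smoothness back from $A$ to $\mathcal{U}$, and saturatedness is the combination of triangulated, smooth, and proper. The key technical point to verify carefully is thus the algebraic fact that smoothness of a block-triangular dg algebra is inherited by its diagonal corners.
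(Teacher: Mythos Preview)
Your argument for pretriangulated, locally perfect, triangulated, compact generator, and proper is essentially the paper's, with only a cosmetic overstatement: Proposition~\ref{p:semi-od-goes-to-Perf} gives a semi-orthogonal decomposition of $[\Perf(\mathcal{T})]$, but it is \emph{into admissible subcategories} only when the original one was, which is not assumed here. This does not matter for your projection argument, since in any semi-orthogonal decomposition $\langle A,B\rangle$ the inclusion of $A$ has a left adjoint and that of $B$ a right adjoint, which is all you use. Similarly, $[\mathcal{U}]$ is Karoubian because $[\mathcal{U}]=[\mathcal{V}]^\perp$ is a right orthogonal, not because it is ``admissible''.

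There is, however, a genuine gap in the smoothness step. You reduce to dg algebras by choosing classical generators $U\in[\mathcal{U}]$ and $V\in[\mathcal{V}]$, but the proposition treats the properties independently: when $\mathcal{T}$ is merely assumed smooth, there is no reason for $\mathcal{U}$ or $\mathcal{V}$ (or even $\mathcal{T}$) to have a compact generator, so your invocation of Proposition~\ref{p:pretriang-classical-generator} is unjustified. The paper circumvents this by staying at the level of dg categories: it forms the full dg subcategory $\mathcal{E}\subset\mathcal{T}$ on the union of the objects of $\mathcal{U}$ and $\mathcal{V}$, replaces it by the quasi-equivalent lower-triangular dg category $\mathcal{E}'=\tzmat{\mathcal{U}}{0}{*}{\mathcal{V}'}$ (using acyclicity of $\mathcal{T}(V',U)$), checks that $\Perf(\mathcal{E})\to\Perf(\mathcal{T})$ is a quasi-equivalence, and then appeals to \cite[Thm.~3.24]{valery-olaf-smoothness-equivariant}, which asserts directly that smoothness of a lower-triangular dg \emph{category} passes to its diagonal blocks. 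Your ``key technical point'' is precisely the one-object special case of that theorem (cf.\ Remark~3.25 there), so the content is the same; what you are missing is the category-level formulation that avoids the compact-generator hypothesis. For the saturated case your reduction is fine, since compact generators are then available.
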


\begin{proof}
  It is clear that $\mathcal{U}$ and $\mathcal{V}$ are
  pretriangulated. 

  Triangulated: 
  If $[\mathcal{T}]$ is Karoubian, so are $[\mathcal{U}]$ and
  $[\mathcal{V}]$ since $[\mathcal{U}]=[\mathcal{V}]^\perp$ and
  $[\mathcal{V}]=\leftidx{^\perp}{[\mathcal{U}]}{},$ and we can
  apply Lemma~\ref{l:triang-versus-pretriang-and-karoubi}.

  Locally perfect: this is obviously passed to any full dg subcategory.

  Has a compact generator. 
  The first claim of
  Proposition~\ref{p:semi-od-goes-to-Perf} 
  (together with
  Lemmata~\ref{l:properties-dg-K-cat-preserved-under-quasi-equivalence}
  and \ref{l:A-and-PerfA})
  shows that we can assume that $\mathcal{T}$ is
  triangulated.
  Then by
  assumption $[\mathcal{T}] \sira [\Perf(\mathcal{T})] \sira
  \per(\mathcal{T})$ has a classical generator.  The obvious
  functors $[\mathcal{U}] \ra [\mathcal{T}]/[\mathcal{V}]$ and
  $[\mathcal{V}] \ra [\mathcal{T}]/[\mathcal{U}]$ are
  equivalences of triangulated categories. This implies that
  $[\mathcal{U}] \sira \per(\mathcal{U})$ and $[\mathcal{V}]
  \sira \per(V)$ have classical generators, i.\,e.\ $\mathcal{U}$
  and $\mathcal{V}$ have compact generators.

  Smooth:
  Let $\mathcal{E} \subset \mathcal{T}$ be the full dg
  subcategory of $\mathcal{T}$ whose objects are the union of the
  objects of $\mathcal{U}$ and $\mathcal{V}.$ Let $\mathcal{V}'$
  be the full dg subcategory of $\mathcal{V}$ obtained by
  ignoring all objects that are also in $\mathcal{U}.$
  Let $\mathcal{E}' \subset \mathcal{E}$ be the (in general
  non-full) dg subcategory with the same objects and morphism
  spaces as $\mathcal{E}$ except that we set
  $\mathcal{E}'(V',U)=0$ for all $V' \in
  \mathcal{V}'$ and $U \in \mathcal{U}.$ 
  Then $\mathcal{E}' \ra \mathcal{E}$ is a quasi-equivalence
  since all $\mathcal{T}(V',U)$ are acyclic.
  Symbolically, this inclusion may be written as
  $
  \mathcal{E}'
  =
  \Big[\begin{smallmatrix}
    \mathcal{U} & 0 \\
    \leftidx{_{\mathcal{V}'}}{\mathcal{T}}{_\mathcal{U}} &
    \mathcal{V}'
  \end{smallmatrix}\Big] \subset
  \mathcal{E}
  =
  \Big[\begin{smallmatrix}
    \mathcal{U} &
    \leftidx{_\mathcal{U}}{\mathcal{T}}{_{\mathcal{V}'}} \\ 
    \leftidx{_{\mathcal{V}'}}{\mathcal{T}}{_\mathcal{U}} &
    \mathcal{V}'
  \end{smallmatrix}\Big].$
  Lemma~\ref{l:dg-functors-restricts-to-perfects-embedding-and-qequiv} 
  implies that $\Perf(\mathcal{E}') \ra \Perf(\mathcal{E})$ is a
  quasi-equivalence and that $\Perf(\mathcal{E}) \ra
  \Perf(\mathcal{T})$ is full and faithful; but in fact this last
  arrow is
  also a quasi-equivalence: on homotopy categories it induces an
  equivalence since each object of $[\Perf(\mathcal{T})]$ is an
  extension of an object of $[\Perf(\mathcal{U})]$ by an object
  of $[\Perf(\mathcal{V})])$ (by
  Proposition~\ref{p:semi-od-goes-to-Perf}),
  so we can use Lemma~\ref{l:dg-functor-qequi-test}.
  Hence smoothness of $\mathcal{T}$ implies smoothness of
  $\mathcal{E}'$ (using Lemmata~\ref{l:properties-dg-K-cat-preserved-under-quasi-equivalence}
  and \ref{l:A-and-PerfA} again), and then
  \cite[Thm.~3.24]{valery-olaf-smoothness-equivariant}
  implies smoothness of both $\mathcal{U}$ and $\mathcal{V}.$

  Proper, saturated: Clear from above.
\end{proof}

\begin{corollary}
  \label{c:semi-od-enhancement}
  Let $\mathcal{D}$ be a triangulated category with a
  semi-orthogonal decomposition $\mathcal{D}= \langle
  \mathcal{A}, \mathcal{B}\rangle.$
  Then the following
  properties of an enhancement of $\mathcal{D}$ are passed on to
  the induced 
  enhancements of $\mathcal{A}$ and $\mathcal{B}$:
  being triangulated, being locally perfect,
  having a compact generator, smoothness, properness,
  being saturated.
\end{corollary}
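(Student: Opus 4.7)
The plan is to reduce the corollary to Proposition~\ref{p:semi-od-heredity}. By an enhancement of $\mathcal{D}$ I mean a pretriangulated dg category $\mathcal{T}$ equipped with a triangulated equivalence $\epsilon \colon [\mathcal{T}] \sira \mathcal{D}$. Transporting the semi-orthogonal decomposition $\mathcal{D} = \langle \mathcal{A}, \mathcal{B}\rangle$ through $\epsilon$ produces a semi-orthogonal decomposition of $[\mathcal{T}]$. The natural induced enhancements of $\mathcal{A}$ and $\mathcal{B}$ are then the full dg subcategories $\mathcal{U}, \mathcal{V} \subset \mathcal{T}$ whose object sets consist of those $t \in \mathcal{T}$ with $\epsilon(t) \in \mathcal{A}$, respectively $\epsilon(t) \in \mathcal{B}$.

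With this setup, $[\mathcal{U}]$ and $[\mathcal{V}]$ are strict full triangulated subcategories of $[\mathcal{T}]$ which $\epsilon$ identifies with $\mathcal{A}$ and $\mathcal{B}$, so that $[\mathcal{T}] = \langle [\mathcal{U}], [\mathcal{V}]\rangle$ is a semi-orthogonal decomposition in exactly the form required by Proposition~\ref{p:semi-od-heredity}. That proposition asserts that each of the six listed properties of $\mathcal{T}$ — being triangulated, locally perfect, having a compact generator, smoothness, properness, saturation — is inherited by both $\mathcal{U}$ and $\mathcal{V}$. Since these six notions are invariant under quasi-equivalence by Lemma~\ref{l:properties-dg-K-cat-preserved-under-quasi-equivalence}, they are bona fide properties of an enhancement rather than of a particular dg model, so the corollary follows.

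There is no substantive obstacle to overcome here; the whole content of the corollary is already packed into Proposition~\ref{p:semi-od-heredity}, and the only bookkeeping step is to pin down what the \emph{induced} enhancement means. The natural and essentially only reasonable choice is the full dg subcategory of $\mathcal{T}$ spanned by those objects whose images under $\epsilon$ lie in the prescribed component of the semi-orthogonal decomposition, and with this interpretation the translation from Proposition~\ref{p:semi-od-heredity} to Corollary~\ref{c:semi-od-enhancement} is immediate.
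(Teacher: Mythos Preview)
Your proof is correct and matches the paper's approach: the corollary is stated without proof in the paper precisely because it is an immediate consequence of Proposition~\ref{p:semi-od-heredity}, and your bookkeeping about what the induced enhancements $\mathcal{U}, \mathcal{V} \subset \mathcal{T}$ are makes this explicit.
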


\subsection{Grothendieck ring of saturated dg categories}
\label{sec:groth-ring-satur}

Let $\sat_\groundring$ be the full subcategory of
$\dgcat_\groundring$ consisting of saturated dg categories.
By $\ul{\sat_\groundring}$ 
we
denote the set of isomorphism 
classes in $\Heq_\groundring$ of these
categories
(cf.\
Lemma~\ref{l:properties-dg-K-cat-preserved-under-quasi-equivalence}).
Given a saturated dg category $\mathcal{T},$ we write $\ul{\mathcal{T}}$ for its class
in $\ul{\sat_\groundring}.$

\begin{proposition}
  \label{p:ulsat-as-a-commutative-monoid}
  The map $(\mathcal{S},\mathcal{T}) \mapsto \mathcal{S} \Perfotimes \mathcal{T}$ induces a
  multiplication $\bullet$ on $\ul{\sat_\groundring}$ that
  turns $\ul{\sat_\groundring}$ into a commutative monoid
  with unit $\ul{\Perf(\groundring)}.$
\end{proposition}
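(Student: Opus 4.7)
The plan is to verify in turn: first that the operation $(\mathcal{S},\mathcal{T}) \mapsto \mathcal{S} \Perfotimes \mathcal{T}$ descends to a well-defined binary operation on $\ul{\sat_\groundring}$; then that this operation is commutative and associative; and finally that $\ul{\Perf(\groundring)}$ is a two-sided identity.

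For well-definedness, Proposition~\ref{p:triang-tensor-product-of-saturated-dg-K-cats} already ensures that $\mathcal{S} \Perfotimes \mathcal{T}$ is saturated whenever $\mathcal{S}$ and $\mathcal{T}$ are, and Lemma~\ref{l:Perfotimes-respects-qequivalences} applied along a zig-zag of quasi-equivalences shows that $\mathcal{S} \Perfotimes \mathcal{T}$ depends only on the $\Heq_\groundring$-isomorphism classes of the two inputs; so $\bullet$ is a well-defined binary operation on $\ul{\sat_\groundring}$.

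Commutativity is immediate from the swap of tensor factors: $Q(\mathcal{S}) \otimes Q(\mathcal{T}) \sira Q(\mathcal{T}) \otimes Q(\mathcal{S})$ is a dg isomorphism, which upon applying $\Perf$ yields a quasi-equivalence $\mathcal{S} \Perfotimes \mathcal{T} \sira \mathcal{T} \Perfotimes \mathcal{S}$ via Lemma~\ref{l:dg-functors-restricts-to-perfects-embedding-and-qequiv}.\ref{enum:qequi-induces-qequi-between-perfs}. For associativity, the key tool is an intermediate statement buried inside the proof of Proposition~\ref{p:triangulated-tensor-product-of-cats-and-their-Perfs}: for arbitrary dg categories $\mathcal{A}, \mathcal{B}$, the natural morphism $\Perf(\mathcal{A} \otimes^\bL \mathcal{B}) \ra \Perf(\Perf(\mathcal{A}) \otimes^\bL \mathcal{B})$ is a quasi-equivalence. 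Applying this with $\mathcal{A} = \mathcal{S} \otimes^\bL \mathcal{T}$ and $\mathcal{B} = \mathcal{U}$ produces
\begin{equation*}
  \Perf\bigl((\mathcal{S} \otimes^\bL \mathcal{T}) \otimes^\bL \mathcal{U}\bigr) \sira (\mathcal{S} \Perfotimes \mathcal{T}) \Perfotimes \mathcal{U},
\end{equation*}
and symmetrically $\Perf\bigl(\mathcal{S} \otimes^\bL (\mathcal{T} \otimes^\bL \mathcal{U})\bigr) \sira \mathcal{S} \Perfotimes (\mathcal{T} \Perfotimes \mathcal{U})$. Since the cofibrant replacements used in $\otimes^\bL$ are $\groundring$-h-flat (by \cite[Lemmata~2.14 and 2.15]{valery-olaf-smoothness-equivariant}), the three-fold derived tensor product is associative up to canonical isomorphism in $\Heq_\groundring$, and hence $(\mathcal{S} \Perfotimes \mathcal{T}) \Perfotimes \mathcal{U} \cong \mathcal{S} \Perfotimes (\mathcal{T} \Perfotimes \mathcal{U})$ in $\Heq_\groundring$.

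For the unit, first observe that $\Perf(\groundring)$ is saturated: the dg algebra $\groundring$ is trivially locally perfect and proper (it is its own compact generator) and smooth (the diagonal bimodule $\groundring$ lies in $\per(\groundring \otimes \groundring^\opp) = \per(\groundring)$), and Lemma~\ref{l:A-and-PerfA} together with Corollary~\ref{c:triang-versus-pretriang-and-karoubi} transfers these properties to $\Perf(\groundring)$. Then the same intermediate quasi-equivalence from the proof of Proposition~\ref{p:triangulated-tensor-product-of-cats-and-their-Perfs} gives
\begin{equation*}
  \mathcal{S} \Perfotimes \Perf(\groundring) = \Perf(\mathcal{S} \otimes^\bL \Perf(\groundring)) \sila \Perf(\mathcal{S} \otimes^\bL \groundring) \cong \Perf(\mathcal{S})
\end{equation*}
in $\Heq_\groundring$; and since saturation of $\mathcal{S}$ includes being triangulated, the Yoneda functor $\mathcal{S} \sira \Perf(\mathcal{S})$ is a quasi-equivalence, which together with commutativity finishes the argument. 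The only real subtlety is associativity, where one must carefully separate the derived tensor (which is associative in $\Heq_\groundring$) from the two $\Perf$ completions; Proposition~\ref{p:triangulated-tensor-product-of-cats-and-their-Perfs} is precisely the tool that legalizes this interchange.
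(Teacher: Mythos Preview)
Your proof is correct and follows essentially the same approach as the paper: both establish well-definedness via Lemma~\ref{l:Perfotimes-respects-qequivalences} and Proposition~\ref{p:triang-tensor-product-of-saturated-dg-K-cats}, reduce associativity to that of $\otimes^\bL$ in $\Heq_\groundring$ via Proposition~\ref{p:triangulated-tensor-product-of-cats-and-their-Perfs} (you invoke the one-sided intermediate step from its proof, the paper invokes the full statement together with $\mathcal{S}_i \sira \Perf(\mathcal{S}_i)$), and handle commutativity and the unit by the obvious isomorphisms of $\otimes^\bL$. Your explicit verification that $\Perf(\groundring)$ is saturated is a small addition the paper leaves implicit.
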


\begin{proof}
  Lemma~\ref{l:Perfotimes-respects-qequivalences} 
  and Proposition~\ref{p:triang-tensor-product-of-saturated-dg-K-cats}
  show that
  $\bullet$ is well defined. 
  Let $\mathcal{S}_1,$ $\mathcal{S}_2,$ and $\mathcal{S}_3$ be
  saturated dg categories. We know that 
  $\mathcal{S}_i \ra \Perf(\mathcal{S}_i)$ is a
  quasi-equivalence.
  Hence to obtain associativity of $\bullet$ it is enough to
  prove that $(\Perf(\mathcal{S}_1) \Perfotimes
  \Perf(\mathcal{S}_2)) \Perfotimes  
  \Perf(\mathcal{S}_3)$ and
  $(\Perf(\mathcal{S}_1) \Perfotimes
  \Perf(\mathcal{S}_2)) \Perfotimes  
  \Perf(\mathcal{S}_3)$ are isomorphic  
  in $\Heq_\groundring.$
  Proposition~\ref{p:triangulated-tensor-product-of-cats-and-their-Perfs}
  and Lemma~\ref{l:dg-functors-restricts-to-perfects-embedding-and-qequiv}.\ref{enum:qequi-induces-qequi-between-perfs}
  reduce this to showing that
  $(\mathcal{S}_1 \otimes^\bL
  \mathcal{S}_2) \otimes^\bL  
  \mathcal{S}_3$
  and $\mathcal{S}_1 \otimes^\bL
  (\mathcal{S}_2 \otimes^\bL  
  \mathcal{S}_3)$
  are isomorphic  
  in $\Heq_\groundring.$
  But this is easy to see since
  cofibrant dg
  categories are $\groundring$-h-flat.
  Similarly, commutativity follows from
  $\mathcal{S}_1 \otimes^\bL \mathcal{S}_2
  \cong \mathcal{S}_2 \otimes^\bL \mathcal{S}_1,$ 
  and $\mathcal{S}_1 \otimes^\bL \groundring
  \cong \mathcal{S}_1$ proves that $\ul{\Perf(\groundring)}$ is
  the unit.
\end{proof}

Denote by $\DZ\ul{\sat_\groundring}$ the (commutative
associative unital) monoid ring of
$\ul{\sat_\groundring},$ i.\,e.\ the free abelian group
on $\ul{\sat_\groundring}$ with $\DZ$-bilinear
multiplication induced by $\bullet.$

\begin{definition}
  [{cf.\ \cite[Def.~5.1,
    8.1]{bondal-larsen-lunts-grothendieck-ring}}]
  \label{def:Grothendieck-group-of-saturated-dg-cats}
  The \define{Grothendieck group $K_0(\sat_\groundring)$ of
    saturated dg categories} 
  is defined to be the quotient of
  $\DZ\ul{\sat_\groundring}$ by the subgroup generated by
  the elements (the "semi-ortogonal relations")
  $\ul{\mathcal{T}} - (\ul{\mathcal{U}} + \ul{\mathcal{V}})$
  whenever there is a saturated dg category $\mathcal{T}$ with
  full dg subcategories $\mathcal{U}$ and $\mathcal{V}$ 
  such that
  $[\mathcal{T}]=\langle [\mathcal{U}],[\mathcal{V}]\rangle$
  is a semi-orthogonal decomposition into admissible
  subcategories.
  (We do not require that $\mathcal{U}$ and
  $\mathcal{V}$ are saturated; this is automatic by
  Proposition~\ref{p:semi-od-heredity}.)
\end{definition}

If $0$ is the trivial dg algebra (considered as a dg category)
and if $\emptyset$ is the empty dg category, then
$0=\Perf(\emptyset),$ and we have
$\ul{0}=\ul{\Perf(\emptyset))}=0$ in $K_0(\sat_\groundring).$

\begin{proposition}
  \label{p:multiplication-of-Zsat-descends}
  The multiplication $\bullet$ on $\DZ\ul{\sat_\groundring}$
  induces a multiplication on $K_0(\sat_\groundring)$ such that $\DZ\ul{\sat_\groundring} \ra
  K_0(\sat_\groundring)$ is a ring morphism.
  Equipped with this multiplication, we call
  $K_0(\sat_\groundring)$ the \define{Grothendieck ring of
    saturated dg categories}. 
\end{proposition}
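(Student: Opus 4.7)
The plan is to verify that the bilinear multiplication $\bullet$ descends to the quotient $K_0(\sat_\groundring)$, i.e.\ that tensoring with any saturated dg category $\mathcal{S}$ carries the ``semi-orthogonal relation'' subgroup into itself. By $\DZ$-bilinearity and by commutativity of $\bullet$ on $\DZ\ul{\sat_\groundring}$ (Proposition~\ref{p:ulsat-as-a-commutative-monoid}), it suffices to show: given a saturated $\mathcal{S}$ and a semi-orthogonal decomposition $[\mathcal{T}]=\langle[\mathcal{U}],[\mathcal{V}]\rangle$ into admissible subcategories with $\mathcal{T}$ saturated, one has
\begin{equation*}
  \ul{\mathcal{S}} \bullet \ul{\mathcal{T}}
  = \ul{\mathcal{S}} \bullet \ul{\mathcal{U}} + \ul{\mathcal{S}} \bullet \ul{\mathcal{V}}
  \qquad \text{in } K_0(\sat_\groundring).
\end{equation*}

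The key step is to apply the second part of Proposition~\ref{p:semi-od-goes-to-Perf} to $\mathcal{R}:=Q(\mathcal{S})$, which is $\groundring$-h-flat as a cofibrant dg category. This furnishes a semi-orthogonal decomposition
\begin{equation*}
  [\Perf(Q(\mathcal{S}) \otimes \mathcal{T})]
  = \langle [\Perf(Q(\mathcal{S}) \otimes \mathcal{U})'],\, [\Perf(Q(\mathcal{S}) \otimes \mathcal{V})']\rangle
\end{equation*}
into admissible subcategories, which is precisely of the shape appearing in the definition of the relation subgroup -- provided we can identify the three dg categories appearing above with $\mathcal{S}\bullet\mathcal{T}$, $\mathcal{S}\bullet\mathcal{U}$, $\mathcal{S}\bullet\mathcal{V}$ up to quasi-equivalence, and provided all three are saturated.

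For the identification, note that the quasi-equivalence $Q(\mathcal{T}) \to \mathcal{T}$ tensored with the h-flat $Q(\mathcal{S})$ yields a quasi-equivalence $Q(\mathcal{S}) \otimes Q(\mathcal{T}) \to Q(\mathcal{S}) \otimes \mathcal{T}$, so Lemma~\ref{l:dg-functors-restricts-to-perfects-embedding-and-qequiv}.\ref{enum:qequi-induces-qequi-between-perfs} gives a quasi-equivalence
\begin{equation*}
  \mathcal{S} \Perfotimes \mathcal{T} = \Perf(Q(\mathcal{S}) \otimes Q(\mathcal{T}))
  \xrightarrow{\sim} \Perf(Q(\mathcal{S}) \otimes \mathcal{T}),
\end{equation*}
and similarly with $\mathcal{U}$ or $\mathcal{V}$ in place of $\mathcal{T}$; the latter two are then further quasi-equivalent to the primed versions by Proposition~\ref{p:semi-od-goes-to-Perf}. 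For the saturation requirement, $\mathcal{S}\Perfotimes\mathcal{T}$ is saturated by Proposition~\ref{p:triang-tensor-product-of-saturated-dg-K-cats}, and the two admissible components inherit saturation from the ambient by Proposition~\ref{p:semi-od-heredity} (together with Lemma~\ref{l:properties-dg-K-cat-preserved-under-quasi-equivalence} to transport along the quasi-equivalence above).

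The main (mild) obstacle is the bookkeeping of cofibrant replacements and primed versus unprimed perfect-module categories -- essentially organizing which quasi-equivalences to invoke so as to land exactly in the shape of the relation. Once done, the ring-morphism statement $\DZ\ul{\sat_\groundring} \to K_0(\sat_\groundring)$ is automatic: the multiplication on the target is defined by transport, the quotient map respects $+$ by construction, and it respects $\bullet$ by what we just proved.
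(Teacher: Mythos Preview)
Your proof is correct and follows essentially the same approach as the paper's own proof: both reduce to showing the relation subgroup is an ideal, replace $\mathcal{S}\otimes^\bL(-)$ by $Q(\mathcal{S})\otimes(-)$ via h-flatness and Lemma~\ref{l:dg-functors-restricts-to-perfects-embedding-and-qequiv}.\ref{enum:qequi-induces-qequi-between-perfs}, and then invoke the second part of Proposition~\ref{p:semi-od-goes-to-Perf} with $\mathcal{R}=Q(\mathcal{S})$. Your version is slightly more explicit about verifying saturation of $\Perf(Q(\mathcal{S})\otimes\mathcal{T})$ (via Proposition~\ref{p:triang-tensor-product-of-saturated-dg-K-cats} and transport along the quasi-equivalence), which the paper leaves implicit.
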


\begin{proof}
  Let $I \subset \DZ\ul{\sat_\groundring}$ be the
  subgroup generated by the "semi-orthogonal relations". We need
  to show that $I$ is an ideal in
  $\DZ\ul{\sat_\groundring}.$
  Assume that $\mathcal{T}$ is a saturated dg category with
  (saturated) dg 
  subcategories $\mathcal{U}$ and $\mathcal{V}$ such that
  $[\mathcal{T}]=\langle [\mathcal{U}],[\mathcal{V}]\rangle$ is a
  semi-orthogonal decomposition into admissible subcategories.

  Let $\mathcal{S}$ be any saturated dg category. We need to
  prove that 
  \begin{equation*}
    \ul{\mathcal{S}} \bullet \ul{\mathcal{T}} - (\ul{\mathcal{S}} \bullet \ul{\mathcal{U}} + \ul{\mathcal{S}} \bullet
    \ul{\mathcal{V}})
    =
    \ul{\Perf(\mathcal{S} \otimes^\bL \mathcal{T})}
    -
    (\ul{\Perf(\mathcal{S} \otimes^\bL \mathcal{U})}
    +
    \ul{\Perf(\mathcal{S} \otimes^\bL \mathcal{V})})
  \end{equation*}
  is an element of $I.$
  Observe that
  $\mathcal{S}\otimes^\bL \mathcal{A} = Q(\mathcal{S}) \otimes Q(\mathcal{A}) \ra Q(\mathcal{S})
    \otimes \mathcal{A}$
  is a quasi-equivalence (for $\mathcal{A}$ an arbitrary dg
  category) 
  since $Q(\mathcal{S})$ is $\groundring$-h-flat.
  Lemma~\ref{l:dg-functors-restricts-to-perfects-embedding-and-qequiv}.\ref{enum:qequi-induces-qequi-between-perfs}
  shows that
  the above element is equal to
  \begin{equation*}
    \ul{\Perf(Q(\mathcal{S}) \otimes \mathcal{T})}
    -
    \ul{\Perf(Q(\mathcal{S}) \otimes \mathcal{U})}
    +
    \ul{\Perf(Q(\mathcal{S}) \otimes \mathcal{V})}.
  \end{equation*}
  But this element lies in $I$ by
  Proposition~\ref{p:semi-od-goes-to-Perf}.
\end{proof}

\begin{remark}
  Gon\c{c}alo Tabuada shows in
  \cite[section~7]{tabuada-inv-additifs} that for $\groundring$ a
  field (and in the differential $\DZ$-graded situation) 
  there is a surjective morphism
  $K_0(\sat_\groundring) \ra K_0(\Hmo_0^{cl})$
  of commutative rings. We refer the reader to  
  \cite{tabuada-inv-additifs} for the definition of 
  $K_0(\Hmo_0^{cl}).$ This ring is non-zero, so the same is
  true for $K_0(\sat_\groundring).$
\end{remark}

The results of the following subsections~\ref{sec:modif-groth-ring},
\ref{sec:groth-ring-prop-smooth-cats} and
\ref{sec:groth-ring-prop-smooth-alg} are dispensable for
sections~\ref{sec:groth-ring-vari},
\ref{sec:thom-sebast-smoothness},
\ref{sec:land-ginzb-motiv} and
\ref{sec:compactification}.

\subsection{Modified Grothendieck ring of saturated dg categories}
\label{sec:modif-groth-ring}

By omitting the words "into admissible subcategories" in
Definition~\ref{def:Grothendieck-group-of-saturated-dg-cats}
we define the \define{modified Grothendieck group
  $K'_0(\sat_\groundring)$ of saturated dg categories.}
The proof of Proposition~\ref{p:multiplication-of-Zsat-descends}
shows that $K'_0(\sat_\groundring)$ becomes a ring with
multiplication induced by $\bullet.$
There is an obvious surjective morphism  
\begin{equation}
  \label{eq:K0-to-K0prime1}
  K_0(\sat_\groundring) \ra K'_0(\sat_\groundring)  
\end{equation}
of rings.

\begin{proposition}
  \label{p:modified-K0}
  If $\groundring$ is a field, then the map
  \eqref{eq:K0-to-K0prime1} is an isomorphism.
\end{proposition}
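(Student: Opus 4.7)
The surjectivity of \eqref{eq:K0-to-K0prime1} is tautological, since every generator on either side is the class of a saturated dg category and every defining relation of $K_0(\sat_\groundring)$ is also a defining relation of $K'_0(\sat_\groundring)$. To prove that the map is injective, it suffices to show that \emph{every} semi-orthogonal decomposition arising in the definition of $K'_0(\sat_\groundring)$ is automatically a semi-orthogonal decomposition into admissible subcategories. So I would fix a saturated dg category $\mathcal{T}$ together with full dg subcategories $\mathcal{U}$ and $\mathcal{V}$ such that $[\mathcal{T}] = \langle [\mathcal{U}], [\mathcal{V}] \rangle$ is a semi-orthogonal decomposition, and aim to show that both $[\mathcal{U}]$ and $[\mathcal{V}]$ are admissible in $[\mathcal{T}]$.

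The first step is to notice that, from the gluing triangles $B \to D \to A \to B[1]$ of the semi-orthogonal decomposition, the inclusion $[\mathcal{U}] \hookrightarrow [\mathcal{T}]$ already has the left adjoint $D \mapsto A$ and the inclusion $[\mathcal{V}] \hookrightarrow [\mathcal{T}]$ already has the right adjoint $D \mapsto B$. Thus $[\mathcal{U}]$ is already left admissible and $[\mathcal{V}]$ is already right admissible, and the remaining content is to produce a right adjoint to $[\mathcal{U}] \hookrightarrow [\mathcal{T}]$ and a left adjoint to $[\mathcal{V}] \hookrightarrow [\mathcal{T}]$. The second step is to apply Proposition~\ref{p:semi-od-heredity}: since $\mathcal{T}$ is saturated, both $\mathcal{U}$ and $\mathcal{V}$ are saturated; in particular, $[\mathcal{U}]$ and $[\mathcal{V}]$ are themselves saturated triangulated categories over the field $\groundring$ (via their enhancements), and so is $[\mathcal{V}]^{\mathrm{op}}$ (opposites of saturated dg categories are saturated).

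The third and decisive step is to invoke the Bondal--Van den Bergh representability theorem for saturated triangulated categories over a field: on such a category, every cohomological functor of finite type to $\groundring$-modules is representable. Given $t \in [\mathcal{T}]$, consider the cohomological functor
\begin{equation*}
H_t \colon [\mathcal{U}]^{\mathrm{op}} \to \Mod(\groundring), \qquad u \mapsto [\mathcal{T}](u,t).
\end{equation*}
Since $\mathcal{T}$ is locally perfect over the field $\groundring$, the graded $\groundring$-module $\bigoplus_r [\mathcal{T}](u[-r],t)$ is finite dimensional for every $u$, so $H_t$ is of finite type. By Bondal--Van den Bergh applied to the saturated category $[\mathcal{U}]$, there is an object $u_t \in [\mathcal{U}]$ and a natural isomorphism $H_t \cong [\mathcal{U}](-,u_t)$. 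The assignment $t \mapsto u_t$ is the desired right adjoint, so $[\mathcal{U}]$ is right admissible, hence admissible. A symmetric argument applied to $[\mathcal{V}]^{\mathrm{op}}$ produces the left adjoint to $[\mathcal{V}] \hookrightarrow [\mathcal{T}]$, so $[\mathcal{V}]$ is also admissible.

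The principal obstacle is the appeal to Bondal--Van den Bergh, which is precisely where the hypothesis that $\groundring$ is a field enters (the finite-type condition and the representability statement both rely on finite dimensionality over $\groundring$). The rest is formal: one only has to verify that the Bondal--Van den Bergh statement applies to our notion of saturated dg category, which is immediate from Proposition~\ref{p:saturated-easy-characterization} together with the standard equivalence $[\mathcal{U}] \sira \per(\mathcal{U})$ for triangulated $\mathcal{U}$. Once both $[\mathcal{U}]$ and $[\mathcal{V}]$ are shown admissible, the semi-orthogonal relation in $K'_0(\sat_\groundring)$ is already a semi-orthogonal-into-admissible relation in $K_0(\sat_\groundring)$, so \eqref{eq:K0-to-K0prime1} is an isomorphism.
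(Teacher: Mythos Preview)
Your argument is correct and follows the same route as the paper: use Proposition~\ref{p:semi-od-heredity} to see that $\mathcal{U}$ and $\mathcal{V}$ are themselves saturated, then invoke representability of finite-type cohomological functors to obtain the missing adjoints, hence admissibility. The paper packages the last step by citing \cite[Thm.~3.1]{shklyarov-serre-duality-cpt-smooth-arXiv} (representability on $\per(A)$ for $A$ smooth and proper) together with \cite[Prop.~2.6]{bondal-kapranov-representable-functors} (a ``saturated'' subcategory in the Bondal--Kapranov sense is admissible), whereas you unwrap these references and build the right adjoint $t \mapsto u_t$ by hand; the content is the same.

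One point the paper handles that you do not: per Remark~\ref{rem:more-general} the result is also asserted in the differential $\DZ_n$-graded setting, and the paper explicitly gives a second proof for that case. The Bondal--Van den Bergh theorem you invoke is formulated for $\DZ$-graded triangulated categories; in the periodic setting one needs the variant stated as Proposition~\ref{p:represent-functor-on-periodic-cat} (strong generator with $E \cong [n]E$) and its consequence Proposition~\ref{p:saturated-dZng-cat-functors-rep}. Your finite-type check still goes through (locally perfect over a field gives $\dim_\groundring \bigoplus_r [\mathcal{T}](u[-r],t) < \infty$ with the sum now finite), but the representability input itself must be replaced by this periodic version. With that substitution your proof covers both cases.
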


The proof of this result requires some additional care in the
differential $\DZ_n$-graded setting.  

\begin{proof}[Proof of Prop.~\ref{p:modified-K0} in the differential $\DZ$-graded setting]  
  Let $\mathcal{T}$ be
  a saturated dg category $\mathcal{T}$ with full dg
  subcategories $\mathcal{U}$ and $\mathcal{V}$ such that
  $[\mathcal{T}]=\langle [\mathcal{U}],[\mathcal{U}]\rangle$ is a
  semi-orthogonal decomposition. We have seen that this already
  implies that both $\mathcal{U}$ and $\mathcal{V}$ are saturated
  dg categories. Then
  \cite[Thm.~3.1]{shklyarov-serre-duality-cpt-smooth-arXiv} (and
  Proposition~\ref{p:saturated-easy-characterization}) show that
  $[\mathcal{U}]$ and $[\mathcal{V}]$ are "saturated" in the
  sense that they are $\Ext$-finite and all covariant and
  contravariant cohomological functors $[\mathcal{T}] \ra
  \Vect(\groundring)$ of finite type are representable. Here
  $\Vect(\groundring)$ denotes the category of vector spaces
  over $\groundring.$ Then
  \cite[Prop.~2.6]{bondal-kapranov-representable-functors} tells
  us that $[\mathcal{U}]$ and $[\mathcal{V}]$ are both admissible
  in $[\mathcal{T}].$
\end{proof}

The following proposition is a variant of
\cite[Thm.~1.3]{bondal-vdbergh-generators}.
We denote the category of finite dimensional vector spaces
over $\groundring$ by $\Vect_\fd(\groundring).$

\begin{proposition}
  \label{p:represent-functor-on-periodic-cat}
  Let $\groundring$ be a field and
  $\mathcal{D}$ a triangulated (in the usual Verdier sense)
  $\groundring$-linear
  category. Assume that 
  \begin{enumerate}
  \item
    \label{enum:fin-dim-hom}
    $\dim_\groundring \mathcal{D}(A,B) < \infty$ for all
    objects $A,$ $B$ of $\mathcal{D};$
  \item 
    \label{enum:strong-gen-shiftfinite}
    $\mathcal{D}$ has a strong generator $E$ such that 
    there is some $n > 0$ such that $E \cong [n]E$;
    and 
  \item 
    \label{enum:karoubian}
    $\mathcal{D}$ is Karoubian.
  \end{enumerate}
  Then  every $\groundring$-linear cohomological functor $\mathcal{D}^\opp \ra
  \Vect_\fd(\groundring)$ is representable.
\end{proposition}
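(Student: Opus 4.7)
The plan is to adapt the Bondal--Van den Bergh representability argument \cite[Thm.~1.3]{bondal-vdbergh-generators} to our setting, with the shift-periodicity $E \cong [n]E$ playing the role that compact generation by a set together with small coproducts plays in the original argument: it keeps all relevant graded $\Hom$-spaces inside $\Vect_\fd(\groundring)$ without ever leaving $\mathcal{D}.$

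First, I would set up the finite-dimensional bookkeeping. For any $Y \in \mathcal{D},$ hypothesis (a) combined with $E \cong [n]E$ shows that $\bigoplus_{i=0}^{n-1}\mathcal{D}(E,[i]Y)$ is a finite-dimensional $\mathbb{Z}/n$-graded $\groundring$-vector space; for $Y=E$ this gives a finite-dimensional $\mathbb{Z}/n$-graded $\groundring$-algebra $A := \bigoplus_{i=0}^{n-1}\mathcal{D}(E,[i]E).$ The values of $H$ on shifts of $E$ similarly assemble into a finite-dimensional $\mathbb{Z}/n$-graded right $A$-module $N := \bigoplus_{i=0}^{n-1} H([-i]E),$ where the action is by precomposition.

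Second, I would build the representing object by a finite iteration. Pick a finite $\groundring$-basis of $N;$ Yoneda turns it into a natural transformation $\phi_0\colon \mathcal{D}(-,X_0)\to H$ from a finite direct sum $X_0$ of shifts of $E,$ surjective on $[j]E$ for every $j$ (by periodicity, only finitely many distinct cases occur). Inductively, given $\phi_k\colon \mathcal{D}(-,X_k)\to H$ surjective on all shifts of $E,$ the kernel functor $K_k := \ker \phi_k$ is cohomological and takes finite-dimensional values on shifts of $E;$ represent a basis of each $K_k([j]E)$ by a map $g_k\colon Y_k\to X_k$ with $Y_k$ a finite direct sum of shifts of $E,$ and complete to a triangle
\[
  Y_k \xrightarrow{g_k} X_k \to X_{k+1} \to [1]Y_k.
\]
Applying $\mathcal{D}(-,?)$ and using that $\phi_k \circ \mathcal{D}(-,g_k)$ vanishes on shifts of $E$ (by construction of $g_k$), a short diagram chase with the long exact sequence produces $\phi_{k+1}\colon \mathcal{D}(-,X_{k+1})\to H$ that lifts $\phi_k$ and is still surjective on shifts of $E.$

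Third, termination is forced by the ghost lemma. Let $m$ be such that $\mathcal{D}=\langle E\rangle_m$ (strong generation, hypothesis (b)). By construction the connecting morphisms $X_k \to X_{k+1}$ are $E$-ghosts (annihilated by $\mathcal{D}([j]E,-)$ for every $j$), so by the classical ghost lemma any composition of $m$ such maps vanishes when precomposed with any morphism from an object of $\langle E\rangle_m=\mathcal{D}.$ This forces the iteration to stabilize, and an induction along the filtration $\langle E\rangle_0\subset\cdots\subset\langle E\rangle_m=\mathcal{D}$ using the five-lemma on triangle long exact sequences upgrades surjectivity of $\phi_{m+1}$ on shifts of $E$ to an isomorphism on all of $\mathcal{D}.$ Hypothesis (c) is used at the end to extract a canonical representing object as a direct summand of $X_{m+1}.$

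The main obstacle will be Step three: organizing the ghost-length bookkeeping so that the $E$-ghost property of each connecting map $X_k\to X_{k+1},$ the length-$m$ bound for strong generation, and the five-lemma induction along $\langle E\rangle_\bullet$ combine to upgrade pointwise isomorphism on shifts of $E$ to isomorphism of natural transformations on all of $\mathcal{D}.$ This is precisely the place where the periodicity $E\cong [n]E$ is essential, because it keeps the sequence of kernels $K_k$ uniformly bounded in each graded degree and allows the ghost argument to close within $\Vect_\fd(\groundring).$
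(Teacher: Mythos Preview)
Your overall strategy matches the paper's proof exactly: both adapt \cite[Thm.~1.3]{bondal-vdbergh-generators}, using the periodicity $E\cong[n]E$ to replace the direct sums over $\DZ$ appearing in their \S2.4 by finite sums over $\{0,\dots,n-1\}$. The paper gives no further detail than this.

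Your Step~3, which you rightly flag as the crux, contains a concrete error. The maps $\iota_k\colon X_k\to X_{k+1}$ are \emph{not} $E$-ghosts. From the triangle $Y_k\xrightarrow{g_k}X_k\xrightarrow{\iota_k}X_{k+1}$, a morphism $f\colon[j]E\to X_k$ satisfies $\iota_k f=0$ iff $f$ factors through $g_k$; but by your own construction the image of $(g_k)_*$ on $[j]E$ is exactly $K_k([j]E)=\ker\phi_{k,[j]E}$, not all of $\mathcal{D}([j]E,X_k)$. Hence $\iota_k$ is an $E$-ghost only if $\phi_k$ already vanishes on every shift of $E$, which (together with surjectivity) forces $H=0$. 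The ghost-lemma step therefore does not apply, and the stabilization claim is unsupported. Relatedly, the five-lemma cannot upgrade mere \emph{surjectivity} of $\phi_{m+1}$ on shifts of $E$ to an \emph{isomorphism} on all of $\mathcal{D}$; you would need bijectivity on shifts of $E$ as input, and you have not established injectivity at any stage. The actual termination mechanism in \cite[\S\S2.3--2.4]{bondal-vdbergh-generators} is organized differently; follow it more closely rather than improvising a ghost argument on the $\iota_k$.
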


If $\mathcal{D}$ is $n$-periodic (for some $n \in \DZ$) in the
sense that each object $A \in \mathcal{D}$ is isomorphic to
$[n]A,$ then condition~\ref{enum:strong-gen-shiftfinite} is
satisfied if and only if $\mathcal{D}$ has a strong generator.

\begin{proof}
  Let $Z=\{0,1, \dots, n-1\}.$
  Then for each $m \in
  \DZ$ there is an $z \in Z$ such that $[m]E \cong [z]E.$ 
  Now observe that
  the proof of \cite[Thm.~1.3]{bondal-vdbergh-generators}
  contains all the ideas needed to prove this proposition.  
  Only
  subsection "2.4 Construction of resolutions" there needs to be
  modified:
  one essentially replaces all direct sums
  indexed by $\DZ$ by direct sums indexed by $Z.$
\end{proof}

We deduce a variant of \cite[Thm.~3.1]{shklyarov-serre-duality-cpt-smooth-arXiv}.

\begin{proposition}
  \label{p:saturated-dZng-cat-functors-rep}
  Let $\groundring$ be a field and
  $n \in \DZ.$ Let $\mathcal{T}$ be a
  saturated d$\DZ_n$g (= differential $\DZ_n$-graded)
  category. 
  Then all covariant and contravariant $\groundring$-linear cohomological functors $[\mathcal{T}] \ra
  \Vect_\fd(\groundring)$ are representable.
\end{proposition}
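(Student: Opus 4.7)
My plan is to apply Proposition~\ref{p:represent-functor-on-periodic-cat} to the triangulated homotopy category $[\mathcal{T}]$, focusing on the genuinely periodic d$\DZ_n$g setting with $n > 0$. For $n = 0$ (the ordinary d$\DZ$g case) the statement is classical and due to Shklyarov \cite[Thm.~3.1]{shklyarov-serre-duality-cpt-smooth-arXiv}, exactly as invoked in the proof of Proposition~\ref{p:modified-K0}. Since saturation is manifestly self-dual, $\mathcal{T}^\opp$ is also saturated, so it suffices to prove representability of contravariant functors for every saturated $\mathcal{T}$; the covariant case then follows by replacing $\mathcal{T}$ with $\mathcal{T}^\opp$.

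For $n > 0$, I would first invoke Proposition~\ref{p:saturated-easy-characterization} to reduce to the case $\mathcal{T} \cong \Perf(A)$ in $\Heq_\groundring$ for some smooth and proper d$\DZ_n$g $\groundring$-algebra $A$; this identifies $[\mathcal{T}]$ with $\per(A)$ as a triangulated category, and the task becomes the verification of the three hypotheses \ref{enum:fin-dim-hom}, \ref{enum:strong-gen-shiftfinite}, \ref{enum:karoubian} of Proposition~\ref{p:represent-functor-on-periodic-cat} for $\per(A)$. Hypothesis \ref{enum:karoubian} follows from Lemma~\ref{l:triang-versus-pretriang-and-karoubi} applied to the triangulated dg category $\mathcal{T}$. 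Hypothesis \ref{enum:fin-dim-hom} is supplied by properness: $A$ is a perfect d$\DZ_n$g module over the field $\groundring$, hence finite-dimensional, and by Lemma~\ref{l:A-and-PerfA} the entire category $\Perf(A)$ is locally perfect, which yields finite-dimensional Hom spaces in the homotopy category.

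The only non-formal point is hypothesis \ref{enum:strong-gen-shiftfinite}. The shift-finiteness $E \cong [n]E$ is automatic, since in the d$\DZ_n$g setting with $n > 0$ the shift functor on $[\mathcal{T}]$ has order dividing $n$. It remains to produce a \emph{strong} generator. I would take $E = A$, which is already a classical generator of $\per(A)$, and upgrade this using smoothness: the diagonal bimodule $A$ lies in $\per(Q(A) \otimes Q(A)^\opp)$, hence is built in a uniformly bounded number of cones from direct sums of shifts of $Q(A) \otimes Q(A)^\opp$; tensoring over $Q(A)$ with an arbitrary $M \in \per(A)$ then exhibits $M$ itself as built from direct sums of shifts of $A$ in a bounded number of cones, which is exactly the definition of $A$ being a strong generator. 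This Bondal--Van den Bergh / Rouquier style argument, needing only cosmetic adaptation to the $\DZ_n$-graded setting, is the main obstacle.

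Once the three hypotheses are in place, Proposition~\ref{p:represent-functor-on-periodic-cat} delivers representability of contravariant $\groundring$-linear cohomological functors $[\mathcal{T}] \to \Vect_\fd(\groundring)$; combined with the self-duality reduction above, this completes the proof.
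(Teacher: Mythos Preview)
Your proof is correct and follows the same strategy as the paper: reduce to $\Perf(A)$ via Proposition~\ref{p:saturated-easy-characterization}, use smoothness of $A$ to produce a strong generator (this is exactly the Shklyarov argument the paper invokes), apply Proposition~\ref{p:represent-functor-on-periodic-cat}, and pass to $\mathcal{T}^\opp$ for the covariant case. The only place the paper is more careful is your claim that ``saturation is manifestly self-dual'': rather than treating this as obvious, the paper verifies that $\mathcal{T}^\opp$ is saturated by invoking the duality $\per(A)^\opp \cong \per(A^\opp)$ from Shklyarov's proof to confirm that $[\mathcal{T}^\opp]$ is Karoubian and that $\mathcal{T}^\opp$ has a compact generator.
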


\begin{proof}
  We follow the proof of
  \cite[Thm.~3.1]{shklyarov-serre-duality-cpt-smooth-arXiv} 
  but use 
  Proposition~\ref{p:represent-functor-on-periodic-cat} 
  instead of
  \cite[Thm.~1.3]{bondal-vdbergh-generators}.
  By Proposition~\ref{p:saturated-easy-characterization} we can
  assume that $\mathcal{T}=\Perf(A)$ for a smooth and proper
  d$\DZ_n$g algebra $A.$ The argument from the proof of
  \cite[Thm.~3.1]{shklyarov-serre-duality-cpt-smooth-arXiv}
  shows that $[\mathcal{T}]$ has a strong generator. 
  Hence we can apply
  Proposition~\ref{p:represent-functor-on-periodic-cat} and
  obtain 
  that every $\groundring$-linear cohomological functor $[\mathcal{T}]^\opp \ra
  \Vect_\fd(\groundring)$ is representable.
  
  We claim that $\mathcal{T}^\opp$ is also saturated. It
  is certainly pretriangulated, locally proper, and smooth (see for example
  \cite[Remark~3.11]{valery-olaf-smoothness-equivariant}).
  Observe that $[\mathcal{T}^\opp] = [\mathcal{T}]^\opp \cong
  \per(A)^\opp \cong \per(A^\opp)$ where the last equivalence
  comes from the proof of
  \cite[Thm.~3.1]{shklyarov-serre-duality-cpt-smooth-arXiv}.
  This shows that $[\mathcal{T}^\opp]$ is Karoubian, so
  $\mathcal{T}^\opp$ is triangulated, and that $\mathcal{T}^\opp$
  has a compact generator. Hence the above argument applied to
  the saturated d$\DZ_n$g category $\mathcal{T}^\opp$ shows that
  any 
  $\groundring$-linear cohomological functor $[\mathcal{T}]=[\mathcal{T}^\opp]^\opp \ra
  \Vect_\fd(\groundring)$ is representable.
\end{proof}

\begin{proof}[Proof of Prop.~\ref{p:modified-K0} in the
  differential $\DZ_n$-graded setting (for some $n \in \DZ$)]  

  Let $\mathcal{T}$ be a saturated d$\DZ_n$g 
  category $\mathcal{T}$ with full d$\DZ_n$g
  subcategories $\mathcal{U}$ and $\mathcal{V}$ such that
  $[\mathcal{T}]=\langle [\mathcal{U}],[\mathcal{U}]\rangle$ is a
  semi-orthogonal decomposition. We already know that both $\mathcal{U}$ and $\mathcal{V}$ are saturated
  d$\DZ_n$g categories, so we can apply Proposition~\ref{p:saturated-dZng-cat-functors-rep}
  to $\mathcal{U}$ and $\mathcal{V}.$
  The proof of
  \cite[Prop.~2.6]{bondal-kapranov-representable-functors} then tells
  us that $[\mathcal{U}]$ and $[\mathcal{V}]$ are both admissible
  in $[\mathcal{T}].$
\end{proof}
  
\subsection{Grothendieck ring of proper and smooth dg categories}
\label{sec:groth-ring-prop-smooth-cats}

The aim of this section is to give an alternative description of 
the modified Grothendieck ring $K'_0(\sat_\groundring)$ of saturated dg categories using 
smooth and proper dg categories (which are not
necessarily triangulated). 

Recall from \cite{tabuada-model-structure-on-cat-of-dg-cats}
and \cite{tabuada-inv-additifs} (and corrections) that there are
three model category structures on $\dgcat(\groundring).$ They
are all 
cofibrantly generated by the same set of generating
cofibrations. In particular they have the same cofibrations,
cofibrant objects and trivial fibrations, and hence we can use
the same cofibrant replacement functor.

Above we have used the model category structure whose weak
equivalences are the quasi-equivalences and have denoted the
corresponding homotopy category by $\Heq_\groundring.$ Now we
will work with the model structure whose weak equivalences are
the Morita equivalences (= dg foncteurs de Morita). The
corresponding homotopy category will be denoted $\Hmo_\groundring.$

Recall that a dg functor $f \colon  \mathcal{A} \ra \mathcal{B}$ is a
Morita equivalence if the restriction of scalars functor
$D(\mathcal{B}) \ra D(\mathcal{A})$ is an equivalence of
triangulated categories.
It is easy to see that $f$ is a Morita equivalence if and only if
$f^* \colon  \Perf(\mathcal{A}) \ra \Perf(\mathcal{B})$ is a
quasi-equivalence (use 
Lemmata~\ref{l:dg-functor-qequi-test}
and
\ref{l:dg-functors-restricts-to-perfects-embedding-and-qequiv},
and \cite[Lemma~2.12]{lunts-categorical-resolution}).
For example, if $\mathcal{A}$ is any dg category, the Yoneda
morphism $\mathcal{A} \ra \Perf(\mathcal{A})$ is a Morita
equivalence by 
Proposition~\ref{p:perf-to-perf-perf}.

\begin{example}
  \label{exam:endo-dg-algebra-Morita-equivalence}
  Let $\mathcal{T}$ be a dg category with a compact generator.
  Let $E \in \Perf(\mathcal{T})$ be a classical generator of
  $[\Perf(\mathcal{T})],$ and let $A:= (\Perf(\mathcal{T}))(E,E)$
  be its endomorphism dg algebra. Then the proof of 
  Proposition~\ref{p:saturated-easy-characterization}
  shows that the obvious dg functor $A \ra \Perf(\mathcal{T})$ is
  a Morita equivalence.
  Moreover,
  Lemma~\ref{l:properties-dg-K-cat-preserved-under-Morita-equivalence} below
  shows that $\mathcal{T}$ is proper (resp.\ smooth) if and only
  if $A$ is proper (resp.\ smooth).
\end{example}

\begin{lemma}
  \label{l:properties-dg-K-cat-preserved-under-Morita-equivalence}
  The following properties of dg categories are invariant under
  Morita 
  equivalences:
  being
  locally perfect, having a compact
  generator, properness, smoothness.
\end{lemma}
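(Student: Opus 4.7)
The plan is to reduce the statement to the already-established invariance under quasi-equivalences (Lemma~\ref{l:properties-dg-K-cat-preserved-under-quasi-equivalence}) by passing to $\Perf$. Recall the characterization recalled right before the lemma: a dg functor $f \colon \mathcal{A} \to \mathcal{B}$ is a Morita equivalence if and only if $f^* \colon \Perf(\mathcal{A}) \to \Perf(\mathcal{B})$ is a quasi-equivalence. This is the bridge between the two notions of weak equivalence, and it is what makes the proof essentially formal.

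First I would assume that $f \colon \mathcal{A} \to \mathcal{B}$ is a Morita equivalence, so that by the characterization above the induced dg functor $f^* \colon \Perf(\mathcal{A}) \to \Perf(\mathcal{B})$ is a quasi-equivalence. Then Lemma~\ref{l:properties-dg-K-cat-preserved-under-quasi-equivalence} tells me that each of the four properties (locally perfect, has a compact generator, proper, smooth) is shared between $\Perf(\mathcal{A})$ and $\Perf(\mathcal{B})$: the first three appear explicitly in that lemma, and smoothness is proved there via \cite[Lemma~3.12]{valery-olaf-smoothness-equivariant}.

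Next I would invoke Lemma~\ref{l:A-and-PerfA}, which asserts that a dg category $\mathcal{C}$ is locally perfect (resp.\ has a compact generator, is proper, is smooth) if and only if $\Perf(\mathcal{C})$ has the corresponding property. Applying this to both $\mathcal{A}$ and $\mathcal{B}$ yields the chain of equivalences
\begin{equation*}
 \mathcal{A} \text{ has property } P \;\Longleftrightarrow\; \Perf(\mathcal{A}) \text{ has } P \;\Longleftrightarrow\; \Perf(\mathcal{B}) \text{ has } P \;\Longleftrightarrow\; \mathcal{B} \text{ has } P,
\end{equation*}
for $P$ each of the four properties in the statement, which is exactly the invariance under Morita equivalence asserted by the lemma.

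Since the two ingredients (Lemma~\ref{l:properties-dg-K-cat-preserved-under-quasi-equivalence} and Lemma~\ref{l:A-and-PerfA}) are already in hand, there is no real obstacle; the only thing to be careful about is that the chain of equivalences above covers precisely the list of properties claimed, and in particular does \emph{not} include being triangulated (which is obviously not Morita invariant, since $\mathcal{A} \to \Perf(\mathcal{A})$ is a Morita equivalence but $\mathcal{A}$ need not be triangulated). Consequently no extension of either of the two lemmas is needed; the proof is a direct two-step reduction.
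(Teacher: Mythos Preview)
Your proposal is correct and follows exactly the paper's approach: the paper's proof is the single sentence ``This follows from the above and Lemmata~\ref{l:properties-dg-K-cat-preserved-under-quasi-equivalence} and \ref{l:A-and-PerfA},'' where ``the above'' is the characterization of Morita equivalences via $\Perf$ that you invoke. Your write-up simply unpacks this one-line reference into the explicit chain of equivalences.
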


\begin{proof}
  This follows from the above and
  Lemmata~\ref{l:properties-dg-K-cat-preserved-under-quasi-equivalence}
  and \ref{l:A-and-PerfA}.
\end{proof}

\begin{lemma}
  \label{l:L-otimes-respects-Morita-equivalences}
  Morita equivalences $\mathcal{A} \ra \mathcal{A}'$ and
  $\mathcal{B} \ra \mathcal{B}'$ give rise to a Morita equivalence
  $\mathcal{A} \otimes^\bL \mathcal{B} \ra \mathcal{A}'
  \otimes^\bL \mathcal{B}'.$ 
\end{lemma}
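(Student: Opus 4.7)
The plan is to reduce the two-variable statement to a one-variable statement and then bootstrap the Morita equivalence to a quasi-equivalence between the $\Perf$-categories, where the already-established Proposition~\ref{p:triangulated-tensor-product-of-cats-and-their-Perfs} and Lemma~\ref{l:Perfotimes-respects-qequivalences} do the real work.

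First I would observe that, by a standard two-out-of-three argument applied to the factorization
\begin{equation*}
  \mathcal{A} \otimes^\bL \mathcal{B} \ra \mathcal{A}' \otimes^\bL \mathcal{B} \ra \mathcal{A}' \otimes^\bL \mathcal{B}',
\end{equation*}
it is enough to treat the case where only one of the two Morita equivalences is non-trivial, say $\mathcal{A} \ra \mathcal{A}'$ is a Morita equivalence and $\mathcal{B}=\mathcal{B}'.$ By the characterization of Morita equivalences recalled in the text (via Lemmata~\ref{l:dg-functor-qequi-test} and \ref{l:dg-functors-restricts-to-perfects-embedding-and-qequiv}), this amounts to showing that the induced dg functor
\begin{equation*}
  \Perf(\mathcal{A} \otimes^\bL \mathcal{B}) \ra \Perf(\mathcal{A}' \otimes^\bL \mathcal{B})
\end{equation*}
is a quasi-equivalence.

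Next I would use naturality of the morphism constructed in Proposition~\ref{p:triangulated-tensor-product-of-cats-and-their-Perfs} to fit the displayed dg functor into a commutative square
\begin{equation*}
  \xymatrix{
    {\Perf(\mathcal{A} \otimes^\bL \mathcal{B})} \ar[r] \ar[d] &
    {\Perf(\Perf(\mathcal{A}) \otimes^\bL \Perf(\mathcal{B}))} \ar[d] \\
    {\Perf(\mathcal{A}' \otimes^\bL \mathcal{B})} \ar[r] &
    {\Perf(\Perf(\mathcal{A}') \otimes^\bL \Perf(\mathcal{B})),}
  }
\end{equation*}
whose horizontal arrows are quasi-equivalences by Proposition~\ref{p:triangulated-tensor-product-of-cats-and-their-Perfs}. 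Since $\mathcal{A} \ra \mathcal{A}'$ is a Morita equivalence, the induced dg functor $\Perf(\mathcal{A}) \ra \Perf(\mathcal{A}')$ is a quasi-equivalence by the characterization recalled above; composing with the identity on $\Perf(\mathcal{B})$ and applying Lemma~\ref{l:Perfotimes-respects-qequivalences} shows that the right vertical arrow is a quasi-equivalence. A standard two-out-of-three for quasi-equivalences then forces the left vertical arrow to be a quasi-equivalence, which is what we needed.

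I do not expect any real obstacle: the only point that needs a little care is to verify that the square above genuinely commutes, which follows from the naturality of the Yoneda embedding and of the cofibrant replacement functor $Q$ used inside $\otimes^\bL.$ Once that is in place, the argument is entirely formal and reduces the Morita-invariance of $\otimes^\bL$ to the already-established quasi-equivalence invariance of $\Perfotimes$ from Lemma~\ref{l:Perfotimes-respects-qequivalences}.
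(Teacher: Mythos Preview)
Your proposal is correct and follows essentially the same strategy as the paper: both arguments convert the Morita equivalence $\mathcal{A}\ra\mathcal{A}'$ into the quasi-equivalence $\Perf(\mathcal{A})\ra\Perf(\mathcal{A}')$, tensor with something $\groundring$-h-flat, and then use a commutative square plus two-out-of-three to descend back to $\Perf(\mathcal{A}\otimes^\bL\mathcal{B})\ra\Perf(\mathcal{A}'\otimes^\bL\mathcal{B})$. The only cosmetic difference is that the paper works with the one-variable Proposition~\ref{p:perf-to-perf-perf} (inserting $\Perf$ into the first slot only, keeping $Q(\mathcal{B})$ in the second) together with \cite[Lemma~2.15]{valery-olaf-smoothness-equivariant} and Lemma~\ref{l:dg-functors-restricts-to-perfects-embedding-and-qequiv}.\ref{enum:qequi-induces-qequi-between-perfs} directly, whereas you invoke the more packaged two-variable Proposition~\ref{p:triangulated-tensor-product-of-cats-and-their-Perfs} and Lemma~\ref{l:Perfotimes-respects-qequivalences}; since the latter two results are themselves proved via the former, the two arguments unwind to the same thing.
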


\begin{proof}
  Clearly $Q(\mathcal{A}) \ra Q(\mathcal{A}')$ is a Morita
  equivalence, so
  $\Perf(Q(\mathcal{A})) \ra
  \Perf(Q(\mathcal{A}'))$
  and
  $\Perf(Q(\mathcal{A})) \otimes Q(\mathcal{B}) \ra
  \Perf(Q(\mathcal{A}')) \otimes Q(\mathcal{B})$ 
  (by \cite[Lemma~2.15]{valery-olaf-smoothness-equivariant})
  and 
  $\Perf(\Perf(Q(\mathcal{A})) \otimes Q(\mathcal{B})) \ra
  \Perf(\Perf(Q(\mathcal{A}')) \otimes Q(\mathcal{B}))$ 
  (by Lemma~\ref{l:dg-functors-restricts-to-perfects-embedding-and-qequiv}.\ref{enum:qequi-induces-qequi-between-perfs})
  are
  quasi-equivalences. Then Proposition~\ref{p:perf-to-perf-perf}
  shows that 
  $\Perf(Q(\mathcal{A}) \otimes Q(\mathcal{B})) \ra
  \Perf(Q(\mathcal{A}') \otimes Q(\mathcal{B}))$ is a
  quasi-equivalence, so 
  $Q(\mathcal{A}) \otimes Q(\mathcal{B}) \ra
  Q(\mathcal{A}') \otimes Q(\mathcal{B})$ is a Morita
  equivalence.
\end{proof}

\begin{lemma}
  \label{l:tensor-product-of-proper}
  Let $\mathcal{A}$ and $\mathcal{B}$ be proper dg
  categories.  Then $\mathcal{A} \otimes^\bL \mathcal{B}$ is a
  proper dg category.  
\end{lemma}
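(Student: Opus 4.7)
The plan is to verify the two constituent conditions of properness (locally perfect plus having a compact generator) separately. The locally perfect part is immediate from Lemma~\ref{l:tensor-product-of-locally-proper}, so the entire content is in producing a compact generator for $\mathcal{A} \otimes^\bL \mathcal{B}$.

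For the compact generator, my approach would be to reduce to the case of dg algebras, where a compact generator is tautologically available. Proposition~\ref{p:saturated-easy-characterization} (whose proof only uses the compact-generator half of properness) yields dg algebras $A$ and $B$ together with isomorphisms $\Perf(\mathcal{A}) \cong \Perf(A)$ and $\Perf(\mathcal{B}) \cong \Perf(B)$ in $\Heq_\groundring$. Combining Lemma~\ref{l:Perfotimes-respects-qequivalences} with Proposition~\ref{p:triangulated-tensor-product-of-cats-and-their-Perfs} then gives a chain of isomorphisms
\begin{equation*}
\mathcal{A} \Perfotimes \mathcal{B}
\;\cong\; \Perf(\mathcal{A}) \Perfotimes \Perf(\mathcal{B})
\;\cong\; \Perf(A) \Perfotimes \Perf(B)
\;\sila\; \Perf(A \otimes^\bL B)
\end{equation*}
in $\Heq_\groundring$; that is, $\Perf(\mathcal{A} \otimes^\bL \mathcal{B})$ is connected by a zig-zag of quasi-equivalences to $\Perf(A \otimes^\bL B)$.

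Now $A \otimes^\bL B$ is a dg algebra, hence its Yoneda module $\Yoneda{A \otimes^\bL B}$ is a compact generator of $D(A \otimes^\bL B)$, so $A \otimes^\bL B$ has a compact generator. By Lemma~\ref{l:A-and-PerfA} the property of having a compact generator is preserved under passage to $\Perf(-)$; by Lemma~\ref{l:properties-dg-K-cat-preserved-under-quasi-equivalence} it is invariant under quasi-equivalence; and by Lemma~\ref{l:A-and-PerfA} again it then descends from $\Perf(\mathcal{A} \otimes^\bL \mathcal{B})$ back to $\mathcal{A} \otimes^\bL \mathcal{B}$. Combined with Lemma~\ref{l:tensor-product-of-locally-proper} this finishes the proof, and the last assertion about dg algebras follows since a dg algebra is proper precisely when it is locally perfect.

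I do not expect any serious obstacle. A more direct route would be to argue that if $E_\mathcal{A}$ and $E_\mathcal{B}$ are compact generators of $D(\mathcal{A})$ and $D(\mathcal{B})$, then $E_\mathcal{A} \otimes^\bL E_\mathcal{B}$ is a compact generator of $D(\mathcal{A} \otimes^\bL \mathcal{B})$; compactness is not hard, but verifying generation requires a slightly delicate argument using a $\groundring$-h-flat cofibrant replacement and the fact that morphism spaces in $D(\mathcal{A} \otimes^\bL \mathcal{B})$ between such external tensor products decompose. The reduction to dg algebras sidesteps this technicality entirely, which is why I would prefer it.
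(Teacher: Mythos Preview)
Your proof is correct. Both you and the paper reduce to the case of dg algebras, but the transport mechanism differs. The paper works in the Morita framework being set up in section~\ref{sec:groth-ring-prop-smooth-cats}: it takes the Morita equivalences $A \ra \Perf(\mathcal{A})$ and $B \ra \Perf(\mathcal{B})$ from Example~\ref{exam:endo-dg-algebra-Morita-equivalence}, uses Lemma~\ref{l:L-otimes-respects-Morita-equivalences} to obtain a zig-zag of Morita equivalences between $A \otimes^\bL B$ and $\mathcal{A} \otimes^\bL \mathcal{B}$, and then transfers properness via Lemma~\ref{l:properties-dg-K-cat-preserved-under-Morita-equivalence}. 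Your argument instead stays in $\Heq_\groundring$ and passes through $\Perf(-)$, using only the earlier material from sections~\ref{sec:perf-tens-prod}--\ref{sec:back-saturated-dg}. The paper's version is a bit shorter in context (the Morita invariance lemmas are right there), while yours has the advantage of not needing the Morita model structure at all and could have been stated earlier. One small slip: your closing sentence about a ``last assertion about dg algebras'' seems to be a confusion with Lemma~\ref{l:tensor-product-of-locally-proper}; the present lemma has no such clause.
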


\begin{proof}
  Example~\ref{exam:endo-dg-algebra-Morita-equivalence}
  shows that
  there are Morita equivalences $A \ra \Perf(\mathcal{A})$ and $B
  \ra \Perf(\mathcal{B})$ for proper dg algebras $A$ and $B.$
  Lemma~\ref{l:tensor-product-of-locally-proper} implies that $A
  \otimes^\bL B$ is a proper dg algebra/category.
  Lemma~\ref{l:L-otimes-respects-Morita-equivalences} 
  shows that $A \otimes^\bL B \ra \Perf(\mathcal{A}) \otimes^\bL
  \Perf(\mathcal{B}) \la \mathcal{A} \otimes^\bL \mathcal{B}$
  consists of Morita equivalences, and hence
  Lemma~\ref{l:properties-dg-K-cat-preserved-under-Morita-equivalence}
  shows that $\mathcal{A} \otimes^\bL \mathcal{B}$ is proper.
\end{proof}

Let $\prsm_\groundring$ be the full subcategory of
$\dgcat_\groundring$ consisting of proper and smooth dg categories.
By $\ul{\prsm_\groundring}$ 
we
denote the set of isomorphism 
classes in $\Hmo_\groundring$ of these
categories
(cf.\
Lemma~\ref{l:properties-dg-K-cat-preserved-under-Morita-equivalence}).
Given a proper and smooth dg category $\mathcal{T},$ we write $\ul{\mathcal{T}}$ for its class
in $\ul{\prsm_\groundring}.$

\begin{proposition}
  \label{p:ulprsm-as-a-commutative-monoid}
  The map $(\mathcal{S},\mathcal{T}) \mapsto \mathcal{S} \otimes^\bL \mathcal{T}$ induces a
  multiplication $\bullet$ on $\ul{\prsm_\groundring}$ that
  turns $\ul{\prsm_\groundring}$ into a commutative monoid
  with unit $\ul{\groundring}.$
\end{proposition}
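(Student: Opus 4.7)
The plan is to mimic the proof of Proposition~\ref{p:ulsat-as-a-commutative-monoid}, replacing quasi-equivalences by Morita equivalences and $\Perfotimes$ by $\otimes^\bL$; with Lemmata~\ref{l:tensor-product-of-smooth}, \ref{l:properties-dg-K-cat-preserved-under-Morita-equivalence}, \ref{l:L-otimes-respects-Morita-equivalences}, and \ref{l:tensor-product-of-proper} in hand, this becomes almost a direct transcription.

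First I would check that $\bullet$ is well-defined on $\ul{\prsm_\groundring}$. If $\mathcal{S}$ and $\mathcal{T}$ are proper and smooth, then Lemmata~\ref{l:tensor-product-of-smooth} and \ref{l:tensor-product-of-proper} show that $\mathcal{S} \otimes^\bL \mathcal{T}$ is again proper and smooth, hence defines a class in $\ul{\prsm_\groundring}$; independence of this class from the Morita equivalence classes of the factors is exactly Lemma~\ref{l:L-otimes-respects-Morita-equivalences}.

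For associativity, I would compare both $(\mathcal{S}_1 \otimes^\bL \mathcal{S}_2) \otimes^\bL \mathcal{S}_3$ and $\mathcal{S}_1 \otimes^\bL (\mathcal{S}_2 \otimes^\bL \mathcal{S}_3)$ to the genuine threefold underived tensor product $Q(\mathcal{S}_1) \otimes Q(\mathcal{S}_2) \otimes Q(\mathcal{S}_3)$. The required comparison maps come from the trivial fibrations $Q(Q(\mathcal{S}_i) \otimes Q(\mathcal{S}_j)) \ra Q(\mathcal{S}_i) \otimes Q(\mathcal{S}_j)$ tensored with the remaining $Q(\mathcal{S}_k)$; they are quasi-equivalences (and in particular Morita equivalences) since each $Q(\mathcal{S}_k)$ is cofibrant and hence $\groundring$-h-flat by \cite[Lemmata~2.14 and~2.15]{valery-olaf-smoothness-equivariant}. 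Commutativity follows from the evident symmetry of the underived tensor product transported through the cofibrant replacements, and the unit axiom from $Q(\mathcal{S}) \otimes \groundring \cong Q(\mathcal{S})$ together with the triviality that the one-object dg category $\groundring$ is cofibrant, proper, and smooth.

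I do not expect any real obstacle. If anything, the passage from saturated to merely proper and smooth dg categories simplifies the argument of Proposition~\ref{p:ulsat-as-a-commutative-monoid}: no detour through $\Perf$ is needed to stay inside the relevant class, since $\otimes^\bL$ already preserves both smoothness and properness, so the same h-flatness arguments that handled the monoidal coherence there carry over without modification.
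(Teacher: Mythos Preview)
Your proposal is correct and follows exactly the approach the paper intends: the paper's own proof cites the same three lemmata (Lemmata~\ref{l:L-otimes-respects-Morita-equivalences}, \ref{l:tensor-product-of-proper}, \ref{l:tensor-product-of-smooth}) for well-definedness and then declares associativity, commutativity, and the unit axiom ``easy'' and leaves them to the reader. Your argument for these via the threefold underived tensor product of cofibrant replacements and $\groundring$-h-flatness is precisely the expected filling-in, matching what the paper spells out in the analogous Proposition~\ref{p:ulsat-as-a-commutative-monoid}.
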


\begin{proof}
  Lemmata~\ref{l:L-otimes-respects-Morita-equivalences},
  \ref{l:tensor-product-of-proper} and
  \ref{l:tensor-product-of-smooth}
  show that
  $\bullet$ is well defined. 
  We leave the easy proofs of associativity, commutativity, and of
  $\ul{\groundring}$ being the unit to the reader.
\end{proof}

Denote by $\DZ\ul{\prsm_\groundring}$ the (commutative
associative unital) monoid ring of
$\ul{\prsm_\groundring},$ i.\,e.\ the free abelian group
on $\ul{\prsm_\groundring}$ with $\DZ$-bilinear
multiplication induced by $\bullet.$

Let $\mathcal{T}$ be a dg category. Following
\cite[section~3.2]{valery-olaf-smoothness-equivariant} we write
$\mathcal{T}= \tzmat{\mathcal{U}}{0}{*}{\mathcal{V}}$ if
$\mathcal{U}$ and $\mathcal{V}$ are full dg subcategories of
$\mathcal{T}$ such that $\mathcal{T}(\mathcal{V}, \mathcal{U})=0$
and such that the set of objects of $\mathcal{T}$ is the disjoint
union of the sets of objects of $\mathcal{U}$ and of
$\mathcal{V}.$ (Conversely, any two dg categories $\mathcal{U}$
and $\mathcal{V}$ together with a dg $\mathcal{U}\otimes
\mathcal{V}^\opp$-module
$N=\leftidx{_\mathcal{V}}{N}{_\mathcal{U}}$ give rise to such a
"directed" or "lower triangular" dg category
$\tzmat{\mathcal{U}}{0}{N}{\mathcal{V}}.$)

\begin{lemma}
  \label{l:directed-induces-semi-od-on-Perf}
  Assume that $\mathcal{U}$ and $\mathcal{V}$ are full dg
  subcategories of a dg category $\mathcal{T}$ such that
  $\mathcal{T}= \tzmat{\mathcal{U}}{0}{*}{\mathcal{V}}.$
  Then there is an induced semi-orthogonal decomposition
  $[\Perf(\mathcal{T})]=\langle
  [\Perf(\mathcal{U})'],[\Perf(\mathcal{V})']\rangle$ 
  where $\Perf(\mathcal{U})'$ is
  the full dg subcategory of $\Perf(\mathcal{T})$ such that 
  $[\Perf(\mathcal{U})']$ is the strict closure of
  $[\Perf(\mathcal{U})]$ in $[\Perf(\mathcal{T})],$ 
  and $\Perf(\mathcal{V})'$ is defined similarly. 
  In particular, there are obvious quasi-equivalences 
  $\Perf(\mathcal{U}) \ra \Perf(\mathcal{U})'$
  and $\Perf(\mathcal{V}) \ra \Perf(\mathcal{V})'.$
\end{lemma}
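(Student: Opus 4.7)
The plan is to mimic the proof of Proposition~\ref{p:semi-od-goes-to-Perf}, but exploit the much stronger structural hypothesis: because the objects of $\mathcal{T}$ split as the disjoint union of the objects of $\mathcal{U}$ and $\mathcal{V}$, every representable $\Yoneda{t}$ already lies in $[\Perf(\mathcal{U})']$ or $[\Perf(\mathcal{V})']$, so there is no triangle to construct by hand (unlike in the pretriangulated setting). The argument therefore reduces to verifying semi-orthogonality and classical generation, and then invoking \cite[Lemma~\ref{semi:l:right-admissible-by-triangulated-and-classical-generators}.\ref{semi:enum:T-is-thick-envelope-of-E}]{valery-olaf-matfak-semi-orth-decomp}.

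First, the inclusions $\mathcal{U} \subset \mathcal{T}$ and $\mathcal{V} \subset \mathcal{T}$ are full and faithful dg functors, so by Lemma~\ref{l:dg-functors-restricts-to-perfects-embedding-and-qequiv}.\ref{enum:embeddings} the extension of scalars functors $\Perf(\mathcal{U}) \to \Perf(\mathcal{T})$ and $\Perf(\mathcal{V}) \to \Perf(\mathcal{T})$ are full and faithful. The subcategories $[\Perf(\mathcal{U})']$ and $[\Perf(\mathcal{V})']$, defined as the strict closures of their essential images, therefore receive equivalences (and the lifted dg functors $\Perf(\mathcal{U}) \to \Perf(\mathcal{U})'$ and $\Perf(\mathcal{V}) \to \Perf(\mathcal{V})'$ are quasi-equivalences by Lemma~\ref{l:dg-functor-qequi-test}).

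Next, I verify semi-orthogonality. For $v \in \mathcal{V}$ and $u \in \mathcal{U}$, the assumption $\mathcal{T}(\mathcal{V},\mathcal{U})=0$ yields
\begin{equation*}
[\Perf(\mathcal{T})](\Yoneda{v}, \Yoneda{u}) = H^0(\mathcal{T}(v,u)) = 0.
\end{equation*}
Since $[\Perf(\mathcal{U})']=\thick(\mathcal{U})$ is classically generated by $\{\Yoneda{u} : u \in \mathcal{U}\}$, and analogously for $[\Perf(\mathcal{V})']$, passing through shifts, cones, and direct summands gives $[\Perf(\mathcal{T})]\bigl([\Perf(\mathcal{V})'], [\Perf(\mathcal{U})']\bigr)=0$. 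For generation, every $t \in \mathcal{T}$ belongs to either $\mathcal{U}$ or $\mathcal{V}$, so $\Yoneda{t}$ lies in $[\Perf(\mathcal{U})'] \cup [\Perf(\mathcal{V})']$; since $[\Perf(\mathcal{T})]=\thick(\mathcal{T})$ is the thick envelope of $\{\Yoneda{t}\}_{t \in \mathcal{T}}$, the pair $[\Perf(\mathcal{U})']$, $[\Perf(\mathcal{V})']$ classically generates $[\Perf(\mathcal{T})]$.

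Finally, both $[\Perf(\mathcal{U})']$ and $[\Perf(\mathcal{V})']$ are Karoubian: each is a strict triangulated subcategory of the Karoubian category $[\Perf(\mathcal{T})]=\thick(\mathcal{T})$ which already coincides with its own Karoubi envelope relative to the Yoneda image. With semi-orthogonality, classical generation, and Karoubianness in hand, \cite[Lemma~\ref{semi:l:right-admissible-by-triangulated-and-classical-generators}.\ref{semi:enum:T-is-thick-envelope-of-E}]{valery-olaf-matfak-semi-orth-decomp} produces the desired semi-orthogonal decomposition $[\Perf(\mathcal{T})]=\langle [\Perf(\mathcal{U})'], [\Perf(\mathcal{V})']\rangle$. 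There is no real obstacle in this argument; the only subtle point is the bookkeeping with strict closures versus essential images, which is handled exactly as in Proposition~\ref{p:semi-od-goes-to-Perf}.
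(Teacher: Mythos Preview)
Your proof is correct and follows exactly the approach the paper indicates: it is the argument of Proposition~\ref{p:semi-od-goes-to-Perf} stripped of the triangle-building step, since here every representable $\Yoneda{t}$ already lies in one of the two subcategories, and it concludes via the same cited lemma from \cite{valery-olaf-matfak-semi-orth-decomp}. The paper's own proof is a one-line pointer to precisely this simplification.
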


\begin{proof}
  The proof is similar to (but easier than) the proof of
  Proposition~\ref{p:semi-od-goes-to-Perf}
  and also based on
  \cite[Lemma~\ref{semi:l:right-admissible-by-triangulated-and-classical-generators}.\ref{semi:enum:T-is-thick-envelope-of-E}]{valery-olaf-matfak-semi-orth-decomp}
\end{proof}

\begin{definition}
  \label{def:Grothendieck-group-of-proper-smooth-dg-cats}
  The \define{Grothendieck group $K_0(\prsm_\groundring)$ of
    proper and smooth dg categories} 
  is defined to be the quotient of
  $\DZ\ul{\prsm_\groundring}$ by the subgroup generated by
  the elements (the "directed relations")
  $\ul{\mathcal{T}} - (\ul{\mathcal{U}} + \ul{\mathcal{V}})$
  whenever there is a smooth and proper dg category $\mathcal{T}$
  with 
  full dg subcategories $\mathcal{U}$ and $\mathcal{V}$ 
  such that
  $\mathcal{T}= \tzmat{\mathcal{U}}{0}{*}{\mathcal{V}}.$
  (We do not require that $\mathcal{U}$ and
  $\mathcal{V}$ are smooth and proper since this is automatic:
  use Lemma~\ref{l:directed-induces-semi-od-on-Perf},
  Proposition~\ref{p:semi-od-heredity}, and
  Lemma~\ref{l:properties-dg-K-cat-preserved-under-Morita-equivalence}
  (applied to $\mathcal{U} \ra \Perf(\mathcal{U})$ and
  $\mathcal{V} \ra \Perf(\mathcal{V})$).)
\end{definition}

If $0$ is the trivial dg algebra (considered as a dg category)
and if $\emptyset$ is the empty dg category, then $\emptyset \ra
0$ is a Morita equivalence and we have
$\ul{\emptyset}=\ul{0}=0$ in $K_0(\prsm_\groundring).$

\begin{proposition}
  \label{p:multiplication-of-Zprsm-descends}
  The multiplication $\bullet$ on $\DZ\ul{\prsm_\groundring}$
  induces a multiplication on $K_0(\prsm_\groundring)$ such that $\DZ\ul{\prsm_\groundring} \ra
  K_0(\prsm_\groundring)$ is a ring morphism.
  Equipped with this multiplication, we call
  $K_0(\prsm_\groundring)$ the \define{Grothendieck ring of
    proper and smooth dg categories}. 
\end{proposition}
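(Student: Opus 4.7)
The plan is to imitate the proof of Proposition~\ref{p:multiplication-of-Zsat-descends}. Let $I \subset \DZ\ul{\prsm_\groundring}$ be the subgroup generated by the directed relations. I need to show that $I$ is an ideal, and by commutativity of $\bullet$ (Proposition~\ref{p:ulprsm-as-a-commutative-monoid}) it suffices to show that $\ul{\mathcal{S}} \bullet (\ul{\mathcal{T}} - \ul{\mathcal{U}} - \ul{\mathcal{V}}) \in I$ for every smooth and proper dg category $\mathcal{S}$ and every directed relation coming from a smooth and proper $\mathcal{T} = \tzmat{\mathcal{U}}{0}{*}{\mathcal{V}}$ with full dg subcategories $\mathcal{U}$ and $\mathcal{V}$.

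The main idea is that the directed structure on $\mathcal{T}$ is preserved by tensoring on the left with an arbitrary dg category $\mathcal{R}$: the vanishing $(\mathcal{R} \otimes \mathcal{T})((r,v),(r',u)) = \mathcal{R}(r,r') \otimes \mathcal{T}(v,u) = 0$ for $r, r' \in \mathcal{R}$, $v \in \mathcal{V}$, $u \in \mathcal{U}$ yields
\begin{equation*}
  \mathcal{R} \otimes \mathcal{T} = \tzmat{\mathcal{R} \otimes \mathcal{U}}{0}{*}{\mathcal{R} \otimes \mathcal{V}}.
\end{equation*}
I will apply this with $\mathcal{R} = Q(\mathcal{S})$. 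Since $Q(\mathcal{S})$ is $\groundring$-h-flat (\cite[Lemma~2.14]{valery-olaf-smoothness-equivariant}), the quasi-equivalence $Q(\mathcal{T}) \ra \mathcal{T}$ (and similarly for $\mathcal{U}, \mathcal{V}$) induces a quasi-equivalence, and in particular a Morita equivalence,
\begin{equation*}
  \mathcal{S} \otimes^\bL \mathcal{T} = Q(\mathcal{S}) \otimes Q(\mathcal{T}) \ra Q(\mathcal{S}) \otimes \mathcal{T},
\end{equation*}
and likewise for the two corners.

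To apply the directed relation to $Q(\mathcal{S}) \otimes \mathcal{T}$ I must know that this dg category is smooth and proper. This follows from Lemmata~\ref{l:tensor-product-of-smooth} and \ref{l:tensor-product-of-proper} (which give smoothness and properness of $\mathcal{S} \otimes^\bL \mathcal{T}$) together with Lemma~\ref{l:properties-dg-K-cat-preserved-under-Morita-equivalence} (which transports these properties along the Morita equivalence above); the same argument applies to the corners. In $K_0(\prsm_\groundring)$ the Morita equivalences yield $\ul{\mathcal{S} \otimes^\bL \mathcal{T}} = \ul{Q(\mathcal{S}) \otimes \mathcal{T}}$ and similarly for $\mathcal{U}, \mathcal{V}$; hence
\begin{equation*}
  \ul{\mathcal{S} \otimes^\bL \mathcal{T}} - \ul{\mathcal{S} \otimes^\bL \mathcal{U}} - \ul{\mathcal{S} \otimes^\bL \mathcal{V}} = \ul{Q(\mathcal{S}) \otimes \mathcal{T}} - \ul{Q(\mathcal{S}) \otimes \mathcal{U}} - \ul{Q(\mathcal{S}) \otimes \mathcal{V}} \in I,
\end{equation*}
by applying the directed relation to $Q(\mathcal{S}) \otimes \mathcal{T}$.

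I do not anticipate any serious obstacle; the whole thing is essentially formal once the directed structure is seen to be stable under left-tensor. The only delicate point is the bookkeeping needed to verify that the auxiliary dg category $Q(\mathcal{S}) \otimes \mathcal{T}$ (with its inherited lower-triangular structure and corners $Q(\mathcal{S}) \otimes \mathcal{U}, Q(\mathcal{S}) \otimes \mathcal{V}$) really lies in $\prsm_\groundring$, so that the directed relation is applicable.
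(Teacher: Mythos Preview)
Your proof is correct and follows essentially the same approach as the paper: reduce to showing that the subgroup $I$ of directed relations is an ideal, replace $\mathcal{S}\otimes^\bL(-)$ by $Q(\mathcal{S})\otimes(-)$ via the $\groundring$-h-flatness of $Q(\mathcal{S})$, and observe that the lower-triangular structure $Q(\mathcal{S})\otimes\mathcal{T}=\tzmat{Q(\mathcal{S})\otimes\mathcal{U}}{0}{*}{Q(\mathcal{S})\otimes\mathcal{V}}$ yields a directed relation. You are in fact slightly more careful than the paper in explicitly verifying that $Q(\mathcal{S})\otimes\mathcal{T}$ is smooth and proper (so that the directed relation is applicable), which the paper leaves implicit.
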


\begin{proof}
  Let $I \subset \DZ\ul{\prsm_\groundring}$ be the
  subgroup generated by the "directed relations". We need 
  to show that $I$ is an ideal in
  $\DZ\ul{\prsm_\groundring}.$
  Assume that $\mathcal{T}$ is a smooth and proper dg category
  with 
  full dg subcategories $\mathcal{U}$ and $\mathcal{V}$ 
  such that
  $\mathcal{T}= \tzmat{\mathcal{U}}{0}{*}{\mathcal{V}}.$
  Let $\mathcal{S}$ be any saturated dg category. 
  Then
  \begin{equation*}
    \ul{\mathcal{S}} \bullet \ul{\mathcal{T}} - (\ul{\mathcal{S}} \bullet \ul{\mathcal{U}} + \ul{\mathcal{S}} \bullet
    \ul{\mathcal{V}})
    =
    \ul{\mathcal{S} \otimes^\bL \mathcal{T}}
    -
    (\ul{\mathcal{S} \otimes^\bL \mathcal{U}}
    +
    \ul{\mathcal{S} \otimes^\bL \mathcal{V}})
    =
    \ul{Q(\mathcal{S}) \otimes \mathcal{T}}
    -
    (\ul{Q(\mathcal{S}) \otimes \mathcal{U}}
    +
    \ul{Q(\mathcal{S}) \otimes \mathcal{V}})
  \end{equation*}
  is an element of $I$ since 
  $Q(S) \otimes \mathcal{T}= \tzmat{Q(S) \otimes
    \mathcal{U}}{0}{*}{Q(S) \otimes \mathcal{V}}.$
\end{proof}

\begin{proposition}
  \label{p:compare-K-prime-sat-and-K-prsm}
  The map $\mathcal{T} \mapsto \Perf(\mathcal{T})$ (for
  proper and smooth $\mathcal{T}$)
  induces an isomorphism 
  \begin{equation*}
    K_0(\prsm_\groundring)
    \sira 
    K'_0(\sat_\groundring)
  \end{equation*}
  of rings with inverse morphism induced by
  $\mathcal{S} \mapsto \mathcal{S}$ (for saturated $\mathcal{S}$).
\end{proposition}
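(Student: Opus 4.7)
The plan is to construct ring morphisms in both directions, check they respect the respective relations, and verify that the two composites are the identity; the main obstacle will be showing that the inverse map respects the semi-orthogonal relations, which requires an auxiliary directed dg category construction. First, well-definedness on the underlying monoids and multiplicativity: for $\mathcal{T} \mapsto \Perf(\mathcal{T})$ with $\mathcal{T}$ proper and smooth, $\Perf(\mathcal{T})$ is triangulated by Corollary~\ref{c:triang-versus-pretriang-and-karoubi} and inherits properness and smoothness via Lemma~\ref{l:A-and-PerfA}, hence is saturated; Morita equivalent categories have quasi-equivalent perfections by the characterization of Morita equivalences recalled in Section~\ref{sec:groth-ring-prop-smooth-cats}, so the assignment descends to $\ul{\prsm_\groundring} \ra \ul{\sat_\groundring}$; multiplicativity is precisely Proposition~\ref{p:triangulated-tensor-product-of-cats-and-their-Perfs}, providing the quasi-equivalence $\Perf(\mathcal{S} \otimes^\bL \mathcal{T}) \sira \Perf(\mathcal{S}) \Perfotimes \Perf(\mathcal{T})$, and the unit $\ul{\groundring}$ goes to $\ul{\Perf(\groundring)}$. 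Conversely, for $\mathcal{S} \mapsto \mathcal{S}$ with $\mathcal{S}$ saturated, saturated implies proper and smooth by definition, quasi-equivalences are Morita equivalences, and multiplicativity follows because the Yoneda map $\mathcal{S} \otimes^\bL \mathcal{T} \ra \Perf(\mathcal{S} \otimes^\bL \mathcal{T}) = \mathcal{S} \Perfotimes \mathcal{T}$ is a Morita equivalence by Proposition~\ref{p:perf-to-perf-perf}.

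Next I would verify descent through the defining relations. In the forward direction this is a direct consequence of Lemma~\ref{l:directed-induces-semi-od-on-Perf}: given a directed decomposition $\mathcal{T} = \tzmat{\mathcal{U}}{0}{*}{\mathcal{V}}$ with $\mathcal{T}$ proper and smooth, the induced semi-orthogonal decomposition $[\Perf(\mathcal{T})] = \langle [\Perf(\mathcal{U})'], [\Perf(\mathcal{V})'] \rangle$ has summands quasi-equivalent to $\Perf(\mathcal{U})$ and $\Perf(\mathcal{V})$, yielding the semi-orthogonal relation in $K'_0(\sat_\groundring)$. The reverse direction is the main obstacle. Given a saturated $\mathcal{T}$ with a semi-orthogonal decomposition $[\mathcal{T}] = \langle [\mathcal{U}], [\mathcal{V}] \rangle$, I would import the construction from the proof of Proposition~\ref{p:semi-od-heredity}: remove from $\mathcal{V}$ every object that also lies in $\mathcal{U}$ (such objects have acyclic endomorphism complex by semi-orthogonality, so the resulting inclusion $\mathcal{V}' \hra \mathcal{V}$ is a quasi-equivalence), then form the directed dg category $\mathcal{E}' = \tzmat{\mathcal{U}}{0}{\leftidx{_{\mathcal{V}'}}{\mathcal{T}}{_\mathcal{U}}}{\mathcal{V}'}$. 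The chain $\mathcal{E}' \hra \mathcal{E} \hra \mathcal{T}$, where $\mathcal{E} \subset \mathcal{T}$ is the full dg subcategory on the union of the object sets of $\mathcal{U}$ and $\mathcal{V}$, is a quasi-equivalence followed by a Morita equivalence (the latter by the argument in the proof of Proposition~\ref{p:semi-od-heredity} establishing that $\Perf(\mathcal{E}) \ra \Perf(\mathcal{T})$ is a quasi-equivalence), so $\mathcal{E}'$ is Morita equivalent to $\mathcal{T}$ and thus itself smooth and proper. Applying the directed relation to $\mathcal{E}'$ then yields $\ul{\mathcal{T}} = \ul{\mathcal{E}'} = \ul{\mathcal{U}} + \ul{\mathcal{V}'} = \ul{\mathcal{U}} + \ul{\mathcal{V}}$ in $K_0(\prsm_\groundring)$, which is the image of the semi-orthogonal relation.

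Finally, the two composites act as the identity on Grothendieck classes: for saturated $\mathcal{S}$ the Yoneda map $\mathcal{S} \sira \Perf(\mathcal{S})$ is a quasi-equivalence by Definition~\ref{d:triangulated-dg-category}, while for proper and smooth $\mathcal{T}$ it is a Morita equivalence by Proposition~\ref{p:perf-to-perf-perf}, so classes are preserved under both composites. The entire argument hinges on the construction of $\mathcal{E}'$ in the second paragraph: one must carefully check that it is genuinely Morita equivalent to $\mathcal{T}$ with diagonal components correctly identified with $\mathcal{U}$ and $\mathcal{V}$ up to quasi-equivalence, and for this all the required ingredients are already isolated in the proofs of Proposition~\ref{p:semi-od-heredity} and Lemma~\ref{l:directed-induces-semi-od-on-Perf}.
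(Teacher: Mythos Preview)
Your proof is correct and follows essentially the same approach as the paper: both directions are set up via the Yoneda/$\Perf$ constructions, the forward relation-compatibility uses Lemma~\ref{l:directed-induces-semi-od-on-Perf}, and the key reverse step uses exactly the directed dg category $\mathcal{E}' = \tzmat{\mathcal{U}}{0}{\leftidx{_{\mathcal{V}'}}{\mathcal{T}}{_\mathcal{U}}}{\mathcal{V}'}$ from the proof of Proposition~\ref{p:semi-od-heredity}. One small correction: the inclusion $\mathcal{V}' \hra \mathcal{V}$ need only be a Morita equivalence (the removed objects are zero in $[\mathcal{V}]$, but $[\mathcal{V}']$ may lack a zero object, and $\mathcal{V}'$ may even be empty), which is all you need for $\ul{\mathcal{V}'} = \ul{\mathcal{V}}$ in $K_0(\prsm_\groundring)$.
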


\begin{proof}
  Morita equivalences $\mathcal{T} \ra \mathcal{T}'$
  between proper and smooth dg categories induce 
  quasi-equivalences $\Perf(\mathcal{T}) \ra \Perf(\mathcal{T}')$
  between saturated dg categories, 
  and $\mathcal{T} \ra \Perf(\mathcal{T})$ is a Morita
  equivalence. 
  Quasi-equivalences
  are certainly Morita equivalences, and $\mathcal{S}
  \ra \Perf(\mathcal{S})$ is a quasi-equivalence for saturated
  $\mathcal{S}.$ 
  Hence we get isomorphisms 
  $\ul{\prsm_\groundring} \ra \ul{\sat_\groundring}$
  of monoids (multiplicativity is obvious for the
  inverse $\ul{\mathcal{S}} \mapsto \ul{\mathcal{S}}$)
  and
  $\DZ\ul{\prsm_\groundring} \ra \DZ\ul{\sat_\groundring}$ of
  unital rings.
  Lemma~\ref{l:directed-induces-semi-od-on-Perf} shows that the
  "directed relations" in
  $\DZ\ul{\prsm_\groundring}$ 
  go to zero in $K'_0(\sat_\groundring).$ 

  We claim that the "semi-orthogonal relations" 
  in $\DZ\ul{\sat_\groundring}$
  go to zero in
  $K_0(\prsm_\groundring)$ under $\ul{\mathcal{S}} \mapsto
  \ul{\mathcal{S}}.$  
  Namely, let $\mathcal{S}$ be a saturated dg category with
  full dg subcategories $\mathcal{U}$ and $\mathcal{V}$ 
  such that
  $[\mathcal{S}]=\langle [\mathcal{U}],[\mathcal{V}]\rangle$
  is a semi-orthogonal decomposition.
  Let 
  $\mathcal{V}'$ be the the full dg subcategory of $\mathcal{V}$
  consisting of all objects that are not in $\mathcal{U}.$
  From the proof of Proposition~\ref{p:semi-od-heredity}
  we see that the obvious dg functor 
  $
  \Big[\begin{smallmatrix}
    \mathcal{U} & 0 \\
    \leftidx{_{\mathcal{V}'}}{\mathcal{T}}{_\mathcal{U}} &
    \mathcal{V}'
  \end{smallmatrix}\Big]
  \ra \mathcal{S}$ is a Morita equivalence.
  Obviously $\mathcal{V}' \ra \mathcal{V}$ is a Morita equivalence
  ($\mathcal{V}'$ may be empty). We obtain
  $\ul{\mathcal{S}}= 
  \ul{\Big[
    \begin{smallmatrix}
      \mathcal{U} & 0 \\
      \leftidx{_{\mathcal{V}'}}{\mathcal{T}}{_\mathcal{U}} &
      \mathcal{V}'
    \end{smallmatrix}
    \Big]}
  = \ul{\mathcal{U}}+\ul{\mathcal{V}'} 
  = \ul{\mathcal{U}}+\ul{\mathcal{V}}$ in
  $K_0(\prsm_\groundring).$
  This shows the claim and proves the proposition.
\end{proof}

\subsection{Grothendieck ring of proper and smooth dg algebras}
\label{sec:groth-ring-prop-smooth-alg}

The aim of this section is to give an alternative
description of the ring $K_0(\prsm_\groundring)$
using smooth and proper dg algebras (instead of categories).

\begin{lemma}
  \label{l:tensor-equivalence-and-isomorphic-in-Hmo}
  Let $\mathcal{A},$ $\mathcal{B}$ be dg categories, and let 
  $X=\leftidx{_\mathcal{B}}{X}{_\mathcal{A}}$
  be a dg $\mathcal{A} \otimes \mathcal{B}^\opp$-module.
  Assume that $(- \otimes_\mathcal{B}^\bL X) \colon  D(\mathcal{B}) \ra
  D(\mathcal{A})$ is an equivalence of triangulated categories.
  Then $\mathcal{A}$ and $\mathcal{B}$ are isomorphic in
  $\Hmo_\groundring.$
\end{lemma}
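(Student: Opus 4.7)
The plan is to exhibit a zig-zag of Morita equivalences $\mathcal{A} \hookrightarrow \Perf(\mathcal{A}) \leftarrow \mathcal{B}$, from which $\ul{\mathcal{A}} = \ul{\mathcal{B}}$ in $\Hmo_\groundring$ follows immediately. The left arrow is the Yoneda embedding $\iota$, which is a Morita equivalence by Proposition~\ref{p:perf-to-perf-perf}; the right arrow $\Psi$ will be constructed directly from $X$.

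I would first observe that, since $\phi = (-\otimes_\mathcal{B}^\bL X)$ is a triangulated equivalence, it preserves compact objects, and for each $b \in \mathcal{B}$ one has $X(-, b) \cong \Yoneda{b} \otimes_\mathcal{B} X \cong \phi(\Yoneda{b}) \in \per(\mathcal{A})$, where the first isomorphism holds strictly by the coend/Yoneda formula and represents the derived tensor product because $\Yoneda{b}$ is cofibrant. Next, after replacing $\mathcal{A}, \mathcal{B}$ by cofibrant resolutions (harmless by Lemma~\ref{l:properties-dg-K-cat-preserved-under-Morita-equivalence}) and $X$ by a cofibrant $\mathcal{A} \otimes \mathcal{B}^\opp$-module model chosen so that each $X(-, b)$ is also cofibrant as a right $\mathcal{A}$-module, the left $\mathcal{B}$-action on $X$ upgrades the assignment $b \mapsto X(-, b)$ to a dg functor $\Psi \colon \mathcal{B} \to \Perf(\mathcal{A})$.

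To verify that $\Psi$ is a Morita equivalence, I would compute the composite of its derived extension of scalars $\bL\Psi^* \colon D(\mathcal{B}) \to D(\Perf(\mathcal{A}))$ with the equivalence $\iota_* \colon D(\Perf(\mathcal{A})) \sira D(\mathcal{A})$ (restriction along the Morita equivalence $\iota$). Using the commutative square \eqref{eq:yoneda-extension-of-scalars} together with the Yoneda lemma internal to $\Perf(\mathcal{A})$, one checks that $\iota_* \circ \bL\Psi^*$ sends the compact generator $\Yoneda{b} \in D(\mathcal{B})$ to $X(-, b) \cong \phi(\Yoneda{b})$. Both functors are coproduct-preserving and triangulated (in fact both are derived tensor products over $\mathcal{B}$ with quasi-iso copies of $X$) and they agree on the compact generators $\{\Yoneda{b}\}_{b \in \mathcal{B}}$, so they are isomorphic; hence $\bL\Psi^*$ is an equivalence because $\phi$ and $\iota_*$ are, and $\Psi$ is a Morita equivalence.

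The main technical obstacle is the cofibrancy arrangement in the construction of $\Psi$: to make $\Psi$ genuinely land in $\Perf(\mathcal{A})$ rather than only in $D(\mathcal{A})$, one must exhibit a cofibrant bimodule model of $X$ whose slices $X(-, b)$ stay cofibrant as right $\mathcal{A}$-modules. This is a standard but slightly delicate application of the model structure on bimodules over $Q(\mathcal{A}) \otimes Q(\mathcal{B})^\opp$; alternatively, one could let $\Psi$ take values in $\calMod(\mathcal{A})_{cf}$ and use Lemma~\ref{l:dg-functors-restricts-to-perfects-embedding-and-qequiv} to pass to $\Perf(\mathcal{A})$, or simply cite To\"en's description of the $\Hmo_\groundring$-mapping space by right quasi-representable bimodules to bypass this issue.
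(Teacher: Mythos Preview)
Your proposal is correct and follows essentially the same approach as the paper. Both replace $\mathcal{A},\mathcal{B}$ by cofibrant resolutions $\mathcal{A}',\mathcal{B}'$, take a cofibrant replacement $X'$ of $X$ in $C(\mathcal{A}'\otimes\mathcal{B}'^\opp)$, and use that the slices $X'(-,b)$ are then cofibrant right $\mathcal{A}'$-modules to obtain a genuine dg functor into $\Perf(\mathcal{A}')$. The only presentational difference is that the paper packages this as the tensor functor $T_{X'}=(-\otimes_{\mathcal{B}'}X')\colon\Perf(\mathcal{B}')\to\Perf(\mathcal{A}')$ and shows it is a quasi-equivalence, whereas you restrict to the Yoneda image and get $\Psi\colon\mathcal{B}'\to\Perf(\mathcal{A}')$ as a Morita equivalence; since $\mathcal{B}'\to\Perf(\mathcal{B}')$ is itself a Morita equivalence, these are equivalent statements. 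The ``technical obstacle'' you flag is exactly what the paper resolves by citing \cite[Prop.~2.10(b) and Lemma~2.14]{valery-olaf-smoothness-equivariant}: over $\groundring$-h-flat dg categories, a cofibrant bimodule has cofibrant one-sided slices, so no ad hoc arrangement is needed.
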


\begin{remark}
  \label{rem:dg-Morita-equivalence-and-isomorphic-in-Hmo}
  Lemma~\ref{l:tensor-equivalence-and-isomorphic-in-Hmo}
  shows that two dg categories $\mathcal{A}$ and $\mathcal{B}$
  are "dg Morita equivalent" in the sense of
  \cite[Def.~3.13]{valery-olaf-smoothness-equivariant} if and
  only if they are isomorphic in $\Hmo_\groundring.$ 
\end{remark}

\begin{proof}
  Let $\mathcal{A}' := Q(\mathcal{A}) \ra \mathcal{A}$
  and $\mathcal{B}' := Q(\mathcal{B}) \ra \mathcal{B}$ be
  cofibrant resolutions.  Consider $X$ by restriction of scalars
  as a dg $\mathcal{A}' \otimes
  \mathcal{B}'^\opp$-module, and let $X' \ra X$
  be a cofibrant resolution in $C(\mathcal{A}' \otimes
  \mathcal{B}'^\opp).$ Corollary 3.15 and the beginning
  of the proof of Proposition 3.16 in
  \cite{valery-olaf-smoothness-equivariant} show that
  the dg functor $T_{X'}:=(- \otimes_{\mathcal{B}'} X') \colon 
  \calMod(\mathcal{B}') \ra 
  \calMod(\mathcal{A}')$
  directly descends to an equivalence
  $T_{X'} \colon 
  D(\mathcal{B}') \ra 
  D(\mathcal{A}')$ of triangulated
  categories. On compact objects we obtain an equivalence
  $T_{X'} \colon  \per(\mathcal{B}') \ra \per(\mathcal{A}').$
  
  For $B' \in
  \mathcal{B}'$ 
  note that $T_{X'}(\Yoneda{B'})=X'(-,B')$ is a cofibrant dg
  $\mathcal{A}'$-module by 
  \cite[Prop.~2.10.(b) and
  Lemma~2.14]{valery-olaf-smoothness-equivariant}.
  Hence the dg functor $T_{X'}$ maps cofibrant 
  dg $\mathcal{B}'$-modules to cofibrant dg
  $\mathcal{A}'$-modules (use
  \cite[Lemma~2.7]{valery-olaf-smoothness-equivariant}). 
  
  This shows that the dg functor $T_{X'} \colon 
  \Perf(\mathcal{B}') \ra \Perf(\mathcal{A}')$ 
  induced by $T_{X'}$ is a
  quasi-equivalence. 
  Hence $\mathcal{B} \la \mathcal{B}' \ra \Perf(\mathcal{B}')
  \xra{T_{X'}} \Perf(\mathcal{A}') \la \mathcal{A}' \ra
  \mathcal{A}$ consists of Morita equivalences, so $\mathcal{A}$
  and $\mathcal{B}$ are isomorphic in $\Hmo_\groundring.$
\end{proof}

\begin{lemma}
  \label{l:reduce-Morita-equivalence-prop-smooth-to-algebras}
  Let $f \colon  \mathcal{S} \ra \mathcal{T}$ be a Morita equivalence
  between dg categories having a
  compact generator. Let $A$ and $B$ be endomorphism dg algebras
  of objects $E \in \Perf(\mathcal{S})$ and
  $F \in \Perf(\mathcal{T}),$ respectively, that become classical
  generators of $[\Perf(\mathcal{S})]$ and $[\Perf(\mathcal{T})],$
  respectively. If $\mathcal{S}$ and $\mathcal{T}$ are proper and
  smooth, the same is true for $A$ and $B.$ 
  Then there is a dg $B \otimes A^\opp$-module
  $X=\leftidx{_A}{X}{_B}$
  such that
  $(- \otimes_A^\bL X) \colon  D(A) \ra
  D(B)$ is an equivalence of triangulated categories.
\end{lemma}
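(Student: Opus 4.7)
The plan is to construct the bimodule $X$ as a Hom-space inside $\Perf(\mathcal{T})$ and then identify the functor $(-\otimes_A^\bL X)$ with a composition of two standard equivalences factoring through $D(\Perf(\mathcal{T}))$.

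First I would set up the relevant Morita equivalences. By Example~\ref{exam:endo-dg-algebra-Morita-equivalence}, the obvious dg functors $\alpha\colon A \to \Perf(\mathcal{S})$ (sending the unique object to $E$) and $\beta\colon B \to \Perf(\mathcal{T})$ (sending the unique object to $F$) are Morita equivalences. Since $f$ is a Morita equivalence, the induced dg functor $f^*\colon \Perf(\mathcal{S}) \to \Perf(\mathcal{T})$ is a quasi-equivalence by the characterization of Morita equivalences recalled just before Example~\ref{exam:endo-dg-algebra-Morita-equivalence} (cf.\ also Lemma~\ref{l:dg-functors-restricts-to-perfects-embedding-and-qequiv}). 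Composing, $\tildew{\alpha}:=f^*\circ\alpha\colon A \to \Perf(\mathcal{T})$ is a Morita equivalence sending the unique object to $f^*(E)$. The intermediate claim that $A$ and $B$ inherit properness and smoothness from $\mathcal{S}$ and $\mathcal{T}$ follows directly from Lemma~\ref{l:properties-dg-K-cat-preserved-under-Morita-equivalence} applied to $\alpha$ and $\beta$.

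Next I would define
\[
  X := \Perf(\mathcal{T})(F, f^*(E))
\]
and equip it with the left $A$-action by post-composition through $\tildew{\alpha}\colon A \to \End_{\Perf(\mathcal{T})}(f^*(E))$ and the right $B$-action by pre-composition through $\beta\colon B \isomoto \End_{\Perf(\mathcal{T})}(F)$. This gives $X=\leftidx{_A}{X}{_B}$ the structure of a dg $B\otimes A^\opp$-module.

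Finally, I would identify the functor $(-\otimes_A^\bL X)\colon D(A)\to D(B)$ with the composition
\[
  D(A) \xra{\bL\tildew{\alpha}^*} D(\Perf(\mathcal{T})) \xra{\beta_*} D(B).
\]
Both arrows are triangulated equivalences: $\bL\tildew{\alpha}^*$ because $\tildew{\alpha}$ is a Morita equivalence (its right adjoint $\tildew{\alpha}_*$ is an equivalence by definition, hence so is the left adjoint), and $\beta_*$ for the same reason applied to $\beta$. For the identification, observe that restriction along the one-object dg functor $\beta$ is evaluation at $F$; combined with the standard formula for extension of scalars, one computes for a cofibrant replacement $M'\to M$ in $C(A)$ that
\[
  \beta_*\bL\tildew{\alpha}^*(M) = \tildew{\alpha}^*(M')(F) = M'\otimes_A \Perf(\mathcal{T})(F, f^*(E)) = M\otimes_A^\bL X.
\]
The main obstacle I anticipate is the bookkeeping around cofibrancy/h-flatness to justify this chain of identifications as genuinely derived (i.e.\ choosing the replacement $M'$ on the correct side, or alternatively a cofibrant replacement of $X$ as an $A$-module). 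Beyond that, the argument reduces to routine checks of adjunctions and of the explicit shape of the extension-of-scalars formula.
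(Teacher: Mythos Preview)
Your proposal is correct and follows essentially the same route as the paper: both define $X=\Perf(\mathcal{T})(F,f^*(E))$ and realize $(-\otimes_A^\bL X)$ as the composite of the equivalences induced by the Morita equivalences $A\to\Perf(\mathcal{S})\xra{f^*}\Perf(\mathcal{T})\la B$, with the properness/smoothness claim handled via Example~\ref{exam:endo-dg-algebra-Morita-equivalence}. Your write-up is slightly more explicit about the identification $\beta_*\bL\tildew{\alpha}^*(M)\cong M\otimes_A^\bL X$, but the underlying argument is the same.
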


\begin{proof}
  Example~\ref{exam:endo-dg-algebra-Morita-equivalence}
  shows that the obvious dg functors $A \ra \Perf(\mathcal{S})$
  and $B \ra \Perf(\mathcal{T})$ are Morita equivalences and
  that 
  properness and smoothness is passed on from $\mathcal{S}$ and
  $\mathcal{T}$ to $A$ and $B.$
  The Morita equivalences $A \ra \Perf(\mathcal{S}) \xra{f^*}
  \Perf(\mathcal{T}) \la B$ (where $f^*$ is even a
  quasi-equivalence) define equivalences
  \begin{equation*}
    [\calMod(A)_{cf}] \sira 
    [\calMod(\Perf(\mathcal{S}))_{cf}] \xsira{[(f^*)^*]} 
    [\calMod(\Perf(\mathcal{T}))_{cf}] 
    \sira D(\Perf(\mathcal{T})) 
    \xsira{\res^{\Perf(\mathcal{T})}_B} D(B)
  \end{equation*}
  mapping
  $A$ to $(\Perf(\mathcal{S}))(-,E)$ to
  $(\Perf(\mathcal{T}))(-,f^*(E))$ to
  $(\Perf(\mathcal{T}))(F,f^*(E)).$
  Hence we can take $X$ to be the
  dg $B \otimes A^\opp$-module $(\Perf(\mathcal{T}))(F,
  f^*(E)).$
\end{proof}

Let $\prsmalg_\groundring$ be the full subcategory of
$\dgcat_\groundring$ consisting of proper and smooth dg algebras
(= dg categories with one object).
We consider the equivalence relation on the objects of
$\prsmalg_\groundring$ 
generated by $A \sim B$ if there is a
dg $B \otimes A^\opp$-module
$X=\leftidx{_A}{X}{_B}$
such that
$(- \otimes_A^\bL X) \colon  D(A) \ra
D(B)$ is an equivalence of triangulated categories.
Let $\ul{\prsmalg_\groundring}$ be the set of equivalence
classes. Given a proper and smooth dg algebra $A$ we denote its
class by $\ul{A}.$

\begin{lemma}
  \label{l:prsmalg-prsmcat}
  The inclusion $\prsmalg_\groundring \ra \prsm_\groundring$
  induces an isomorphism 
  $\ul{\prsmalg_\groundring} \sira \ul{\prsm_\groundring}$ of
  sets and then an
  isomorphism $\DZ\ul{\prsmalg_\groundring} \sira
  \DZ\ul{\prsm_\groundring}$ of abelian groups.
\end{lemma}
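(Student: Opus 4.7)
The plan is to prove the isomorphism of sets first; the isomorphism of free abelian groups then follows by $\DZ$-linear extension. I would establish well-definedness, surjectivity, and injectivity of the induced map $\ul{A} \mapsto \ul{A}$ separately.

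For well-definedness, the defining relation on $\ul{\prsmalg_\groundring}$ is generated by the existence of a dg bimodule $X = \leftidx{_A}{X}{_B}$ for which $(-\otimes_A^\bL X) \colon D(A) \ra D(B)$ is a triangulated equivalence. By Lemma~\ref{l:tensor-equivalence-and-isomorphic-in-Hmo} this already implies $A \cong B$ in $\Hmo_\groundring$, so the map descends to equivalence classes.

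For surjectivity, given $\mathcal{T} \in \prsm_\groundring$, choose a classical generator $E$ of $[\Perf(\mathcal{T})]$ (which exists since $\mathcal{T}$ is proper, hence has a compact generator) and form its endomorphism dg algebra $A := (\Perf(\mathcal{T}))(E,E)$. By Example~\ref{exam:endo-dg-algebra-Morita-equivalence} and Lemma~\ref{l:properties-dg-K-cat-preserved-under-Morita-equivalence}, the canonical dg functor $A \ra \Perf(\mathcal{T})$ is a Morita equivalence and $A$ is proper and smooth; together with the Yoneda Morita equivalence $\mathcal{T} \ra \Perf(\mathcal{T})$, this shows $\ul{A} = \ul{\mathcal{T}}$ in $\ul{\prsm_\groundring}$.

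The main step is injectivity. Suppose $A, B \in \prsmalg_\groundring$ satisfy $\ul{A} = \ul{B}$ in $\ul{\prsm_\groundring}$. By the model category presentation of $\Hmo_\groundring$, this means that $A$ and $B$ are connected by a finite zig-zag of Morita equivalences in $\prsm_\groundring$,
\[ A = C_0 \xra{\phantom{f}} C_1 \xla{\phantom{f}} C_2 \xra{\phantom{f}} \cdots \xla{\phantom{f}} C_n = B, \]
where each $C_i$ is proper and smooth. At each vertex $C_i$ I would choose once and for all a classical generator of $[\Perf(C_i)]$ and let $D_i$ be its endomorphism dg algebra, taking $D_0 := A$ and $D_n := B$; each $D_i$ is proper and smooth by Example~\ref{exam:endo-dg-algebra-Morita-equivalence}. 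Applying Lemma~\ref{l:reduce-Morita-equivalence-prop-smooth-to-algebras} to each individual leg (using the common choices at the shared endpoints) yields, for every $i$, a dg bimodule realizing a derived equivalence between $D(D_i)$ and $D(D_{i+1})$; legs oriented in the opposite direction are handled by passing to the quasi-inverse equivalence, which is again represented by a bimodule. Hence $\ul{D_i} = \ul{D_{i+1}}$ in $\ul{\prsmalg_\groundring}$ for each $i$, and concatenating gives $\ul{A} = \ul{D_0} = \ul{D_n} = \ul{B}$ in $\ul{\prsmalg_\groundring}$.

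The hardest part is the injectivity, and within it the key observation is that a single choice of classical generator at each intermediate vertex $C_i$ suffices for both adjacent legs because Lemma~\ref{l:reduce-Morita-equivalence-prop-smooth-to-algebras} permits the classical generators at the two ends of a Morita equivalence to be prescribed independently; this is precisely what makes it possible to replace a zig-zag through arbitrary proper and smooth dg categories by a chain of derived-equivalence bimodules between proper and smooth dg algebras.
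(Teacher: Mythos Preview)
Your proof is correct and follows essentially the same approach as the paper: well-definedness via Lemma~\ref{l:tensor-equivalence-and-isomorphic-in-Hmo}, surjectivity via Example~\ref{exam:endo-dg-algebra-Morita-equivalence}, and injectivity via Lemma~\ref{l:reduce-Morita-equivalence-prop-smooth-to-algebras}. The paper is terser, simply asserting that injectivity ``follows from'' that lemma; you have made explicit the zig-zag argument that this entails, and your observation that a single choice of classical generator at each intermediate vertex suffices is exactly the point. One small remark: since the relation on $\prsmalg_\groundring$ is defined as the \emph{equivalence relation generated by} the bimodule condition, a leg oriented the ``wrong'' way already yields $D_{i+1}\sim D_i$ and hence $D_i\sim D_{i+1}$ without needing to invoke a quasi-inverse bimodule.
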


\begin{proof}
  The first isomorphism trivially yields the second one which we
  prove now.
  Lemma~\ref{l:tensor-equivalence-and-isomorphic-in-Hmo} shows
  that the map 
  $\ul{\prsmalg_\groundring} \ra \ul{\prsm_\groundring}$ is
  well-defined.
  If $\mathcal{T}$ is any proper and smooth dg category, take any
  $E \in \Perf(\mathcal{T})$ that is a classical generator of
  $[\Perf(\mathcal{T})],$ and let $A=\Perf(\mathcal{T})(E,E).$
  Then $A \ra \Perf(\mathcal{T})$ is a Morita equivalence
  and $A$ is proper and smooth (see
  Example~\ref{exam:endo-dg-algebra-Morita-equivalence}). This 
  shows surjectivity.
  Injectivity follows from
  Lemma~\ref{l:reduce-Morita-equivalence-prop-smooth-to-algebras}.
\end{proof} 

Let $A$ and $B$ be dg algebras, and let 
$N=\leftidx{_B}{N}{_A}$ be a dg $A \otimes B^\opp$-module.
Then we can form the dg algebra 
$\roundtzmat A0NB.$ We use round brackets in order to distinguish 
$\roundtzmat A0NB$ from the dg category
$\tzmat A0NB$ with two objects.

\begin{definition}
  \label{def:Grothendieck-group-of-proper-smooth-dg-alg}
  The \define{Grothendieck group $K_0(\prsmalg_\groundring)$ of
    proper and smooth dg algebras}
  is defined to be the quotient of the abelian group
  $\DZ\ul{\prsmalg_\groundring}$ by the subgroup generated by
  the elements (the "lower-triangular matrix algebra relations")
  $\ul{R} - (\ul{A} + \ul{B})$
  whenever $R$ is a proper and smooth dg algebra such that
  there are dg algebras $A$ and $B$ together with a
  dg $A \otimes B^\opp$-module 
  $N=\leftidx{_B}{N}{_A}$ such that 
  $R=\roundtzmat A0NB.$ 
  (We do not require that $A$ and $B$ are smooth and proper since
  this is automatic: properness of $A$ and $B$ is obvious, and
  smoothness follows from
  \cite[Thm.~3.24 and
  Rem.~3.25]{valery-olaf-smoothness-equivariant}.) 
\end{definition}

\begin{proposition}
  \label{p:compare-K-prsm-and-K-prsmalg}
  The isomorphism $\DZ\ul{\prsmalg_\groundring} \sira
  \DZ\ul{\prsm_\groundring}$ of abelian groups induces an
  isomorphism 
  \begin{equation*}
    K_0(\prsmalg_\groundring)
    \sira 
    K_0(\prsm_\groundring)
  \end{equation*}
   of abelian groups.
\end{proposition}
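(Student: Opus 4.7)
The plan is to verify that the isomorphism $\varphi\colon \DZ\ul{\prsmalg_\groundring}\sira \DZ\ul{\prsm_\groundring}$ of Lemma~\ref{l:prsmalg-prsmcat} carries the subgroup $I_{\text{alg}}$ of lower-triangular matrix algebra relations bijectively onto the subgroup $I_{\text{cat}}$ of directed relations; the induced map on quotients is then the desired isomorphism of abelian groups.

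For the inclusion $\varphi(I_{\text{alg}})\subseteq I_{\text{cat}}$, let $R=\roundtzmat{A}{0}{N}{B}$ be a proper and smooth dg algebra and consider the two-object directed dg category $\mathcal{D}:=\tzmat{A}{0}{N}{B}$ with $\mathcal{D}(1,1)=A,$ $\mathcal{D}(2,2)=B,$ $\mathcal{D}(1,2)=N,$ $\mathcal{D}(2,1)=0.$ The object $E:=\Yoneda{1}\oplus\Yoneda{2}$ is a classical generator of $[\Perf(\mathcal{D})],$ and a direct computation gives $(\Perf(\mathcal{D}))(E,E)=R$ as dg algebras. Example~\ref{exam:endo-dg-algebra-Morita-equivalence} then yields a Morita equivalence between $R$ and $\mathcal{D},$ so $\varphi(\ul{R})=\ul{\mathcal{D}}$ in $\ul{\prsm_\groundring}$ and $\mathcal{D}$ is proper and smooth. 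The defining directed relation on $\mathcal{D}$ gives $\ul{\mathcal{D}}=\ul{A}+\ul{B}$ in $K_0(\prsm_\groundring),$ whence $\varphi(\ul{R}-\ul{A}-\ul{B})=0.$

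For the reverse inclusion $\varphi^{-1}(I_{\text{cat}})\subseteq I_{\text{alg}}$, start with a proper and smooth dg category $\mathcal{T}=\tzmat{\mathcal{U}}{0}{*}{\mathcal{V}};$ here $\mathcal{U}$ and $\mathcal{V}$ are automatically proper and smooth. Choose classical generators $E_U\in\Perf(\mathcal{U}),$ $E_V\in\Perf(\mathcal{V}),$ set $A:=(\Perf(\mathcal{U}))(E_U,E_U)$ and $B:=(\Perf(\mathcal{V}))(E_V,E_V)$ (smooth and proper by Example~\ref{exam:endo-dg-algebra-Morita-equivalence}), and push $E_U,E_V$ via the full and faithful extension of scalars functors $\Perf(\mathcal{U})\hra\Perf(\mathcal{T})$ and $\Perf(\mathcal{V})\hra\Perf(\mathcal{T})$ coming from the inclusions $\mathcal{U},\mathcal{V}\hra\mathcal{T}$ (Lemma~\ref{l:dg-functors-restricts-to-perfects-embedding-and-qequiv}.\ref{enum:embeddings}) to objects $\tilde E_U,\tilde E_V\in\Perf(\mathcal{T}).$ By Lemma~\ref{l:directed-induces-semi-od-on-Perf}, $[\Perf(\mathcal{T})]=\langle[\Perf(\mathcal{U})'],[\Perf(\mathcal{V})']\rangle$ is a semi-orthogonal decomposition, so $\tilde E_U\oplus\tilde E_V$ classically generates $[\Perf(\mathcal{T})]$ with endomorphism dg algebra $R=\tzmat{A}{X}{N}{B},$ where $N:=(\Perf(\mathcal{T}))(\tilde E_U,\tilde E_V)$ and $X:=(\Perf(\mathcal{T}))(\tilde E_V,\tilde E_U)$ is acyclic by semi-orthogonality. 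The evident inclusion $R':=\roundtzmat{A}{0}{N}{B}\hra R$ is a dg algebra map (multiplicativity is immediate since $X$ appears only in the matrix positions being killed), and its cokernel $X$ is acyclic, so $R'\hra R$ is a quasi-isomorphism of dg algebras, hence a quasi-equivalence of the corresponding one-object dg categories. In particular $R$ and $R'$ are Morita equivalent; since $R$ is smooth and proper by Example~\ref{exam:endo-dg-algebra-Morita-equivalence}, so is $R'.$ The lower-triangular relation $\ul{R'}-\ul{A}-\ul{B}\in I_{\text{alg}}$ is sent by $\varphi$ to $\ul{R'}-\ul{A}-\ul{B}=\ul{\mathcal{T}}-\ul{\mathcal{U}}-\ul{\mathcal{V}}$ in $\DZ\ul{\prsm_\groundring},$ as required.

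The main obstacle is the last step: the complex $X$ is only acyclic, not strictly zero, so the endomorphism dg algebra of $\tilde E_U\oplus\tilde E_V$ is not literally of the required lower-triangular form. Replacing $R$ by the strictly lower-triangular $R'$ via the quasi-isomorphism $R'\hra R$ and invoking the Morita invariance of classes in $\ul{\prsmalg_\groundring}$ (Remark~\ref{rem:dg-Morita-equivalence-and-isomorphic-in-Hmo}) gives a smooth and proper strictly lower-triangular $R'$ with $\ul{R'}=\ul{\mathcal{T}},$ thereby matching each directed relation on the category side with a lower-triangular relation on the algebra side.
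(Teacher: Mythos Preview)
Your proof is correct and follows essentially the same approach as the paper: both directions use the Morita equivalence between the matrix dg algebra $\roundtzmat{A}{0}{N}{B}$ and the two-object dg category $\tzmat{A}{0}{N}{B}$ for one inclusion, and for the other inclusion both pick classical generators $u,v$ of $\Perf(\mathcal{U}),\Perf(\mathcal{V})$, form the endomorphism algebra $R$ of $u\oplus v$ in $\Perf(\mathcal{T})$, and then replace $R$ by the strictly lower-triangular $R'$ via the quasi-isomorphism coming from acyclicity of $(\Perf(\mathcal{T}))(v,u)$. The only difference is packaging: you frame it as $\varphi(I_{\text{alg}})=I_{\text{cat}}$, while the paper phrases it as two well-defined maps on quotients; the content is identical.
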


\begin{proof}
  It is easy to see that 
  the dg algebra $\roundtzmat A0NB$ and the dg category
  $\tzmat A0NB$ are Morita equivalent, cf.\
  \cite[Rem.~3.25]{valery-olaf-smoothness-equivariant}. 
  Hence the "lower-triangular matrix algebra relations" go to
  zero in  
  $K_0(\prsm_\groundring)$ and we obtain a morphism 
  $K_0(\prsmalg_\groundring)
  \ra
  K_0(\prsm_\groundring)$ of groups.
  
  Let $\mathcal{T}$ be a smooth and proper dg category with full
  dg subcategories $\mathcal{U}$ and $\mathcal{V}$ such that
  $\mathcal{T}= \tzmat{\mathcal{U}}{0}{*}{\mathcal{V}}.$
  Then we have a semi-orthogonal decomposition
  $[\Perf(\mathcal{T})] = \langle [\Perf(\mathcal{U})],
  [\Perf(\mathcal{V})]\rangle$ by
  Lemma~\ref{l:directed-induces-semi-od-on-Perf}.
  Choose $u \in \Perf(\mathcal{U})$ and $v \in
  \Perf(\mathcal{V})$ that become classical generators of 
  $[\Perf(\mathcal{U})]$ and $[\Perf(\mathcal{V})]$ respectively.
  Then $u \oplus v$ is a classical generator of
  $[\Perf(\mathcal{T})].$ Let $A,$ $B,$ $R$ be the
  endomorphism dg algebras of $u,$ $v,$ $u\oplus v,$
  respectively. Then 
  the "directed relation"
  $\ul{\mathcal{T}} - (\ul{\mathcal{U}} + 
  \ul{\mathcal{V}})$ in 
  $\DZ\ul{\prsm_\groundring}$ is mapped to 
  $\ul{R} - (\ul{A} + \ul{B})$ in
  $\DZ\ul{\prsmalg_\groundring}$
  under the inverse of the isomorphism of
  Lemma~\ref{l:prsmalg-prsmcat}. 
  Note that $R=\roundtzmat
  A{(\Perf(\mathcal{T}))(v,u)}{(\Perf(\mathcal{T}))(u,v)}B.$
  But since ${(\Perf(\mathcal{T}))(v,u)}$ is acyclic the obvious
  morphism 
  $R':=\roundtzmat
  A0
  {(\Perf(\mathcal{T}))(u,v)}B \ra R$ is a quasi-isomorphism of dg
  algebras. Hence
  $\ul{R} - (\ul{A} + \ul{B})=\ul{R'} - (\ul{A} + \ul{B})$ goes
  to zero in 
  $K_0(\prsmalg_\groundring),$ and this implies the proposition.
\end{proof}

Note that up to now 
$\ul{\prsmalg_\groundring}$ is only a set and
$\DZ\ul{\prsmalg_\groundring}$ and
$K_0(\prsmalg_\groundring)$ are only abelian groups.
However the isomorphisms from 
Lemma~\ref{l:prsmalg-prsmcat}
and Proposition~\ref{p:compare-K-prsm-and-K-prsmalg}
enable us to equip these structures with multiplication maps
$\bullet$ which are obviously induced by $(A,B)
\mapsto A \otimes^\bL B.$
So 
$\ul{\prsmalg_\groundring}$ is a monoid with unit
$\ul{\groundring},$ and 
$\DZ\ul{\prsmalg_\groundring}$ and
$K_0(\prsmalg_\groundring)$ are commutative rings with unit 
$\ul{\groundring}.$

\begin{definition}
  \label{def:Grothendieck-ring-of-proper-smooth-dg-alg}
  We call $K_0(\prsmalg_\groundring)$ with the multiplication
  $\bullet$ induced by $(A,B) \mapsto A \otimes^\bL B.$ the
  \define{Grothendieck ring of proper and smooth dg algebras}.
\end{definition}

\begin{remark}
  \label{rem:K0s-combined}  
  If we combine 
  Propositions~\ref{p:compare-K-prsm-and-K-prsmalg} and
  \ref{p:compare-K-prime-sat-and-K-prsm}
  we obtain ring isomorphisms
  \begin{equation*}
    K_0(\prsmalg_\groundring)
    \sira
    K_0(\prsm_\groundring)
    \sira 
    K'_0(\sat_\groundring)
  \end{equation*}
  induced by $A \mapsto A$ (for $A$ a proper and smooth dg
  algebra) and $\mathcal{T} \mapsto \Perf(\mathcal{T})$ (for
  $\mathcal{T}$ a proper and smooth dg category).
  The inverse map $K'_0(\sat_\groundring) \sira
  K_0(\prsmalg_\groundring)$ is induced by mapping a saturated dg
  category $\mathcal{S}$ to the endomorphism dg algebra of an
  arbitrary classical generator of $[\mathcal{S}].$
\end{remark}

\section{Grothendieck ring of varieties over \texorpdfstring{$\DA^1$}{A1}}
\label{sec:groth-ring-vari}

A variety is a reduced separated
scheme of finite type over a field $k$ (not necessarily irreducible).
In this section we assume that $k$ has characteristic zero. 
If $X$ and $Y$ are schemes over $k$ we abbreviate $X \times Y := X \times_{\Spec k} Y.$
Denote by $\DA^1=\DA^1_k$ the affine line over $k.$ An
$\DA^1$-variety is a variety $X$ together with a morphism $X \ra
\DA^1.$

\begin{definition}
  [\cite{bittner-euler-characteristic}]
  \label{d:groth-group-var-over-A1}
  The \define{(motivic) Grothendieck group $K_0(\Var_{\DA^1})$ of
    varieties 
    over $\DA^1$} is 
  the free abelian group on isomorphism classes
  $[X]_{\DA^1}$ of varieties $X \ra \DA^1$ over $\DA^1$
  subject 
  to the 
  relations $[X]_{\DA^1}=[X-Y]_{\DA^1} + [Y]_{\DA^1}$ whenever
  $Y \subset X$ is a closed subvariety. 
\end{definition}

Sometimes we write $[X,W]$ instead of $[X]_{\DA^1}$ if we want to
emphasize the morphism $W \colon X \ra \DA^1.$
The following theorem describes two  
alternative presentations
of the Grothendieck group
$K_0(\Var_{\DA^1})$ of varieties
over $\DA^1.$

\begin{theorem}
  [{\cite[Thm.~5.1]{bittner-euler-characteristic}}]
  \label{t:groth-group-var-over-A1-smooth-and-blowup}
  The obvious morphisms from the following two abelian groups to 
  $K_0(\Var_{\DA^1})$ are isomorphisms.
  \begin{enumerate}[label=(sm)]
  \item
    \label{enum:smooth-presentation}
    $K_0^\sm(\Var_{\DA^1}),$ the free abelian group on
    isomorphism classes 
    $[X]_{\DA^1}$ of 
    $\DA^1$-varieties which are smooth over $k,$ 
%
%
    %
    subject to the
    relations
    $[X]_{\DA^1}=[X-Y]_{\DA^1}+[Y]_{\DA^1},$ where 
    $X$ is smooth over $k,$ 
    and $Y\subset X$ is a $k$-smooth 
    closed subvariety.
  \end{enumerate}
  \begin{enumerate}[label=(bl)]
  \item
    \label{enum:blowup-presentation}
    $K_0^\bl(\Var_{\DA^1}),$
    the free abelian group on isomorphism classes
    $[X]_{\DA^1}$ of 
    $\DA^1$-varieties which are smooth over $k$
    and proper
    over $\DA^1$ subject to relations $[\emptyset]_{\DA^1}=0$ 
    and
    $[\Blow_Y(X)]_{\DA^1}-[E]_{\DA^1}=[X]_{\DA^1}-[Y]_{\DA^1},$
    where $X$ is smooth over $k$
    and proper over
    $\DA^1,$ $Y\subset X$ is a $k$-smooth 
    closed
    subvariety, $\Blow_Y(X)$ is the blowing-up of $X$ along $Y,$
    and $E$ is the exceptional divisor of this blowing-up.
  \end{enumerate}
  In case
  \ref{enum:smooth-presentation} we can restrict to
  varieties which are in addition quasi-projective over
  $\DA^1$
  (and hence quasi-projective over $k$),
  and in case 
  \ref{enum:blowup-presentation} to varieties which are
  projective over
  $\DA^1$ (and hence quasi-projective over $k$).
  In both cases we can restrict to connected varieties.
\end{theorem}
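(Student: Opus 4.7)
This is a classical result of Bittner whose proof relies on three tools specific to characteristic zero: Hironaka's resolution of singularities, Nagata's compactification theorem (applied to morphisms over $\DA^1$), and the weak factorization theorem of Abramovich-Karu-Matsuki-W{\l}odarczyk. For part (sm), the forward map $K_0^\sm(\Var_{\DA^1}) \to K_0(\Var_{\DA^1})$ is well-defined because every defining relation of the source is a special case of the defining relations of the target. To construct an inverse, proceed by noetherian induction on $\dim X$: for a general $\DA^1$-variety $W \colon X \to \DA^1$, let $U \subset X$ be the $k$-smooth locus, so that $Z := X - U$ is a proper closed subvariety with $\dim Z < \dim X$, and set $[X,W] \mapsto [U, W|_U] + [Z, W|_Z]$; the first term is a smooth generator, the second is handled by the inductive hypothesis. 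Well-definedness amounts to independence from the stratification, checked by passing to a common refinement and invoking the additivity relations; the quasi-projective restriction is preserved since both $U$ and $Z$ inherit quasi-projectivity from $X$.

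For part (bl), the forward map $K_0^\bl(\Var_{\DA^1}) \to K_0^\sm(\Var_{\DA^1})$ sends each generator to itself, and the blow-up relation follows from additivity in $K_0^\sm$: since $\Blow_Y(X) - E \cong X - Y$ as $\DA^1$-varieties,
\[
[\Blow_Y(X)]_{\DA^1} - [E]_{\DA^1} = [X - Y]_{\DA^1} = [X]_{\DA^1} - [Y]_{\DA^1}.
\]
For the inverse map $e \colon K_0^\sm(\Var_{\DA^1}) \to K_0^\bl(\Var_{\DA^1})$, for each smooth quasi-projective $\DA^1$-variety $W \colon X \to \DA^1$, first use Nagata to compactify $X \hookrightarrow \bar X$ with $\bar X \to \DA^1$ proper, then apply Hironaka to $\bar X$ relative to the complement $\bar X - X$ to produce a smooth $\tilde X$ proper over $\DA^1$ containing $X$ as an open subvariety whose complement $D = \bigcup_i D_i$ is a simple normal crossing divisor. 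Set
\[
e([X,W]) := \sum_{I} (-1)^{|I|} [D_I]_{\DA^1} \in K_0^\bl(\Var_{\DA^1}),
\]
where $D_I := \bigcap_{i \in I} D_i$ and $D_\emptyset := \tilde X$, and extend additively to $K_0^\sm$.

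The hard part is showing that $e([X,W])$ is independent of the chosen smooth compactification $(\tilde X, D)$; this is exactly where the weak factorization theorem is used. It states that any two smooth compactifications of $X$ with SNC boundary are connected by a chain of blow-ups and blow-downs along smooth centers lying inside the boundary, and a direct check shows that each elementary step preserves the inclusion-exclusion sum by virtue of the blow-up relations in $K_0^\bl$. Checking that $e$ respects the defining relations of $K_0^\sm$ uses a similar compactification/factorization argument applied to $[X] = [X - Y] + [Y]$, compactifying $X$ and $Y$ compatibly. Once well-definedness is in hand, the fact that $e$ inverts the forward map is immediate by taking $\tilde X = X$ with empty boundary when $X$ is already smooth and proper over $\DA^1$. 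The restriction to projective morphisms in (bl) is achieved via Chow's lemma refining Nagata's compactification; the reduction to connected varieties in both parts is immediate from disjoint-union additivity.
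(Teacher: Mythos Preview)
The paper does not prove this theorem at all: it is stated with a citation to \cite[Thm.~5.1]{bittner-euler-characteristic} and then used as a black box. Your sketch is a faithful outline of Bittner's original argument (resolution of singularities for part~(sm), Nagata compactification plus Hironaka plus the weak factorization theorem of Abramovich--Karu--Matsuki--W{\l}odarczyk for part~(bl)), so there is nothing to compare against in the present paper.
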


The presentation \ref{enum:blowup-presentation} of
$K_0(\Var_{\DA^1})$ is very important for us whereas the
presentation \ref{enum:smooth-presentation} is not used in the
rest of this article.

Using that $\DA^1$ is an abelian algebraic group we now turn
$K_0(\Var_{\DA^1})$ into a commutative ring with unit. Given
varieties $W \colon X \ra \DA^1$ and $V \colon  Y \ra \DA^1$ define $W*V$ to be
the composition
\begin{equation*}
  W*V \colon  X \times Y \xra{W \times V}
  \DA^1\times \DA^1\xra{+} \DA^1. 
\end{equation*}
From Definition~\ref{d:groth-group-var-over-A1} it is
clear that $[X,W] \cdot [Y,V]:= [X \times Y, W*V]$
turns the abelian group $K_0(\Var_{\DA^1})$ into a commutative
ring with unit $[\Spec k, 0],$ the class of the zero
function $\Spec k \xra{0} \DA^1$.

The same recipe turns $K_0^\sm(\Var_{\DA^1})$ 
into a ring such that $K_0^\sm(\Var_{\DA^1}) \ra
K_0(\Var_{\DA^1})$ is an isomorphism of rings.
Note however that this recipe does not work for
$K_0^\bl(\Var_{\DA^1})$: if $W \colon  X \ra \DA^1$ and $V \colon  Y \ra \DA^1$
are projective, $W*V$ is not projective in general.

\begin{remark}
  \label{rem:class-L-over-A1}
  We denote the class of the zero morphism
  $\DA^1 \xra{0} \DA^1$ by $\DL_{(\DA^1,0)}
  :=[\DA^1,0].$

  Let us justify this. Similar as above one defines the
  Grothendieck ring $K_0(\Var_k)$ of varieties over $k,$ with
  multiplication given by $[X] \cdot [Y]:= [X \times Y].$ The map
  $K_0(\Var_k) \ra K_0(\Var_\DA^1)$ given by $[X] \mapsto [X, 0]$
  is then a morphism of unital rings. It maps the class $\DL_k$ of
  $\DA^1 \ra \Spec k$ to $\DL_{(\DA^1,0)}.$
\end{remark}

\begin{definition}
  A \define{Landau-Ginzburg (LG) motivic measure} is a morphism of unital rings from
  $K_0(\Var_{\DA^1})$ to another ring.
\end{definition}

\section{Thom-Sebastiani Theorem and smoothness}
\label{sec:thom-sebast-smoothness}

We now start to consider categories of matrix factorizations.
Our notation and many results are explained in
\cite{valery-olaf-matfak-semi-orth-decomp}. 
Our aim in this section is to prove the
Thom-Sebastiani Theorem~\ref{t:thom-sebastiani} and the
smoothness result of Theorem~\ref{t:MF-DSgW-Cechobj-smooth}.

We fix a field $k$ which can be arbitrary in
section~\ref{sec:object-orient-vcech-MF} and is assumed 
to be algebraically closed and of characteristic zero starting
from section~\ref{sec:categ-sing}. By a scheme we mean a scheme
over $k,$ 
and by a variety a reduced separated
scheme of finite type over $k$ (as in
section~\ref{sec:groth-ring-vari}).

In this and the following section dg means "differential
$\DZ_2$-graded". When we refer to results from
section~\ref{sec:groth-ring-saturated-dg-cats} we always mean the
differential $\DZ_2$-graded version
(see Remark~\ref{rem:more-general}) for $\groundring=k.$

\subsection{Object oriented \v{C}ech enhancements for matrix
  factorizations} 
\label{sec:object-orient-vcech-MF}

This section runs parallel to
\cite[section~\ref{enhance:sec:vcech-enhanc-loc-integral}]{valery-olaf-enhancements-in-prep};
we will therefore often refer to results there
and assume that the reader is familiar with the
notation and arguments there.

We say that a scheme $X$
satisfies condition~\ref{enum:srNfKd} 
if
\begin{enumerate}[label=(srNfKd)]
\item
  \label{enum:srNfKd}
  $X$ is a separated regular Noetherian scheme of finite
  Krull dimension. 
\end{enumerate}
This is the condition we have worked with in
\cite{valery-olaf-matfak-semi-orth-decomp}.  From the discussion
there it is clear that this condition implies
condition~\ref{enhance:enum:GSP+} in
\cite{valery-olaf-enhancements-in-prep}.

\begin{remark}
  \label{rem:alternative-for-srNfKd}
  If schemes $X$ and $Y$ satisfy condition~\ref{enum:srNfKd} it
  is in general not true that so does $X \times Y.$
  Hence as soon as we work on products we need to 
  require condition~\ref{enum:srNfKd} there.
  To avoid this annoyance one may work with 
  smooth varieties, i.\,e. separated smooth schemes of finite type
  (over the field $k$). 
  Every smooth variety 
  satisfies condition~\ref{enum:srNfKd}, and products of smooth
  varieties are again smooth varieties.
\end{remark}

Let $X$ be a scheme satisfying condition~\ref{enum:srNfKd} and 
let $\mathcal{U}=(U_s)_{s \in S}$ be a finite affine open covering
of $X.$
Given a vector bundle $P$ on $X$ we can consider its (finite)
ordered \v{C}ech resolution
\begin{equation*}
  \mathcal{C}^\bullet_\ord(P):=
  \Big(\prod_{s_0 \in S} \leftidx{_{U_{\{s_0\}}}}{P}{} \ra
  \prod_{s_0, s_1 \in S,\; s_0 < s_1} \leftidx{_{U_{\{s_0,s_1\}}}}{P}{} \ra
  \dots
  \Big)
\end{equation*}
with the usual differentials where
we abbreviate $U_I:= \bigcap_{i \in I} U_i$ for a subset $I \subset
S$ and use the notation
$\leftidx{_V}{P}{}:=j_*j^*(P)$ if $j \colon V \hra X$ is the
inclusion of an open subscheme; 
note that $\mathcal{C}^\bullet_\ord(P)$ depends on $\mathcal{U}$
and also 
on the choice of a total 
order $<$ on $S.$ However, we can and will neglect the choice of
$<$ since different choices lead to isomorphic resolutions. 

Let $W \colon X \ra \DA^1$ be a morphism.
Consider the
functor that maps
a vector bundle $P$ on $X$ to 
$\mathcal{C}^\bullet_\ord(P).$
If we apply it to an object $E \in
\MF(X,W)$ we obtain a (bounded) complex in $Z_0(\Qcoh(X,W))$
that we denote by $\mathcal{C}^\bullet_\ord(E).$ 
We denote its totalization by
$\mathcal{C}_\ord(E):=\Tot(\mathcal{C}^\bullet_\ord(E)) \in \Qcoh(X,W).$

Let $\MF_\Cechobj(X,W)$ (omitting $\mathcal{U}$ from the
notation) be the smallest 
full dg subcategory of $\Qcoh(X,W)$
that contains all objects $\mathcal{C}_\ord(E)$
for $E \in \MF(X, W),$ is closed under shifts,
under cones of
closed degree zero morphisms and unter taking homotopy equivalent
objects (i.\,e.\ objects that are isomorphic in $[\Qcoh(X,W)]$).
It is strongly pretriangulated.

\begin{proposition}
  \label{p:cech-object-enhancement-MF}
  The dg category $\MF_\Cechobj(X,W)$ 
  is naturally an enhancement of $\bfMF(X,W).$
  More precisely, the natural functor
  \begin{equation*}
    \epsilon \colon  [\MF_\Cechobj(X,W)] \ra \DQcoh(X,W)
  \end{equation*}
  is full and faithful and its essential image coincides with
  the closure under isomorphisms of $\bfMF(X,W) \subset
  \DQcoh(X,W)$
  (see \cite[Thm.~\ref{semi:t:equivalences-curved-categories}]{valery-olaf-matfak-semi-orth-decomp}).
  We call $\MF_\Cechobj(X,W)$ the \define{object
    oriented \v{C}ech enhancement} of $\bfMF(X,W).$ 
\end{proposition}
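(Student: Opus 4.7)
The plan is to run a close analog of the argument for object oriented \v{C}ech enhancements of quasi-coherent sheaves given in \cite{valery-olaf-enhancements-in-prep}, adapted to the curved $\DZ_2$-graded setting of matrix factorizations. Since $\MF_\Cechobj(X,W)$ is defined by a completely parallel construction, most of the proof should reduce to invoking the analogous statements there componentwise.

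First, I would show that for each $E \in \MF(X,W)$ the natural augmentation $E \ra \mathcal{C}^\bullet_\ord(E)$ is a quasi-isomorphism in $Z_0(\Qcoh(X,W))$ (with $E$ placed in degree $0$). This is essentially the exactness of the augmented ordered \v{C}ech complex of a vector bundle with respect to a finite affine open cover of a separated scheme, applied to the two components of the matrix factorization $E$. The totalization formalism, together with the fact that the \v{C}ech complex is bounded (so that totalization only introduces finite sums), then produces a well-defined matrix factorization $\mathcal{C}_\ord(E) \in \MF_\Cechobj(X,W)$ with a canonical isomorphism $E \sira \mathcal{C}_\ord(E)$ in $\DQcoh(X,W)$.

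Second, the essential image statement is then a formal consequence of the definition. Indeed, $\MF_\Cechobj(X,W)$ is by construction the smallest full dg subcategory of $\Qcoh(X,W)$ containing the objects $\mathcal{C}_\ord(E)$ for $E \in \MF(X,W)$ and closed under shifts, cones of closed degree zero morphisms, and homotopy equivalences; passing to $\DQcoh(X,W)$ through $\epsilon$ yields the closure of $\{\mathcal{C}_\ord(E)\mid E \in \MF(X,W)\}$ under shifts, cones, and isomorphisms. By step one this closure agrees with the isomorphism closure of $\bfMF(X,W)$ inside $\DQcoh(X,W)$, since $\bfMF(X,W)$ is already triangulated there (see \cite[Thm.~\ref{semi:t:equivalences-curved-categories}]{valery-olaf-matfak-semi-orth-decomp}).

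Third, and this is where the real work lies, I would prove fullness and faithfulness of $\epsilon$. It suffices to show that for $E, F \in \MF(X,W)$ the natural map
\begin{equation*}
H^0 \,\Hom^\bullet_{\Qcoh(X,W)}\bigl(\mathcal{C}_\ord(E), \mathcal{C}_\ord(F)\bigr) \ra \Hom_{\DQcoh(X,W)}(E,F)
\end{equation*}
is an isomorphism. The key point is that the components of $\mathcal{C}_\ord(F)$ are of the form $\leftidx{_{U_I}}{Q}{} = j_{I,*}j_I^*Q$ for an affine open immersion $j_I \colon U_I \hra X$ and a vector bundle component $Q$ of $F$; such pushforwards satisfy an acyclicity (h-injective-style) property that makes $\Hom^\bullet_{\Qcoh(X,W)}(?, j_{I,*}j_I^*Q)$ already compute the derived Hom into $j_{I,*}j_I^*Q$. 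This is precisely the property transferred from the parallel statement in \cite{valery-olaf-enhancements-in-prep}. Totalizing over the bounded \v{C}ech complex and combining with the isomorphism $E \sira \mathcal{C}_\ord(E)$ of step one yields the claim on the generators $\mathcal{C}_\ord(E)$, after which closure under shifts, cones, and homotopy equivalences propagates fullness and faithfulness to all of $\MF_\Cechobj(X,W)$ by standard dévissage.

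The main obstacle will be the careful transfer of this Hom computation from the quasi-coherent setting to the curved $\DZ_2$-graded setting: one has to verify that the totalization of a bounded \v{C}ech complex of matrix factorizations genuinely produces an object of $\MF_\Cechobj(X,W)$ compatible with the curvature $W$, and that the acyclicity properties of $j_*j^*$ pushforwards across affine open immersions remain valid in the curved setting. Once these adaptations are in place, the proof should read as a straightforward mirror of the corresponding argument in \cite{valery-olaf-enhancements-in-prep}.
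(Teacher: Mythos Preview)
Your proposal is correct and follows essentially the same strategy as the paper's proof: establish the essential image via $E \sira \mathcal{C}_\ord(E)$ in $\DQcoh(X,W)$, then prove full faithfulness by reducing to the building blocks $\leftidx{_{U_I}}{F}{}$ of $\mathcal{C}_\ord(F)$ and invoking the parallel arguments from \cite{valery-olaf-enhancements-in-prep}. One clarification worth making: the paper does not argue via an ``h-injective-style'' property of $j_{I,*}j_I^*Q$ for vector bundle components $Q$, but rather works with the curved object $\leftidx{_V}{F}{}=j_*j^*F$ and splits the verification into (a) showing $\Hom_{[\Qcoh(X,W)]}(E,[n]\leftidx{_V}{F}{}) \sira \Hom_{\DQcoh(X,W)}(E,[n]\leftidx{_V}{F}{})$ via the $(j^*,j_*)$ adjunction and the affine result $[\MF(V,W)]\sira\bfMF(V,W)$, and (b) showing that replacing the source $E$ by $\mathcal{C}_\ord(E)$ induces a quasi-isomorphism on $\Hom$ into $\leftidx{_V}{F}{}$, handled by a double-complex argument that reduces componentwise to the uncurved \v{C}ech statement.
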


\begin{proof}
  It is clear that the essential image of $\epsilon$ is as
  claimed: given $E \in \MF(X,W),$ the obvious morphism $E \ra
  \mathcal{C}_\ord(E)$ 
  becomes an isomorphism in $\DQcoh(X,W).$

  Let $E, F \in \MF(X,W).$
  In order to prove that $\epsilon$ is full and faithful it is
  enough to show that 
  \begin{equation*}
    \Hom_{[\Qcoh(X,W)]}(\mathcal{C}_\ord(E),
    [m]\mathcal{C}_\ord(F))
    \ra
    \Hom_{\DQcoh(X,W)}(\mathcal{C}_\ord(E), [m]\mathcal{C}_\ord(F))
  \end{equation*}
  is an isomorphism, for any $m \in \DZ_2.$
  Note that $\mathcal{C}_\ord(F)$ is constructed as an iterated cone
  from shifts of objects $\leftidx{_{V}}{F}{}:=j_*j^*(F),$ where
  $I \subset S$ and $j \colon  V:=U_I := \bigcap_{i \in I} U_i \ra X$ is
  the corresponding open 
  embedding. Hence, as in the proof of
  \cite[Lemma~\ref{enhance:l:CD-isom-CP-VQ}]{valery-olaf-enhancements-in-prep},
  we need to show the following
  two claims.
  \begin{enumerate}
  \item
    \label{enum:E-VF-MF}
    $\Hom_{[\Qcoh(X,W)]}(E, [n]\leftidx{_V}{F}{}) \ra
    \Hom_{\DQcoh(X,W)}(E, [n]\leftidx{_V}{F}{})$
    is an isomorphism, for any $n \in \DZ_2.$
  \item
    \label{enum:CechE-to-E}
    $\Hom_{\Qcoh(X,W)}(\Tot(\mathcal{C}^\bullet_\ord(E)),
    \leftidx{_V}{F}{}) 
    \ra
    \Hom_{\Qcoh(X,W)}(E, 
    \leftidx{_V}{F}{})$
    is a quasi-isomorphism.
  \end{enumerate}
  
  Proof of \ref{enum:E-VF-MF}: Note that $\bR j_*=j_*$ and
  $\bL j^*=j^*,$ by \cite[Lemma~\ref{semi:l:componentwise-acyclics}]{valery-olaf-matfak-semi-orth-decomp},
  since $j$ is open and affine. Hence 
  by the adjunctions $(j^*, j_*)$ it is enough to show that
  \begin{equation*}
    \Hom_{[\MF(V,W)]}(j^*(E), [n]j^*(F)) \ra
    \Hom_{\bfMF(V,W)}(j^*(E), [n]j^*(F))
  \end{equation*}
  is an isomorphism (we use that $\bfMF(V,W) \ra \DQcoh(V,W)$
  is full and faithful, by \cite[Thm.~\ref{semi:t:equivalences-curved-categories}]{valery-olaf-matfak-semi-orth-decomp}).
  But $[\MF(V,W)] \sira \bfMF(V,W)$ since $V$ is affine, by
  \cite[Lemma~\ref{semi:l:affine-MF}]{valery-olaf-matfak-semi-orth-decomp}. 

  Proof of \ref{enum:CechE-to-E}: 
  The domain of the given morphism is the totalization of the
  bounded complex
  \begin{equation*}
    \dots
    \ra 
    \Hom_{\Qcoh(X,W)}(\mathcal{C}^1_\ord(E), \leftidx{_V}{F}{})
    \ra
    \Hom_{\Qcoh(X,W)}(\mathcal{C}^0_\ord(E), \leftidx{_V}{F}{})
    \ra 0
  \end{equation*}
  in $Z_0(\Sh(\Spec k, 0)).$ We can also view this complex as a
  $\DZ_2 \times \DZ$-graded double complex.
  Hence the given morphism is the totalization of a morphism of
  double complexes.
  Then \cite[Lemma~\ref{semi:l:double-complex-upper-halfplane}.\ref{semi:enum:double-complex-upper-halfplane-column-qisos}]{valery-olaf-matfak-semi-orth-decomp}
  shows that it is enough to show that
  \begin{equation*}
    \Hom_{C(\Qcoh(X))}(\mathcal{C}^\bullet_\ord(E_s),
    \leftidx{_V}{F}{_t}) 
    \ra
    \Hom_{C(\Qcoh(X))}(E_s, \leftidx{_V}{F}{_t})
  \end{equation*}
  is a quasi-isomorphism for all $s, t \in \DZ_2.$ But this is
  true by the argument 
  that shows that 
  the morphism in
  \cite[Formula~\eqref{enhance:eq:CP-UIQ}]{valery-olaf-enhancements-in-prep}   
  is a quasi-isomorphism ($\mathcal{C}_*$ there is denoted 
  $\mathcal{C}^\bullet_\ord$ here; implicitly we replace $V$
  by one of its connected components).
\end{proof}

\begin{remark}
  [{cf.\ \cite[Rem.~\ref{enhance:rem:objects-of-cech-object-enhancement}]{valery-olaf-enhancements-in-prep}}]
  \label{rem:objects-of-cech-object-enhancement-MF}
  The objects of 
  $\MF_\Cechobj(X,W)$ are precisely the objects of $\Qcoh(X,W)$
  that are homotopy equivalent to an object of
  the form $\mathcal{C}_\ord(E)$, for $E \in \MF(X,W).$
\end{remark}

Let $Y$ be another scheme and assume that $Y$ and
$X \times Y$ satisfy condition~\ref{enum:srNfKd}
(cf.~Remark~\ref{rem:alternative-for-srNfKd}).
We fix a morphism $V\colon Y \ra \DA^1$ and a 
finite affine open
covering $\mathcal{V}$ of $Y.$ We consider the product covering
$\mathcal{U} \times \mathcal{V}$ on $X \times Y.$
In order to prove the analog of
\cite[Prop.~\ref{enhance:p:cech-*-object-enhancement-product}]{valery-olaf-enhancements-in-prep}
we let $\MF_\Cechobjbox(X \times Y, W*V)$ be the smallest full dg
subcategory 
of $\Qcoh(X \times Y, W*V)$ that contains all objects
$\mathcal{C}_\ord(E) \boxtimes \mathcal{C}_\ord(F)$ for $E \in
\MF(X,W)$ and $F \in \MF(Y,V),$
all objects $\mathcal{C}_\ord(G)$ for $G \in \MF(X \times Y, W*V),$
is closed under shifts, cones of 
closed degree zero morphisms and under taking homotopy equivalent
objects. It is strongly pretriangulated.

\begin{proposition}
  \label{p:cech-object-enhancement-product-MF}
  The dg category $\MF_\Cechobjbox(X \times Y, W*V)$
  is naturally an enhancement of $\bfMF(X \times Y, W*V).$ 
  In fact, it is equal to the enhancement 
  $\MF_\Cechobj(X \times Y, W*V).$
\end{proposition}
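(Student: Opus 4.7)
The plan is to first establish that $\MF_\Cechobjbox(X\times Y, W*V)$ is an enhancement of $\bfMF(X\times Y,W*V)$ by directly adapting the proof of Proposition~\ref{p:cech-object-enhancement-MF}, and then to deduce the equality $\MF_\Cechobjbox(X\times Y,W*V)=\MF_\Cechobj(X\times Y,W*V)$ from full-and-faithfulness.

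The key observation making the first step work uniformly is that every generator of $\MF_\Cechobjbox$ is (after suitable shifts and cones of closed degree-zero morphisms) an iterated cone built from sheaves of the form
$\leftidx{_{U_I\times V_J}}{G}{}:=(j_X\times j_Y)_*(j_X\times j_Y)^*G,$ where $j_X\colon U_I\hookrightarrow X$ and $j_Y\colon V_J\hookrightarrow Y$ are the open affine inclusions corresponding to intersections in the covers $\mathcal{U}$, $\mathcal{V}$, and where $G$ is a matrix factorization on $X\times Y$. For the generator $\mathcal{C}_\ord(G)$ this is obvious from the definition; for $\mathcal{C}_\ord(E)\boxtimes\mathcal{C}_\ord(F)$ one uses the canonical identification $(j_X)_*j_X^*E\boxtimes (j_Y)_*j_Y^*F\cong (j_X\times j_Y)_*(j_X\times j_Y)^*(E\boxtimes F),$ noting that $E\boxtimes F\in\MF(X\times Y,W*V)$.

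With this observation in hand, I would run the two sub-claims in the proof of Proposition~\ref{p:cech-object-enhancement-MF} verbatim, taking $X\times Y$ in place of $X$ and the product covering $\mathcal{U}\times\mathcal{V}$ in place of $\mathcal{U}$. The first sub-claim (comparison of $\Hom$ in $[\Qcoh]$ versus $\DQcoh$ for a single restriction $\leftidx{_{W}}{G'}{}$ with $W\subset X\times Y$ open affine) follows identically from $\bR j_*=j_*$ and $\bL j^*=j^*$ for $j$ open affine, together with $[\MF(W,W*V)]\sira\bfMF(W,W*V)$ on affines. The second sub-claim, saying that $\Hom_{\Qcoh}(A,\leftidx{_W}{G'}{})\ra\Hom_{\Qcoh}(G,\leftidx{_W}{G'}{})$ is a quasi-isomorphism for $A$ one of the two types of generators, reduces via \cite[Lemma~\ref{semi:l:double-complex-upper-halfplane}.\ref{semi:enum:double-complex-upper-halfplane-column-qisos}]{valery-olaf-matfak-semi-orth-decomp} to a double- (or triple-) complex computation. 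For $A=\mathcal{C}_\ord(G)$ this is the computation already done in the proof of Proposition~\ref{p:cech-object-enhancement-MF} applied on $X\times Y$. For $A=\mathcal{C}_\ord(E)\boxtimes\mathcal{C}_\ord(F)$ one totalizes in the $X$-Čech direction first (using the single-variable Čech quasi-isomorphism and the flatness of $(j_Y)_*j_Y^*F$ on the relevant sections) and then in the $Y$-Čech direction, producing the required quasi-isomorphism with $\Hom_{\Qcoh}(E\boxtimes F,\leftidx{_W}{G'}{})$. This is essentially the same mechanism as in \cite[Prop.~\ref{enhance:p:cech-*-object-enhancement-product}]{valery-olaf-enhancements-in-prep}.

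Once $\MF_\Cechobjbox(X\times Y,W*V)$ is known to be an enhancement of $\bfMF(X\times Y,W*V)$, the equality with $\MF_\Cechobj(X\times Y,W*V)$ is essentially automatic. Both $\mathcal{C}_\ord(E)\boxtimes\mathcal{C}_\ord(F)$ and $\mathcal{C}_\ord(E\boxtimes F)$ carry natural augmentations to $E\boxtimes F$ that become isomorphisms in $\DQcoh(X\times Y,W*V)$, so they are isomorphic in $[\MF_\Cechobjbox(X\times Y,W*V)]$; by full-and-faithfulness of $\epsilon$ they are therefore homotopy equivalent in $\Qcoh(X\times Y,W*V)$. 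Since $\MF_\Cechobj(X\times Y,W*V)$ is closed under homotopy equivalence and contains $\mathcal{C}_\ord(E\boxtimes F)$, it also contains $\mathcal{C}_\ord(E)\boxtimes\mathcal{C}_\ord(F)$, which gives the inclusion $\MF_\Cechobjbox\subseteq\MF_\Cechobj$; the reverse inclusion is immediate from the definitions. The main obstacle is the second sub-claim for $A=\mathcal{C}_\ord(E)\boxtimes\mathcal{C}_\ord(F)$, which requires a careful three-direction totalization, but this is just a bookkeeping adaptation of the argument already carried out in \cite{valery-olaf-enhancements-in-prep}.
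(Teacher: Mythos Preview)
Your proposal is correct and follows essentially the same approach as the paper: the paper's proof is a one-line instruction to combine the techniques of Proposition~\ref{p:cech-object-enhancement-MF} with \cite[Prop.~\ref{enhance:p:cech-*-object-enhancement-product}, Cor.~\ref{enhance:c:cech-*-object-enhancement-product}]{valery-olaf-enhancements-in-prep}, and you have unpacked exactly this, including the deduction of the equality $\MF_\Cechobjbox=\MF_\Cechobj$ from full faithfulness (which is the content of the cited Corollary).
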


\begin{proof}
  Use the techniques of proof from
  Proposition~\ref{p:cech-object-enhancement-MF}
  and
  \cite[Prop.~\ref{enhance:p:cech-*-object-enhancement-product},
  Cor.~\ref{enhance:c:cech-*-object-enhancement-product}]{valery-olaf-enhancements-in-prep}. 
\end{proof}

Consider now $X \times X$ with the morphism
$W * (-W) \colon  X \times X \ra \DA^1$ and with the
product covering $\mathcal{U} \times \mathcal{U},$ and assume
that
$X \times X$ satisfies
condition~\ref{enum:srNfKd}.
Let $\Delta \colon  X \ra X \times X$ be the diagonal inclusion.
Note that $\Delta^*(W*(-W))=0$ so that the dg functor 
$\Delta_* \colon  \Qcoh(X,0) \ra \Qcoh(X \times X, W*(-W))$ is
well-defined. 

\begin{lemma}
  \label{l:boxtimes-cech-object-to-diagonal-cech-K-iso-D-MF}
  Let $E \in \MF(X, W),$ $F \in \MF(X, -W),$ $G \in
  \MF(X,0),$ and let $m \in \DZ_2.$
  Then the canonical map
  \begin{multline*}
    \Hom_{[\Qcoh(X \times X, W*(-W))]}(\mathcal{C}_\ord(E) \boxtimes
      \mathcal{C}_\ord(F),
      [m]\Delta_*(\mathcal{C}_\ord(G)))\\
    \ra
    \Hom_{\DQcoh(X \times X, W*(-W))}(\mathcal{C}_\ord(E) \boxtimes
      \mathcal{C}_\ord(F),
      [m]\Delta_*(\mathcal{C}_\ord(G)))
  \end{multline*}
  is an isomorphism.
\end{lemma}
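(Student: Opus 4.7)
I would mimic the two-step structure from the proof of Proposition~\ref{p:cech-object-enhancement-MF}. The target $\Delta_*(\mathcal{C}_\ord(G))$ is an iterated cone (with bounded filtration) of shifts of $\Delta_*(\leftidx{_V}{G}{})$ where $V=U_I$ ranges over the finite intersections of opens in $\mathcal{U}$; the dg functor $\Delta_*$ preserves the cones that define $\mathcal{C}_\ord(G)$. By the triangulated five-lemma, this reduces the assertion to proving the analogous isomorphism for each fixed $V=U_I$. For such a $V$ the key structural observation is that $\Delta\circ j_V$ factors as $V\xra{\Delta_V}V\times V\xra{\tilde\jmath}X\times X$, where $j_V\colon V\hra X$ and $\tilde\jmath\colon V\times V\hra X\times X$ are open affine immersions and $\Delta_V$ is the diagonal closed immersion of $V$. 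Consequently
$\Delta_*(\leftidx{_V}{G}{})=\tilde\jmath_*(\Delta_V)_*(G|_V)$.

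Exactly as in the proof of Proposition~\ref{p:cech-object-enhancement-MF}, it then suffices to check two sub-claims:
(1) for every $n\in\DZ_2$,
\[
\Hom_{[\Qcoh(X\times X,W*(-W))]}\bigl(E\boxtimes F,\,[n]\Delta_*(\leftidx{_V}{G}{})\bigr)\lra
\Hom_{\DQcoh(X\times X,W*(-W))}\bigl(E\boxtimes F,\,[n]\Delta_*(\leftidx{_V}{G}{})\bigr)
\]
is an isomorphism, and
(2) the natural morphism
\[
\Hom_{\Qcoh(X\times X,W*(-W))}\bigl(\Tot(\mathcal{C}^\bullet_\ord(E)\boxtimes\mathcal{C}^\bullet_\ord(F)),\,\Delta_*(\leftidx{_V}{G}{})\bigr)\lra
\Hom_{\Qcoh(X\times X,W*(-W))}\bigl(E\boxtimes F,\,\Delta_*(\leftidx{_V}{G}{})\bigr)
\]
is a quasi-isomorphism.

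For (1), I would apply the adjunction $(\tilde\jmath^{\,*},\tilde\jmath_*)$, which descends to both $[\Qcoh]$ and $\DQcoh$ because $\tilde\jmath$ is open and affine (so $\bR\tilde\jmath_*=\tilde\jmath_*$ and $\bL\tilde\jmath^{\,*}=\tilde\jmath^{\,*}$, as in \cite[Lemma~\ref{semi:l:componentwise-acyclics}]{valery-olaf-matfak-semi-orth-decomp}). This replaces the target by $(\Delta_V)_*(G|_V)$ on the affine $V\times V$ and the source by $E|_V\boxtimes F|_V\in\MF(V\times V,W|_V*(-W|_V))$. The second adjunction $(\Delta_V^{*},(\Delta_V)_*)$ then gives, in both the homotopy and derived senses, an identification with $\Hom_{\bullet}(E|_V\otimes F|_V,\,G|_V)$: here $\bL\Delta_V^{*}=\Delta_V^{*}$ on the locally-free object $E|_V\boxtimes F|_V$, so no correction appears in $\DQcoh$. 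Both $E|_V\otimes F|_V$ and $G|_V$ are matrix factorizations on the affine $V$ (with curvature $0$), so $[\MF(V,0)]\sira \bfMF(V,0)$ by \cite[Lemma~\ref{semi:l:affine-MF}]{valery-olaf-matfak-semi-orth-decomp} closes (1).

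For (2), I would copy the double-complex argument of sub-claim \ref{enum:CechE-to-E} in the proof of Proposition~\ref{p:cech-object-enhancement-MF}: totalize the $\Hom$-double complex, invoke \cite[Lemma~\ref{semi:l:double-complex-upper-halfplane}.\ref{semi:enum:double-complex-upper-halfplane-column-qisos}]{valery-olaf-matfak-semi-orth-decomp} to reduce to showing component-wise quasi-isomorphisms after pushing the target through $\tilde\jmath_*(\Delta_V)_*$, and then use the same open-affine-cover/Čech computation that proves \cite[Formula~\eqref{enhance:eq:CP-UIQ}]{valery-olaf-enhancements-in-prep}. The main obstacle is sub-claim (1): because $(\Delta_V)_*(G|_V)$ is \emph{not} itself a matrix factorization on $V\times V$, Lemma~\ref{semi:l:affine-MF} does not apply directly; the essential trick is the double adjunction above, which exploits local freeness of $E|_V\boxtimes F|_V$ to absorb the closed-immersion pullback without deriving it and so reduces the comparison to a Hom computation purely among matrix factorizations on an affine.
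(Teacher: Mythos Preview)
Your proposal is correct and follows essentially the same approach as the paper, which simply instructs the reader to adapt the techniques from the proof of Proposition~\ref{p:cech-object-enhancement-MF} together with the proof of the analogous coherent-sheaf statement \cite[Lemma~\ref{enhance:l:boxtimes-cech-object-to-diagonal-cech-K-iso-D}]{valery-olaf-enhancements-in-prep}, noting that $\bR\Delta_*=\Delta_*$. Your factorization $\Delta\circ j_V=\tilde\jmath\circ\Delta_V$ and the double adjunction reducing sub-claim~(1) to the affine comparison $[\MF(V,0)]\sira\bfMF(V,0)$ is exactly the intended mechanism; just note that you also implicitly use $\bR(\Delta_V)_*=(\Delta_V)_*$ (closed immersion, hence affine) alongside $\bL\Delta_V^*=\Delta_V^*$ on locally free objects, and that in sub-claim~(2) the source carries two \v{C}ech directions, so the bounded-double-complex lemma is applied iteratively (or once to the totalized bicomplex).
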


\begin{proof}
  Again use the above techniques and the proof of
  \cite[Lemma~\ref{enhance:l:boxtimes-cech-object-to-diagonal-cech-K-iso-D}]{valery-olaf-enhancements-in-prep}
  (note that $\bR \Delta_*=\Delta_*$ by
  \cite[Remark~\ref{semi:rem:derived-direct-image-for-affine-morphism}]{valery-olaf-matfak-semi-orth-decomp}). 
\end{proof}

We come back to the product situation $X \times Y$ with morphism
$W*V$ and covering
$\mathcal{U} \times \mathcal{V}.$

\begin{lemma}
  \label{l:dg-functor-boxtimes-full-and-faithful-MF}
  The dg functor
  \begin{equation}
    \label{eq:dg-functor-boxtimes-MF}
    \boxtimes \colon  \MF_\Cechobj(X,W) \otimes
    \MF_\Cechobj(Y,V) \ra 
    \MF_\Cechobj(X\times Y, W*V)
  \end{equation}
  induced from $(- \boxtimes -) \colon  \Qcoh(X,W) \times
  \Qcoh(Y,V) \ra \Qcoh(X \times Y, W*V)$ 
  is quasi-fully faithful, i.\,e.\ induces quasi-isomorphisms
  between morphisms spaces.
\end{lemma}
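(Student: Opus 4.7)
My plan is to reduce to a concrete statement about \v{C}ech complexes of internal Hom sheaves and then invoke a Künneth quasi-isomorphism.

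First, I would argue that it suffices to establish the quasi-isomorphism for the generating objects, namely $A = \mathcal{C}_\ord(E)$, $A' = \mathcal{C}_\ord(E')$ with $E, E' \in \MF(X,W)$ and $B = \mathcal{C}_\ord(F)$, $B' = \mathcal{C}_\ord(F')$ with $F, F' \in \MF(Y,V)$. Both $\MF_\Cechobj(X,W)$ and $\MF_\Cechobj(X\times Y, W*V)$ are strongly pretriangulated and closed under homotopy equivalence, and the dg bifunctor $\boxtimes$ commutes with shifts, cones of closed degree zero morphisms, and homotopy equivalences in each argument. A standard devissage then shows that the collection of pairs $(A, A')$ (resp.\ $(B, B')$) for which the map of Hom-complexes is a quasi-isomorphism is closed under these operations in each variable, so we may restrict to the case of the standard \v{C}ech objects.

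Second, I would identify the Hom-complexes in \v{C}ech-theoretic terms. For an open affine embedding $j\colon U_I \hookrightarrow X$ one has $\bR j_* = j_*$ (by \cite[Lemma~\ref{semi:l:componentwise-acyclics}]{valery-olaf-matfak-semi-orth-decomp}), and together with the adjunction $(j^*, j_*)$ and the fact that $[\MF(U_I,W)] \sira \bfMF(U_I,W)$ on affines, the proof of Proposition~\ref{p:cech-object-enhancement-MF} already essentially establishes that
\[
\Hom_{\MF_\Cechobj(X,W)}(\mathcal{C}_\ord(E), \mathcal{C}_\ord(E')) \simeq \Tot \check{C}^\bullet_\ord(\mathcal{U}, \sheafhom(E,E')),
\]
where $\sheafhom(E,E')$ is the internal $\sheafhom$ in the curved setting (which is a genuine complex of quasi-coherent sheaves on $X$ since the curvatures cancel and land in $\Qcoh(X, 0)$). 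The same identification holds on $Y$ and, via Proposition~\ref{p:cech-object-enhancement-product-MF} together with the $\boxtimes$-enhanced description there, on $X \times Y$ for the external tensor product $\mathcal{C}_\ord(E) \boxtimes \mathcal{C}_\ord(F)$ and its analog on the target side.

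Third, I would combine two classical facts: the natural isomorphism of quasi-coherent sheaves
\[
\sheafhom(E \boxtimes F, E' \boxtimes F') \cong \sheafhom(E, E') \boxtimes \sheafhom(F, F'),
\]
valid because $E$ and $F$ are locally free, and the Künneth quasi-isomorphism for ordered \v{C}ech complexes with respect to affine coverings,
\[
\check{C}^\bullet_\ord(\mathcal{U}, \mathcal{F}) \otimes_k \check{C}^\bullet_\ord(\mathcal{V}, \mathcal{G}) \sira \check{C}^\bullet_\ord(\mathcal{U} \times \mathcal{V}, \mathcal{F} \boxtimes \mathcal{G}).
\]
Chaining these identifications with the ones from step two yields the desired quasi-isomorphism. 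The hard part will be the careful bookkeeping of the $\DZ_2$-grading, the curvature, and the \v{C}ech signs so that every arrow in the chain is genuinely natural and compatible with the dg structure; once the identification with the ordered \v{C}ech complex of $\sheafhom$ is in place, the Künneth step itself is classical and the proof should parallel the $\DZ$-graded argument in the companion paper \cite{valery-olaf-enhancements-in-prep}.
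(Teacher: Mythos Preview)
Your proposal is correct and aligns with the paper's approach: the paper's proof is a one-line reference to the $\DZ$-graded analog in the companion paper \cite{valery-olaf-enhancements-in-prep}, noting only that one passes to the $\DZ_2$-graded setting by treating graded components separately. Your sketch (d\'evissage to the generating \v{C}ech objects, identification of the Hom-complexes with \v{C}ech complexes of $\sheafhom$, and the K\"unneth quasi-isomorphism for ordered \v{C}ech complexes) is exactly the content behind that reference, and you even note this yourself in your final sentence.
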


\begin{proof}
  This is an easy generalization of
  \cite[Lemma~\ref{enhance:l:dg-functor-boxtimes-full-and-faithful}]{valery-olaf-enhancements-in-prep} since we
  can consider the graded components separately.
\end{proof}

The dg bifunctor~\eqref{eq:dg-functor-boxtimes-MF}
lifts the dg bifunctor $\boxtimes \colon 
\bfMF(X,W) \otimes \bfMF(Y,V) \ra \bfMF(X\times Y, W*V)$ of
triangulated categories
(cf.~\cite[Rem.~\ref{enhance:rem:dg-lift-of-boxtimes}]{valery-olaf-enhancements-in-prep}).

\subsubsection{Equivalence of enhancements}
\label{sec:equiv-enhanc-MF}


\begin{lemma}
  \label{l:enhancements-equivalent-MF}
  The enhancements $\InjQcoh_{\bfMF}(X,W)$
  (defined in
  \cite[section~\ref{semi:sec:enhanc-inject}]{valery-olaf-matfak-semi-orth-decomp})
  and
  $\MF_\Cechobj(X,W)$
  of $\bfMF(X,W)$ 
  are equivalent.
\end{lemma}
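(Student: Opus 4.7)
The plan is to construct a dg functor $\iota \colon \MF_\Cechobj(X,W) \to \InjQcoh_{\bfMF}(X,W)$ by taking h-injective resolutions and to verify that it is a quasi-equivalence. I would first fix a dg h-injective resolution functor $R \colon \Qcoh(X,W) \to \InjQcoh(X,W)$ together with a natural morphism $\eta \colon \id \to R$ whose components are componentwise quasi-isomorphisms; such a functor should be available from (or is closely related to) the construction in section~\ref{semi:sec:enhanc-inject}. Since $R$ preserves the isomorphism class in $\DQcoh(X,W)$ and the essential image of $[\MF_\Cechobj(X,W)]$ in $\DQcoh(X,W)$ agrees with that of $\bfMF(X,W)$ up to isomorphism by Proposition~\ref{p:cech-object-enhancement-MF}, the restriction $\iota := R|_{\MF_\Cechobj(X,W)}$ lands in $\InjQcoh_{\bfMF}(X,W)$.

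For quasi-fully faithfulness, given $A, B \in \MF_\Cechobj(X,W)$, I would factor the induced map on morphism complexes through $\Hom_{\Qcoh(X,W)}(A, R(B))$. Both $\Hom_{\Qcoh(X,W)}(A, B) \to \Hom_{\Qcoh(X,W)}(A, R(B))$ and $\Hom_{\Qcoh(X,W)}(R(A), R(B)) \to \Hom_{\Qcoh(X,W)}(A, R(B))$ are quasi-isomorphisms: the first because $R(B)$ is h-injective and $\eta_B$ is a componentwise quasi-isomorphism, combined with Proposition~\ref{p:cech-object-enhancement-MF} (which identifies both with $\Hom_{\DQcoh(X,W)}(A,B)$ on cohomology); the second because $R(A)$ is again an h-injective resolution of $A$. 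Essential surjectivity of $[\iota]$ is immediate: every $J \in \InjQcoh_{\bfMF}(X,W)$ is $\DQcoh$-isomorphic to some $E \in \MF(X,W)$, hence $\iota(\mathcal{C}_\ord(E)) = R(\mathcal{C}_\ord(E))$ is an h-injective object with the same $\DQcoh$-class as $J$, and hence homotopy equivalent to $J$.

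The main technical obstacle is the dg-functoriality of the h-injective resolution $R$, since a pointwise choice of resolutions is not automatically compatible with dg-morphism complexes. This must be handled either by an explicit construction (e.g., a \v{C}ech-Godement-type resolution in the spirit of subsection~\ref{sec:object-orient-vcech-MF}) or by fibrant replacement in a suitable dg model structure on $\Qcoh(X,W)$. Should setting up such an $R$ be inconvenient, a zig-zag alternative is to introduce the common enhancement $\mathcal{E} \subset \Qcoh(X,W)$ whose objects form the union of those of $\MF_\Cechobj(X,W)$ and $\InjQcoh_{\bfMF}(X,W)$, and to show both inclusions are quasi-equivalences; the only non-trivial Hom computation, namely $\Hom_{[\Qcoh(X,W)]}(J, C) \to \Hom_{\DQcoh(X,W)}(J, C)$ for $J$ h-injective and $C$ a \v{C}ech object, can again be reduced by the techniques used in the proof of Proposition~\ref{p:cech-object-enhancement-MF}.
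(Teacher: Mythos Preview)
Your primary approach is correct and is essentially what the paper intends: the referenced methods (Lemma~6.2 of \cite{bondal-larsen-lunts-grothendieck-ring} and Proposition~\ref{semi:p:Cech-enhancement} of \cite{valery-olaf-matfak-semi-orth-decomp}) proceed by producing a dg-functorial injective resolution $R$ and then running the quasi-equivalence check you outline. You are right that the only real content is the dg-functoriality of $R$; once that is available, your verification of quasi-full-faithfulness and essential surjectivity goes through as written.

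Your zig-zag alternative, however, has a genuine gap. For the inclusions $\MF_\Cechobj(X,W) \hookrightarrow \mathcal{E}$ and $\InjQcoh_{\bfMF}(X,W) \hookrightarrow \mathcal{E}$ to be quasi-equivalences one needs $[\mathcal{E}] \to \DQcoh(X,W)$ to be fully faithful, and the case you single out, $\Hom_{[\Qcoh(X,W)]}(J,C) \to \Hom_{\DQcoh(X,W)}(J,C)$, is \emph{not} accessible by the techniques of Proposition~\ref{p:cech-object-enhancement-MF}. Decomposing $C=\mathcal{C}_\ord(F)$ into the pieces $j_*j^*F$ and applying $(j^*,j_*)$-adjunction reduces the question to $[\Qcoh(V,W)](j^*J, j^*F) \to \DQcoh(V,W)(j^*J, j^*F)$ on an affine open $V$; but $j^*J$ need not have injective components (restriction along an open immersion does not preserve injectivity of quasi-coherent sheaves), and $j^*F \in \MF(V,W)$ is not h-injective in $\Qcoh(V,W)$, so there is no reason for this map to be an isomorphism. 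The point is that h-injectivity of $J$ controls maps \emph{into} $J$, not out of it, and \v{C}ech objects are not themselves h-injective; this asymmetry cannot be circumvented without the functorial $R$ you already identified as necessary, or a more elaborate device such as a Drinfeld dg quotient.
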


\begin{proof}
  For the first statement use the method
  of proof of
  \cite[Lemma~6.2]{bondal-larsen-lunts-grothendieck-ring} or
  \cite[Prop.~\ref{semi:p:Cech-enhancement}]{valery-olaf-matfak-semi-orth-decomp}.
\end{proof}

\subsubsection{Version for arbitrary curved sheaves}
\label{sec:vers-arbitr-sheav-MF}

In the following section~\ref{sec:lifting-duality-MF} 
we need a small generalization of the previous constructions and
results. 

Recall from \cite[Thm.~\ref{semi:t:big-curved-categories}]{valery-olaf-matfak-semi-orth-decomp} that
the functor $\DQcoh(X,W) \ra \DSh^\co(X,W)$ is full and
faithful and that $\InjSh(X,W)$ is naturally an enhancement of
$\DSh^\co(X,W).$
Let $\bfMF'(X,W)$ be the essential image of $\bfMF(X,W)$ under
the full and faithful functor $\bfMF(X,W) \ra \DSh^\co(X,W)$
(see \cite[Thm.~\ref{semi:t:equivalences-curved-categories}]{valery-olaf-matfak-semi-orth-decomp}); so 
$\bfMF(X,W) \ra \bfMF'(X,W)$ is an equivalence.
 
Denote by $\MF'_\Cechobj(X,W)$ the smallest full dg subcategory of
$\Sh(X,W)$ that contains all objects of $\MF_\Cechobj(X,W)$ and is
closed under taking homotopy equivalent objects.  
Then the inclusion $\MF_\Cechobj(X,W) \ra \MF'_\Cechobj(X,W)$
is a quasi-equivalence.
If we define $\MF'_\Cechobjbox(X \times Y, W*V)$ similarly
it is clear that all propositions, lemmata and remarks of
section~\ref{sec:object-orient-vcech-MF} remain true if we replace
$\MF_\Cechobj$ by $\MF'_\Cechobj,$
$\MF_\Cechobjbox$ by $\MF'_\Cechobjbox,$ $\bfMF$ by $\bfMF',$ and $\DQcoh(-,?)$ by $\DSh^\co(-,?).$
The full dg
subcategory 
$\InjSh_{\bfMF'}(X,W)$ of $\InjSh(X,W)$ consisting of objects of
$\bfMF'(X,W)$ is naturally an enhancement of
$\bfMF'(X,W),$ and the obvious variation of
Lemma~\ref{l:enhancements-equivalent-MF} is true;
in fact, all the enhancements of
$\bfMF'(X,W)$
we have defined are equivalent.  

\subsubsection{Lifting the duality}
\label{sec:lifting-duality-MF}

Recall the duality 
\begin{equation*}
  D=D_X=(-)^\cek=\sheafHom(-,\mathcal{D}) \colon  \bfMF(X,W)^\opp \ra \bfMF(X,-W)
\end{equation*}
from \cite[section~\ref{semi:sec:duality}]{valery-olaf-matfak-semi-orth-decomp} where 
$\mathcal{D}=\mathcal{D}_X=(\matfak{0}{}{\mathcal{O}_X}{}) \in
\MF(X,0).$ 
Our aim is to lift 
its extension 
\begin{equation}
  \label{eq:duality-MF-extended}
  D \colon  \bfMF'(X,W)^\opp \ra \bfMF'(X,-W)
\end{equation}
to a dg functor
$\MF'_\Cechobj(X,W) \ra \MF'_\Cechobj(X,-W)$ between the respective
enhancements. Consider the dg functor
\begin{equation*}
  \tildew{D}:= \sheafHom(-, \mathcal{C}_\ord(\mathcal{D})) \colon 
  \Sh(X,W)^\opp \ra \Sh(X,-W).
\end{equation*}

\begin{lemma} 
  \label{l:sheafHomCC-vs-CsheafHom-MF} 
  Let $E \in \MF(X,W)$ and consider the canonical morphism
  $\alpha \colon E \ra \mathcal{C}_\ord(E)$ 
  in $Z_0(\Sh(X,W)).$ Then the induced morphism
  \begin{equation*}
    \tildew{D}(\alpha) \colon 
    \tildew{D}(\mathcal{C}_\ord(E)) =
    \sheafHom(\mathcal{C}_\ord(E),\mathcal{C}_\ord(\mathcal{D})) 
    \ra 
    \tildew{D}(E)
    =
    \sheafHom(E,\mathcal{C}_\ord(\mathcal{D}))
    = \mathcal{C}_\ord(E^\cek),
  \end{equation*}
  is a homotopy equivalence, i.\,e. an isomorphism in
  $[\Sh(X,-W)].$ See \cite[Rem.~\ref{enhance:rem:Cech-of-dual}]{valery-olaf-enhancements-in-prep} for the
  last identification.
\end{lemma}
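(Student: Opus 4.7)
The plan is to reduce the claim to a local homotopy equivalence on each open set $U_I$, where the augmented ordered \v{C}ech complex admits a standard contracting homotopy.

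First, I would note that since $S$ is finite, $\mathcal{C}^\bullet_\ord(\mathcal{D})$ is a bounded complex and $\mathcal{C}_\ord(\mathcal{D})=\Tot(\mathcal{C}^\bullet_\ord(\mathcal{D}))$ is obtained by finitely many cones and shifts. Therefore the morphism $\tildew{D}(\alpha)$ is the (bounded) totalization of the family of morphisms
\[
\sheafHom(\alpha, \mathcal{C}^q_\ord(\mathcal{D})) \colon
\sheafHom(\mathcal{C}_\ord(E),\mathcal{C}^q_\ord(\mathcal{D}))
\ra
\sheafHom(E,\mathcal{C}^q_\ord(\mathcal{D})),
\]
for $q=0,1,\dots,|S|-1$, and it suffices to show that each of these is a homotopy equivalence in $[\Sh(X,-W)]$, since bounded totalizations of homotopy equivalences are again homotopy equivalences.

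Second, since $\mathcal{C}^q_\ord(\mathcal{D})=\prod_{|J|=q+1,\,\ord} j_{J*}j_J^*\mathcal{D}$ (with $j_J\colon U_J\hra X$ the open immersion), the adjunction yields
\[
\sheafHom(-, j_{J*}j_J^*\mathcal{D}) \cong j_{J*}\sheafHom(j_J^*(-), j_J^*\mathcal{D}),
\]
which respects the $\DZ_2$-grading and curvings. Since the dg functor $j_{J*}\sheafHom(-, j_J^*\mathcal{D})$ preserves homotopy equivalences (as any dg functor does) and products of homotopy equivalences are homotopy equivalences, it is enough to establish that for every non-empty ordered subset $J\subset S$, the restriction $j_J^*\alpha\colon j_J^*E\ra j_J^*\mathcal{C}_\ord(E)$ is a homotopy equivalence in $[\Sh(U_J, W|_{U_J})]$.

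Third, the restriction of the augmented ordered \v{C}ech complex $E\ra \mathcal{C}^\bullet_\ord(E)$ to $U_J$ is precisely the augmented ordered \v{C}ech complex of $j_J^*E$ with respect to the induced affine open covering $(U_s\cap U_J)_{s\in S}$ of $U_J$. Fixing the minimum element $i_0\in J$, we have $U_{i_0}\cap U_J=U_J$, so this covering contains the whole space as one of its members. The standard contracting homotopy (insert/remove $i_0$, with appropriate signs) then provides a null-homotopy of the augmented complex as a complex of $\DZ_2$-graded curved sheaves on $U_J$; this null-homotopy only uses the combinatorics of the indexing and the $\cO_{U_J}$-module structure, so it respects the curving. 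Equivalently, $j_J^*\alpha$ is a split homotopy equivalence.

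The main obstacle is bookkeeping: one must verify that the contracting homotopy is a morphism of $\DZ_2$-graded sheaves of degree $-1$ that commutes with the curving in the required sense, and that the subsequent totalization in Step 1 is genuinely a totalization of homotopy equivalences in the $\DZ_2$-graded, curved category (not merely in the underlying $\DZ$-graded category). Both checks are routine because the ordered \v{C}ech differentials and the contracting homotopy are purely combinatorial, hence commute with the curving $W$ acting as an $\cO_X$-linear operator.
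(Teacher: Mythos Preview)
Your proof is correct and takes a genuinely different route from the paper's.

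The paper argues directly that $\Cone(\tildew{D}(\alpha))=\sheafHom(\Cone(\alpha),\mathcal{C}_\ord(\mathcal{D}))$ is contractible by filtering it with subquotients indexed by pairs $(I,K)$ with $\emptyset\neq I\subset S$ and $K\subset S\setminus I$; each subquotient is identified with the totalization of the augmented chain complex of a nonempty simplex with coefficients in a fixed object $\mathcal{H}_I^K=\sheafHom(E_{U_{I\cup K}},\leftidx{_{U_I}}{\mathcal{D}})$, and such a complex is contractible. By contrast, you first untotalize the \emph{target} $\mathcal{C}_\ord(\mathcal{D})$ and reduce, via the $(j_J^*,j_{J*})$-adjunction for $\sheafHom$, to showing that the \emph{source} map $j_J^*\alpha$ is a homotopy equivalence on each $U_J$; this then follows from the standard contracting homotopy for an ordered \v{C}ech complex whose cover contains the whole space. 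Your argument is more streamlined: it replaces the double filtration by $(I,K)$ with a single adjunction step, and the contractibility input is the familiar \v{C}ech homotopy rather than a simplicial one. The paper's approach, on the other hand, stays on $X$ throughout and parallels the filtration argument already developed in the companion paper \cite{valery-olaf-enhancements-in-prep}, which is convenient for their cross-referencing. Both arguments ultimately rest on the same elementary fact---that the standard combinatorial homotopy is $\mathcal{O}$-linear and hence commutes with the curving---which you note correctly in your final paragraph.
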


\begin{proof}
  Write $\alpha^*:= [1]D(\alpha).$
  We have to show that
  $\Cone(\alpha^*)=\sheafHom(\Cone(\alpha),
  \mathcal{C}_\ord(\mathcal{D}))$  is
  contractible.
  Using the method of proof of
  \cite[Lemma~\ref{enhance:l:sheafHomCC-vs-CsheafHom}]{valery-olaf-enhancements-in-prep}
  (we can assume that $X$ is irreducible)
  we see that 
  $\Cone(\alpha^*)$ has a filtration with subquotients 
  $\Cone(\alpha^*)_I^K$
  labeled
  by pairs $(I,K)$ where   
  $I \subset S$ is a non-empty subset and $K \subset S \setminus I$
  a (possibly empty) subset, such that
  $\Cone(\alpha^*)_I^K$
  consists (if we forget some differentials) of all summands
  $\sheafHom(\leftidx{_{U_J}}{E}{},\leftidx{_{U_I}}{\mathcal{D}})$
  for $K \subset J \subset (I \cup K).$ Moreover, for fixed
  $(I,K),$ all these
  summands are isomorphic to  
  $\mathcal{H}_I^K:=\sheafHom(E_{U_{I \cup K}},\leftidx{_{U_I}}{\mathcal{D}}),$
  and  
  $\Cone(\alpha^*)_I^K$
  is isomorphic to the totalization of the augmented chain
  complex of a (non-empty) simplex $\Sigma$ with coefficients in
  $\mathcal{H}_I^K.$ By the latter we mean the complex in 
  $Z_0(\Sh(X,-W))$ that arises from tensoring the augmented
  chain complex of $\Sigma$ with the object
  $\mathcal{H}_I^K \in \Sh(X,-W).$ Since the augmented chain
  complex is homotopy equivalent to zero, the same is true for
  this complex, and then for its totalization.
\end{proof}



\begin{corollary}
  \label{c:duality-lift-well-defined-MF}
  The dg functor $\tildew{D}$ induces a dg functor 
  \begin{equation*}
    \tildew{D} = \sheafHom(-, \mathcal{C}_\ord(\mathcal{D})) \colon 
    \MF'_\Cechobj(X,W)^\opp \ra \MF'_\Cechobj(X,-W)
  \end{equation*}
  which lifts the duality $D$ in \eqref{eq:duality-MF-extended}.
\end{corollary}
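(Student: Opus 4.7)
The plan is to exploit the fact that $\tildew{D} = \sheafHom(-, \mathcal{C}_\ord(\mathcal{D}))$ is already a (contravariant) dg functor on all of $\Sh(X,W)$, so nothing needs to be constructed; only the image and the lifting statement need verification. Being a dg functor, $\tildew{D}$ automatically preserves shifts, cones of closed degree zero morphisms (contravariantly), and homotopy equivalences of objects.

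Next, I would check that $\tildew{D}$ sends every generating object $\mathcal{C}_\ord(E)$ of $\MF_\Cechobj(X,W)$, for $E \in \MF(X,W)$, into $\MF'_\Cechobj(X,-W)$. By Lemma~\ref{l:sheafHomCC-vs-CsheafHom-MF} there is a canonical homotopy equivalence
\[
\tildew{D}(\mathcal{C}_\ord(E)) = \sheafHom(\mathcal{C}_\ord(E), \mathcal{C}_\ord(\mathcal{D})) \simeq \sheafHom(E,\mathcal{C}_\ord(\mathcal{D})) = \mathcal{C}_\ord(E^\cek)
\]
in $[\Sh(X,-W)]$. Since $E^\cek \in \MF(X,-W)$, the right-hand side already lies in $\MF_\Cechobj(X,-W) \subset \MF'_\Cechobj(X,-W)$; as $\MF'_\Cechobj(X,-W)$ is by definition closed under passage to homotopy equivalent objects in $\Sh(X,-W)$, the left-hand side $\tildew{D}(\mathcal{C}_\ord(E))$ also belongs to $\MF'_\Cechobj(X,-W)$.

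A short induction along the inductive construction of $\MF'_\Cechobj(X,W)$ (built from the generating objects $\mathcal{C}_\ord(E)$ by iteratively adjoining shifts, cones of closed degree zero morphisms, and homotopy equivalent objects) then shows that $\tildew{D}$ restricts to a dg functor $\MF'_\Cechobj(X,W)^\opp \to \MF'_\Cechobj(X,-W)$, because the target subcategory is closed under these same operations and $\tildew{D}$ preserves them (in the appropriate contravariant sense).

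Finally, to see that this restriction lifts the duality $D$ of~\eqref{eq:duality-MF-extended}, I would invoke the enhancement equivalence $[\MF'_\Cechobj(X,W)] \simeq \bfMF'(X,W)$ (the variant of Proposition~\ref{p:cech-object-enhancement-MF} discussed in section~\ref{sec:vers-arbitr-sheav-MF}), under which $\mathcal{C}_\ord(E)$ corresponds to $E$. The homotopy equivalence of Lemma~\ref{l:sheafHomCC-vs-CsheafHom-MF} is natural in $E$, being induced by the canonical morphism $E \to \mathcal{C}_\ord(E)$, so on homotopy categories $[\tildew{D}]$ agrees with $D$ via this natural isomorphism on generators, and hence on all of $\bfMF'(X,W)$. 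Since Lemma~\ref{l:sheafHomCC-vs-CsheafHom-MF} is the only non-formal input and is already established, the main obstacle is merely bookkeeping the contravariant behavior of $\tildew{D}$ under shifts and cones correctly.
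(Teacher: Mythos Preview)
Your proposal is correct and follows essentially the same approach as the paper, which simply says to adapt the proof of the corresponding corollary in \cite{valery-olaf-enhancements-in-prep}; you have supplied the details of that adaptation. One minor simplification: by Remark~\ref{rem:objects-of-cech-object-enhancement-MF} (and its variant in section~\ref{sec:vers-arbitr-sheav-MF}), every object of $\MF'_\Cechobj(X,W)$ is already homotopy equivalent to a single $\mathcal{C}_\ord(E)$, so the inductive step over shifts and cones is not strictly needed once you have handled the generators and closure under homotopy equivalence.
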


\begin{proof}
  Adapt the proof of
  \cite[Cor.~\ref{enhance:c:duality-lift-well-defined}]{valery-olaf-enhancements-in-prep}. 
  %
\end{proof}

The canonical morphism
\begin{align}
  \label{eq:theta-F-MF}
  \theta_F  \colon  F &
  \ra
  \tildew{D}^2(F)=\sheafHom(\sheafHom(F,
  \mathcal{C}_\ord(\mathcal{D})),\mathcal{C}_\ord(\mathcal{D})),\\
  \notag
  f & \mapsto (\lambda \mapsto \lambda(f))
\end{align}
(for $F \in \Sh(X,W)$) defines a morphism
$\theta \colon  \id \ra \tildew{D}^2$ of dg functors 
$\Sh(X,W) \ra \Sh(X,W),$
and, by Corollary~\ref{c:duality-lift-well-defined-MF}, 
also of dg functors
$\MF'_\Cechobj(X,W) \ra \MF'_\Cechobj(X,W).$

\begin{lemma} 
  \label{l:map-to-double-dual-homotopy-equi-MF}
  For each $F \in \MF'_\Cechobj(X,W),$ 
  the morphism
  $\theta_F$ 
  in \eqref{eq:theta-F-MF}
  is a homotopy equivalence.
\end{lemma}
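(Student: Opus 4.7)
The plan, modeled on \cite[Lemma~\ref{enhance:l:map-to-double-dual-homotopy-equi}]{valery-olaf-enhancements-in-prep} transposed to the present curved setting, is to use naturality of $\theta$ to reduce to a generating family of objects and then verify the statement on those via two applications of Lemma~\ref{l:sheafHomCC-vs-CsheafHom-MF}.

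First I would observe that the full subcategory $\mathcal{F}$ of $\MF'_\Cechobj(X,W)$ consisting of objects $F$ for which $\theta_F$ is a homotopy equivalence is closed under shifts, under passage to homotopy equivalent objects, and under cones of closed degree zero morphisms. These closure properties follow from naturality of $\theta \colon \id \Ra \tildew{D}^2$ combined with the two-out-of-three property for morphisms of distinguished triangles in the triangulated category $[\Sh(X,W)]$. By the very definition of $\MF'_\Cechobj(X,W)$, it suffices to verify that $\mathcal{C}_\ord(E) \in \mathcal{F}$ for each $E \in \MF(X,W).$

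Second, for such an $E,$ I would combine two applications of Lemma~\ref{l:sheafHomCC-vs-CsheafHom-MF} with classical biduality for matrix factorizations. Since the components of $E$ are locally free, the projection formula yields $\tildew{D}(E) = \mathcal{C}_\ord(E^\cek)$. Applying the lemma to $E$ gives a homotopy equivalence $\tildew{D}(\alpha_E) \colon \tildew{D}(\mathcal{C}_\ord(E)) \to \mathcal{C}_\ord(E^\cek);$ applying the contravariant dg functor $\tildew{D}$ to this homotopy equivalence yields a further homotopy equivalence $\tildew{D}^2(\alpha_E) \colon \tildew{D}^2(E) \to \tildew{D}^2(\mathcal{C}_\ord(E)).$ Applying the lemma a second time to $E^\cek \in \MF(X,-W)$ and using the biduality $E^{\cek\cek} \cong E$ (valid because matrix factorizations consist of vector bundles) produces a homotopy equivalence
\begin{equation*}
\gamma_E \;:=\; \tildew{D}(\alpha_{E^\cek}) \colon \tildew{D}^2(E) \longrightarrow \mathcal{C}_\ord(E).
\end{equation*}

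Finally, I would close the argument by a diagram chase using the naturality square
\begin{equation*}
\xymatrix{
E \ar[r]^-{\alpha_E} \ar[d]_-{\theta_E} & \mathcal{C}_\ord(E) \ar[d]^-{\theta_{\mathcal{C}_\ord(E)}} \\
\tildew{D}^2(E) \ar[r]^-{\tildew{D}^2(\alpha_E)} & \tildew{D}^2(\mathcal{C}_\ord(E)).
}
\end{equation*}
A direct unwinding of the evaluation formula~\eqref{eq:theta-F-MF} shows that $\gamma_E \circ \theta_E = \alpha_E$ in $Z_0(\Sh(X,W))$, so under the identification $\tildew{D}^2(E) \simeq \mathcal{C}_\ord(E)$ given by $\gamma_E$ the map $\theta_E$ becomes $\alpha_E.$ Running the same two-step identification with $\mathcal{C}_\ord(E)$ in place of $E$ and transporting $\theta_{\mathcal{C}_\ord(E)}$ through the resulting homotopy equivalences forces it to agree, up to homotopy, with $\tildew{D}^2(\alpha_E)$ composed with the appropriate inverse equivalence, hence to be a homotopy equivalence. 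The principal obstacle is this final compatibility, i.e.\ the identity $\gamma_E \circ \theta_E = \alpha_E$ and its counterpart for $\mathcal{C}_\ord(E)$: conceptually it is just vector bundle biduality paired through the ordered Čech resolution of $\mathcal{D},$ but a careful bookkeeping of signs and Čech indices is required to nail down the identification.
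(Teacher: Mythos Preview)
Your overall strategy---reduce to the generators $\mathcal{C}_\ord(E)$ via closure properties, then invoke Lemma~\ref{l:sheafHomCC-vs-CsheafHom-MF} together with vector-bundle biduality---is the right one and matches the paper, which simply defers to the companion article \cite{valery-olaf-enhancements-in-prep}. But the final diagram chase has a genuine gap. Even granting the identity $\gamma_E \circ \theta_E = \alpha_E$, the naturality square gives only
\[
\theta_{\mathcal{C}_\ord(E)} \circ \alpha_E \;=\; \tildew{D}^2(\alpha_E) \circ \theta_E,
\]
and to isolate $\theta_{\mathcal{C}_\ord(E)}$ you must cancel $\alpha_E$. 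However $\alpha_E \colon E \to \mathcal{C}_\ord(E)$ is \emph{not} a homotopy equivalence in $[\Sh(X,W)]$; it only becomes an isomorphism in $\DSh^\co(X,W)$. Your proposed escape, ``running the same two-step identification with $\mathcal{C}_\ord(E)$ in place of $E$,'' is not available either: Lemma~\ref{l:sheafHomCC-vs-CsheafHom-MF} is stated for $E \in \MF(X,W)$ and does not apply to $\mathcal{C}_\ord(E)$, whose components are not vector bundles.

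The fix is exactly what the paper's one-line proof singles out: carry out the intermediate steps with ``morphisms in $Z_0(\Sh(X,\pm W))$ that become isomorphisms in $\DSh(X,\pm W)$'' rather than with homotopy equivalences. In $\DSh^\co(X,W)$ the map $\alpha_E$ is invertible, so the naturality square (together with your identifications) shows that $\theta_{\mathcal{C}_\ord(E)}$ is an isomorphism there. Now both source and target of $\theta_{\mathcal{C}_\ord(E)}$ lie in $\MF'_\Cechobj(X,W)$, and the functor $[\MF'_\Cechobj(X,W)] \to \DSh^\co(X,W)$ is fully faithful (section~\ref{sec:vers-arbitr-sheav-MF}); hence an isomorphism in $\DSh^\co$ is already a homotopy equivalence. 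This closes the argument, and the reduction to generators then handles arbitrary $F$.
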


\begin{proof}
  Adapt the proof of
  \cite[Lemma~\ref{enhance:l:map-to-double-dual-homotopy-equi}]{valery-olaf-enhancements-in-prep}. Instead of
  quasi-isomorphisms we need to speak about morphisms in
  $Z_0(\Sh(X,\pm W))$ that become isomorphisms in $\DSh(X, \pm
  W).$
\end{proof}

\begin{corollary}
  \label{c:duality-lifted-to-enhancement-MF}
  The dg functor 
  $\tildew{D}= \sheafHom(-, \mathcal{C}_\ord(\mathcal{D})) \colon 
  \MF'_\Cechobj(X,W)^\opp \ra \MF'_\Cechobj(X,-W)$
  is a quasi-equivalence. The induced functor $[\tildew{D}]$ on
  homotopy categories is an equivalence and a duality in the
  sense that the natural morphism $\theta \colon  \id \ra
  [\tildew{D}]^2$ is an isomorphism.
\end{corollary}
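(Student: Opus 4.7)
The plan is to reduce everything to Lemma~\ref{l:map-to-double-dual-homotopy-equi-MF}, together with its symmetric counterpart in which $W$ is replaced by $-W,$ and then to appeal to Lemma~\ref{l:dg-functor-qequi-test} to promote an equivalence on homotopy categories to a quasi-equivalence of dg categories.

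First I would observe that the construction $\tildew{D}=\sheafHom(-,\mathcal{C}_\ord(\mathcal{D}))$ is symmetric in the sign of the superpotential: since $\mathcal{D}\in\MF(X,0)$ does not depend on $W,$ Corollary~\ref{c:duality-lift-well-defined-MF} supplies not only the dg functor $\tildew{D}_W\colon \MF'_\Cechobj(X,W)^\opp\ra\MF'_\Cechobj(X,-W)$ under study but also its analog $\tildew{D}_{-W}\colon\MF'_\Cechobj(X,-W)^\opp\ra\MF'_\Cechobj(X,W).$ Applying Lemma~\ref{l:map-to-double-dual-homotopy-equi-MF} to both signs shows that the natural transformations $\theta\colon\id\Rightarrow\tildew{D}_{-W}\circ\tildew{D}_W^\opp$ and $\theta'\colon\id\Rightarrow\tildew{D}_W\circ\tildew{D}_{-W}^\opp$ are pointwise homotopy equivalences. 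On the homotopy category they therefore become natural isomorphisms; in particular $\theta\colon\id\Rightarrow[\tildew{D}]^2$ is the asserted isomorphism witnessing the duality, and $\theta$ and $\theta'$ together display $[\tildew{D}]^\opp$ and $[\tildew{D}_{-W}]$ as quasi-inverse equivalences, so that $[\tildew{D}]\colon[\MF'_\Cechobj(X,W)]^\opp\ra[\MF'_\Cechobj(X,-W)]$ is an equivalence of categories.

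The remaining step is to lift this equivalence of homotopy categories to a quasi-equivalence of dg categories. For this I would invoke Lemma~\ref{l:dg-functor-qequi-test}: the dg category $\MF'_\Cechobj(X,W)$ is strongly pretriangulated by construction, hence so is its opposite, and the shift-closure hypothesis of the lemma is satisfied. Combined with the equivalence $[\tildew{D}]$ on homotopy categories established above, the lemma yields that $\tildew{D}$ induces quasi-isomorphisms on morphism complexes, and therefore $\tildew{D}$ is a quasi-equivalence. The substantive input has already been supplied by Lemma~\ref{l:map-to-double-dual-homotopy-equi-MF}; the only nontrivial care needed here is bookkeeping of the opposite categories when arranging $\theta$ and $\theta'$ into a pair of quasi-inverse equivalences, which is the main (very mild) obstacle.
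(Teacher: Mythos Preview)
Your proof is correct and follows essentially the same approach as the paper's, which also deduces everything from Lemma~\ref{l:map-to-double-dual-homotopy-equi-MF}. The paper's argument is extremely terse: it simply notes that this lemma makes $\theta\colon\id\to[\tildew{D}]^2$ an isomorphism, concludes that $[\tildew{D}]$ is an equivalence, and then asserts that $\tildew{D}$ is a quasi-equivalence. Your version spells out the two points the paper leaves implicit---namely, that one needs the $-W$ instance of the lemma as well to obtain a two-sided inverse, and that the passage from an equivalence on homotopy categories to a quasi-equivalence is via Lemma~\ref{l:dg-functor-qequi-test} (using that $\MF'_\Cechobj(X,W)$ is pretriangulated).
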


\begin{proof}
  Lemma~\ref{l:map-to-double-dual-homotopy-equi-MF}
  shows that $\theta \colon  \id \ra [\tildew{D}]^2$ is an isomorphism. 
  In particular, $[\tildew{D}]$ is an equivalence, and
  $\tildew{D}$ is a quasi-equivalence.
\end{proof}

\subsection{The singularity category of a function}
\label{sec:categ-sing}

We assume now and for the rest of
section~\ref{sec:thom-sebast-smoothness} 
that our field $k$ is algebraically
closed and of characteristic zero.
Let $X$ be a smooth variety, i.\,e.\ a separated smooth scheme of
finite type (over $k$), cf.\ Remark~\ref{rem:alternative-for-srNfKd}.
Let $W \colon  X\ra \DA^1$ be a morphism. 
We identify $k=\DA^1(k)$ with the set of closed points of
$\DA^1.$

\begin{definition}
  \label{d:category-of-sings}
  We define the \define{singularity category of $W$} as
  the product
  \begin{equation*}
    \bfMF(W):=\prod_{a \in k} \bfMF(X, W-a).
  \end{equation*}
\end{definition}

Note that only finitely many factors of this product are
non-zero. To show this we can assume that $X$ is connected
(see \cite[Rem.~\ref{semi:rem:X-disconnected}]{valery-olaf-matfak-semi-orth-decomp}). If
$W$ is constant, then $W=b$ for some $b \in k$ and
$\bfMF(W)=\bfMF(X, 0)$ by
\cite[Lemma~\ref{semi:l:case-W=constant-nonzero}]{valery-olaf-matfak-semi-orth-decomp}. 
Otherwise $W$ is flat and Orlov's theorem says that $\coker \colon \bfMF(X,W-a) \ra
D_\Sg(X_a)$ is an equivalence
(\cite[Thm.~\ref{semi:t:factorizations=singularity}]{valery-olaf-matfak-semi-orth-decomp})
where $X_a$ is 
the scheme theoretic fiber over $a \in k.$
By generic smoothness on the target 
(\cite[Cor.~III.10.7]{Hart})
$X_a$ is smooth for all but
finitely many values $a \in k.$ If $X_a$ is smooth, then
$D_\Sg(X_a)=0.$

\begin{lemma}
  \label{l:D-Sg-W-vanishes}
  We have $\bfMF(W)=0$ if and only if $W$ is a smooth morphism.
\end{lemma}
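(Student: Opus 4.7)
The plan is to prove both directions by reducing to the connected case and then applying Orlov's theorem (\cite[Thm.~\ref{semi:t:factorizations=singularity}]{valery-olaf-matfak-semi-orth-decomp}) together with the criterion that a flat morphism is smooth if and only if all its closed fibers are smooth. Since $\bfMF(W)$ factors as a product over the connected components of $X$, and smoothness of a morphism is local on the source, we may assume $X$ is connected and nonempty.

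For the ``$\Leftarrow$'' direction, assume $W$ is smooth. A constant morphism from a nonempty connected variety to $\DA^1$ factors through a closed point, and such a closed immersion is not flat, hence not smooth. Therefore $W$ is non-constant, and hence flat. Orlov's theorem then supplies an equivalence $\bfMF(X, W-a) \simeq D_\Sg(X_a)$ for every $a \in k$. Smoothness of $W$ forces every closed fiber $X_a$ to be smooth, hence regular, so $D_\Sg(X_a)=0$. Consequently every factor of $\bfMF(W)=\prod_{a \in k}\bfMF(X, W-a)$ vanishes.

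For the ``$\Rightarrow$'' direction, suppose $W$ is not smooth; I will exhibit a nonzero factor of $\bfMF(W)$. Again restricting to a connected component on which $W$ fails to be smooth, I split into two cases. If $W$ is constant with value $b$, then $\bfMF(W)=\bfMF(X,0)$ by \cite[Lemma~\ref{semi:l:case-W=constant-nonzero}]{valery-olaf-matfak-semi-orth-decomp}, and (as the name of that lemma suggests) $\bfMF(X,0)$ is nonzero for nonempty smooth $X$. Otherwise $W$ is non-constant and hence flat. Since a flat morphism whose closed fibers are all smooth is itself smooth, some closed fiber $X_a$ must fail to be smooth; such an $X_a$ is not regular, so by Auslander--Buchsbaum--Serre the local ring at some closed point of $X_a$ has infinite global dimension and $D_\Sg(X_a)\neq 0$. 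Orlov's theorem then gives a nonzero factor $\bfMF(X, W-a)\neq 0$ of $\bfMF(W)$.

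The only mild subtlety is the constant case, where Orlov's theorem does not apply directly and we must rely on the cited lemma that $\bfMF(X,0)$ is nonzero for nonempty smooth $X$; the rest amounts to combining Orlov's equivalence with the fiberwise criterion for smoothness.
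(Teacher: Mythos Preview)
Your argument is correct and follows essentially the same route as the paper: reduce to connected $X$, split into the constant and flat cases, and in the flat case invoke Orlov's equivalence together with the fiberwise smoothness criterion. The only point to flag is a citation slip: the lemma \texttt{l:case-W=constant-nonzero} is about the case where $W$ equals a nonzero constant (and asserts $\bfMF(X,W)=0$ there), so its name does \emph{not} suggest that $\bfMF(X,0)$ is nonzero; the paper instead appeals to \cite[Prop.~\ref{semi:p:case-W-equals-zero-acycl-equals-exact}]{valery-olaf-matfak-semi-orth-decomp} for that fact (e.g.\ the object $(\matfak{0}{}{\mathcal{O}_X}{})$ is visibly nonzero in $\bfMF(X,0)$).
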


\begin{proof}
  If $W \colon  X \ra \DA^1$ is smooth, then it is in particular flat, 
  so Orlov's equivalence 
  $\coker \colon \bfMF(X,W-a) \sira D_\Sg(X_a)$ and the fact that
  all $X_a$
  are regular show that $\bfMF(W)=0.$

  Conversely, assume that $\bfMF(W)=0.$ We can in addition assume
  that $X$ is connected and non-empty. Then $W$ is either
  constant or flat. If $W$ is constant, we obtain 
  $\bfMF(X,0)=\bfMF(W)=0.$ This is a contradiction since $\bfMF(X,0)$ obviously has
  non-zero objects
  (use \cite[Prop.~\ref{semi:p:case-W-equals-zero-acycl-equals-exact}]{valery-olaf-matfak-semi-orth-decomp}). 
  So assume that $W$ is flat. Then 
  $D_\Sg(X_a)=0$ for all $a \in k,$ so all fibers $X_a$ are (regular
  and) smooth. This together with flatness of $W$ already implies
  that $W$ is smooth (by \cite[Def.~4.3.35]{Liu}).  
\end{proof}

\begin{remark}
  \label{rem:intuition-D-Sg-W}
  As made precise by Lemma~\ref{l:D-Sg-W-vanishes},
  one may think of $\bfMF(W)$ as measuring the
  singularity of $W.$
  The above discussion implies
  that
  $\bfMF(W)$ is nonzero for a constant function $W$ (if $X
  \not= \emptyset$),
  hence a 
  constant function is considered to be singular. This would not
  be the case if we had defined $\bfMF(W)$ as the product of
  the categories $D_\Sg(X_a).$
\end{remark}

Let $\Sing(W) \subset X$ be the closed subscheme defined by the 
vanishing of the section $dW \in \Gamma(X, \Omega^1_{X/k})$
of the cotangent bundle. Its closed points are the critical
points of $W.$ Let $\Crit(W) =W(\Sing(W)(k)) \subset \DA^1(k)=k$
be the (finite) set of critical values of $W$.
The above discussion shows that
\begin{equation*}
  \bfMF(W)=\prod_{a \in \Crit(W)} \bfMF(X, W-a).
\end{equation*}
and we emphasize again that this product is finite.

Recall that we defined in \cite[section~\ref{semi:sec:enhancements}]{valery-olaf-matfak-semi-orth-decomp}
and in section~\ref{sec:object-orient-vcech-MF}
the enhancements $\InjQcoh_{\bfMF}(X, W-a),$ $\MF_{\Cechmor}(X,
W-a)),$ 
$\MF(X, W-a)/\AcyclMF(X, W-a),$ $\MF_\Cechobj(X, W-a)$
and $\MF'_\Cechobj(X,W-a)$
of $\bfMF(X, W-a)$ 
and 
showed that they are equivalent (three of these enhancements
depend on the choice of a (finite) affine open covering of $X$).  
Fix one of these enhancements and denote it by $\bfMF(X,
W-a)^\dg.$ Then
\begin{equation*}
  \bfMF(W)^\dg:=\prod_{a \in \Crit(W)} \bfMF(X, W-a)^\dg
\end{equation*}
is an enhancement of $\bfMF(W).$
Since the pretriangulated dg category $\bfMF(W)$ might not be
triangulated 
(cf.\ Lemma~\ref{l:triang-versus-pretriang-and-karoubi})
we will mainly work with its "triangulated dg envelope"
\footnote{
  If $\mathcal{A}$ and $\mathcal{B}$ are non-empty dg categories,
  scalar 
  extension along the two projections $\mathcal{A} \times
  \mathcal{B} \ra \mathcal{A}$ and $\mathcal{A} \times
  \mathcal{B} \ra \mathcal{B}$ defines an
  equivalence $\Perf(\mathcal{A} \times \mathcal{B}) \ra
  \Perf(\mathcal{A}) \times \Perf(\mathcal{B})$ of dg
  categories. This explains the second equality. 
}
\begin{equation}
  \label{eq:def-Perf-W-dg}
  \bfMF(W)^{\dg,\natural} := \Perf(\bfMF(W)^\dg)
  = \prod_{a \in \Crit(W)} \Perf(\bfMF(X, W-a)^\dg).
\end{equation}
Then $\bfMF(W)^{\dg,\natural}$ is an enhancement of the Karoubi
envelope of $\bfMF(W).$
Note that the quasi-equivalence class of $\bfMF(W)^{\dg,\natural}$ does not
depend on the above choices of enhancements, by
  Lemma~\ref{l:dg-functors-restricts-to-perfects-embedding-and-qequiv}.\ref{enum:qequi-induces-qequi-between-perfs}.

\begin{remark}
  \label{rem:Perf-W-Cech-concretely}
  Let us give a more concrete description of $\bfMF(W)^{\dg,\natural}$
  that we 
  will mainly use later on:
  For each $a \in \Crit(W)$
  choose an object $E(a) \in \bfMF(X, W-a)^\dg$ that becomes a
  classical generator in $[\bfMF(X,W-a)^\dg] \cong \bfMF(X, W-a)$
  (use
  \cite[Prop.~\ref{semi:p:existence-of-classical-generator-in-mf}]{valery-olaf-matfak-semi-orth-decomp}). 
  Let $A(a)$ be the endomorphism dg algebra of $E(a)$ in 
  $\bfMF(X,W-a)^\dg,$ i.\,e.\
  \begin{equation*}
    A(a)=\End_{\bfMF(X,W-a)^\dg}(E(a)).
  \end{equation*}
  Then
  $A=\prod_{a \in \Crit(W)} A(a)$ is the endomorphism dg algebra of
  $E=(E(a))$ in $\bfMF(W)^\dg.$
  Proposition~\ref{p:pretriang-classical-generator}
  yields a quasi-equivalence
  \begin{equation*}
    \Perf(A) =
    \prod_{a \in \Crit(W)} \Perf(A(a)) 
    \ra 
    \bfMF(W)^{\dg,\natural}
  \end{equation*}
  and also provides a triangulated equivalence
  \begin{equation}
    \label{eq:Karoubi-D-Sg-W-equiv-perA}
    \ol{\bfMF(W)}=
    \prod_{a \in \Crit(W)} \ol{\bfMF(X, W-a)} \sira
    \prod_{a \in \Crit(W)} \per(A(a)) = \per(A)
  \end{equation}
  where $\ol{\mathcal{T}}$ denotes the Karoubi envelope of a
  triangulated category $\mathcal{T}.$
  Moreover, it says that smoothness and properness of
  $\bfMF(W)^\dg$ (resp.\ $\bfMF(W)^{\dg,\natural}$) can be tested on $A,$
  and that $\bfMF(W)^{\dg,\natural}$ is saturated if and only if $A$ is
  smooth and proper.
\end{remark}

\subsection{Products and generators}
\label{sec:products-generators}

Let $X$ and $Y$ (and hence $X \times Y$) be smooth varieties
and let $W \colon  X\ra \DA^1$ and $V \colon  Y \ra \DA^1$ be morphisms. 
Our aim is to prove
Proposition~\ref{p:boxtimes-of-classical-generators} below. We
start with some preparations.


An object $E \in \Coh(X_0)$ can be considered as an object
$\mu(E):=\big(\matfak{0}{}{E}{}\big) \in \Coh(X,W).$ 
For flat $W$ recall the equivalence $\coker \colon  \bfMF(X,W) \ra
D_\Sg(X_0)$ from
\cite[Thm.~\ref{semi:t:factorizations=singularity}]{valery-olaf-matfak-semi-orth-decomp}.

\begin{lemma}
  [{\cite[Lemma~2.18]{lin-pomerleano}}]
  \label{l:cokern-of-MF-reso-of-mu-CohX0}
  Assume that $W \colon X \ra \DA^1$ is flat, and let $E \in \Coh(X_0).$ 
  Suppose that $P \ra \mu(E)$ is a morphism
  in $Z_0(\Coh(X,W))$ with $P \in \MF(X,W)$ and cone in
  $\Acycl[\Coh(X,W)]$ (such a morphism exists by
  \cite[Thm.~\ref{semi:l:resolutions}.\ref{semi:enum:MF-reso}]{valery-olaf-matfak-semi-orth-decomp}).
  Then there is an isomorphism $\coker(P):=\coker(p_1)
  \cong E$ in $D_\Sg(X_0).$
\end{lemma}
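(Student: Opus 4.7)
The plan is to leverage Orlov's equivalence $\coker\colon \bfMF(X,W) \sira D_\Sg(X_0)$ (established in the companion paper as Theorem~\ref{semi:t:factorizations=singularity}) by viewing the hypothesis as exhibiting $P$ as a matrix-factorization resolution of $\mu(E),$ and proving that any such resolution computes the class $[E] \in D_\Sg(X_0).$

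First, I would verify the basic compatibility: the identity $p_1 p_0 = W \cdot \id_{P_0}$ shows that $W$ annihilates $\coker(p_1),$ so this quotient is naturally a coherent sheaf on $X_0.$ Unpacking the closedness condition on $\phi = (\phi_0,\phi_1)\colon P \to \mu(E)$ in $Z_0(\Coh(X,W))$---recalling that $\mu(E)$ has zero curved differentials---yields a single nontrivial equation from which (possibly after exchanging $\coker(p_0)$ and $\coker(p_1)$ via the short exact sequence $0 \to \coker(p_0) \to P_0 \to \coker(p_1) \to 0$ on $X_0,$ whose middle term is a vector bundle and hence zero in $D_\Sg(X_0)$) I extract a canonical morphism $\alpha\colon \coker(P) \to E$ in $\Coh(X_0).$

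The main step is to prove that $\alpha$ becomes an isomorphism in $D_\Sg(X_0).$ I would complete $\phi$ to a distinguished triangle $P \to \mu(E) \to \Cone(\phi) \to [1]P$ in $[\Coh(X,W)]$ and extend Orlov's cokernel functor to a functor defined on this ambient category which sends $P$ to $\coker(P),$ sends $\mu(E)$ to $E,$ and sends objects of $\Acycl[\Coh(X,W)]$ into $\Perf(X_0).$ Applying the extended functor to this triangle then forces $\alpha$ to be an isomorphism in $D_\Sg(X_0),$ since the image of $\Cone(\phi)$ lies in $\Perf(X_0)$ and hence is zero there.

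The principal technical obstacle is the construction of such an extension and, in particular, the statement that objects of $\Acycl[\Coh(X,W)]$ restrict to perfect complexes on $X_0.$ This is essentially the content of Orlov's original argument and uses the flatness hypothesis on $W$ in a crucial way, so that $X_0 \hookrightarrow X$ is a Cartier divisor and any acyclic curved complex restricts to a perfect object on $X_0$ (via a local calculation with a regular element cutting out $X_0$). Once this is in place, the lemma follows by chasing the above triangle in $D_\Sg(X_0).$
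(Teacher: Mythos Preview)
Your plan has a genuine gap at the step where you ``extend Orlov's cokernel functor'' to a triangulated functor $F$ on $[\Coh(X,W)]$ sending $P\mapsto\coker(P)$, $\mu(E)\mapsto E$, and acyclics to zero. There are only two natural candidates, and neither closes the argument. If you take $F$ to be the composite $[\Coh(X,W)]\to\DCoh(X,W)\sira\bfMF(X,W)\xra{\coker}D_\Sg(X_0)$, then (a) and (c) hold by construction, but verifying (b), i.e.\ that this functor sends $\mu(E)$ to $E$, means replacing $\mu(E)$ by an isomorphic matrix factorization and computing its cokernel --- which is exactly the statement you are trying to prove. If instead you take the ``naive'' cokernel $(C_1\rightleftarrows C_0)\mapsto\coker(c_1)$, then (a) and (b) are tautological, but this is \emph{not} a functor on the homotopy category: a homotopy $h$ between closed morphisms produces a residual term $\overline{h_1 c_0}$ on cokernels which has no reason to vanish in $D_\Sg(X_0)$, so you cannot feed the triangle $P\to\mu(E)\to\Cone(\phi)$ into it. Your appeal to ``Orlov's original argument'' does not resolve this; Orlov constructs $\coker$ as a functor only on $\bfMF(X,W)$, and the description of its inverse is precisely what is at stake.

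The paper avoids this circularity by a concrete reduction rather than an abstract extension. It first passes from $E$ to a Cohen--Macaulay sheaf $E'$ via a locally free resolution $0\to E'\to L^{-l+1}\to\cdots\to L^0\to E\to 0$ on $X_0$, so that $[l]E'\cong E$ in $D_\Sg(X_0)$. For a Cohen--Macaulay sheaf, Orlov's construction (from the proof of \cite[Thm.~3.5]{orlov-mf-nonaffine-lg-models}) produces an explicit $Q\in\MF(X,W)$ with $\coker(q_1)=E'$ as sheaves, and a short exact sequence in $Z_0(\Coh(X,W))$ shows directly that $Q\cong\mu(E')$ in $\DCoh(X,W)$. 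One then checks that $\mu(L)=0$ in $\DCoh(X,W)$ for locally free $L$ (again via an explicit matrix factorization), so the resolution yields $\mu(E)\cong[l]\mu(E')\cong[l]Q$. Since $P\cong\mu(E)$ in $\DCoh(X,W)$ by hypothesis and $\bfMF(X,W)\to\DCoh(X,W)$ is an equivalence, we get $P\cong[l]Q$ in $\bfMF(X,W)$, whence $\coker(P)\cong[l]\coker(Q)=[l]E'\cong E$ in $D_\Sg(X_0)$. The Cohen--Macaulay step is the key idea you are missing: it is what allows one to land in the range of the \emph{explicit} inverse to $\coker$, rather than assuming knowledge of that inverse on arbitrary coherent sheaves.
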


\begin{proof}
  We elaborate on the proof of \cite[Lemma~2.18]{lin-pomerleano}.
  From the proof of
  \cite[Prop.~1.23]{orlov-tri-cat-of-sings-and-d-branes}
  we see that there is an exact sequence (for any $l \gg 0$)
  \begin{equation}
    \label{eq:E-loc-free-transfer-to-CM}
    0 \ra E' \ra
    L^{-l+1} \ra \dots \ra L^0
    \ra E \ra 0
  \end{equation}
  in $\Coh(X_0)$ where all $L^r$ are locally free
  coherent sheaves and $E'$ is a Cohen-Macaulay sheaf (as defined
  in
  \cite[Lemma-Def.~1]{orlov-mf-nonaffine-lg-models}).
  
  The proof of \cite[Thm.~3.5]{orlov-mf-nonaffine-lg-models}
  shows that there is an object $Q :=
  \big(\matfak{Q_1}{q_1}{Q_0}{q_0}\big) \in \MF(X, W)$ such that
  $\coker(q_1) = E'$ as coherent sheaves.
  Let $K:=\big(\matfak{Q_1}{1}{Q_1}{W}\big)$
  and note that 
  $0 \ra K \xra{(1,q_1)}
  Q \ra \mu(E') \ra 0$
  is a short exact sequence in $Z_0(\Coh(X,W)).$
  It gives rise to a triangle in $\DCoh(X,W).$
  Since $K = 0$ in $[\MF(X,W)]$
  we obtain an isomorphism $Q  \sira \mu(E')$
  in $\DCoh(X,W)].$

  If $L \in \Coh(X_0)$ is locally free
  we claim that $\mu(L)$ vanishes in
  $\DCoh(X,W).$
  Indeed, $L$ is Cohen-Macaulay, so the above argument shows
  that there is
  an object $M \in \MF(X, W)$ such that
  $\coker(m_1) = L$ in $\Coh(X_0)$ and 
  $M \sira \mu(L)$ in $\DCoh(X,W).$
  Since $L$ vanishes in $D_\Sg(X_0)$ we see that $M$ vanishes in
  $\bfMF(X,W)$ and a fortiori in $\DCoh(X,W).$ 

  If we apply $\mu$ to
  \eqref{eq:E-loc-free-transfer-to-CM} and use this claim for the
  $\mu(L^r)$ we see
  that $[l]\mu(E')\cong \mu(E)$ in $\DCoh(X,W).$

  By assumption we have $P \cong \mu(E)$ in $\DCoh(X,W).$
  Combined with the above isomorphisms this shows that $P \cong
  [l]Q$ in $\DCoh(X,W).$ Since both $P$ and $Q$ are in $\MF(X,W)$
  and $\bfMF(X,W) \ra \DCoh(X,W)$ is an equivalence, we have $P
  \cong [l]Q$ in $\bfMF(X,W).$ This shows that
  $\coker(p_1) \cong
  [l]\coker(q_1)
  = [l]E' \cong E$
  in $D_\Sg(X_0)$
  where we use \eqref{eq:E-loc-free-transfer-to-CM} for the last
  isomorphism.
\end{proof}

\begin{corollary}
  \label{c:cokern-and-boxtimes}
  Assume that $W \colon X \ra \DA^1$ and $V \colon  Y \ra \DA^1$ are flat.
  Let $E \in \Coh(X_0)$ and $F \in \Coh(Y_0).$
  Let $P \ra \mu(E)$ be a morphism
  in $Z_0(\Coh(X,W))$ with $P \in \MF(X,W)$ and cone in
  $\Acycl[\Coh(X,W)],$
  and let $Q \ra \mu(F)$ be a morphism
  in $Z_0(\Coh(Y,V))$ with $Q \in \MF(Y,V)$ and cone in
  $\Acycl[\Coh(Y,V)].$
  Then
  \begin{equation*}
    \coker(P \boxtimes Q)
    \cong 
    E \boxtimes F
  \end{equation*}
  in $D_\Sg((X \times Y)_0).$
  Here we 
  use the closed embedding $X_0 \times Y_0 \subset (X \times
  Y)_0$ 
  in order to consider $E \boxtimes F \in
  \Coh(X_0 \times Y_0)$ as an object of $\Coh((X \times Y)_0).$
\end{corollary}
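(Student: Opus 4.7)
The plan is to reduce to the Cohen-Macaulay case along the lines of the proof of Lemma~\ref{l:cokern-of-MF-reso-of-mu-CohX0} and compute the cokernel of the external tensor product directly. First observe that $W*V = W+V \colon X \times Y \to \DA^1$ is flat, since $W \times V \colon X \times Y \to \DA^1 \times \DA^1$ is flat (composition of base changes of the flat morphisms $W$ and $V$) and the sum map $+ \colon \DA^1 \times \DA^1 \to \DA^1$ is smooth. The standard external-tensor formula yields $P \boxtimes Q \in \MF(X \times Y, W+V)$, and Orlov's equivalence $\coker \colon \bfMF(X \times Y, W+V) \sira D_\Sg((X \times Y)_0)$ (cf.\ \cite[Thm.~\ref{semi:t:factorizations=singularity}]{valery-olaf-matfak-semi-orth-decomp}) then applies.

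Next, choose finite locally free resolutions $L^\bullet \to E$ on $X_0$ of length $l$ and $M^\bullet \to F$ on $Y_0$ of length $m$, ending respectively in Cohen-Macaulay sheaves $E'$ and $F'$, together with matrix factorizations $P' \in \MF(X, W)$ and $Q' \in \MF(Y, V)$ satisfying $\coker(p'_1) = E'$ and $\coker(q'_1) = F'$ (as in \cite[Thm.~3.5]{orlov-mf-nonaffine-lg-models}). The proof of Lemma~\ref{l:cokern-of-MF-reso-of-mu-CohX0} provides isomorphisms $P \cong [l]P'$ in $\bfMF(X, W)$ and $Q \cong [m]Q'$ in $\bfMF(Y, V)$. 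Since $\boxtimes$ descends to a bifunctor $\bfMF(X, W) \times \bfMF(Y, V) \to \bfMF(X \times Y, W+V)$ compatible with shifts, this yields $P \boxtimes Q \cong [l+m](P' \boxtimes Q')$ in $\bfMF(X \times Y, W+V)$, and therefore
\[
\coker(P \boxtimes Q) \cong [l+m]\,\coker(P' \boxtimes Q') \text{ in } D_\Sg((X \times Y)_0).
\]

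The crux is to identify $\coker(P' \boxtimes Q') \cong E' \boxtimes F'$ in the Cohen-Macaulay case. I would combine the short exact sequences $0 \to K' \to P' \to \mu(E') \to 0$ and $0 \to K'' \to Q' \to \mu(F') \to 0$ (with $K', K''$ contractible, obtained as in the proof of Lemma~\ref{l:cokern-of-MF-reso-of-mu-CohX0}) via the external tensor product, which yields $P' \boxtimes Q' \sira \mu(E') \boxtimes \mu(F') = \mu(E' \boxtimes F')$ in $\DCoh(X \times Y, W+V)$; here I use that $E' \boxtimes F'$ is killed by $W + V$ because it is killed separately by $W$ and by $V$, and that the identification $\mu(E') \boxtimes \mu(F') = \mu(E' \boxtimes F')$ on $\Coh(X \times Y, W+V)$ is immediate by direct inspection of both graded components. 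Thus $\coker(P' \boxtimes Q') \cong E' \boxtimes F'$, viewed via the closed immersion $X_0 \times Y_0 \hookrightarrow (X \times Y)_0$.

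Finally, the tensor product $L^\bullet \boxtimes M^\bullet$ is a finite double complex of locally free coherent sheaves on $X_0 \times Y_0$ resolving $E \boxtimes F$ with final term $E' \boxtimes F'$. Since locally free sheaves vanish in $D_\Sg(X_0 \times Y_0)$, the resolution gives $[l+m](E' \boxtimes F') \cong E \boxtimes F$ there, and pushforward along the closed immersion $X_0 \times Y_0 \hookrightarrow (X \times Y)_0$ transports the isomorphism to $D_\Sg((X \times Y)_0)$, completing the proof. The hard part will be the Cohen-Macaulay Künneth identification $P' \boxtimes Q' \sira \mu(E' \boxtimes F')$, which requires tracking the matrix-factorization differentials in the external tensor product and verifying that the tensored contractible pieces $K' \boxtimes Q' + P' \boxtimes K''$ are still contractible (or at least acyclic) as curved complexes; once this is in hand, the rest is formal.
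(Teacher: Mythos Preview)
Your detour through the Cohen--Macaulay case is unnecessary and introduces a genuine error in the final step. The paper's proof is two lines: one checks directly that the morphism $P \boxtimes Q \to \mu(E) \boxtimes \mu(F)=\mu(E\boxtimes F)$ has cone in $\Acycl[\Coh(X\times Y,W*V)]$ by factoring it as $P\boxtimes Q \to \mu(E)\boxtimes Q \to \mu(E)\boxtimes\mu(F)$ (each step has acyclic cone because $\boxtimes$ with anything is exact over a field), and then one applies Lemma~\ref{l:cokern-of-MF-reso-of-mu-CohX0} on $X\times Y$. You actually carry out exactly this argument in your step~3, but only for the Cohen--Macaulay substitutes $P',Q',E',F'$; and your passage from ``$P'\boxtimes Q'\sira\mu(E'\boxtimes F')$ in $\DCoh$'' to ``$\coker(P'\boxtimes Q')\cong E'\boxtimes F'$'' \emph{is} Lemma~\ref{l:cokern-of-MF-reso-of-mu-CohX0}. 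So you could have applied the same reasoning to $P,Q,E,F$ themselves and been done.

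The error is in step~4. The total complex of $L^\bullet\boxtimes M^\bullet$ (the locally free parts) does \emph{not} resolve $E\boxtimes F$ with final term $E'\boxtimes F'$. By K\"unneth (locally this is $\otimes_k$), its cohomology sits in four degrees: $H^0=E\boxtimes F$, $H^{-l+1}=E'\boxtimes F$, $H^{-m+1}=E\boxtimes F'$, and $H^{-l-m+2}=E'\boxtimes F'$. The cross terms $E'\boxtimes F$ and $E\boxtimes F'$ are not locally free and have no reason to be perfect after pushforward to $(X\times Y)_0$, so you cannot simply discard them in $D_\Sg$. One can repair this by running the reduction in two stages (first replace $E$ by $E'$ keeping $F$ fixed, then replace $F$ by $F'$), but at each stage the middle terms $L^i\boxtimes F$ (resp.\ $E'\boxtimes M^j$) are again not perfect on $(X\times Y)_0$, so this route does not close without further work. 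The clean fix is to drop the Cohen--Macaulay reduction altogether and apply Lemma~\ref{l:cokern-of-MF-reso-of-mu-CohX0} directly to $P\boxtimes Q\to\mu(E\boxtimes F)$, as the paper does.
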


\begin{proof}
  The morphism $P \boxtimes Q \ra \mu(E)
  \boxtimes \mu(F)= \mu(E \boxtimes F)$ has cone in
  $\Acycl[\Coh(X \times Y, W*V)$: it factors as $P
  \boxtimes Q \ra \mu(E) \boxtimes Q \ra \mu(E) \boxtimes
  \mu(F),$ both morphisms have cone in $\Acycl[\Coh(X \times Y,
  W*V),$ and we can use the octahedral axiom.
  Since $W*V$ is flat we can apply 
  Lemma~\ref{l:cokern-of-MF-reso-of-mu-CohX0}.
\end{proof}



We need that certain categories have classical generators.

\begin{theorem}
  \label{t:generators-of-DbCoh}
  For $Z,$ $Z_1,$ $Z_2$ separated schemes of finite type, we
  have:
  \begin{enumerate}
  \item 
    \label{enum:generator-existence}
    The category
    $D^b(\Coh(Z))$ has a classical generator.
  \item 
    \label{enum:generators-product}
    If
    $T_1$ and $T_2$ are classical generators of $D^b(\Coh(Z_1))$
    and $D^b(\Coh(Z_2)),$ respectively,
    then $T_1\boxtimes T_2$ is a
    classical generator of $D^b(\Coh(Z_1 \times Z_2)).$
  \end{enumerate}
\end{theorem}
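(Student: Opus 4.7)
For part (a), the plan is to invoke a known existence result: by a theorem of Rouquier (``Dimensions of triangulated categories'') --- or alternatively of Bondal--Van den Bergh --- every separated scheme of finite type over a field admits a strong, hence in particular a classical, generator in its bounded derived category of coherent sheaves. If one wants an in-situ argument, proceed by Noetherian induction on closed subsets: write $Z = U \sqcup Y$ with $U$ affine open and $Y$ its reduced closed complement; by induction $D^b(\Coh(Y))$ has a classical generator $T_Y$, while $D^b(\Coh(U))$ has an obvious classical generator such as $\mathcal{O}_U$. Using the localization triangle
\[
D^b_Y(\Coh(Z)) \lra D^b(\Coh(Z)) \lra D^b(\Coh(U)),
\]
together with the fact that $D^b_Y(\Coh(Z))$ is classically generated by $i_*T_Y$ (where $i\colon Y \hookrightarrow Z$), one assembles a classical generator of $D^b(\Coh(Z))$ from $i_*T_Y$ and a coherent extension to $Z$ of the chosen generator of $D^b(\Coh(U))$.

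For part (b), let $\mathcal{C} \subset D^b(\Coh(Z_1 \times Z_2))$ be the thick closure of $T_1 \boxtimes T_2$. I would first carry out the standard two-variable thick-subcategory argument: the full subcategory
\[
\mathcal{D}_1 := \{E \in D^b(\Coh(Z_1)) : E \boxtimes T_2 \in \mathcal{C}\}
\]
is thick in $D^b(\Coh(Z_1))$ (since $- \boxtimes T_2$ is a triangulated functor and $\mathcal{C}$ is thick) and contains $T_1$, so it equals $D^b(\Coh(Z_1))$. The symmetric argument with the second factor then yields $E \boxtimes F \in \mathcal{C}$ for all $E \in D^b(\Coh(Z_1))$ and $F \in D^b(\Coh(Z_2))$. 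Hence it suffices to prove that the external tensor products $\{E \boxtimes F\}$ classically generate $D^b(\Coh(Z_1 \times Z_2))$.

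The main obstacle is precisely this last assertion. The plan is to choose finite affine open covers of $Z_1$ and $Z_2$ and, via iterated Mayer--Vietoris / \v{C}ech triangles (using that pushforward along a quasi-compact open immersion is well behaved on bounded derived categories), reduce to the case where $Z_1 = \Spec A_1$ and $Z_2 = \Spec A_2$ are affine. In the affine case, a coherent sheaf $M$ on $Z_1 \times Z_2$ is a finitely generated $A_1 \otimes_k A_2$-module, and one proceeds by Noetherian induction on $\Supp(M)$: by generic flatness, after shrinking to a dense open subscheme of the support, $M$ can be approximated by an object of the form $E \boxtimes F$ (with $E, F$ coherent on $Z_1$ and $Z_2$ respectively), with error concentrated on a strictly smaller closed subscheme, to which the inductive hypothesis applies. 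Variants of this argument appear in Bondal--Van den Bergh (``Generators and representability of functors in commutative and noncommutative geometry'') and in Rouquier, and they yield the required classical generation, completing the proof.
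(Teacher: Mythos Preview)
Your overall strategy matches the paper's. For (a) the paper just cites Rouquier \cite[Thm.~7.38]{rouquier-dimensions} and \cite[Thm.~6.3]{lunts-categorical-resolution}, as you do. Your optional in-situ sketch, however, has a gap: for a singular affine open $U$, the structure sheaf $\mathcal{O}_U$ is a classical generator of $\Perf(U)$ but \emph{not} of $D^b(\Coh(U))$, so the Noetherian induction does not close as written; the actual inductive arguments in the cited references are organized differently (e.g.\ inducting via the singular locus).

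For (b) the paper's route is the same as yours but packaged more economically. It observes that the \emph{proof} of \cite[Thm.~6.3]{lunts-categorical-resolution} already furnishes particular classical generators $S_1,S_2$ such that $S_1\boxtimes S_2$ classically generates $D^b(\Coh(Z_1\times Z_2))$; it then runs exactly your two-variable thick-subcategory transfer to pass from $(S_1,S_2)$ to arbitrary $(T_1,T_2)$. Your ``main obstacle'' --- that external tensor products classically generate $D^b(\Coh(Z_1\times Z_2))$ --- is precisely what that citation delivers, so rather than reprove it via Mayer--Vietoris and a (somewhat underspecified) generic-flatness d\'evissage in the affine case, the paper simply invokes it. In short: same transfer step, but the hard geometric input is outsourced to a reference rather than resketched.
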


\begin{proof}
  See \cite[Thm.~7.38]{rouquier-dimensions} or
  \cite[Thm.~6.3]{lunts-categorical-resolution} for the first
  statement.  The proof of
  \cite[Thm.~6.3]{lunts-categorical-resolution} shows that there
  are classical generators $S_1$ and $S_2$ of $D^b(\Coh(Z_1))$
  and $D^b(\Coh(Z_2)),$ respectively, such that $S_1 \boxtimes
  S_2$ is a classical generator of $D^b(\Coh(Z_1 \times Z_2)).$
  From $S_1 \in \thick(T_1)$ we obtain $S_1 \boxtimes S_2 \in
  \thick(T_1 \boxtimes S_2),$ so $T_1 \boxtimes S_2$ is a
  classical generator of $D^b(\Coh(Z_1 \times Z_2)).$
  Similarly, we see that $T_1\boxtimes T_2$ is a
  classical generator of $D^b(\Coh(Z_1 \times Z_2)).$
\end{proof}

If $Z$ is a locally Noetherian scheme (over our $k$) its regular
locus is open (\cite[Rem.~6.25(4)]{goertz-wedhorn-AGI}).
We equip its closed complement $Z^\sing \subset Z$ of
singular points with the unique structure of a reduced closed
subscheme of $Z.$ 

\begin{proposition}
  \label{p:classical-gen-from-class-gen-of-cohZsing}
  Let $Z$ be a scheme 
  satisfying condition
  (ELF) in 
  \cite{orlov-formal-completions-and-idempotent-completions},
  and let $i  \colon Z^\sing
  \hra Z$ be the inclusion of the singular locus.
  Let $T \in D^b(\Coh(Z^\sing))$ be a classical generator. 
  Then
  the image of $i_*(T)$ 
  in $D_\Sg(Z)$ is a classical generator of $D_\Sg(Z).$
\end{proposition}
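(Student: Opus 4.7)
The strategy is to proceed in two steps: first, show that the image of the functor $i_* \colon D^b(\Coh(Z^\sing)) \to D_\Sg(Z)$ classically generates $D_\Sg(Z)$; then deduce the statement about $T$ formally.

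Step 1: Prove that for every $E \in D^b(\Coh(Z))$, its image in $D_\Sg(Z)$ lies in $\thick(i_*(D^b(\Coh(Z^\sing))))$. This is the technical heart and splits into two substeps.

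\emph{(1a) Reduction to sheaves set-theoretically supported on $Z^\sing$:} Set $U = Z \setminus Z^\sing,$ a regular open subscheme of finite Krull dimension. Since $U$ is regular of dimension $d$, every coherent sheaf on $U$ has projective dimension $\leq d$. Under the condition (ELF), one can construct a locally free resolution of $E$ on $Z$ and then, using enough locally frees on $Z,$ produce a perfect complex $P^\bullet$ together with a morphism $P^\bullet \to E$ in $D^b(\Coh(Z))$ whose cone $C$ has cohomology sheaves set-theoretically supported in $Z^\sing$. This is essentially contained in \cite{orlov-formal-completions-and-idempotent-completions}; the key point is that (ELF) provides enough locally frees to globalize the local resolution which is forced to stop after $d$ steps on $U$. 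Since $P^\bullet$ is perfect, we have $E \cong C[-1]$ in $D_\Sg(Z).$

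\emph{(1b) Sheaves supported on $Z^\sing$ come from $i_*$:} Let $F$ be a bounded complex of coherent sheaves on $Z$ with set-theoretic support in $Z^\sing$. By Noetherianness, each cohomology sheaf is annihilated by some power of the ideal $\mathcal{I}$ of $Z^\sing$, and since $F$ is bounded we may choose a common $n$ with $\mathcal{I}^n H^i(F) = 0$ for all $i$. Hence $F$ is quasi-isomorphic to a bounded complex of $\mathcal{O}_{Z^\sing_n}$-modules, where $Z^\sing_n$ is the $n$-th infinitesimal neighborhood. Filtering such a complex by powers of $\mathcal{I}/\mathcal{I}^n$ exhibits it as an iterated extension of bounded complexes of $\mathcal{O}_{Z^\sing}$-modules, hence an object of $\thick(i_*(D^b(\Coh(Z^\sing))))$ inside $D^b(\Coh(Z))$, a fortiori inside $D_\Sg(Z).$

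Combining (1a) and (1b) establishes Step 1.

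Step 2: The functor $i_* \colon D^b(\Coh(Z^\sing)) \to D_\Sg(Z)$ is triangulated, so from $D^b(\Coh(Z^\sing)) = \thick(T)$ we obtain $i_*(D^b(\Coh(Z^\sing))) \subset \thick(i_*(T))$ inside $D_\Sg(Z)$. Taking thick envelopes and applying Step 1 gives
\begin{equation*}
  D_\Sg(Z) = \thick(i_*(D^b(\Coh(Z^\sing)))) \subset \thick(i_*(T)) \subset D_\Sg(Z),
\end{equation*}
so $i_*(T)$ is a classical generator of $D_\Sg(Z)$.

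The main obstacle is Step 1(a), the reduction to objects supported on $Z^\sing$, which depends on condition (ELF) and may be quoted from \cite{orlov-formal-completions-and-idempotent-completions} rather than reproved here. Step 1(b) and Step 2 are then formal.
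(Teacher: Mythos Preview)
Your approach is essentially the same as the paper's. The paper cites \cite[Lemma~6.9]{lunts-categorical-resolution} for the fact that $i_*(T)$ classically generates $D^b_{Z^\sing}(\Coh(Z))$ (your Step~1(b) combined with Step~2), and then cites \cite[Lemma~2.6, Prop.~2.7]{orlov-formal-completions-and-idempotent-completions} for the fact that $D^b_{Z^\sing}(\Coh(Z))/\mfPerf_{Z^\sing}(Z) \to D_\Sg(Z)$ is full, faithful, and dense (your Step~1(a)). You have unpacked these citations rather than invoked them, but the logical structure is identical.

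One point to tighten in Step~1(a): the specific claim that for every $E$ there is a perfect $P$ with $P \to E$ whose cone is supported on $Z^\sing$ is stronger than what Orlov actually proves. His Proposition~2.7 only shows that every object of $D_\Sg(Z)$ is a \emph{direct summand} of an object coming from $D^b_{Z^\sing}(\Coh(Z))$; the obstruction to getting an honest isomorphism is of Thomason--Trobaugh type (extension of perfect complexes from the open $U$). Your sketch ``the resolution is forced to stop after $d$ steps on $U$'' produces a perfect truncation $Q$ with $Q \to E$, but the cone is the $d$-th syzygy, which is locally free on $U$ rather than supported on $Z^\sing$. This does not damage your argument, since you only need $E \in \thick(i_*(D^b(\Coh(Z^\sing))))$ and thick subcategories are closed under summands; just state~(1a) as density rather than essential surjectivity.
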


\begin{proof}
  We use the notation of
  \cite{orlov-formal-completions-and-idempotent-completions}.
  The object $i_*(T)$ is a classical generator of
  $D_{Z^\sing}^b(\Coh(Z))$ (by.\
  \cite[Lemma~6.9]{lunts-categorical-resolution})
  and
  the obvious functor
  \begin{equation*}
    D^b_{Z^\sing}(\Coh(Z))/\mfPerf_{Z^\sing}(Z) \ra D_\Sg(Z)
  \end{equation*}
  is full and faithful, and dense in the sense that any object of
  $D_\Sg(Z)$ is a direct summand of an object of
  $D^b_{Z^\sing}(\Coh(Z))/\mfPerf_{Z^\sing}(Z)$ (\cite[Lemma~2.6
  and
  Prop.~2.7]{orlov-formal-completions-and-idempotent-completions}).
  These statements obviously imply that 
  $i_*(T)$ becomes a classical generator of $D_\Sg(Z).$
\end{proof}

We come back to our setting with $W \colon  X \ra \DA^1$ and $V \colon  Y \ra
\DA^1.$ Recall that $\Sing(W) \subset X$ is the closed subscheme
defined by the vanishing of $dW.$ If $Z$ is a scheme we denote by
$|Z|$ the corresponding reduced closed subscheme.

\begin{remark}
  \label{rem:SingW-cap-Xa-versus-Xa-sing}
  Assume that $X$ is connected, and  
  let $a \in k.$
  If $W=a$ then $\Sing(W) \cap X_a=X$ and
  $(X_a)^\sing=\emptyset.$ 
  Otherwise
  the singular
  points of $X_a$ are precisely the elements of the
  scheme-theoretic intersection $\Sing(W) \cap X_a,$ i.\,e.\ we
  have the equality
  \begin{equation}
    \label{eq:reduced-SingW-cap-Xa-equals-reduced-Xa-sing}
    |\Sing(W) \cap X_a| = (X_a)^\sing
  \end{equation}
  of varieties. This is
  trivial if $W$ is constant $\not=a,$ and otherwise it follows
  from the Jacobian criterion applied to $W-a$ (see e.\,g.\ the
  proof of \cite[Thm.~III.\S4.4]{redbook}).
\end{remark}

We obviously have
\begin{equation}
  \label{eq:Sing-W*V}
  \Sing(W*V) =   \Sing(W)\times \Sing(V).
\end{equation}
This implies that
$\Crit(W*V)=\Crit(W)+\Crit(V):=\{a+b\mid a \in
\Crit(W), b \in \Crit(V)\}.$ 

\begin{lemma}
  \label{l:Sing-convolution-is-product-of-Sings}
  Let $c \in k.$ 
  Then
  \begin{equation*}
    |\Sing(W*V) \cap (X \times Y)_c| = \coprod_{a \in \Crit(W),\; b
      \in \Crit(V),\; a+b=c} 
    |\Sing(W) \cap X_a| \times |\Sing(V) \cap Y_b|
  \end{equation*}
  as subvarieties of $X \times Y.$ If $c \not\in \Crit(W*V)$ then
  $|\Sing(W*V) \cap (X \times Y)_c| = \emptyset.$
\end{lemma}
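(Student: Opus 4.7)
My plan is to derive the statement directly from the product formula $\Sing(W*V) = \Sing(W) \times \Sing(V)$ of \eqref{eq:Sing-W*V}, after decomposing $|\Sing(W)|$ and $|\Sing(V)|$ by critical values.

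First I would pass to reduced subvarieties. Since $k$ is algebraically closed (and so in particular perfect), the product of two reduced $k$-varieties of finite type is reduced, hence $|\Sing(W) \times \Sing(V)| = |\Sing(W)| \times |\Sing(V)|$. Combined with \eqref{eq:Sing-W*V} this gives $|\Sing(W*V)| = |\Sing(W)| \times |\Sing(V)|$ as reduced subvarieties of $X \times Y$.

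Next I would stratify $|\Sing(W)|$ by the value of $W$. Every closed point of $|\Sing(W)|$ is a critical point of $W$, so its image under $W$ lies in the finite set $\Crit(W)$. The subvarieties $|\Sing(W)| \cap X_a$, for $a \in \Crit(W)$, are pairwise disjoint (different fibers) and both closed and open in $|\Sing(W)|$ (their complement is a finite union of the other pieces). Thus
\[
|\Sing(W)| = \coprod_{a \in \Crit(W)} |\Sing(W) \cap X_a|
\]
as subvarieties of $X$, where I use that $|\Sing(W)| \cap X_a = |\Sing(W) \cap X_a|$ on underlying sets and give the right-hand side its reduced structure. The analogous decomposition holds for $|\Sing(V)|$.

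Taking the product of these two decompositions yields
\[
|\Sing(W*V)| = \coprod_{(a,b) \in \Crit(W) \times \Crit(V)} |\Sing(W) \cap X_a| \times |\Sing(V) \cap Y_b|,
\]
and each summand lies entirely in the fiber $(X \times Y)_{a+b}$ since $(W*V)(x,y) = W(x)+V(y)$. Intersecting with $(X \times Y)_c$ therefore selects precisely the pairs $(a,b)$ with $a+b=c$, proving the first assertion. If $c \notin \Crit(W*V) = \Crit(W)+\Crit(V)$ no such pairs exist, so the intersection is empty, which gives the second assertion.

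The only point requiring care — and the closest thing to an obstacle — is the bookkeeping with reduced structures: one needs both that products of reduced varieties over $k$ remain reduced (used to identify $|\Sing(W*V)|$ with $|\Sing(W)| \times |\Sing(V)|$) and that cutting by the fiber $(X \times Y)_c$ on the union of the relevant summands does not change the underlying reduced scheme, which holds because these summands are already contained set-theoretically in $(X\times Y)_c$ and we pass to the reduced structure on the outside.
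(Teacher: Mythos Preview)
Your proof is correct and follows essentially the same route as the paper: decompose $|\Sing(W)|$ and $|\Sing(V)|$ according to critical values, feed both decompositions into the product identity \eqref{eq:Sing-W*V}, and then read off the $c$-fiber. The paper is terser and leaves the reduced-structure bookkeeping implicit, while you spell it out, but the argument is the same.
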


\begin{proof}
  %
  %
  The set $\Crit(W) \subset k$ of critical values of $W$ is
  finite, by generic smoothness on the target. Hence $|\Sing(W)|
  = \coprod_{a \in \Crit(W)} |\Sing(W) \cap X_a|,$ and similarly
  for $V$ and $W*V.$ Hence we can rewrite both sides of \eqref{eq:Sing-W*V} and
  obtain
  \begin{equation*}
    \coprod_{c \in \Crit(W*V)}|\Sing(W*V) \cap (X \times Y)_c| 
    =
    \coprod_{a \in \Crit(W),\; b  \in \Crit(V)} 
    |\Sing(W) \cap X_a| \times |\Sing(V) \cap Y_b|\\
  \end{equation*}
  These statements imply the lemma.
\end{proof}


The functor
$\boxtimes \colon  \bfMF(X,W-a) \times \bfMF(Y,V-b) \ra \bfMF(X \times Y, W*V-a-b)$
gives rise to the functor
$\boxtimes \colon  \bfMF(W) \times \bfMF(V) \ra \bfMF(W*V)$ defined by
\begin{equation}
  \label{eq:boxtimes-on-DSgW}
  E \boxtimes F :=
  \Big(\bigoplus_{a+b=c} E(a) \boxtimes F(b)\Big)_{c \in k}
\end{equation}
for $E=(E(a))_{a \in k}$ and
$F=(F(b))_{b \in k}.$ Note that only finitely many of the objects
$E(a)\boxtimes F(b)$ are non-zero.

Obviously, an object $E=(E(a))_{a \in k} \in \bfMF(W)$ is a classical
generator if and only if $E(a) \in \bfMF(X, W-a)$ is
a classical
generator for the finitely many critical values $a$ of $W.$

\begin{proposition}
  \label{p:boxtimes-of-classical-generators}
  Let $E \in \bfMF(W)$ and
  $F \in \bfMF(V)$ be classical generators. Then $E \boxtimes F$
  is a classical generator of $\bfMF(W*V).$
\end{proposition}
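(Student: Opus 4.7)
The plan is to test classical generation component by component. By the definition of $\boxtimes$ in~\eqref{eq:boxtimes-on-DSgW} together with the observation preceding the statement, it suffices to prove, for each $c \in \Crit(W*V)$, that
\begin{equation*}
  \bigoplus_{\substack{a+b=c \\ a \in \Crit(W),\, b \in \Crit(V)}} E(a) \boxtimes F(b)
\end{equation*}
classically generates $\bfMF(X \times Y, W*V-c)$. Decomposing along connected components of $X$ and $Y$, I may also assume both $X$ and $Y$ connected. I deal first with the main case in which neither $W$ nor $V$ is constant, so that $W$, $V$ and $W*V$ are all flat; under Orlov's equivalence $\coker \colon \bfMF(X \times Y, W*V-c) \sira D_\Sg((X \times Y)_c)$ of \cite[Thm.~\ref{semi:t:factorizations=singularity}]{valery-olaf-matfak-semi-orth-decomp}, the task is reduced to showing that $\bigoplus_{a+b=c}\coker(E(a)\boxtimes F(b))$ classically generates $D_\Sg((X \times Y)_c)$.

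The geometric input is Lemma~\ref{l:Sing-convolution-is-product-of-Sings} together with~\eqref{eq:reduced-SingW-cap-Xa-equals-reduced-Xa-sing}, which gives $((X \times Y)_c)^\sing = \coprod_{a+b=c}(X_a)^\sing \times (Y_b)^\sing$. Choose classical generators $T_a$ of $D^b(\Coh((X_a)^\sing))$ and $S_b$ of $D^b(\Coh((Y_b)^\sing))$; by Theorem~\ref{t:generators-of-DbCoh}.\ref{enum:generators-product}, $\bigoplus_{a+b=c} T_a \boxtimes S_b$ is a classical generator of $D^b(\Coh(((X \times Y)_c)^\sing))$, and pushing forward along the closed embedding $j_{a,b}\colon (X_a)^\sing \times (Y_b)^\sing \hookrightarrow (X \times Y)_c$ and invoking Proposition~\ref{p:classical-gen-from-class-gen-of-cohZsing}, the object $\bigoplus_{a+b=c}j_{a,b,*}(T_a \boxtimes S_b)$ is a classical generator of $D_\Sg((X \times Y)_c)$.

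To match the two systems of classical generators, I first work with specific MF representatives. For each $a \in \Crit(W)$, choose $\tilde E(a) \in \MF(X, W-a)$ to be an MF-resolution of $\mu((i_a)_* T_a)$, where $i_a\colon (X_a)^\sing \hookrightarrow X_a$; by Lemma~\ref{l:cokern-of-MF-reso-of-mu-CohX0}, $\coker(\tilde E(a)) \cong (i_a)_* T_a$ in $D_\Sg(X_a)$, which is a classical generator by Proposition~\ref{p:classical-gen-from-class-gen-of-cohZsing}, so $\tilde E(a)$ is a classical generator of $\bfMF(X, W-a)$. Define $\tilde F(b)$ analogously and apply Corollary~\ref{c:cokern-and-boxtimes}:
\begin{equation*}
  \coker\bigl(\tilde E(a) \boxtimes \tilde F(b)\bigr) \cong (i_a)_*T_a \boxtimes (i_b)_*S_b \cong j_{a,b,*}(T_a \boxtimes S_b)
\end{equation*}
in $D_\Sg((X\times Y)_c)$, so $\bigoplus_{a+b=c}\tilde E(a)\boxtimes \tilde F(b)$ classically generates $\bfMF(X \times Y, W*V-c)$. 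Finally, for arbitrary classical generators $E(a), F(b)$ the equalities $\thick(E(a)) = \thick(\tilde E(a)) = \bfMF(X, W-a)$ and $\thick(F(b)) = \thick(\tilde F(b)) = \bfMF(Y, V-b)$ hold, and since $-\boxtimes -$ is bi-exact on triangulated categories a short diagram chase yields $\thick(E(a)\boxtimes F(b)) = \thick(\tilde E(a)\boxtimes \tilde F(b))$, which closes the flat case.

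The main obstacle is the remaining degenerate cases in which $W$ or $V$ is constant on a connected component, for then Orlov's equivalence and Corollary~\ref{c:cokern-and-boxtimes} are unavailable on that factor. If for instance $W$ equals $\alpha$ on $X$ while $V$ is flat on $Y$, then $W*V$ is still flat, $(X \times Y)_c = X \times Y_b$ for $b=c-\alpha$ has singular locus $X \times (Y_b)^\sing$ because $X$ is smooth, and the same strategy runs through after replacing $T_a$ by a classical generator of $D^b(\Coh(X))$ (which exists by Theorem~\ref{t:generators-of-DbCoh}.\ref{enum:generator-existence}) and using the straightforward variant of Corollary~\ref{c:cokern-and-boxtimes} in which only the MF side carries flatness; the totally constant case reduces to the classical fact that external products of classical generators of $\DPerf$ of smooth varieties remain classical generators, after the identification of $\bfMF(-,0)$ with the $\DZ_2$-folded perfect derived category.
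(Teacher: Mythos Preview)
Your argument is correct and follows essentially the same route as the paper's: reduce to connected $X,Y$, split into the flat/flat, flat/constant, and constant/constant cases, and in each case produce explicit generators via $\mu$ of pushforwards from the singular loci, then transfer along Orlov's equivalence using Lemma~\ref{l:cokern-of-MF-reso-of-mu-CohX0}, Corollary~\ref{c:cokern-and-boxtimes}, Theorem~\ref{t:generators-of-DbCoh} and Proposition~\ref{p:classical-gen-from-class-gen-of-cohZsing}. The only cosmetic difference is that the paper places the thick-closure reduction (your final ``diagram chase'' $\thick(E(a)\boxtimes F(b))=\thick(\tilde E(a)\boxtimes\tilde F(b))$) at the very beginning, citing the argument at the end of the proof of Theorem~\ref{t:generators-of-DbCoh}, whereas you postpone it to the end; and in the degenerate cases the paper, like you, is terse, invoking \cite[Rem.~\ref{semi:rem:classical-generator-in-mf-zero}]{valery-olaf-matfak-semi-orth-decomp} for the constant factor rather than your phrase ``$\DZ_2$-folded perfect derived category''.
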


\begin{proof}
  Observe first that it is enough to prove the result for
  suitably chosen 
  classical generators $E$ and $F,$ see the end of the proof of
  Theorem~\ref{t:generators-of-DbCoh}.

  It is certainly enough to prove the proposition under the
  additional assumption that both $X$ and $Y$ are connected
  (cf.\ \cite[Rem.~\ref{semi:rem:X-disconnected}]{valery-olaf-matfak-semi-orth-decomp}). 
  Then $W$ is either constant or flat, and similarly for $V.$  

  \textbf{Case 1:} Both $W$ and $V$ are flat.

  \textbf{Step~1:}
  Fix a critical value $a \in \Crit(W)$ of $W.$
  Let $S_a \in D^b(\Coh((X_{a})^\sing))$ be
  a classical
  generator
  (which exists by Theorem~\ref{t:generators-of-DbCoh}.\ref{enum:generator-existence}).
  By replacing $S_a$ by the direct sum of its cohomologies we can and
  will assume that $S_a \in \Coh((X_{a})^\sing).$
  Let $s_a \colon  (X_{a})^\sing\hra X_{a}$ be the closed embedding.
  By 
  \cite[Thm.~\ref{semi:l:resolutions}.\ref{semi:enum:MF-reso}]{valery-olaf-matfak-semi-orth-decomp}
  there is an object $E(a) \in \MF(X,W-a)$ together with a
  morphism 
  $E(a) \ra \mu(s_{a*}(S_a))$ 
  in $Z_0(\Coh(X,W-a))$ whose cone is in
  $\Acycl[\Coh(X,W-a)].$ 
  By Lemma~\ref{l:cokern-of-MF-reso-of-mu-CohX0}
  we have $\coker(E(a))\cong s_{a*}(S_a)$ in $D_\Sg(X_a).$
  Proposition~\ref{p:classical-gen-from-class-gen-of-cohZsing}
  then shows that
  $E(a)$ is a classical generator of $\bfMF(X, W-a).$
  Letting $a$ vary we see that $E:=(E(a))_{a \in \Crit(W)}$ is a
  classical generator of $\bfMF(W).$

  \textbf{Step~2:}
  Similarly we find for each $b \in \Crit(V)$ an object $T_b \in
  \Coh((Y_{b})^\sing)$ 
  that is a classical generator of $D^b(\Coh((Y_{b})^\sing))$ and
  then $F(b) \in \MF(Y,V-b)$ 
  together with a
  morphism 
  $F(b) \ra \mu(t_{b*}(T_b))$ 
  in $Z_0(\Coh(Y,V-b))$ whose cone is in
  $\Acycl[\Coh(Y,V-b)]$ 
  such that
  $\coker(F(b)) \cong t_{b*}(T_b)$ in $D_\Sg(Y_b)$ where 
  $t_b \colon (Y_{b})^\sing \hra Y_{b}.$ 
  Then $F:=(F(b))_{b \in \Crit(V)}$ is the
  classical generator of $\bfMF(V)$ we will consider.

  \textbf{Step~3:}
  Fix $c \in \Crit(W*V)$ a critical value of $W*V.$
  Theorem~\ref{t:generators-of-DbCoh}.\ref{enum:generators-product},
  Lemma~\ref{l:Sing-convolution-is-product-of-Sings}
  and
  equation~\eqref{eq:reduced-SingW-cap-Xa-equals-reduced-Xa-sing}
  in Remark~\ref{rem:SingW-cap-Xa-versus-Xa-sing}, and
  Proposition~\ref{p:classical-gen-from-class-gen-of-cohZsing}
  imply that the image of 
  \begin{equation*}
    \bigoplus_{a \in \Crit(W),\; b \in \Crit(V),\; a+b=c} 
    s_{a*}(S_a) \boxtimes t_{b*}(T_b)  
  \end{equation*}
  in $D_\Sg((X \times Y)_c)$ is a classical generator
  of $D_\Sg((X \times Y)_c).$ 
  But Corollary~\ref{c:cokern-and-boxtimes}
  shows that this object is isomorphic to $\coker((E \boxtimes
  F)(c))$ in $D_\Sg((X \times Y)_c).$ 

  Hence $E \boxtimes F$ is a classical generator of $\bfMF(W*V).$
  This proves the proposition if both
  $W$ and $V$ are flat.

  \textbf{Case 2:} Precisely one of $W,$ $V$ is flat.

  Without loss of generality assume that $W$ is
  flat and that $b_0 :=V \in k.$
  Then $\bfMF(V)=\bfMF(Y,0)$ by
  \cite[Lemma~\ref{semi:l:case-W=constant-nonzero}]{valery-olaf-matfak-semi-orth-decomp},
  and
  \cite[Rem.~\ref{semi:rem:classical-generator-in-mf-zero}]{valery-olaf-matfak-semi-orth-decomp}
  shows that there is a vector bundle $Q$ on $Y$ that is a
  classical generator of $D^b(\Coh(Y))$ such that
  $\mu(Q)=(\matfak{0}{}{Q}{})$ is a classical generator of
  $\bfMF(Y,0).$ 
  Now define $F \in \bfMF(V)$ by $F(b_0):=\mu(Q)$
  and $F(b)=0$ for all $b\not=b_0.$
  Define the classical generator $E=(E(a))$ of $\bfMF(W)$ as in
  Step 1. We have $\Sing(W*V)= \Sing(W) \times Y=\coprod_{a \in
    \Crit(W)} (X_a)^\sing \times Y$
  and $((X \times Y)_c)^\sing = (X_{c-b_0})^\sing \times Y.$
  Adjusting the above method it is easy to see that $E \boxtimes
  F$ is a classical generator of $\bfMF(W*V).$

  \textbf{Case 3:} Both $W$ and $V$ are constant.

  Then $\bfMF(W)=\bfMF(X,0),$ $\bfMF(V)=\bfMF(Y,0)$ and
  $\bfMF(W*V)=\bfMF(X \times Y, 0).$ Let $Q$ be a
  vector bundle on $Y$ as in the previous case, and let $P$ be a
  vector bundle on $X$ that generates $D^b(\Coh(X))$ classically
  and such that $\mu(P)$ is a
  classical generator of $\bfMF(X,0).$
  Theorem~\ref{t:generators-of-DbCoh}.\ref{enum:generators-product}
  (or \cite[Lemma~3.4.1, 3.1, 2.1]{bondal-vdbergh-generators}
  since $X$ and $Y$ are smooth) shows that $P \boxtimes Q$ is a
  classical generator of $D^b(\Coh(X \times Y)).$ Then
  $\mu(P \boxtimes Q)=\mu(P) \boxtimes \mu(Q)$ is 
  a classical generator of
  $\bfMF(X \times Y, 0),$ by
  \cite[Rem.~\ref{semi:rem:classical-generator-in-mf-zero}]{valery-olaf-matfak-semi-orth-decomp}. This shows
  what we need.
\end{proof}

\subsection{Thom-Sebastiani Theorem}
\label{sec:thom-sebastiani-theorem}

Note that the definition of
$\Perfotimes$ in
\eqref{eq:def-triang-tensor-product-of-dg-cats}
simplifies 
since we work over the field $k.$
We can and will assume that 
$\mathcal{A} \otimes^\bL \mathcal{B}= \mathcal{A} \otimes
\mathcal{B}$ and 
$\mathcal{A} \Perfotimes \mathcal{B} = \Perf(\mathcal{A} \otimes
\mathcal{B}).$

\begin{theorem}
  [Thom-Sebastiani Theorem]
  \label{t:thom-sebastiani}
  Let $X$ and $Y$ be smooth varieties
  with morphisms $W \colon  X\ra \DA^1$ and $V \colon  Y \ra \DA^1.$ 
  Then the two dg categories
  \begin{equation*}
    \bfMF(W)^{\dg,\natural} \Perfotimes \bfMF(V)^{\dg,\natural}
    \quad
    \text{and}
    \quad
    \bfMF(W*V)^{\dg,\natural}  
  \end{equation*}
  are quasi-equivalent.
  An equivalent statement is that the two dg categories
  \begin{equation*}
    \bfMF(W)^{\dg} \otimes \bfMF(V)^{\dg}
    \quad
    \text{and}
    \quad
    \bfMF(W*V)^{\dg}  
  \end{equation*}
  are Morita equivalent, i.\,e.\ isomorphic in 
  $\Hmo_\groundring.$ 
\end{theorem}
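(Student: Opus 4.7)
The plan is to identify both dg categories with $\Perf$ of an endomorphism dg algebra of a classical generator, following Remark~\ref{rem:Perf-W-Cech-concretely} and Proposition~\ref{p:pretriang-classical-generator}. First I would fix enhancements such as $\bfMF(W)^\dg = \prod_{a \in \Crit(W)} \MF_\Cechobj(X, W-a)$ (and similarly for $V$ and $W*V$), choose classical generators $E = (E(a))$ of $\bfMF(W)$ and $F = (F(b))$ of $\bfMF(V)$ with dg lifts, and form their endomorphism dg algebras $A = \End(E) = \prod_a A(a)$ and $B = \End(F) = \prod_b B(b)$. By Proposition~\ref{p:boxtimes-of-classical-generators}, the object $G$ whose component at $c \in \Crit(W*V)$ is $G(c) = \bigoplus_{a+b=c} E(a) \boxtimes F(b)$ is a classical generator of $\bfMF(W*V)$; let $C = \End(G)$. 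Proposition~\ref{p:pretriang-classical-generator} gives $\bfMF(W)^{\dg,\natural} \simeq \Perf(A)$, $\bfMF(V)^{\dg,\natural} \simeq \Perf(B)$, and $\bfMF(W*V)^{\dg,\natural} \simeq \Perf(C)$; and Proposition~\ref{p:triangulated-tensor-product-of-cats-and-their-Perfs} gives $\bfMF(W)^{\dg,\natural} \Perfotimes \bfMF(V)^{\dg,\natural} \simeq \Perf(A \otimes B)$. Hence the theorem reduces to showing that the morphism $A \otimes B \to C$ of dg algebras induced by the dg bifunctor $\boxtimes$ is a quasi-isomorphism.

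Next I would analyze the matrix structure of both sides. Because the products over critical values are finite, $A = \bigoplus_a A(a)$ and $B = \bigoplus_b B(b)$ as dg algebras, so $A \otimes B = \bigoplus_{(a,b)} A(a) \otimes B(b)$. Meanwhile $C = \prod_c C(c)$ where each $C(c) = \End(G(c))$ has a block decomposition with diagonal entries $\End(E(a) \boxtimes F(b))$ and off-diagonal entries $\Hom(E(a) \boxtimes F(b), E(a') \boxtimes F(b'))$ for pairs $(a,b) \neq (a',b')$ with $a+b=a'+b'=c$. The diagonal maps $A(a) \otimes B(b) \to \End(E(a) \boxtimes F(b))$ are quasi-isomorphisms by the quasi-full-faithfulness of the dg functor $\boxtimes$ (Lemma~\ref{l:dg-functor-boxtimes-full-and-faithful-MF}). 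The main obstacle is therefore to show that the off-diagonal entries of each $C(c)$ are acyclic.

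To establish this acyclicity I would argue via disjoint supports in the singularity category. Given $(a,b) \neq (a',b')$ with $a+b=a'+b'=c$, assume without loss of generality $a \neq a'$. Using (the relevant extension of) Orlov's equivalence $\bfMF(X \times Y, W*V-c) \simeq D_\Sg((X \times Y)_c)$, the objects $E(a) \boxtimes F(b)$ and $E(a') \boxtimes F(b')$ correspond to objects supported on the disjoint closed subschemes $|\Sing(W) \cap X_a| \times |\Sing(V) \cap Y_b|$ and $|\Sing(W) \cap X_{a'}| \times |\Sing(V) \cap Y_{b'}|$ of the singular locus (disjoint because $X_a \cap X_{a'} = \emptyset$). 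Since by Lemma~\ref{l:Sing-convolution-is-product-of-Sings} the whole singular locus $|\Sing(W*V) \cap (X\times Y)_c|$ is the finite disjoint union of such pieces, $D_\Sg((X \times Y)_c)$ decomposes orthogonally along this partition, and all graded Hom's between objects with disjoint supports vanish. Degenerate cases in which $W$ or $V$ is constant are handled by analogous arguments using the direct description of $\bfMF(X,0)$. This yields the quasi-isomorphism $A \otimes B \to C$ and hence the theorem; the Morita-theoretic reformulation is immediate, since two dg categories are Morita equivalent precisely when their $\Perf$'s are quasi-equivalent, and $\Perf(\bfMF(W)^\dg \otimes \bfMF(V)^\dg) \simeq \bfMF(W)^{\dg,\natural} \Perfotimes \bfMF(V)^{\dg,\natural}$ by Proposition~\ref{p:triangulated-tensor-product-of-cats-and-their-Perfs}.
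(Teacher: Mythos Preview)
Your reduction to a quasi-isomorphism $A \otimes B \to C$ of endomorphism dg algebras, via Propositions~\ref{p:pretriang-classical-generator} and~\ref{p:triangulated-tensor-product-of-cats-and-their-Perfs} and Remark~\ref{rem:Perf-W-Cech-concretely}, is exactly the paper's strategy, as is handling the diagonal blocks via Lemma~\ref{l:dg-functor-boxtimes-full-and-faithful-MF}. The only divergence is in the off-diagonal acyclicity.

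Your support argument there is incomplete as written. First, the claim that $E(a)\boxtimes F(b)$ corresponds under Orlov's equivalence to an object supported on $(X_a)^\sing \times (Y_b)^\sing$ is not automatic for arbitrary classical generators; you need Corollary~\ref{c:cokern-and-boxtimes} (applied to the natural maps $E(a) \to \mu(\coker E(a))$, etc.) to see that $\coker(E(a) \boxtimes F(b)) \cong \coker E(a) \boxtimes \coker F(b)$ is supported on $X_a \times Y_b$, and this already requires $W$ and $V$ flat. Second, the asserted orthogonal decomposition of $D_\Sg((X\times Y)_c)$ is not free: you must invoke the full faithfulness of $D^b_A(\Coh Z)/\mfPerf_A(Z) \to D_\Sg(Z)$ for closed $A$ (Orlov, as cited in Proposition~\ref{p:classical-gen-from-class-gen-of-cohZsing}) with $A = (X_a \times Y_b) \cup (X_{a'} \times Y_{b'})$, together with the orthogonal splitting of $D^b_A$ over these two disjoint pieces, to conclude the Hom-vanishing in $D_\Sg$. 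Both gaps are fillable with results already cited in the paper, but neither step is stated.

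The paper avoids $D_\Sg$ entirely at this point. It observes that
\[
\sheafHom\big(E(a) \boxtimes F(b),\, E(a') \boxtimes F(b')\big) \;\cong\; \sheafHom\big(E(a),E(a')\big) \boxtimes \sheafHom\big(F(b),F(b')\big)
\]
in $\MF(X \times Y,0)$, and that the first factor lies in $\MF(X,\,a-a')$, hence is contractible for $a \neq a'$ by the lemma on nonzero constant potentials. This is shorter, works uniformly without any flatness hypothesis, and needs no support bookkeeping.
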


The assertion of this theorem is not new. A proof is contained
in the preprint
\cite{preygel-thom-sebastiani-and-duality-MF-arxiv}
using higher techniques of derived algebraic geometry.
A different proof is claimed in \cite{lin-pomerleano}. 

\begin{proof}
  The equivalence of the two statements follows from
  Proposition~\ref{p:perf-to-perf-perf}
  and Lemma~\ref{l:L-otimes-respects-Morita-equivalences}.

  Fix finite affine open coverings 
  of $X$ and 
  $Y$ and 
  consider
  the product covering 
  of $X \times Y.$
  We can assume that we have used the 
  object oriented \v{C}ech enhancements
  (see Proposition~\ref{p:cech-object-enhancement-MF})
  when defining $\bfMF(W)^\dg,$ i.\,e.
  \begin{align*}
    \bfMF(W)^\dg & = \MF(W)_\Cechobj :=
    \prod_{a \in \Crit(W)}\MF_\Cechobj(X, W-a),\\
    \bfMF(W)^{\dg,\natural} & = \MF(W)_\Cechobj^\natural :=
    \prod_{a \in \Crit(W)} \Perf(\MF_\Cechobj(X, W-a)).
  \end{align*}
  Similarly we consider and define the dg categories
  $\MF_\Cechobj(Y, V-b),$ 
  $\bfMF(V)^\dg  = \MF(V)_\Cechobj$ and
  $\bfMF(V)^{\dg,\natural}  = \MF(V)_\Cechobj^\natural.$
  On $X \times Y$ we consider the 
  object oriented
  \v{C}ech enhancement
  $\MF_\Cechobj(X \times Y, W*V)$ of $\bfMF(X \times Y,
  W*V)$ 
  (see
  Proposition~\ref{p:cech-object-enhancement-product-MF}) 
  and then define
  $\bfMF(W*V)^\dg  = \MF(W*V)_\Cechobj$ and
  $\bfMF(W*V)^{\dg,\natural}  = \MF(W*V)_\Cechobj^\natural$ accordingly.
  To ease the notation we abbreviate
  $\Hom_\Cechobj=\Hom_{\MF_\Cechobj(X, W-a)},$ 
  and similarly 
  for the other dg categories just mentioned.

  Let $E = (E(a))_{a \in k} \in \bfMF(W)$ be a classical
  generator.  Its canonical lift to the enhancement
  $\MF(W)_\Cechobj$ is the object $\mathcal{C}_\ord(E):=
  (\mathcal{C}_\ord(E(a)))_{a \in k}.$ As explained in
  Remark~\ref{rem:Perf-W-Cech-concretely} we obtain the dg algebra
  $A=\prod A(a)=\End_\Cechobj(\mathcal{C}_\ord(E))$ and a
  quasi-equivalence $\Perf(A) \ra \MF(W)_\Cechobj^\natural.$
  Similarly, starting from a classical generator $F \in
  \bfMF(V),$ we obtain a dg algbra $B=\prod
  B(b)=\End_\Cechobj(\mathcal{C}_\ord(F))$ and a
  quasi-equivalence $\Perf(B) \ra \MF(V)_\Cechobj^\natural.$
  Proposition~\ref{p:triangulated-tensor-product-of-cats-and-their-Perfs}
  and Lemma~\ref{l:Perfotimes-respects-qequivalences} 
  then
  provide quasi-equivalences
  \begin{equation*}
    A \Perfotimes B = \Perf(A \otimes B) \ra \Perf(A) \Perfotimes
    \Perf(B) 
    \ra 
    \MF(W)_\Cechobj^\natural \Perfotimes \MF(V)_\Cechobj^\natural.
  \end{equation*}
  
  On the other hand $E \boxtimes F$ is a classical
  generator of $\bfMF(W * V)$ by
  Proposition~\ref{p:boxtimes-of-classical-generators}.
  As its lift to the enhancement $\MF(W*V)_\Cechobj$ we use
  the object $\mathcal{C}_\ord(E) \boxtimes \mathcal{C}_\ord(F)$
  defined in the obvious manner, cf.\ \eqref{eq:boxtimes-on-DSgW}.
  Let 
  $M=\prod M(c)=\End_\Cechobj(\mathcal{C}_\ord(E) \boxtimes
  \mathcal{C}_\ord(F)).$    
  Remark~\ref{rem:Perf-W-Cech-concretely} again provides a
  quasi-equivalence
  \begin{equation*}
    \Perf(M) 
    \ra \MF(W*V)_\Cechobj^\natural.
  \end{equation*}

  By Lemma~\ref{l:dg-functors-restricts-to-perfects-embedding-and-qequiv}.\ref{enum:qequi-induces-qequi-between-perfs}
  it is hence sufficient to show that there is a quasi-isomorphism
  \begin{equation}
    \label{eq:otimes-Cech-to-Cech-boxtimes}
    A \otimes B \ra M
  \end{equation}
  of dg algebras.
  For $c \in \Crit(W*V)$ the dg algebra
  $M(c)$ is a matrix algebra in the sense that
  \begin{multline*}
    M(c)=\End_\Cechobj((\mathcal{C}_\ord(E) \boxtimes
    \mathcal{C}_\ord(F))(c))\\
    = 
    \bigoplus_{a+b=c,\; a'+b'=c}
    \Hom_\Cechobj(\mathcal{C}_\ord(E(a)) \boxtimes
    \mathcal{C}_\ord(F(b)), \mathcal{C}_\ord(E(a')) \boxtimes
    \mathcal{C}_\ord(F(b'))),  
  \end{multline*}
  where the (finite) direct sum is taken over all $a, a' \in
  \Crit(W)$ 
  and $b, b' \in 
  \Crit(V)$ satisfying the given condition.
  Note that $A \otimes B = \prod_{c \in \Crit(W*V)} (A \otimes
  B)(c)$
  where $(A \otimes B)(c)$ is defined by
  \begin{equation*}
    (A \otimes B)(c) := \prod_{a + b=c} A(a) \otimes B(b)
    =  \prod_{a + b=c} \End_\Cechobj(\mathcal{C}_\ord(E(a))) 
    \otimes \End_\Cechobj(\mathcal{C}_\ord(F(b))).
  \end{equation*}
  We define 
  the morphism \eqref{eq:otimes-Cech-to-Cech-boxtimes}
  of dg algebras using 
  Lemma~\ref{l:dg-functor-boxtimes-full-and-faithful-MF}.
  This lemma then says that $(A \otimes B)(c)$ goes
  quasi-isomorphically (even isomorphically, by inspection of the
  proof) onto the diagonal subalgebra of $M(c).$  
  Hence we need to show that the off diagonal part of each $M(c)$
  is acyclic.

  Let $a,a' \in \Crit(W)$ and $b,b'
  \in \Crit(V)$ and assume that $a+b=c=a'+b'$ but $a\not=a'$ (and
  hence $b
  \not= b'$).  
  We need to prove that both cohomologies of
  $\Hom_\Cechobj(\mathcal{C}_\ord(E(a)) \boxtimes
  \mathcal{C}_\ord(F(b)), \mathcal{C}_\ord(E(a')) \boxtimes
  \mathcal{C}_\ord(F(b')))$ are zero. Equivalently we need to
  show that 
  \begin{equation*}
    \Hom_{\bfMF(X \times Y, W*V-c)}(E(a) \boxtimes F(b), [p]E(a') \boxtimes
    F(b'))
  \end{equation*}
  is zero for $p \in \DZ_2.$
  We can use the morphism oriented \v{C}ech enhancement
  $\MF_\Cechmor(X \times Y, W*V-c)$ 
  (see
  \cite[Prop.~\ref{semi:p:Cech-enhancement}]{valery-olaf-matfak-semi-orth-decomp})
  for this and need to show
  that both 
  cohomologies of
  \begin{equation*}
    \Hom_{\MF_\Cechmor(X \times Y, W*V-c)}(E(a) \boxtimes F(b),
    E(a') \boxtimes F(b'))
    = 
    C(\mathcal{U} \times \mathcal{V}, \sheafHom (E(a) \boxtimes
    F(b), E(a') \boxtimes F(b'))
  \end{equation*}
  vanish.
  It is certainly sufficient to show that the object
  $\sheafHom(E(a) \boxtimes F(b), E(a') \boxtimes F(b'))$ of
  $\MF(X \times Y, 0)$ is zero in
  $[\MF(X \times Y, 0)]$
  (use for example
  \cite[Lemma~\ref{semi:l:qiso-induces-Cech-qiso}]{valery-olaf-matfak-semi-orth-decomp}). 

  We have $\sheafHom(E(a), E(a')) \in \MF(X,
  a-a')$ and
  $\sheafHom(F(b), F(b')) \in \MF(Y, b-b'),$
  cf.\ 
  \cite[section~\ref{semi:sec:sheaf-hom-tensor-product}]{valery-olaf-matfak-semi-orth-decomp}.
  The $\boxtimes$-product of these two objects is then in 
  $\MF(X \times Y, 0)$ and the obvious closed degree zero
  morphism
  \begin{equation}
    \label{eq:boxtimes-sheafhom-thomseb}
    \boxtimes \colon  
    \sheafHom(E(a), E(a')) \boxtimes
    \sheafHom(F(b), F(b')) \ra
    \sheafHom(E(a)\boxtimes F(b), E(a')\boxtimes F(b'))
  \end{equation}
  is an isomorphism:
  this can be checked componentwise and locally 
  on $\Spec R \subset X$ and $\Spec S \subset Y$
  and boils down to
  the fact that the obvious map 
  \begin{equation*}
    \Hom_R(M,M') \otimes \Hom_S(N,N') \ra \Hom_{R \otimes S}(M \otimes
    N, M' \otimes N')
  \end{equation*}
  is an isomorpism for
  $M, M' \in \Mod(R)$ and $N, N' \in \Mod(S)$ 
  with $M$ and $N$ finitely generated projective.

  Since we assume that $a \not= a',$
  \cite[Lemma~\ref{semi:l:case-W=constant-nonzero}]{valery-olaf-matfak-semi-orth-decomp}
  shows that 
  $\sheafHom(E(a), E(a'))=0$ in $[\MF(X, a-a'].$
  We then see from \eqref{eq:boxtimes-sheafhom-thomseb}
  that $\sheafHom(E(a)\boxtimes F(b), E(a')\boxtimes F(b'))=0$
  in $[\MF(X \times Y, 0)].$
\end{proof}

\subsection{Smoothness}
\label{sec:smoothness}


Theorem~\ref{t:MF-DSgW-Cechobj-smooth} below is the analog of
\cite[Cor.~\ref{enhance:c:smooth-quasi-projective}]{valery-olaf-enhancements-in-prep} 
for matrix factorizations,
and we use the same strategy of proof.


\begin{theorem} 
  \label{t:MF-DSgW-Cechobj-smooth}
  Let $X$ be a smooth variety 
  with a morphism $W \colon  X \ra \DA^1.$
  Then the dg categories $\bfMF(W)^\dg$ and $\bfMF(W)^{\dg,\natural}$ are
  smooth over $k.$
\end{theorem}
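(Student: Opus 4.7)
I plan to adapt the strategy used for coherent sheaves in Corollary~\ref{enhance:c:smooth-quasi-projective}, exploiting the compatibility of the object oriented \v{C}ech enhancement with duality and external tensor products established in section~\ref{sec:object-orient-vcech-MF}. By Lemma~\ref{l:A-and-PerfA}, smoothness of $\bfMF(W)^\dg$ and of $\bfMF(W)^{\dg,\natural}$ are equivalent, so it suffices to treat one of them. The finite product decomposition $\bfMF(W)^\dg=\prod_{a\in\Crit(W)}\bfMF(X,W-a)^\dg$, together with the description via the dg algebra $A=\prod_{a} A(a)$ in Remark~\ref{rem:Perf-W-Cech-concretely}, reduces the problem to verifying that each factor $\bfMF(X,W-a)^\dg$ is smooth; replacing $W$ by $W-a$, I may assume $a=0$ and henceforth aim to prove that $\bfMF(X,W)^\dg$ is smooth for an arbitrary morphism $W\colon X\to\DA^1$.

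The key step is to recognize the diagonal bimodule. Corollary~\ref{c:duality-lifted-to-enhancement-MF} gives a quasi-equivalence $\bfMF(X,W)^{\dg,\opp}\simeq \bfMF(X,-W)^\dg$ on the level of object oriented \v{C}ech enhancements, and Theorem~\ref{t:thom-sebastiani} together with Lemma~\ref{l:dg-functor-boxtimes-full-and-faithful-MF} and Proposition~\ref{p:cech-object-enhancement-product-MF} provides a Morita equivalence
\[
\bfMF(X,W)^\dg \otimes \bfMF(X,W)^{\dg,\opp}\;\sim\; \bfMF(X,W)^\dg \otimes \bfMF(X,-W)^\dg\;\sim\; \bfMF(X\times X,\,W*(-W))^\dg,
\]
realized on objects by $\boxtimes$. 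Under this chain of equivalences, I claim that the diagonal bimodule of $\bfMF(X,W)^\dg$ corresponds to the diagonal pushforward $\Delta_{*}\mathcal{O}_X$, which makes sense as an object of $\bfMF(X\times X, W*(-W))$ because $W*(-W)=W(x)-W(y)$ vanishes on $\Delta(X)$. The justification rests on the identities $\sheafHom(E,F)\simeq \Delta^{*}(E^{\vee}\boxtimes F)$ and the adjunction $(\Delta^{*},\Delta_{*})$ applied in the curved setting (compare Lemma~\ref{l:boxtimes-cech-object-to-diagonal-cech-K-iso-D-MF}), which translate morphism complexes in $\bfMF(X,W)^\dg$ into pairings with $\Delta_{*}\mathcal{O}_X$ on $X\times X$.

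It then remains to show that $\Delta_{*}\mathcal{O}_X$ defines a perfect object of $\bfMF(X\times X,\,W*(-W))^{\dg,\natural}$. Since $X$ is smooth, the diagonal $\Delta\colon X\hra X\times X$ is a regular closed embedding of codimension $\dim X$, so $\Delta_{*}\mathcal{O}_X$ admits a finite resolution $P^{\bullet}$ by vector bundles on $X\times X$. Because $W*(-W)$ vanishes on $\Delta(X)$, multiplication by $W*(-W)$ is null-homotopic on $P^{\bullet}$; choosing such a null-homotopy and folding the resulting data by the standard Orlov construction (as used in the proof of Lemma~\ref{l:cokern-of-MF-reso-of-mu-CohX0}) yields an object of $\MF(X\times X,\,W*(-W))$ that represents $\Delta_{*}\mathcal{O}_X$ in $\bfMF(X\times X,\,W*(-W))$. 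Such an object lies in the essential image of the Yoneda embedding and is therefore perfect in the enhancement.

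The main obstacle, as in the coherent sheaf analog, is the delicate identification in the second step of the abstract diagonal bimodule with $\Delta_{*}\mathcal{O}_X$ under the chain of Morita equivalences. This requires careful bookkeeping with the interplay between the lifted duality $\tildew{D}=\sheafHom(-,\mathcal{C}_\ord(\mathcal{D}))$, the external product on \v{C}ech enhancements, and the diagonal adjunction, together with verifying bimodule-level compatibility of the object-level identifications from section~\ref{sec:object-orient-vcech-MF}. Once this identification is in place, the rest of the argument is geometric: perfectness of $\Delta_{*}\mathcal{O}_X$ is a direct consequence of the smoothness of $X$, and the theorem follows.
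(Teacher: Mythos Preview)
Your overall strategy---identify $\bfMF(X,W)^{\dg}\otimes\bfMF(X,W)^{\dg,\opp}$ with matrix factorizations on $X\times X$ via duality and $\boxtimes$, recognize the diagonal bimodule as $\Delta_*\mathcal{O}_X$, and observe that the latter is perfect because $X$ is smooth---is exactly the paper's approach. However, there is a genuine gap in your second step.

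The Morita equivalence you invoke,
\[
\bfMF(X,W)^{\dg}\otimes\bfMF(X,-W)^{\dg}\;\sim\;\bfMF(X\times X,\,W*(-W))^{\dg},
\]
is \emph{not} what Theorem~\ref{t:thom-sebastiani} asserts, and it is false whenever $W$ has more than one critical value. Thom--Sebastiani is a statement about the full products $\bfMF(W)^{\dg}=\prod_{a}\bfMF(X,W-a)^{\dg}$, not about individual factors. Concretely, by Lemma~\ref{l:Sing-convolution-is-product-of-Sings} the zero fibre of $W*(-W)$ has singular locus $\coprod_{a\in\Crit(W)}(X_a)^{\sing}\times(X_a)^{\sing}$, so a classical generator of $\bfMF(X\times X,W*(-W))$ must see \emph{all} critical values of $W$. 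If $E(0)$ generates the single factor $\bfMF(X,W)$, then $E(0)\boxtimes E(0)^{\vee}$ only detects the $a=0$ piece and does not generate. In particular $\Delta_*\mathcal{O}_X$ need not lie in the thick subcategory generated by $E(0)\boxtimes E(0)^{\vee}$, so perfectness of $\Delta_*\mathcal{O}_X$ in $\bfMF(X\times X,W*(-W))$ does not by itself yield perfectness of the diagonal bimodule over $A(0)\otimes A(0)^{\opp}$.

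The paper avoids this by \emph{not} reducing to a single critical value first. It works with the full product algebra $A=\prod_a A(a)$ and the full generator $P=\mathcal{C}_\ord(E)\boxtimes\tildew{D}(\mathcal{C}_\ord(E))$ of $\bfMF(W*(-W))$, so that $\Hom_{\Cechobj}(P,-)$ is full and faithful into $\per(A\otimes A^{\opp})=\prod_{a,a'}\per\bigl(A(a)\otimes A(a')^{\opp}\bigr)$. Applying this to (a matrix-factorization replacement of) $\Delta_*\mathcal{D}$ yields a perfect object, and only then does one project to the $(a,a)$-component to conclude $A(a)\in\per\bigl(A(a)\otimes A(a)^{\opp}\bigr)$. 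Your argument is easily repaired along these lines: drop the preliminary reduction to a single factor and carry all critical values simultaneously.
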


\begin{proof}
  Recall that smoothness is invariant under quasi-equivalence.
  We proceed as in the beginning of the proof of
  the Thom-Sebastiani Theorem~\ref{t:thom-sebastiani},
  but we use the enhancements $\MF'_\Cechobj(X, W-a)$
  (see section~\ref{sec:vers-arbitr-sheav-MF}).
  The reason is that the
  duality $D=(-)^\cek \colon  \MF_\Cechobj(X, W-a)^\opp \ra \MF_\Cechobj(X,
  -W+a)$ can then be lifted to the dg functor 
  $\tildew{D} \colon  \MF'_\Cechobj(X, W-a)^\opp \ra \MF'_\Cechobj(X,
  -W+a),$ see Corollary~\ref{c:duality-lift-well-defined-MF}.
  So we assume that 
  $\bfMF(W)^\dg=\MF(W)'_\Cechobj:=\prod_{a \in k}
  \MF'_\Cechobj(X, W-a)$ and 
  $\bfMF(W)^{\dg,\natural} = {\MF(W)'}_\Cechobj^\natural :=
  \prod
  \Perf(\MF'_\Cechobj(X, W-a)).$ 
  It is clear how to extend the duality $D$ and its lift
  $\tildew{D}$ to $D \colon  \bfMF(W)^\opp \ra \bfMF(-W)$ 
  and $\tildew{D} \colon  (\MF(W)'_\Cechobj)^\opp \ra \MF(-W)'_\Cechobj,$
  respectively. 

  Let $E=(E(a))_{a \in k}$ be a classical generator of
  $\bfMF(W)$ and consider the dg algebra $A=\prod
  A(a)=\End_\Cechobj(\mathcal{C}_\ord(E)).$
  Here we abbreviate $\End_\Cechobj=\End_{\MF(W)'_\Cechobj}$
  and use similar notation in the following.
  By Remark~\ref{rem:Perf-W-Cech-concretely}
  it is enough to prove that $A$ is smooth,
  i.\,e.\ that $A \in \per(A\otimes A^{\opp}).$
  
  Since $E^\cek$ is a classical
  generator of $\bfMF(-W),$
  Proposition~\ref{p:boxtimes-of-classical-generators}
  says that $E \boxtimes E^\cek$ is a classical generator of
  $\bfMF(W*(-W)).$ 
  We will use the lift
  \begin{equation*}
    P:=\mathcal{C}_\ord(E) \boxtimes
    \tildew{D}(\mathcal{C}_\ord(E))
  \end{equation*}
  of this generator to the
  enhancement $\MF(W*(-W))'_\Cechobj,$ cf.\
  Lemma~\ref{l:sheafHomCC-vs-CsheafHom-MF}. Here
  $\MF(W*(-W))'_\Cechobj$ 
  is defined in the obvious way using the product covering of $X
  \times X.$  
  Note that
  \begin{equation*}
    \tildew{D} \colon  A^\opp =
    \End_\Cechobj(\mathcal{C}_\ord(E))^\opp
    \ra
    \End_\Cechobj(\tildew{D}(\mathcal{C}_\ord(E)))
  \end{equation*}
  is a quasi-isomorphism of dg algebras
  by Corollary~\ref{c:duality-lifted-to-enhancement-MF}. 
  Recall that we showed in the proof of the Thom-Sebastiani
  Theorem~\ref{t:thom-sebastiani}
  that the natural morphism \eqref{eq:otimes-Cech-to-Cech-boxtimes}
  is a quasi-isomorphism. Transferred to our setting this means
  that the morphism
  \begin{equation*}
    \boxtimes \colon 
    \End_\Cechobj(\mathcal{C}_\ord(E))
    \otimes
    \End_\Cechobj(\mathcal{C}_\ord(E^\cek))
    \ra
    \End_\Cechobj(\mathcal{C}_\ord(E) \boxtimes
    \mathcal{C}_\ord(E^\cek))
  \end{equation*}
  is a quasi-isomorphism of dg algebras.  We also have the
  isomorphism $\tildew{D}(\mathcal{C}_\ord(E)) \sira
  \mathcal{C}_\ord(E^\cek)$ in $\prod_{a \in k}[\Sh(X, -W+a)]$
  from Lemma~\ref{l:sheafHomCC-vs-CsheafHom-MF}. These
  three facts (and the fact that $\otimes$ preserves
  quasi-isomorphisms) show that both arrows in
  \begin{equation*}
    A \otimes A^\opp
    \xra{\id \otimes \tildew{D}}
    \End_\Cechobj(\mathcal{C}_\ord(E))
    \otimes
    \End_\Cechobj(\tildew{D}(\mathcal{C}_\ord(E)))
    \xra{\boxtimes}
    \End_\Cechobj(P)
  \end{equation*}
  are quasi-isomorphism
  of dg algebras.
  Restriction of dg modules along their composition defines an
  equivalence of the corresponding perfect derived categories; combined with
  Proposition~\ref{p:pretriang-classical-generator} 
  we obtain a full and faithful functor 
  \begin{equation*}
    F:=\Hom_\Cechobj(P,-) \colon 
    [\MF(W *(-W))'_\Cechobj]
    \ra \per(A \otimes A^\opp)
  \end{equation*}
  of triangulated categories.
  Note that $A \otimes A^\opp= \prod_{a, a' \in k} A(a)
  \otimes A(a')^\opp$ and $\per(A \otimes A^\opp)=
  \prod_{a, a' \in k} \per (A(a) \otimes A(a')^\opp).$ 
  Under this identification, the $(a, a')$-component of $F$ is
  given by 
  \begin{equation*}
    F_{a,a'}=\Hom_\Cechobj(P_{a,a'}, -) \colon 
    [\MF'_\Cechobj(X \times X, W *(-W)-a+a')]
    \ra \per(A(a) \otimes A(a')^\opp)
  \end{equation*}
  where $P_{a,a'}:=\mathcal{C}_\ord(E(a)) \boxtimes \tildew{D}(\mathcal{C}_\ord(E(a'))).$
  We also see that smoothness of $A$ is equivalent to 
  smoothness of all $A(a),$ for $a \in k.$

  Let $\Delta \colon  X \ra X \times X$ be the diagonal embedding.  Note
  that $\Delta^*(W * (-W))=0.$ Consider the object
  $\mathcal{D}=\mathcal{D}_X=(\matfak{0}{}{\mathcal{O}_X}{}) \in
  \bfMF(X,0)$ 
  and the canonical morphism $\mathcal{D} \ra
  \mathcal{C}_\ord(\mathcal{D})$ in $\Qcoh(X,0)$ that becomes an
  isomorphism in $\DQcoh(X,0).$ Since $\Delta$ is affine and
  proper, $\Delta_*(\mathcal{D}) \ra
  \Delta_*(\mathcal{C}_\ord(\mathcal{D}))$ in $\Qcoh(X \times X,
  W*(-W))$ becomes an isomorphism in $\DQcoh(X \times X,
  W*(-W)),$ and $\Delta_*(\mathcal{D})$ is in $\bfMF'(X \times X,
  W*(-W))$ (cf.\
  \cite[Rem.~\ref{semi:rem:derived-direct-image-for-affine-morphism}
  and
  Lemma~\ref{semi:l:pi-proper-Rpi-lower-star-and-MF-and-adjunction}]{valery-olaf-matfak-semi-orth-decomp}).
  
  Find 
  $I \in \InjQcoh(X \times X, W*(-W))$ 
  and 
  $T \in \MF(X \times X, W*(-W))$  
  together with morphisms 
  $\Delta_*(\mathcal{C}_\ord(\mathcal{D})) \ra I$ 
  and
  $T \ra I$ 
  in $Z_0(\Qcoh(X \times X, W*(-W)))$ that become
  isomorphisms in $\DQcoh(X \times X, W*(-W)).$
  These morphisms induce quasi-morphisms 
  \begin{equation*}
    \label{eq:identify-diagonal}
    F_{a,a}(\mathcal{C}_\ord(T))
    \ra 
    \Hom_{\Sh(X \times X, W*(-W))}(P_{a,a},I)
    \la
    \Hom_{\Sh(X \times X, W*(-W))}(P_{a,a},
    \Delta_*(\mathcal{C}_\ord(\mathcal{D})))
  \end{equation*}
  of dg $A(a) \otimes A(a)^\opp$-modules (for $a \in k$):
  this is proved using
  the version of
  Proposition~\ref{p:cech-object-enhancement-product-MF}
  explained in section~\ref{sec:vers-arbitr-sheav-MF},
  Lemma~\ref{l:sheafHomCC-vs-CsheafHom-MF} and
  Lemma~\ref{l:boxtimes-cech-object-to-diagonal-cech-K-iso-D-MF};
  and 
  \cite[Thm.~\ref{semi:t:big-curved-categories} and
  Remark~\ref{semi:rem:morphisms-to-injectives}]{valery-olaf-matfak-semi-orth-decomp}).
  These three dg modules are
  in $\per(A(a) \otimes A(a)^\opp)$ 
  since
  $F(\mathcal{C}_\ord(T)) \in \per(A \otimes A^\opp)$
  and hence $F_{a,a}(\mathcal{C}_\ord(T)) \in \per(A(a) \otimes
  A(a)^\opp).$ 

  Observe that the obvious adjunctions provide isomorphisms
  of dg $A(a) \otimes A(a)^\opp$-modules
  \begin{align*}
    \Hom_{\Sh(X \times X, W*(-W))}(P_{a,a},&
    \Delta_*(\mathcal{C}_\ord(\mathcal{D})))\\
    & \sira
    \Hom_{{\Sh(X,0)}}(\Delta^*(\mathcal{C}_\ord(E(a)) \boxtimes
    \tildew{D}(\mathcal{C}_\ord(E(a)))),
    \mathcal{C}_\ord(\mathcal{D}))\\ 
    & =
    \Hom_{{\Sh(X,0)}}(\mathcal{C}_\ord(E(a)) \otimes
    \tildew{D}(\mathcal{C}_\ord(E(a))),
    \mathcal{C}_\ord(\mathcal{D}))\\
    & \sira
    \Hom_{{\Sh(X,0)}}(\mathcal{C}_\ord(E(a)), \sheafHom(
    \tildew{D}(\mathcal{C}_\ord(E(a))),
    \mathcal{C}_\ord(\mathcal{D})))\\
    & =
    \Hom_{{\Sh(X,0)}}(\mathcal{C}_\ord(E(a)),
    \tildew{D}^2(\mathcal{C}_\ord(E(a)))).
  \end{align*}
  Now use
  Lemma~\ref{l:map-to-double-dual-homotopy-equi-MF}.
  The canonical morphism
  $\theta=\theta_{\mathcal{C}_\ord(E(a))} \colon  \mathcal{C}_\ord(E(a)) 
  \ra
  \tildew{D}^2(\mathcal{C}_\ord(E(a)))$ is a homotopy equivalence,
  so 
  \begin{equation*}
    \theta_* \colon 
    \Hom_{{\Sh(X,0)}}(\mathcal{C}_\ord(E(a)),
    \mathcal{C}_\ord(E(a)))
    \ra
    \Hom_{{\Sh(X,0)}}(\mathcal{C}_\ord(E(a)),
    \tildew{D}^2(\mathcal{C}_\ord(E(a))))
  \end{equation*}
  is a homotopy equivalence; moreover, it is a morphism of 
  dg $A(a) \otimes A(a)^\opp$-modules. The object on the left
  is the 
  diagonal dg $A(a) \otimes A(a)^\opp$-module $A(a)$ which is
  hence in 
  $\per(A(a) \otimes A(a)^\opp).$ This proves smoothness of
  $A(a),$ for any $a \in k.$ As observed above this just means
  that $A$ is smooth.
\end{proof}

\begin{corollary}
  \label{c:MF-DSgW-Cechobj-smooth}
  Let $X$ be a 
  smooth variety
  with a morphism $W \colon  X
  \ra \DA^1.$ Then the dg category $\bfMF(X,W)^\dg$ is smooth
  over $k.$ 
\end{corollary}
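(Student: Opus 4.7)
The plan is to deduce this corollary directly from Theorem~\ref{t:MF-DSgW-Cechobj-smooth} by recognizing $\bfMF(X,W)^\dg$ as a direct factor of the finite product $\bfMF(W)^\dg$, and then invoking the easy principle that smoothness is inherited by factors of such a product.

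First, I would identify $\bfMF(X,W)^\dg = \bfMF(X, W-0)^\dg$ with the factor indexed by $a=0$ in
\begin{equation*}
\bfMF(W)^\dg = \prod_{a \in \Crit(W)} \bfMF(X,W-a)^\dg.
\end{equation*}
If $0 \notin \Crit(W)$ then this factor is the zero dg category: working component by component (using \cite[Rem.~\ref{semi:rem:X-disconnected}]{valery-olaf-matfak-semi-orth-decomp}), on components where $W$ is flat Orlov's equivalence gives $\bfMF(X,W) \cong D_\Sg(X_0)=0$ since $X_0$ is smooth or empty, while on components where $W$ is constant $b\neq 0$ we have $\bfMF(X,W)=0$ by \cite[Lemma~\ref{semi:l:case-W=constant-nonzero}]{valery-olaf-matfak-semi-orth-decomp}. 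The zero dg category is trivially smooth, so we may assume $0 \in \Crit(W)$.

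Next, Theorem~\ref{t:MF-DSgW-Cechobj-smooth} applied to $W$ gives that $\bfMF(W)^\dg$ is smooth over $k$. Finally, I would invoke the standard observation that a finite product of dg categories $\mathcal{A} = \prod_{i} \mathcal{A}_i$ is smooth if and only if each factor $\mathcal{A}_i$ is smooth: via the orthogonal idempotents coming from the projections, the diagonal $\mathcal{A}$-bimodule decomposes as the direct sum of the diagonal bimodules $\mathcal{A}_i$, each supported in the $(i,i)$-block of $\mathcal{A} \otimes \mathcal{A}^\opp = \prod_{i,j} \mathcal{A}_i \otimes \mathcal{A}_j^\opp$ (the off-diagonal blocks contributing zero). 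Hence perfection of $\mathcal{A}$ over $\mathcal{A}\otimes \mathcal{A}^\opp$ is equivalent to perfection of each $\mathcal{A}_i$ over $\mathcal{A}_i \otimes \mathcal{A}_i^\opp$. Applied to our finite product, this yields smoothness of the single factor $\bfMF(X,W)^\dg$.

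There is no genuine obstacle here: the substantive work is concentrated in Theorem~\ref{t:MF-DSgW-Cechobj-smooth}, and the corollary is essentially a bookkeeping remark about how smoothness behaves with respect to finite products of dg categories.
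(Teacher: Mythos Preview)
Your proof is correct. The approach differs slightly from the paper's: the paper does not invoke Theorem~\ref{t:MF-DSgW-Cechobj-smooth} as a black box, but instead points back \emph{inside} its proof, where smoothness of each individual dg algebra $A(a)=\End_\Cechobj(\mathcal{C}_\ord(E(a)))$ was established directly (before assembling them into the product $A=\prod_a A(a)$). In particular, the case $a=0$ gives smoothness of $\bfMF(X,W)^\dg$ immediately, without the separate case analysis for $0\notin\Crit(W)$ or the product-to-factor argument. Your route is the mirror image: you take the theorem's conclusion (smoothness of the product) and descend to the factor at $a=0$ via the standard observation that smoothness of a finite product of dg categories is equivalent to smoothness of each factor---an observation the paper itself uses in the other direction inside the theorem's proof. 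Both arguments are short and rest on the same underlying fact; yours has the mild advantage of using only the statement of the theorem.
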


\begin{proof}
  We can assume that $\bfMF(X,W)^\dg=\MF'_\Cechobj(X,W).$
  In the proof of Theorem~\ref{t:MF-DSgW-Cechobj-smooth}
  we have seen that
  $A(0)=\End_\Cechobj(\mathcal{C}_\ord(E(0)))$ is smooth.
  This implies the claim.
\end{proof}

\subsection{Properness}
\label{sec:properness}

\begin{proposition}
  \label{p:PerfWdg-proper}
  Let $X$ be a smooth variety 
  with a
  morphism $W \colon  X \ra \DA^1,$ and assume that $W|_{\Sing(W)} \colon 
  \Sing(W) \ra \DA^1$
  is proper
  (for example $W$ could be proper),
  or equivalently, that $\Sing(W)$ is complete. 
  Then the dg
  categories $\bfMF(X,W)^\dg,$ 
  $\Perf(\bfMF(X,W)^\dg),$
  $\bfMF(W)^\dg,$ and $\bfMF(W)^{\dg,\natural}$
  are proper over $k.$
\end{proposition}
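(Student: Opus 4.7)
The plan is to reduce the four stated claims to a single properness statement for each factor $\bfMF(X, W-a)^\dg$, $a \in \Crit(W)$, and then invoke Orlov's [Cor.~1.24]{orlov-tri-cat-of-sings-and-d-branes} as the essential ingredient. First, the bookkeeping: by definition~\eqref{eq:def-Perf-W-dg}, $\bfMF(W)^{\dg,\natural}=\Perf(\bfMF(W)^\dg)$, and Lemma~\ref{l:A-and-PerfA} says that a dg category is proper if and only if its $\Perf$ is. Hence the claim for $\bfMF(W)^{\dg,\natural}$ follows from that for $\bfMF(W)^\dg$, and similarly $\Perf(\bfMF(X,W)^\dg)$ is proper once $\bfMF(X,W)^\dg$ is. Next, $\bfMF(W)^\dg = \prod_{a \in \Crit(W)} \bfMF(X, W-a)^\dg$ is a \emph{finite} product; local perfectness passes to such products componentwise, and a tuple of compact generators in each component gives a compact generator of the product. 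So it suffices to show that each $\bfMF(X, W-a)^\dg$ is proper. Finally, $\bfMF(X,W)^\dg$ is either zero (e.g.\ when $X_0$ is smooth, in particular when $0 \notin \Crit(W)$) and thus trivially proper, or, when $0 \in \Crit(W)$, it is the $a=0$ factor of $\bfMF(W)^\dg$ and hence proper by the same reduction.

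Fix $a \in k$. The existence of a compact generator is immediate: by the existence of classical generators of $\bfMF(X, W-a)$ cited in Remark~\ref{rem:Perf-W-Cech-concretely}, together with the fact that $\bfMF(X, W-a)^\dg$ is an enhancement of $\bfMF(X, W-a)$, any such classical generator lifts to an object of the enhancement that is a compact generator of $D(\bfMF(X, W-a)^\dg)$.

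The core step is local perfectness. Since $\Sing(W-a) = \Sing(W)$ is complete by hypothesis (equivalently, $(W-a)|_{\Sing(W-a)}$ is proper), Orlov's [Cor.~1.24]{orlov-tri-cat-of-sings-and-d-branes} applies to $(X, W-a)$ and yields that $\bfMF(X, W-a)$ is $\Ext$-finite over $k$, i.e.\ $\Hom_{\bfMF(X, W-a)}(P, [n]Q)$ is a finite-dimensional $k$-vector space for all objects $P, Q$ and both $n \in \DZ_2$. Since any of our enhancements, by section~\ref{sec:equiv-enhanc-MF} and [section~\ref{semi:sec:enhancements}]{valery-olaf-matfak-semi-orth-decomp}, is equivalent to $\bfMF(X, W-a)$ on homotopy categories, the cohomology of every morphism complex in $\bfMF(X, W-a)^\dg$ is such an $\Ext$-group and is therefore finite-dimensional over $k$. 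Over a field, a dg module with finite-dimensional total cohomology is perfect, so $\bfMF(X, W-a)^\dg$ is locally perfect.

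The main (indeed, only non-formal) obstacle is precisely the local perfectness, which we import wholesale from Orlov; everything else is formal manipulation of finite products and of the passage to $\Perf$ via Lemma~\ref{l:A-and-PerfA}. Assembling the reductions above with the two properties just established for each $\bfMF(X, W-a)^\dg$ completes the proof.
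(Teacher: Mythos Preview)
Your reduction to checking properness of each factor $\bfMF(X,W-a)^\dg$ and the passage through Lemma~\ref{l:A-and-PerfA} are fine, and the flat case is handled exactly as in the paper: Orlov's equivalence $\bfMF(X,W-a)\sira D_\Sg(X_a)$ combined with \cite[Cor.~1.24]{orlov-tri-cat-of-sings-and-d-branes} gives Ext-finiteness, hence local perfectness over the field $k$.

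The gap is that you invoke \cite[Cor.~1.24]{orlov-tri-cat-of-sings-and-d-branes} for \emph{every} $a\in\Crit(W)$ without checking that Orlov's equivalence with $D_\Sg(X_a)$ is available. That equivalence requires $W-a$ to be flat. If $X$ has a connected component $C$ on which $W$ is constant equal to $a$, then $a\in\Crit(W)$ and $C\subset\Sing(W)$, so the hypothesis forces $C$ to be complete; but on $C$ the potential $W-a$ is identically zero, $\bfMF(C,0)$ is nonzero (cf.\ Remark~\ref{rem:intuition-D-Sg-W}), while the would-be ``fibre'' is all of $C$, which is smooth, so $D_\Sg=0$. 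Thus Orlov's corollary says nothing here. The paper treats this case separately: for $W=0$ one takes the classical generator $E=(\matfak{0}{}{P}{})$ with $P$ a vector bundle, computes the endomorphism dg algebra in the \v{C}ech enhancement, and finds $H_l(A)=\bigoplus_n H^n(X,\sheafHom(P,P))$, which is finite-dimensional because $X=\Sing(W)$ is complete. You need to add this argument (or an equivalent one) to close the gap.
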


\begin{proof}
  We know that 
  $|\Sing(W)| = \coprod_{a \in \Crit(W)} |\Sing(W) \cap X_a|.$
  Hence 
  $|\Sing(W)| \ra \DA^1$ factors as
  $|\Sing(W)| \ra \Crit(W) \subset \DA^1.$
  This implies that $\Sing(W) \ra \DA^1$ is proper if and only if
  $\Sing(W) \ra \Spec k$ is proper.

  Let $E$ be a classical generator of $\bfMF(X,W)$ and
  $A$ the dg algebra of its endomorphisms in $\bfMF(X,W)^\dg.$
  It is certainly enough to show $A$ is proper
  (cf.\ Remark~\ref{rem:Perf-W-Cech-concretely}), 
  i.\,e.\ that $A \in \per(k).$
  Since $k$ is a field this just means 
  $H_l(A)=\Hom_{\bfMF(X,W)}(E, [l]E)$ is finite dimensional for $l \in
  \DZ_2.$
  We can assume that $X$ is connected, so that
  $W$ is either flat or constant.

  Assume that $W$ is flat. 
  Then we have the equivalence
  $\coker \colon  \bfMF(X,W) \sira D_\Sg(X_0)$ 
  and 
  $\dim_k \Hom_{D_\Sg(X_0)}(M,N)< \infty$
  for all $M,N \in D_\Sg(X_0)$ by
  \cite[Cor.~1.24]{orlov-tri-cat-of-sings-and-d-branes}:
  note that $X_0$ is Gorenstein and that $(X_0)^\sing=|\Sing(W)
  \cap X_0|$
  (see
  equation~\eqref{eq:reduced-SingW-cap-Xa-equals-reduced-Xa-sing} 
  in Remark~\ref{rem:SingW-cap-Xa-versus-Xa-sing})
  is
  complete. This implies that $A$ is proper over $k.$

  Now assume that $W$ is constant. In case $W\not=0$ we have
  $\bfMF(X,W)=0$ by
  \cite[Lemma~\ref{semi:l:case-W=constant-nonzero}]{valery-olaf-matfak-semi-orth-decomp}
  and
  the claim is trivial. So assume $W=0.$ We can assume that
  $E=(\matfak{0}{}{P}{})$ with $P$ a vector bundle on $X$ (see
  \cite[Rem.~\ref{semi:rem:classical-generator-in-mf-zero}]{valery-olaf-matfak-semi-orth-decomp}) and that
  $\bfMF(X,0)^\dg=\MF_\Cechmor(X,0).$ 
  Then
  $A= C(\mathcal{U}, \sheafHom(E,E))=
  C(\mathcal{U}, (\matfak{0}{}{\sheafHom(P,P)}{})),$
  and hence $H_l(A)=\bigoplus H^n(X,
  \sheafHom(P,P))$ where the direct sum is over
  all $n \in \DZ$ with $n=l$ in $\DZ_2.$ We have $\dim_k H_l(A) <
  \infty$ 
  by \cite[Thm.~III.2.7]{Hart} and \cite[Thm.~3.2.1]{EGAIII-i}
  since $X$ is Noetherian of finite dimension
  and $X=\Sing(W)$ is complete.
  %
\end{proof}

\subsection{Conclusion}
\label{sec:conclusion}

Recall the Grothendieck ring of saturated dg categories from
Proposition~\ref{p:multiplication-of-Zsat-descends}
and Definition~\ref{def:Grothendieck-group-of-saturated-dg-cats}.
Since we work here in the differential $\DZ_2$-graded setting and
over the field $k$ 
(cf.\ Remark~\ref{rem:more-general})
we denote it by $K_0(\sat_k^{\DZ_2}).$
Similarly we denote the 
monoid from Proposition~\ref{p:ulsat-as-a-commutative-monoid}
by $\ul{\sat_k^{\DZ_2}}.$

\begin{theorem}
  \label{t:combined-thomseb-smooth-proper}
  Let $X$ be a smooth variety 
  with a morphism $W \colon  X \ra \DA^1$ such that $\Sing(W)$ is
  complete (for example $W$ could be proper).
  Then 
  $\Perf(\bfMF(X,W)^\dg)$
  and
  $\bfMF(W)^{\dg,\natural}$ are saturated dg categories
  and hence
  define elements
  $\ul{\Perf(\bfMF(X,W)^\dg)}$ and
  $\ul{\bfMF(W)^{\dg,\natural}}$ of $K_0(\sat_k^{\DZ_2}).$
  If $Y$ is another smooth variety
  with a morphism
  $V \colon  Y \ra \DA^1$ such that $\Sing(V)$ is complete,
  then
  \begin{equation}
    \label{eq:PerfWPerfVPerfWV}
    \ul{\bfMF(W)^{\dg,\natural}} \bullet \ul{\bfMF(V)^{\dg,\natural}} = \ul{\bfMF(W*V)^{\dg,\natural}}
  \end{equation}
  in the 
  monoid $\ul{\sat_k^{\DZ_2}}$ and hence in the ring
  $K_0(\sat_k^{\DZ_2}).$
\end{theorem}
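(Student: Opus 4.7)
The plan is to assemble the claims from the pieces already established in this section: smoothness from Theorem~\ref{t:MF-DSgW-Cechobj-smooth} and Corollary~\ref{c:MF-DSgW-Cechobj-smooth}, properness from Proposition~\ref{p:PerfWdg-proper}, the triangulated property from Corollary~\ref{c:triang-versus-pretriang-and-karoubi}, and the Thom-Sebastiani Theorem~\ref{t:thom-sebastiani}. No new ingredient is needed; the only point requiring a small check is that $W*V$ again falls under the hypothesis of the theorem.

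First I would verify saturation. Recall that a dg category is saturated if it is simultaneously triangulated, smooth, and proper. For any dg category $\mathcal{A}$, Corollary~\ref{c:triang-versus-pretriang-and-karoubi} shows that $\Perf(\mathcal{A})$ is triangulated, so in particular both $\Perf(\bfMF(X,W)^\dg)$ and $\bfMF(W)^{\dg,\natural}=\Perf(\bfMF(W)^\dg)$ are triangulated. Smoothness of $\bfMF(X,W)^\dg$ (resp.\ of $\bfMF(W)^\dg$) is given by Corollary~\ref{c:MF-DSgW-Cechobj-smooth} (resp.\ Theorem~\ref{t:MF-DSgW-Cechobj-smooth}), and Lemma~\ref{l:A-and-PerfA} transports smoothness to the corresponding $\Perf$. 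Properness of all four dg categories is precisely Proposition~\ref{p:PerfWdg-proper}. Thus $\Perf(\bfMF(X,W)^\dg)$ and $\bfMF(W)^{\dg,\natural}$ are saturated and define classes in $K_0(\sat_k^{\DZ_2})$.

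Next I would prove the equality \eqref{eq:PerfWPerfVPerfWV}. Before invoking Thom-Sebastiani I have to ensure that the right-hand side is defined, i.\,e.\ that $\bfMF(W*V)^{\dg,\natural}$ is saturated. The variety $X\times Y$ is again smooth, so the first part of the theorem applies to the morphism $W*V\colon X\times Y\ra\DA^1$ provided $\Sing(W*V)$ is complete. But by \eqref{eq:Sing-W*V} we have $\Sing(W*V)=\Sing(W)\times\Sing(V)$, which is a product of complete varieties and hence complete. Therefore $\bfMF(W*V)^{\dg,\natural}$ is saturated.

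By Definition of the multiplication $\bullet$ on $\ul{\sat_k^{\DZ_2}}$ (Proposition~\ref{p:ulsat-as-a-commutative-monoid}), the left-hand side of \eqref{eq:PerfWPerfVPerfWV} equals $\ul{\bfMF(W)^{\dg,\natural}\Perfotimes\bfMF(V)^{\dg,\natural}}$. The Thom-Sebastiani Theorem~\ref{t:thom-sebastiani} provides a quasi-equivalence between $\bfMF(W)^{\dg,\natural}\Perfotimes\bfMF(V)^{\dg,\natural}$ and $\bfMF(W*V)^{\dg,\natural}$, so both objects represent the same class in the monoid $\ul{\sat_k^{\DZ_2}}$. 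This identity then passes to $K_0(\sat_k^{\DZ_2})$ under the canonical morphism $\DZ\ul{\sat_k^{\DZ_2}}\ra K_0(\sat_k^{\DZ_2})$ of Proposition~\ref{p:multiplication-of-Zsat-descends}, finishing the proof. The genuinely substantive content is concentrated in the Thom-Sebastiani Theorem and the smoothness/properness results already established; the present theorem is essentially a packaging of these into the Grothendieck ring formalism, and I expect no additional obstacle.
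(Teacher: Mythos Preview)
Your proof is correct and follows essentially the same route as the paper: you invoke the same results (Theorem~\ref{t:MF-DSgW-Cechobj-smooth}, Corollary~\ref{c:MF-DSgW-Cechobj-smooth}, Proposition~\ref{p:PerfWdg-proper}, Corollary~\ref{c:triang-versus-pretriang-and-karoubi}, Lemma~\ref{l:A-and-PerfA}, and the Thom-Sebastiani Theorem~\ref{t:thom-sebastiani}) in the same way. Your explicit verification that $\Sing(W*V)=\Sing(W)\times\Sing(V)$ is complete is a welcome clarification that the paper leaves implicit.
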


\begin{proof}
  The dg categories
  $\Perf(\bfMF(X,W)^\dg)$
  and
  $\bfMF(W)^{\dg,\natural}$ 
  are
  smooth, proper and triangulated, i.\,e.\ saturated,
  by Theorem~\ref{t:MF-DSgW-Cechobj-smooth},
  Corollary~\ref{c:MF-DSgW-Cechobj-smooth},
  Proposition~\ref{p:PerfWdg-proper},
  Corollary~\ref{c:triang-versus-pretriang-and-karoubi},
  and
  Lemma~\ref{l:A-and-PerfA}.
  Equality~\eqref{eq:PerfWPerfVPerfWV} is then a direct
  consequence of the Thom-Sebastiani
  Theorem~\ref{t:thom-sebastiani}.
\end{proof}

\begin{remark}
  \label{rem:groth-monoid}
  Consider the set $M$ of
  isomorphism classes
  $[X]_{\DA^1}$ of $\DA^1$-varieties $W \colon  X \ra \DA^1$ with
  $X$ smooth over $k$ and $\Sing(W)$ complete.
  If $W \colon  X \ra \DA^1$ and $V \colon X \ra
  \DA^1$ are $\DA^1$-varieties satisfying these conditions, so
  does $W * V \colon  X \times Y \ra \DA^1$
  (by equation~\eqref{eq:Sing-W*V}).
  Hence 
  $[X]_{\DA^1} \cdot [Y]_{\DA^1} := [X \times Y]_\DA^1$ turns
  $M$ into a commutative monoid with unit the class of $\Spec k
  \xra{0} \DA^1.$ One may view $M$ as a "Grothendieck monoid" 
  of certain varieties over $\DA^1.$
  Then 
  Theorem~\ref{t:combined-thomseb-smooth-proper} 
  says that mapping the class of $W \colon  X \ra \DA^1$ as above to
  $\ul{\bfMF(W)^{\dg,\natural}}$ 
  defines a (unital) morphism
  $M \ra \ul{\sat_k^{\DZ_2}}$ of monoids.
\end{remark}

\section{Landau-Ginzburg motivic measure}
\label{sec:land-ginzb-motiv}


Let $k$ be an algebraically closed field of characteristic zero.
We continue to work in the differential $\DZ_2$-graded setting.
Our aim in this section is to prove
Theorem~\ref{t:category-of-singularities-induces-ring-morphism}.

Recall the Grothendieck ring $K_0(\Var_{\DA^1})$ of varieties
over $\DA^1$ from section~\ref{sec:groth-ring-vari}
and the Grothendieck ring $K_0(\sat_k^{\DZ_2})$ of saturated dg
categories from 
Proposition~\ref{p:multiplication-of-Zsat-descends}.
We first state an additive precursor
of
Theorem~\ref{t:category-of-singularities-induces-ring-morphism}
which only uses the additive structures on 
$K_0(\Var_{\DA^1})$ and $K_0(\sat_k^{\DZ_2}).$

\begin{proposition}
  \label{p:morphism-of-additive-groups}
  There is a unique morphisms
  \begin{equation*}
    K_0(\Var_{\DA^1}) \ra K_0(\sat_k^{\DZ_2})
  \end{equation*}
  of abelian groups that maps $[X]_{\DA^1}=[X, W]$
  to $\ul{\Perf(\bfMF(X,W)^\dg)}$ whenever 
  $X$ is a smooth variety and 
  $W \colon  X \ra \DA^1$ is a
  proper morphism. 
  This morphism of abelian groups is uniquely determined by its
  values on $[X,W]$ for smooth 
  (connected) $X$ and projective $W.$
\end{proposition}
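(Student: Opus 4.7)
The strategy is to use the blow-up presentation \ref{enum:blowup-presentation} of $K_0(\Var_{\DA^1})$ from Theorem~\ref{t:groth-group-var-over-A1-smooth-and-blowup}: generators are the isomorphism classes $[X,W]$ of smooth $k$-varieties equipped with a proper morphism $W\colon X \to \DA^1$, modulo $[\emptyset]_{\DA^1} = 0$ and the blow-up relations
\begin{equation*}
  [\Blow_Y(X)]_{\DA^1} - [E]_{\DA^1} = [X]_{\DA^1} - [Y]_{\DA^1}
\end{equation*}
for $X$ smooth with $W$ proper and $Y \subset X$ a smooth closed subvariety with exceptional divisor $E$ over $Y$. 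First I would check that the prescribed assignment is well-defined on generators: if $W$ is proper then $\Sing(W) \subset X$ is closed and, since $W(\Sing(W)) \subset \Crit(W)$ is a finite set of points, the proper morphism $W|_{\Sing(W)}\colon \Sing(W) \to \DA^1$ forces $\Sing(W)$ to be complete; so Theorem~\ref{t:combined-thomseb-smooth-proper} (using also Corollary~\ref{c:MF-DSgW-Cechobj-smooth} and Proposition~\ref{p:PerfWdg-proper}) tells us that $\Perf(\bfMF(X,W)^\dg)$ is saturated, hence defines a class in $K_0(\sat_k^{\DZ_2})$. Also $\bfMF(\emptyset,W)^\dg$ is the zero dg category, whose $\Perf$ has class $0$ in $K_0(\sat_k^{\DZ_2})$.

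The core step is to verify the blow-up relation, which is exactly where the machinery of \cite{valery-olaf-matfak-semi-orth-decomp} is designed to intervene. Let $\pi\colon \tildew{X} := \Blow_Y(X) \to X$ with exceptional divisor $j\colon E \hookrightarrow \tildew{X}$, and set $c := \codim(Y \subset X)$. Applying Theorem~\ref{semi:t:semi-orthog-2-mf} of the companion paper to $W\circ\pi$ yields a semi-orthogonal decomposition into admissible subcategories, whose components are $[\Perf(\bfMF(X,W)^\dg)]$ together with $c-1$ copies of $[\Perf(\bfMF(Y,W|_Y)^\dg)]$; similarly, Theorem~\ref{semi:t:semi-orthog-1-mf} applied to the $\bbP^{c-1}$-bundle $E \to Y$ with potential $(W\circ\pi)|_E$ yields a decomposition of $[\Perf(\bfMF(E,(W\circ\pi)|_E)^\dg)]$ into $c$ admissible copies of $[\Perf(\bfMF(Y,W|_Y)^\dg)]$. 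By the defining relations of $K_0(\sat_k^{\DZ_2})$ (together with Proposition~\ref{p:semi-od-goes-to-Perf} to lift these decompositions to the saturated $\Perf$-enhancements), we obtain
\begin{align*}
  \ul{\Perf(\bfMF(\tildew X, W\circ\pi)^\dg)}
  &= \ul{\Perf(\bfMF(X,W)^\dg)} + (c-1)\,\ul{\Perf(\bfMF(Y,W|_Y)^\dg)},\\
  \ul{\Perf(\bfMF(E, (W\circ\pi)|_E)^\dg)}
  &= c\,\ul{\Perf(\bfMF(Y,W|_Y)^\dg)}.
\end{align*}
Subtracting gives precisely the required image of the blow-up relation, so the assignment extends uniquely to a group homomorphism $K_0^\bl(\Var_{\DA^1}) \to K_0(\sat_k^{\DZ_2})$, and composing with the inverse of the isomorphism $K_0^\bl(\Var_{\DA^1}) \sira K_0(\Var_{\DA^1})$ of Theorem~\ref{t:groth-group-var-over-A1-smooth-and-blowup} produces the claimed morphism. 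The uniqueness refinement (that the values on connected smooth $X$ with projective $W$ already determine the morphism) is an immediate consequence of the last sentence of the same theorem, which says that those classes already generate $K_0^\bl(\Var_{\DA^1})$ as an abelian group.

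The only nontrivial content beyond bookkeeping lies in the two semi-orthogonal decompositions, which are the main results of \cite{valery-olaf-matfak-semi-orth-decomp}; the main obstacle here is really just matching conventions so that passing from the pretriangulated enhancements $\bfMF(-)^\dg$ to their $\Perf$-completions preserves the decompositions with components admissible, which is supplied by Proposition~\ref{p:semi-od-goes-to-Perf}.
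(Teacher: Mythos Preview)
Your proposal is correct and follows essentially the same approach as the paper's proof: both use Bittner's blow-up presentation (Theorem~\ref{t:groth-group-var-over-A1-smooth-and-blowup}), invoke Theorem~\ref{t:combined-thomseb-smooth-proper} to see that the target classes are saturated, and then kill the blow-up relation via the semi-orthogonal decompositions for blow-ups and projective bundles from the companion paper together with Proposition~\ref{p:semi-od-goes-to-Perf}. The only cosmetic difference is that the paper cites the dg-level corollaries \cite[Cor.~\ref{semi:c:semi-orthog-1-mf-dg-level} and \ref{semi:c:semi-orthog-2-mf-dg-level}]{valery-olaf-matfak-semi-orth-decomp} (which already live at the enhancement level) rather than the underlying Theorems, and explicitly reduces to the case of connected $Y$ so that the codimension $c$ is well-defined; you should add this reduction as well, since your formulas use a single $c$.
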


\begin{proof}
  Recall the isomorphism $K_0^\bl(\Var_{\DA^1}) \sira
  K_0(\Var_{\DA^1})$ of abelian groups from Theorem
  \ref{t:groth-group-var-over-A1-smooth-and-blowup}
  (and that one may restrict
  to connected varieties or projective morphisms
  in \ref{enum:blowup-presentation}).
  This shows uniqueness.

  If $X$ and $W$ are as above, then
  $\Perf(\bfMF(X,W)^\dg)$ is saturated by
  Theorem~\ref{t:combined-thomseb-smooth-proper}.
  Hence to show existence we only need to see that
  the relation $[\emptyset]_{\DA^1}=0$ and 
  the blowing-up relations 
  go to zero under  
  $[X,W] \mapsto \ul{\Perf(\bfMF(X,W)^\dg)}.$
  It is trivial that $[\emptyset]_{\DA^1}$ goes to   
  $\ul{\Perf(\emptyset)}=0.$
  It is enough to consider the blowing-up relations when 
  blowing-up a connected smooth subvariety, and in this case we
  can use
  \cite[Cor.~\ref{semi:c:semi-orthog-1-mf-dg-level} and
  \ref{semi:c:semi-orthog-2-mf-dg-level}]{valery-olaf-matfak-semi-orth-decomp}
  and Proposition~\ref{p:semi-od-goes-to-Perf}.
\end{proof}

Let us formulate the main result of this article.

\begin{theorem}
  \label{t:category-of-singularities-induces-ring-morphism}
  Let $k$ be an algebraically closed field of 
  characteristic zero.
  There is a unique morphism
  \begin{equation*}
    \mu \colon  K_0(\Var_{\DA^1})\ra K_0(\sat_k^{\DZ_2})
  \end{equation*}
  of rings (= a Landau-Ginzburg motivic measure) that maps
  $[X,W]$ to 
  $\ul{\bfMF(W)^{\dg,\natural}}$ whenever $X$ is a smooth
  variety and
  $W \colon  X \ra \DA^1$ is a proper morphism. 

  In particular, $\mu$ is a morphism of abelian groups and maps 
  $[X,W]$ to 
  $\ul{\bfMF(W)^{\dg,\natural}}$ whenever $X$ is a smooth (connected)
  variety and
  $W \colon  X \ra \DA^1$ is a projective morphism. These two properties
  determine $\mu$ uniquely.
\end{theorem}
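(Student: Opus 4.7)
Uniqueness in all three forms (as a ring morphism, as a group morphism determined by its values on proper generators, as a group morphism determined by its values on projective generators) follows immediately from Theorem~\ref{t:groth-group-var-over-A1-smooth-and-blowup}, which tells us that the classes $[X,W]$ with $X$ smooth connected and $W$ projective already generate $K_0(\Var_{\DA^1})$ as an abelian group. For existence as an additive morphism, I would leverage the morphism $\mu_0\colon K_0(\Var_{\DA^1})\to K_0(\sat_k^{\DZ_2})$ supplied by Proposition~\ref{p:morphism-of-additive-groups}, which sends $[X,W] \mapsto \ul{\Perf(\bfMF(X,W)^\dg)}$ for smooth $X$ and proper $W$. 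For each $a \in k$, translation $\sigma_a\colon [X,W] \mapsto [X,W-a]$ is a group automorphism of $K_0(\Var_{\DA^1})$ (the blowing-up relations translate consistently), so each composition $\mu_0 \circ \sigma_a$ is again a group morphism. Since $\bfMF(X, W-a)$ vanishes unless $a$ is one of the finitely many elements of $\Crit(W)$, the map $a\mapsto (\mu_0 \circ \sigma_a)(c)$ has finite support for every $c \in K_0(\Var_{\DA^1})$, and I would set $\mu := \sum_{a\in k} \mu_0 \circ \sigma_a$. Using that a finite product of saturated dg categories splits in $K_0(\sat_k^{\DZ_2})$ as the sum of its factors (via the obvious admissible semi-orthogonal decomposition by the factors), one obtains $\mu([X,W]) = \sum_{a \in \Crit(W)} \ul{\Perf(\bfMF(X, W-a)^\dg)} = \ul{\bfMF(W)^{\dg,\natural}}$ whenever $X$ is smooth and $W$ is proper.

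The remaining task will be to show that $\mu$ is multiplicative, and by additivity it suffices to verify $\mu([X,W]\cdot [Y,V]) = \mu([X,W])\cdot \mu([Y,V])$ on generators with $X, Y$ smooth and $W, V$ projective. Since $W, V$ projective in particular makes $\Sing(W), \Sing(V)$ complete, Theorem~\ref{t:combined-thomseb-smooth-proper} (which packages the Thom-Sebastiani Theorem~\ref{t:thom-sebastiani} with properness) identifies the right-hand side as
\[
\mu([X,W]) \cdot \mu([Y,V]) = \ul{\bfMF(W)^{\dg,\natural}} \bullet \ul{\bfMF(V)^{\dg,\natural}} = \ul{\bfMF(W*V)^{\dg,\natural}}.
\]
So multiplicativity boils down to the single equality $\mu([X\times Y, W*V]) = \ul{\bfMF(W*V)^{\dg,\natural}}$. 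The point is that this is \emph{not} a direct consequence of the definition of $\mu$: the defining formula $\mu([Z,U]) = \ul{\bfMF(U)^{\dg,\natural}}$ is only known to hold when $U$ is proper, and $W*V\colon X\times Y\to \DA^1$ is typically not proper even when $W$ and $V$ are.

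To close this gap I would invoke the compactification result announced as Proposition~\ref{p:compactification}: a smooth variety $\tildew{Z}$ containing $X\times Y$ as an open subvariety, together with a projective morphism $\tildew{W}\colon \tildew{Z} \to \DA^1$ extending $W*V$, whose boundary $D := \tildew{Z}\setminus(X\times Y)$ is arranged so that applying $\mu$ to $[D, \tildew{W}|_D]$ matches the contribution of the singularity categories of $\tildew{W}$ lying over $D$. The scissor relation $[X\times Y, W*V] = [\tildew{Z}, \tildew{W}] - [D, \tildew{W}|_D]$ in $K_0(\Var_{\DA^1})$, combined with additivity of $\mu$, will then give
\[
\mu([X\times Y, W*V]) = \ul{\bfMF(\tildew{W})^{\dg,\natural}} - \mu([D, \tildew{W}|_D]),
\]
and the design of the compactification will force the right-hand side to collapse to $\ul{\bfMF(W*V)^{\dg,\natural}}$: the critical values of $\tildew{W}$ lying outside $X\times Y$ contribute only through the boundary correction, while the critical values inside $X\times Y$ reproduce those of $W*V$. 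The hardest step of the entire argument — and the content of Proposition~\ref{p:compactification} — is to produce such a geometrically controlled compactification with the correct cancellation of boundary contributions; once it is in hand, the rest follows by formal additivity and Thom-Sebastiani. Although resolution of singularities (available since $\charakteristik k = 0$) readily supplies smooth projective compactifications, the fine control over where $\Sing(\tildew{W})$ meets $D$ is the nontrivial geometric input.
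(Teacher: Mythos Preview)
Your strategy matches the paper's: uniqueness from Bittner's presentation, additivity built from Proposition~\ref{p:morphism-of-additive-groups} together with the splitting $\ul{\mathcal{A}\times\mathcal{B}}=\ul{\mathcal{A}}+\ul{\mathcal{B}}$, Thom--Sebastiani for the product formula, and Proposition~\ref{p:compactification} to handle the non-properness of $W*V$. Your construction of $\mu$ as $\sum_{a\in k}\mu_0\circ\sigma_a$ is a legitimate minor variant of the paper's direct definition.

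Your account of how the compactification finishes the argument has two genuine gaps, however. First, the boundary $D=Z\setminus(X\times Y)$ is not smooth but only simple normal crossing, so you cannot evaluate $\mu([D,h|_D])$ by the defining formula; instead one writes
\[
[X\times Y,W*V]=[Z,h]-\sum_i[D_i,h_i]+\sum_{i<j}[D_{ij},h_{ij}]-\cdots
\]
by inclusion--exclusion over the smooth strata. These boundary terms vanish under $\mu$ not through any cancellation of critical values but because Proposition~\ref{p:compactification}.\ref{enum:compactification-intersections-proper-and-smooth} arranges that each $h_{i_1\cdots i_p}$ is a \emph{smooth} morphism, whence $\bfMF(h_{i_1\cdots i_p})=0$ by Lemma~\ref{l:D-Sg-W-vanishes}; indeed condition~\ref{enum:compactification-critical-points} says $\Sing(h)=\Sing(W*V)$, so there are no critical points on the boundary at all. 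Second, this only yields $\mu([X\times Y,W*V])=\ul{\bfMF(h)^{\dg,\natural}}$, and you still need $\ul{\bfMF(h)^{\dg,\natural}}=\ul{\bfMF(W*V)^{\dg,\natural}}$. That step is not part of the ``design of the compactification'' and is not purely formal: the paper lifts the open restriction $j^*$ to the enhancements and shows that $j^*\colon\bfMF(Z,h-a)\to\bfMF(X\times Y,W*V-a)$ is an equivalence for each $a$, using $(Z_a)^\sing=((X\times Y)_a)^\sing\subset(X\times Y)_a$ (a consequence of condition~\ref{enum:compactification-critical-points}) together with Orlov's localization result \cite[Prop.~1.3]{orlov-equivalences-LG-models}.
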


\begin{proof}
  If $\mathcal{A}$ and $\mathcal{B}$ are saturated dg categories,
  then $\mathcal{A} \times \mathcal{B}$ is saturated and
  $\ul{\mathcal{A} \times \mathcal{B}} =\ul{\mathcal{A}} +
  \ul{\mathcal{B}}$ in $K_0(\sat_k^{\DZ_2})$ since there are
  semi-orthogonal decompositions $[\mathcal{A} \times
  \mathcal{B}]=\langle [\mathcal{A}], [\mathcal{B}]\rangle =
  \langle [\mathcal{B}], [\mathcal{A}]\rangle.$
  If we use the isomorphism
  $K_0^\bl(\Var_{\DA^1}) \sira
  K_0(\Var_{\DA^1})$ of abelian groups 
  and proceed as in the proof of
  Proposition~\ref{p:morphism-of-additive-groups}
  (using the defining equation \eqref{eq:def-Perf-W-dg}) we see
  that there 
  is a unique morphism 
  $\mu \colon  K_0(\Var_{\DA^1})\ra K_0(\sat_k^{\DZ_2})$
  of abelian groups 
  mapping $[X, W]$ to
  $\ul{\bfMF(W)^{\dg,\natural}}$ whenever $X$ is smooth and $W$ is
  proper, and that it is uniquely determined by its values on
  $[X,W]$ for $X$ smooth (connected) and $W$ projective.
  It is clear that $\mu$ sends the unit $[\Spec k, 0]$ of 
  $K_0(\Var_{\DA^1})$ to the unit $\ul{\Perf(k)}$ of
  $K_0(\sat_k^{\DZ_2}).$ 

  We need to prove that $\mu$ is compatible with
  multiplication.  
  Recall that the multiplication is easy to define on 
  $K^0(\Var_{\DA^1})$ but not on
  $K_0^\bl(\Var_{\DA^1}).$
  Let $X$ and $Y$ be smooth connected varieties
  with projective morphisms
  $W \colon X \ra \DA^1$ and $V \colon Y\ra \DA^1.$
  By definition of $\mu$ and
  Theorem~\ref{t:combined-thomseb-smooth-proper} 
  we have
  \begin{equation*}
    \mu([X,W]) \bullet \mu([Y,V]
    =
    \ul{\bfMF(W)^{\dg,\natural}} \bullet \ul{\bfMF(V)^{\dg,\natural}} 
    =
    \ul{\bfMF(W*V)^{\dg,\natural}}
  \end{equation*}
  in $K_0(\sat_k^{\DZ_2}).$
  If $W$ or $V$ is constant, then $W*V$ is projective and hence
  $\ul{\bfMF(W*V)^{\dg,\natural}}= \mu([X \times Y, W*V),$ so $\mu$
  is multiplicative.
 
  Hence we can assume that both $W$ and $V$ are flat.
  Since $W*V$ might not be projective, 
  it is not clear 
  that $\mu$ maps  
  $[X \times Y, W*V]$ to $\ul{\bfMF(W*V)^{\dg,\natural}}.$
  In order to prove this it is enough to find
  smooth varieties $Z_i$ together with
  projective 
  morphisms $W_i \colon  Z_i \ra \DA^1$ and integers $n_i$ such that
  \begin{align*}
    [X \times Y, W*V] & = \sum_i \;n_i\;[Z_i,W_i]
    && \text{in $K_0(\Var_{\DA^1}),$ and}\\
    \ul{\bfMF(W*V)^{\dg,\natural}} & = \sum
    \;n_i\;\ul{\bfMF(W_i)^{\dg,\natural}} 
    && \text{in $K_0(\sat_k^{\DZ_2}).$}
  \end{align*}
  This can be done using Proposition~\ref{p:compactification}
  below
  which shows that the morphism $W*V$ can be "compactified"
  in a nice way.
  Using the 
  notation introduced there, 
  it is easy
  to see that
  \begin{equation*}
    [X\times Y,W*V]=[Z,h]-\sum_{i}[D_i,h_i]+\sum_{i<j}[D_{ij},h_{ij}]
    - \dots + (-1)^{s-1}[D_{12 \dots s}, h_{12 \dots s}]
  \end{equation*}
  in $K_0(\Var_{\DA^1}).$
  On the right-hand side, $Z$ and all $D_{i_1 \dots i_p}$ 
  are smooth quasi-projective varieties, and $h$ and all 
  $h_{i_1 \dots i_p}$ are projective morphisms, 
  by 
  Proposition~\ref{p:compactification}.\ref{enum:compactification-intersections-proper-and-smooth}.
  Hence we obtain
  \begin{equation*}
    \mu([X\times Y,W*V])=\ul{\bfMF(h)^{\dg,\natural}}-
    \sum_{i} \ul{\bfMF(h_i)^{\dg,\natural}}+ \dots
    + (-1)^{s-1} \ul{\bfMF(h_{12 \dots s})^{\dg,\natural}}.
  \end{equation*}
  Lemma~\ref{l:D-Sg-W-vanishes}
  and
  Proposition~\ref{p:compactification}.\ref{enum:compactification-intersections-proper-and-smooth}
  again show that 
  $\ul{\bfMF(h_{i_1 \dots i_p})^{\dg,\natural}}=0$
  for all tuples $(i_1,
  \dots, i_p)$ with $p \geq 1.$
  Hence it is enough to show that
  $\ul{\bfMF(h)^{\dg,\natural}}=\ul{\bfMF(W*V)^{\dg,\natural}}.$ 
  
  Let $j \colon  X \times Y \ra Z$ be the open inclusion, and let $a \in
  k.$ The functor $j^* \colon \bfMF(Z, h-a) \ra \bfMF(X \times Y,
  W*V-a)$ lifts to a dg functor $j^* \colon  \bfMF(Z, h-a)^\dg \ra
  \bfMF(X \times Y, W*V-a)^\dg$ if we work for example with the
  enhancements using injective quasi-coherent sheaves.  From the
  defining equation~\eqref{eq:def-Perf-W-dg} it is clearly enough
  to show that this functor is a quasi-equivalence, or
  equivalently, that $j^* \colon \bfMF(Z, h-a) \ra \bfMF(X \times Y,
  W*V-a)$ is an equivalence.  Note that $W*V$ and hence $h$ are
  flat, so we can use Orlov's equivalence
  \cite[Thm.~\ref{semi:t:factorizations=singularity}]{valery-olaf-matfak-semi-orth-decomp}
  and have to prove that $j^* \colon D_\Sg(Z_a) \ra D_\Sg((X \times
  Y)_a)$ is an equivalence.  But
  equation~\eqref{eq:reduced-SingW-cap-Xa-equals-reduced-Xa-sing}
  in Remark~\ref{rem:SingW-cap-Xa-versus-Xa-sing} and
  Proposition~\ref{p:compactification}.\ref{enum:compactification-critical-points}
  imply that
  \begin{equation*}
    (Z_a)^\sing=|\Sing(h) \cap Z_a|
    =
    |\Sing(W*V) \cap Z_a| 
    =((X \times Y)_a)^\sing
    \subset (X \times Y)_a,
  \end{equation*}
  so we can use
  \cite[Prop.~1.3]{orlov-equivalences-LG-models}.
\end{proof}

\begin{remark}
  \label{rem:lg-measure-onW*V}
  Let $X$ and $Y$ be smooth varieties with proper morphisms
  $W \colon  X \ra \DA^1$ and $V \colon  Y \ra \DA^1.$
  Then we see from
  Theorems~\ref{t:category-of-singularities-induces-ring-morphism}
  and \ref{t:combined-thomseb-smooth-proper}
  that
  \begin{equation*}
    \mu([X \times Y, W*V])= \mu([X,W]) \bullet \mu([Y,V]) =
    \ul{\bfMF(W)^{\dg,\natural})} \bullet \ul{\bfMF(V)^{\dg,\natural})} =
    \ul{\bfMF(W*V)^{\dg,\natural}}. 
  \end{equation*}
  This shows that the Landau-Ginzburg motivic measure $\mu$ 
  sends $W*V \colon  X \times Y \ra \DA^1$ to $\ul{\bfMF(W*V)^{\dg,\natural}}$ 
  even though $W*V$ might not be proper. This statement is
  slightly more general than what we showed in the proof of
  Theorem~\ref{t:category-of-singularities-induces-ring-morphism}.\end{remark}

\begin{remark}
  \label{rem:class-L-over-A1-minus-one-goes-to-zero}
  From \cite[Corollary~\ref{semi:c:semi-orthog-1-mf-dg-level}]{valery-olaf-matfak-semi-orth-decomp}
  we see that $\mu([\DP^n_k,0])= (n+1) \ul{\Perf(k)}=n+1.$
  Recall the element $\DL_{(\DA^1,0)}
  :=[\DA^1,0] \in K_0(\Var_\DA^1)$
  from Remark~\ref{rem:class-L-over-A1}.
  Then we obtain $\mu(\DL_{(\DA^1,0)})=
  \mu([\DP^1_k,0])-\mu([\Spec k,0])=2-1=1.$
  This implies that $\mu$ factors to a morphism
  \begin{equation*}
    \mu \colon  K_0(\Var_{\DA^1})/(\DL_{(\DA^1,0)}-1) \ra K_0(\sat_k^{\DZ_2})
  \end{equation*}
  of rings, cf.\
  \cite[sect.~8.2]{bondal-larsen-lunts-grothendieck-ring}. 
  
  If $W \colon  X \ra \DA^1$ is a proper and smooth morphism, then
  certainly $\bfMF(W)=0$ by Lemma~\ref{l:D-Sg-W-vanishes} and
  hence $\mu([X,W])=0.$ This yields many other elements of the
  kernel of $\mu.$ For example $[\DA^1, \id_{\DA^1}]$ lies in the
  kernel of $\mu.$
\end{remark}

\section{Compactification}
\label{sec:compactification}

Let $k$ be an algebraically closed field of characteristic zero.

\begin{proposition}
  \label{p:compactification}
  Let $X$ and $Y$ be smooth 
  varieties and let
  $W \colon  X \ra \DA^1$ and $V \colon Y\ra \DA^1$ be 
  projective
  morphisms (hence $X$ and $Y$ are
  quasi-projective varieties).
  Consider the convolution
  \begin{equation*}
    W*V \colon X\times Y\xra{W \times V} \DA^1\times \DA^1\xra{+} \DA^1.
  \end{equation*}
  Then there exists a smooth quasi-projective variety $Z$ with
  an open embedding $X\times Y \hra Z$ and a
  projective morphism $h \colon Z\ra \DA^1$ such that the
  following conditions are satisfied.
  \begin{enumerate}[label=(\roman*)]
  \item
    \label{enum:compactification-diagram}
    The diagram
    \begin{equation*}
      \xymatrix{
        {X\times Y} \iar[r] \ar[d]^-{W*V} & {Z} \ar[d]^-h\\
        {\DA^1} \ar@{}[r]|-{=} & {\DA^1}
      }
    \end{equation*}
    commutes.
  \item
    \label{enum:compactification-critical-points}
    All critical points of $h$ are contained
    in $X\times Y,$ i.\,e.\ $\Sing(W*V)=\Sing(h).$
  \item
    \label{enum:compactification-boundary-snc}
    We have $Z \setminus X \times Y =  \bigcup_{i=1}^s D_i$ where
    the
    $D_i$ are pairwise distinct smooth
    prime divisors.
    More precisely, $Z \setminus X \times Y$ is the support
    of a snc (= simple normal crossing) divisor.
  \item
    \label{enum:compactification-intersections-proper-and-smooth}
    For every $p$-tuple $(i_1, \dots, i_p)$ of indices
    (with $p \geq 1$) the morphism
    \begin{equation*}
      h_{i_1 \dots i_p} \colon  D_{i_1 \dots i_p}:=D_{i_1} \cap \dots \cap
      D_{i_p} \ra \DA^1   
    \end{equation*}
    induced by $h$ is projective and smooth. In
    particular, all
    $D_{i_1 \dots i_p}$ are smooth quasi-projective varieties.
  \end{enumerate}
\end{proposition}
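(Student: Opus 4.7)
The plan is to compactify $W$ and $V$ separately into projective morphisms to $\DP^1,$ assemble them into a rational analogue of the convolution, resolve the resulting indeterminacy, and finally adjust the boundary by further Hironaka-type blowups to secure (ii) and (iv). First, using Hironaka's theorem I would choose smooth projective varieties $\bar X, \bar Y$ with open embeddings $X \hookrightarrow \bar X$ and $Y \hookrightarrow \bar Y,$ together with projective morphisms $\bar W \colon \bar X \to \DP^1$ and $\bar V \colon \bar Y \to \DP^1$ extending $W$ and $V,$ arranging that the boundaries $\bar X \setminus X = \bar W^{-1}(\infty)$ and $\bar Y \setminus Y = \bar V^{-1}(\infty)$ are supports of snc divisors.

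Next, since the addition $+ \colon \DA^1 \times \DA^1 \to \DA^1$ extends to a rational map $\sigma \colon \DP^1 \times \DP^1 \dashrightarrow \DP^1$ whose indeterminacy locus consists of the single point $(\infty, \infty),$ the composition $\bar h := \sigma \circ (\bar W, \bar V) \colon \bar X \times \bar Y \dashrightarrow \DP^1$ has indeterminacy locus contained in $\bar W^{-1}(\infty) \times \bar V^{-1}(\infty),$ which is disjoint from $X \times Y.$ Applying resolution of indeterminacies together with embedded resolution of singularities, I would then produce a smooth projective variety $\tilde Z$ with a projective birational morphism $\pi \colon \tilde Z \to \bar X \times \bar Y$ that is an isomorphism over $X \times Y,$ and a projective morphism $\tilde h \colon \tilde Z \to \DP^1$ extending $W * V$ via $\pi,$ such that $\tilde h^{-1}(\infty) \cup \pi^{-1}\bigl((\bar X \times \bar Y) \setminus (X \times Y)\bigr)$ is supported on an snc divisor in $\tilde Z.$ Setting $Z_0 := \tilde h^{-1}(\DA^1)$ and $h_0 := \tilde h|_{Z_0}$ then yields a smooth quasi-projective variety together with a projective morphism $h_0 \colon Z_0 \to \DA^1$ extending $W * V,$ an open embedding $X \times Y \hookrightarrow Z_0,$ and an snc boundary $D^0 := Z_0 \setminus (X \times Y),$ which already verifies (i) and (iii) for the pair $(Z_0, h_0).$

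The main obstacle is to upgrade $(Z_0, h_0)$ so that (iv) also holds; condition (ii) will then follow automatically. By generic smoothness on the target, for each multi-index $(i_1, \ldots, i_p)$ the induced map $D^0_{i_1 \cdots i_p} \to \DA^1$ is smooth over the complement of a finite subset of $\DA^1.$ Using iterated Hironaka-type blowups with smooth centers contained in the boundary $D^0$ (so that the open piece $X \times Y$ is never touched, preserving smoothness of the total space and the snc character of the boundary), one can modify $(Z_0, h_0)$ to a smooth quasi-projective variety $Z$ with projective morphism $h \colon Z \to \DA^1$ and snc boundary $D = \sum_{i=1}^{s} D_i$ such that every stratum $D_{i_1 \cdots i_p}$ now maps submersively to $\DA^1;$ this is essentially a weak toroidalization of $h$ relative to the boundary divisor and is the technical heart of the argument. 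Projectivity of each $h_{i_1 \cdots i_p}$ is then automatic since $D_{i_1 \cdots i_p} \subset Z$ is closed and $h$ is projective. Finally, (ii) follows because for $p$ lying in the smooth interior of a stratum $D_I,$ surjectivity of the restriction of $dh_p$ to $T_p D_I \subset T_p Z$ forces $dh_p \neq 0,$ so $p$ is not a critical point of $h;$ hence $\Sing(h) \subseteq X \times Y,$ and equality follows from $h|_{X \times Y} = W * V.$
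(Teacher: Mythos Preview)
Your outline is reasonable up through the construction of $(Z_0,h_0)$ with the snc boundary, but step (iv) is a genuine gap. You assert that ``iterated Hironaka-type blowups with smooth centers contained in the boundary'' will make every stratum $D_{i_1\cdots i_p}$ smooth over $\DA^1$, and you label this ``essentially a weak toroidalization.'' But weak toroidalization theorems (Abramovich--Karu and descendants) require modifying the base as well as the source, and here the base $\DA^1$ must stay fixed. There is no standard result saying that one can, by blowups only in the boundary of the source, force all boundary strata to become submersive over a fixed one-dimensional base. Without this, you have no mechanism to produce (iv), and hence no (ii) either. In short, you have correctly located the technical heart of the argument and then skipped it.

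The paper's proof is organized around exactly this difficulty and avoids it by a structural trick specific to the convolution. Instead of resolving the rational map $\DP^1\times\DP^1\dashrightarrow\DP^1$ directly, they write down an explicit morphism $\sigma\colon\DP^1\times\DA^1\to\DP^1\times\DP^1$ under which addition $\DA^1\times\DA^1\xra{+}\DA^1$ becomes the second projection $\pr_2$. Pulling back $\bar W\times\bar V$ along $\sigma$ yields a variety $T$ over $\DA^1$ whose structure map is, \'etale locally near the boundary, a genuine projection $\Spec k[v]\times L\to\Spec k[v]$ for an explicit affine scheme $L=\Spec k[x,y]/(x^\mu-y^\nu)$. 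Now they apply a \emph{functorial} resolution-and-principalization procedure $c(-)$ (Koll\'ar) that commutes with smooth base change: hence $c(\Spec k[v]\times L)=\Spec k[v]\times c(L)$, and the map to $\DA^1=\Spec k[v]$ is still a projection. Every stratum of the resulting snc divisor is therefore of the form $\Spec k[v]\times(\text{stratum of }c(L))$, automatically smooth over $\DA^1$. The \'etale-local computation globalizes precisely because the resolution is functorial. So the paper never needs a toroidalization step at all; the product structure coming from $\sigma$ does the work.
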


\begin{remark}
  \label{rem:compactification-W-or-V-constant}
  To prove Proposition~\ref{p:compactification}
  one may assume that both $X$ and $Y$ are connected. 
  If the map $W$ is not flat then its image is one point
  $W(X)=a\in \DA^1,$ $X$ is projective,  
  and the map 
  $W*V \colon  X\times Y\ra \DA^1$ is already projective.
  So we can take $Z=X\times Y$ and
  $h=W*V.$ 
  This shows that 
  Proposition~\ref{p:compactification}
  is interesting only in case both $W$ and $V$ are flat.
  The proof given below works in general.
\end{remark}

We need some preparations for the proof of this proposition. 
Let $U$ be a scheme 
and $I \subset \mathcal{O}_U$ an ideal
sheaf. We say that the pair
$(U,I)$ satisfies condition~\ref{enum:non-zero} if
\begin{enumerate}[label=(K)]
\item
  \label{enum:non-zero}
  $U$ is a reduced
  scheme of finite type (over $k$), $I$ is not
  zero on any irreducible component of $U,$ and the closed
  subscheme $\mathcal{V}(I)$ defined by $I$ contains the singular locus
  $U^\sing$ of $U.$
\end{enumerate}

\begin{remark}
  \label{rem:resolution-and-principalization}
  We recall some results on resolution of
  singularities and monomialization (principalization) from
  \cite{kollar-singularities}. 
  Assume that 
  $(U,I)$ as above satisfies condition \ref{enum:non-zero}.
  Let $\tildew{U} \ra U$ 
  be the
  resolution of singularities from
  \cite[Thm.~3.36]{kollar-singularities} (it seems preferable to
  start with a reduced scheme there). 
  Then $\tildew{U}$ together with the inverse image ideal sheaf
  $\tildew{I}$ of $I$ under $\tildew{U} \ra U$ also satisfies
  condition~\ref{enum:non-zero}: $\tildew{U}$ is again reduced
  (\cite[Lemma~8.1.2]{Liu}) of
  finite type, $\tildew{I}$ is not zero
  on any irreducible component of $\tildew{U}$ since $\tildew{U}
  \ra U$ is birational (as confirmed to us by J{\'a}nos
  Koll{\'a}r), 
  and $\tildew{U}^\sing = \emptyset.$
  So we can apply
  monomialization (principalization) 
  \cite[Thm.~3.35]{kollar-singularities} 
  to this inverse image ideal sheaf 
  (and the empty snc
  divisor)
  and obtain a morphism $c(U)=c_I(U) \ra \tildew{U}.$
  Let $\gamma=\gamma_U=\gamma_{U,I}$
  be the composition $c(U) \ra \tildew{U} \ra U.$ Then $c(U)$ is
  a smooth 
  scheme of 
  finite type over $k,$
  the inverse image ideal sheaf
  $\gamma\inv(I) \cdot \mathcal{O}_{c(U)} \subset
  \mathcal{O}_{c(U)}$ is the ideal sheaf of a
  snc divisor, and $\gamma$ is an isomorphism over $U \setminus
  \mathcal{V}(I).$ Moreover, $\gamma$ is a composition of blowing-up
  morphisms and in particular a proper morphism. If $U$ is
  quasi-projective (resp.\ projective), so is $c(U),$ and
  $\gamma$ is a projective morphism.
  As described in \cite[3.34.1]{kollar-singularities},
  the association
  \begin{equation}
    \label{eq:principalize-resolve}
    (U, I) \mapsto \big(c_I(U) \xra{\gamma} U\big)
  \end{equation}
  commutes with smooth (and in particular \'etale)
  morphisms.
  This means that any smooth or \'etale morphism
  $f \colon U' \ra U,$ gives rise to a pullback diagram
  \begin{equation}
    \label{eq:c-smooth-bc}
    \xymatrix{
      {c_{f\inv(I) \cdot \mathcal{O}_{U'}}(U')} \ar[r] \ar[d]^-{\gamma_{U'}} &
      {c_I(U)} \ar[d]^-{\gamma_{U}} \\
      {U'} \ar[r]^-{f} &
      {U.}
    }
  \end{equation}
\end{remark}

The following proposition provides useful
compactifications
and describes them "\'etale locally".
We view $\DA^1 \subset \DP^1,$ $z \mapsto [1,z],$ 
as an open subvariety, and let $\infty=[0,1] \in \DP^1.$
We write $\DA^1_\infty$ for $\DA^1$ viewed as an open
neighborhood of $\infty$ via $z \mapsto [z,1].$ 

\begin{proposition}
  \label{p:snc-compactification-at-infinity}
  Let $X$ be a smooth (quasi-projective)
  variety and let $W \colon X \ra \DA^1$
  be a
  projective morphism.
  Let $I_{\infty}$ be the ideal
  sheaf of the closed subvariety
  $\{\infty\} \subset \DP^1.$
  Then there is a smooth projective variety $\ol{X}$ with an open
  embedding $X \hra \ol{X}$
  and a projective morphism
  $\ol{W} \colon \ol{X} \ra \DP^1$ such
  that the diagram
  \begin{equation}
    \label{eq:pullback-snc-compacti-at-infinity}
    \xymatrix{
      {X} \iar[r] \ar[d]^-{W} & {\ol{X}} \ar[d]^-{\ol{W}}\\
      {\DA^1} \iar[r] & {\DP^1}
    }
  \end{equation}
  is a pullback diagram
  and such that the inverse image ideal sheaf
  $\ol{W}\inv(I_\infty) \cdot \mathcal{O}_{\ol{X}}
  \subset \mathcal{O}_{\ol{X}}$
  is a locally monomial ideal, i.\,e.\ the ideal sheaf of a
  snc divisor.


  In particular, for any (closed)
  point $p$ in the fiber
  $\ol{X}_\infty:=\ol{W}\inv(\infty)$ at infinity, there are
  an \'etale morphism
  $u \colon  U \ra \ol{X}$ with $p$ in its image,
  uniformizing parameters $\ul{x}=(\ul{x}_1, \dots, \ul{x}_m)$ on $U$
  and a tuple $\mu=(\mu_1, \dots, \mu_s)$ of positive integers,
  for some $1 \leq s \leq m,$ such that the diagram
  \begin{equation*}
    \xymatrix{
      {\ol{X}} \ar[d]^-{\ol{W}} &
      {U} \ar[l]_-{u} \ar[r]^-{\ul{x}} &       
      {\DA^m} \ar[d]^-{\mu}\\
      {\DP^1} \ar@{}[rr]|-{\supset} &&
      {\DA^1_\infty} \\
    }
  \end{equation*}
  commutes, where $\ul{x}$ is the morphism given by the
  uniformizing parameters and $\mu$ is the morphism mapping
  $(t_1,\dots, t_m)$ to $t^\mu:=t_1^{\mu_1} \dots t_s^{\mu_s}.$
\end{proposition}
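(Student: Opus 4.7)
The plan is to build $\ol{X}$ in three steps (compactify, resolve, principalize), then read off the étale local structure from the fact that the boundary ideal has been made locally monomial.

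First I would compactify $W$ over $\DP^1.$ Since $W$ is projective, choose a factorization of $W$ as $X \hra \DP^N \times \DA^1 \twoheadrightarrow \DA^1,$ with $X$ closed in $\DP^N \times \DA^1,$ and let $\tildew{X}$ be the closure of $X$ in $\DP^N \times \DP^1.$ The projection $\tildew{W} \colon \tildew{X} \ra \DP^1$ is projective, $X$ is open in $\tildew{X},$ and because $X$ is already closed in $\DP^N \times \DA^1$ one checks that $\tildew{X} \times_{\DP^1} \DA^1 = X,$ giving the pullback square~\eqref{eq:pullback-snc-compacti-at-infinity} at this intermediate stage.

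Next I would resolve the singularities of $\tildew{X}$ and then principalize the boundary. By \cite[Thm.~3.36]{kollar-singularities} there is a proper birational morphism $\tildew{X}' \ra \tildew{X}$ with $\tildew{X}'$ smooth projective which is an isomorphism over the smooth locus of $\tildew{X};$ since $X$ is smooth and lies in that locus, the pullback of $\DA^1 \subset \DP^1$ to $\tildew{X}'$ is still $X.$ Now let $I$ be the ideal sheaf on $\tildew{X}'$ of the closed subset $\tildew{X}' \setminus X = (\tildew{W}')^{-1}(\infty),$ and form $\ol{X} := c_I(\tildew{X}')$ as in Remark~\ref{rem:resolution-and-principalization}. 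The associated map $\gamma_{\tildew{X}'} \colon \ol{X} \ra \tildew{X}'$ is a projective composition of blowing-ups which is an isomorphism over $\tildew{X}' \setminus \mathcal{V}(I) = X,$ so $X$ embeds as an open subset, and setting $\ol{W}$ to be the composition with $\tildew{W}'$ preserves the pullback square. The pullback of $I_\infty$ under $\ol{W}$ equals $\gamma_{\tildew{X}'}^{-1}(I) \cdot \mathcal{O}_{\ol{X}},$ which by the principalization property is the ideal of an snc divisor on $\ol{X}.$ This gives \ref{enum:compactification-diagram}--\ref{enum:compactification-boundary-snc} in the compactification statement.

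For the local form at a point $p \in \ol{X}_\infty,$ I would argue as follows. Since the ideal $\ol{W}^{-1}(I_\infty) \cdot \mathcal{O}_{\ol{X}}$ is the ideal of an snc divisor, there exists a Zariski open neighborhood $V$ of $p$ together with a regular system of parameters $x_1,\dots,x_m \in \mathcal{O}_{\ol{X},p}$ such that, after possibly shrinking, this ideal is generated by a monomial $x_1^{\mu_1}\cdots x_s^{\mu_s} \cdot v$ with $v$ a unit. Because $\infty \in \DP^1$ is a smooth point we may choose a coordinate $z'$ on the affine chart $\DA^1_\infty$ vanishing at $\infty,$ and then $\ol{W}^*(z') = u \cdot x_1^{\mu_1}\cdots x_s^{\mu_s}$ for some unit $u.$ In characteristic zero, after passing to the étale cover $U \ra V$ obtained by adjoining a $\mu_1$-th root $w$ of $u$ and replacing $x_1$ by $x_1 w,$ one absorbs the unit into the first coordinate and realizes $\ol{W}\comp u \colon U \ra \DA^1_\infty$ as the monomial map $(t_1,\dots,t_m) \mapsto t_1^{\mu_1} \cdots t_s^{\mu_s},$ which is exactly the local factorization claimed. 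The main obstacle is bookkeeping: one must verify that the resolution and principalization steps (which blow up only over $\tildew{X} \setminus X$) truly leave $X$ untouched so that the pullback square survives, and that the étale-local unit-absorption above can be performed coherently (this is where characteristic zero is used, via $n$-th roots of units being étale).
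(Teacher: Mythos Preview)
Your argument is essentially the paper's: compactify in $\DP^N\times\DP^1$, resolve and principalize the fiber over $\infty$ (the paper bundles these two steps into a single application of the construction $c$ from Remark~\ref{rem:resolution-and-principalization} to the pair $(K,\kappa^{-1}(I_\infty)\cdot\mathcal O_K)$), then absorb the unit into the first coordinate by an \'etale $\mu_1$-th root. One small correction: take $I$ to be the inverse image ideal $(\tildew W')^{-1}(I_\infty)\cdot\mathcal O_{\tildew X'}$ rather than the reduced ideal of the closed set $(\tildew W')^{-1}(\infty)$, so that your asserted equality $\ol W^{-1}(I_\infty)\cdot\mathcal O_{\ol X}=\gamma^{-1}(I)\cdot\mathcal O_{\ol X}$ holds on the nose by functoriality of inverse image ideals.
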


\begin{proof}
  By assumption on $W$ we have a commutative diagram, for some $N
  \in \DN,$
  \begin{equation*}
    \xymatrix{
      {X} \iar[r] \ar[d]^-{W} & {\DP^N_{\DA^1}} \iar[r] \ar[d]
      & {\DP^N_{\DP^1}} \ar[d] 
      \\
      {\DA^1} \ar@{}[r]|-{=} & {\DA^1} \iar[r] & {\DP^1}
    }
  \end{equation*}
  where the first arrow in the first row is a closed
  embedding. Let $K$ be the closure of $X$ in
  $\DP^N_{\DP^1}=\DP^N \times \DP^1.$ 
  Then $K$ is a projective variety with an open embedding $X
  \subset K$ and 
  a projective morphism $\kappa \colon  K \ra \DP^1$
  such that the diagram
  \begin{equation*}
    \xymatrix{
      {X} \iar[r] \ar[d]^-{W} & {K} \ar[d]^-{\kappa}\\
      {\DA^1} \iar[r] & {\DP^1}
    }
  \end{equation*}
  is a pullback diagram.
  This compactifies $W \colon  X \ra \DA^1$ at infinity.
  The singular points of $K$ are all contained in the fiber
  of $\kappa$ over $\infty.$ So clearly 
  $(K, \kappa\inv(I_\infty)\cdot \mathcal{O}_K)$ satisfies
  condition~\ref{enum:non-zero},
  and we obtain a morphism
  $\gamma \colon  c_{\kappa\inv(I_\infty) \cdot \mathcal{O}_K}(K) \ra K$
  as explained in 
  Remark~\ref{rem:resolution-and-principalization}.
  Define $\ol{X}:= c_{\kappa\inv(I_\infty) \cdot _K}(K)$
  and $\ol{W}:= \kappa \comp \gamma.$ Then $\ol{X}$ is smooth
  projective and $\ol{W}$ is projective, and from the
  construction we obtain the
  pullback diagram~\eqref{eq:pullback-snc-compacti-at-infinity}.

  It remains to provide the local description of $\ol{W}$ around
  $p \in \ol{X}_\infty.$ We can assume that $p$ is a closed
  point.
  Let $\ol{X}':= \ol{W}\inv(\DA^1_\infty)$ and view the
  restriction $\ol{W} \colon  \ol{X}' \ra \DA^1_\infty$
  as a regular function on $\ol{X}'.$ 
  It generates the inverse image ideal sheaf
  $\ol{W}\inv(I_\infty) \cdot \mathcal{O}_{\ol{X}},$
  so its divisor is the snc divisor 
  of this ideal sheaf. 
  Hence there 
  is an open neighborhood $U'$ of $p$ in $\ol{X}$ with
  uniformizing parameters
  $(\ul{y}_1, \dots, \ul{y}_m)$ centered at $p$ and
  a tuple $\mu=(\mu_1, \dots, \mu_s)$ of positive integers,
  for some $1 \leq s \leq m,$ such that
  $\ol{W}= v \ul{y}_1^{\mu_1} \dots \ul{y}_s^{\mu_s}$
  for some unit $v$ in $\mathcal{O}_{\ol{X}}(U').$
  Let $u \colon  U \ra U'$ be the \'etale morphism extracting a $\mu_1$-th
  root of $v.$ Then
  $\ul{x}_1:= \ul{y}_1 v^{1/\mu_1}, \ul{x}_2:=\ul{y}_2,
  \dots, \ul{x}_m:=\ul{y}_m$
  defines uniformizing parameters
  on $U$ which satisfy $\ul{x}_1^{\mu_1} \dots \ul{x}_s^{\mu_s}=
  \ol{W} \comp u.$ 
\end{proof}

We introduce another condition needed in the proof of
Proposition~\ref{p:compactification}. 
Let $(U, I)$ satisfy condition~\ref{enum:non-zero}
and let
$\gamma \colon c(U) \ra U$ be as in
Remark~\ref{rem:resolution-and-principalization}.
Write the snc divisor corresponding to
$\gamma\inv(I) \cdot \mathcal{O}_{c(U)}$ as 
$\sum_{i=1}^s n_i E_i$ with pairwise distinct prime divisors
$E_1, \dots, E_s$ and all $n_i>0.$ Let $f \colon U \ra \DA^1$ be a
regular function.  We say that the triple $(U,I,f)$ satisfies
condition~\ref{enum:NoCrit-Sm} if
\begin{enumerate}[label=(NoCrit-Sm)]
\item
  \label{enum:NoCrit-Sm}
  No critical point of the morphism $f \comp \gamma \colon  c_I(U) \ra
  \DA^1$ is contained in $E_1 \cup \dots \cup E_s,$ and for every
  tuple $(i_1, \dots, i_p)$ of indices (with $p \geq 1$) the
  morphism $E_{i_1} \cap \dots \cap E_{i_p} \ra \DA^1$ induced by
  $f \comp \gamma$ is smooth.  
\end{enumerate}

\begin{proof}[Proof of Proposition~\ref{p:compactification}]
  Consider the morphism
  \begin{align*}
    \sigma \colon  \DP^1 \times \DA^1 & \ra \DP^1 \times \DP^1,\\
    ([z'_0, z'_1], v') & \mapsto ([z'_0, z'_1], [z'_0, z'_0v'-z'_1]).
  \end{align*}
  The image of $\sigma$ is
  $\DA^1 \times \DA^1 \cup \{(\infty, \infty)\}.$
  The fiber of $\sigma$ over $(\infty, \infty)$
  is $E:= \{\infty\} \times \DA^1,$ and $\sigma$ induces an isomorphism
  $\DA^1 \times \DA^1 \sira \DA^1 \times \DA^1.$ Note moreover that
  the diagram
  \begin{equation}
    \label{eq:addition-becomes-projection}
    \xymatrix{
      {\DP^1 \times \DP^1} 
      &
      {\DA^1 \times \DA^1} \ar[d]^-{+} \ar@{}[l]|-{\supset} 
      &
      {\DA^1 \times \DA^1} \ar[l]_-{\sigma}^-{\sim}
      \ar[d]^-{\pr_2} 
      &
      {\DP^1 \times \DA^1} \ar@{}[l]|-{\subset} \ar[d]^-{\pr_2} 
      \\
      &
      {\DA^1} \ar@{}[r]|-{=} &
      {\DA^1} \ar@{}[r]|-{=} &
      {\DA^1} 
   }
  \end{equation}
  commutes. It says that addition $\DA^1 \times \DA^1 \xra{+} \DA^1$
  corresponds under $\sigma$ to the second projection; this
  projection  can easily be extended to the projective morphism 
  on the right. This little construction already does the job in
  case $X=Y=\DA^1$ and $W=V=\id_{\DA^1}.$

  Now let $X,$ $Y$ be smooth (quasi-projective)
  varieties and let
  $W \colon X \ra \DA^1$ and $V \colon  Y \ra \DA^1$ be projective morphisms.
  We can and will assume that $X$ and $Y$ are irreducible.

  We choose $X \hra \ol{X} \xra{\ol{W}} \DP^1$
  and $Y \hra \ol{Y} \xra{\ol{V}} \DP^1$
  having the properties described in
  Proposition~\ref{p:snc-compactification-at-infinity}.
  Consider the pullback diagram
  \begin{equation}
    \label{eq:pullback-named-T}
    \xymatrix{
      {T} \ar[r]^-{\hatw{\sigma}}
      \ar[d]^-{\theta} & {\ol{X} \times \ol{Y}}
      \ar[d]^-{\ol{W} \times \ol{V}}\\
      {\DP^1 \times \DA^1} \ar[r]^-{\sigma} & {\DP^1 \times \DP^1.}
    }
  \end{equation}
  Note that the upper horizontal arrow
  $\hatw{\sigma}$
  in this diagram
  induces an isomorphism
  $\hatw{\sigma} \colon  T':=\theta\inv(\DA^1 \times \DA^1) \sira X \times Y.$
  From \eqref{eq:addition-becomes-projection} it is obvious that
  under this isomorphism the morphism
  $\pr_2 \comp \theta|_{T'} \colon T' \ra \DA^1$
  corresponds to $W * V \colon X \times Y \ra \DA^1.$

  We need to analyze $T$ "\'etale locally" around an arbitrary
  point of 
  $\theta\inv(E).$
  Let $I_E \subset  \mathcal{O}_{\DP^1 \times \DA^1}$ be the ideal
  sheaf of $E.$
  Our analysis will in particular show that
  the pair
  $(T, \theta\inv(I_E) \cdot \mathcal{O}_T)$
  satisfies condition~\ref{enum:non-zero}, so that 
  the morphism $\gamma \colon  Z:=c_{\theta\inv(I_E) \cdot
    \mathcal{O}_T}(T) \ra T,$ is available.
  We will then see that $Z$ together with the composition
  $h \colon  Z \xra{\gamma} T \xra{\theta}
  \DP^1 \times \DA^1 \xra{\pr_2} \DA^1$
  does the job.

  Define
  \begin{equation*}
    B:=\Spec k[z_0,v,(z_0 v -1)\inv]
  \end{equation*}
  and embed this as an open subvariety of $\DP^1 \times \DA^1$
  via $(z'_0, v') \mapsto ([z'_0,1],v').$ 
  So $B$ is contained
  in $\DA^1_\infty \times \DA^1.$ 
  We have
  $B= \sigma\inv(\DA^1_\infty \times
  \DA^1_\infty).$
  Note that $B$ contains
  $E=\{\infty\} \times \DA^1=\{z'_0=0\}$ 
  and that
  $\sigma$ induces the morphism
  \begin{align}
    \label{eq:sigma-restricted}
    \sigma \colon  B & \ra \DA^1_\infty \times \DA^1_\infty,\\
    \notag
    (z'_0, v') & \mapsto ([z'_0, 1], [z'_0\;(z'_0v'-1)\inv, 1]).
  \end{align}

  Let $t \in \theta\inv(E)$
  be the closed
  point around which we will analyze $T$ "\'etale locally".
  Define $(x_\infty, y_\infty) := \hat{\sigma}(t) \in
  \ol{X}_\infty \times \ol{Y}_{\infty}.$
  We use the local description of $\ol{W}$
  around $x_\infty \in \ol{X}_\infty$
  given by 
  Proposition~\ref{p:snc-compactification-at-infinity}.
  There is an \'etale morphism $u \colon U \ra \ol{X}$ whose image
  contains $x_\infty$
  such that $\ol{W} \comp u$ can be factorized as
  $U \xra{\ul{x}} \DA^m \xra{\mu} \DA^1_\infty \subset \DP^1$
  for uniformizing parameters $\ul{x}$ and a suitable $\mu.$
  Similarly we describe $\ol{V}$ locally around
  $y_\infty \in \ol{Y}_\infty$ by an \'etale morphism
  $u' \colon  U' \ra \ol{Y}$ such that $\ol{V} \comp u'$ is given by
  $U' \xra{\ul{y}} \DA^n \xra{\nu} \DA^1_\infty \subset \DP^1$
  for suitable $\ul{y}$ and $\nu.$
  Then $(\ol{W} \times \ol{V}) \comp (u \times u')$
  is equal to the composition
  \begin{equation*}
    U \times U' \xra{\ul{x} \times \ul{y}} \DA^m \times \DA^n
    \xra{\mu \times \nu} \DA^1_\infty \times \DA^1_\infty
    \subset \DP^1 \times \DP^1.
  \end{equation*}
  Consider the pullback diagram
  \begin{equation*}
    \xymatrix{
      {S} \ar[d]^-{\theta'} \ar[r] &
      {\DA^m \times \DA^n} \ar[d]^-{\mu \times \nu}\\
      {B} \ar[r]^-{\sigma} &
      {\DA^1_\infty \times \DA^1_\infty.}
    }
  \end{equation*}
  Note that the pullback of $T$ along the \'etale morphism
  $u \times u'$ coincides with the pullback of $S$ along the
  \'etale morphism $\ul{x} \times \ul{y}.$ Let us denote this
  pullback by $\hatw{S}.$ The \'etale morphism $\hatw{S} \ra T$
  contains $t$ in its image.

  From \eqref{eq:sigma-restricted}
  we see that
  $S$ can be described explicitly as
  \begin{equation*}
    S= \Spec k[v, x, y,
    (x^\mu v-1)\inv]/(x^\mu - (x^\mu v -1)y^\nu)
  \end{equation*}
  where
  $x=x_1, \dots, x_m$ and $y=y_1, \dots, y_n$ and $x^\mu=x_1^{\mu_1}
  \dots x_s^{\mu_s}$ and similarly for $y^\nu.$
  Note that $\theta'^*(z_0)=x^\mu$ and $\pr_2 \comp \theta' = v.$ 
  Let $S' \ra S$ be the (surjective) \'etale morphism
  extracting the
  $\nu_1$-th 
  root of the invertible element $(x^\mu v -1).$ Define new
  coordinates 
  $y_1':=y_1(x^\mu v -1)^{1/\nu_1}, y_2':=y_2, \dots, y_n':= y_n.$
  Then $S'$ is given by
  \begin{equation*}
    S'= \Spec k[v, x, y', (x^\mu v-1)^{-1/\nu_1}]/(x^\mu - y'^\nu).
  \end{equation*}
  There is an obvious \'etale morphism from
  $S'$ to the open subscheme
  $S'':=\Spec k[v, x, y', (x^\mu v-1)^{-1}]/(x^\mu - y'^\nu)$
  of
  \begin{equation*}
    S'''= \Spec k[v] \times L
  \end{equation*}
  where $L :=\Spec k[x, y']/(x^\mu - y'^\nu).$

  Up to now we have constructed a zig-zag of \'etale morphisms
  $T \la \hatw{S} \ra S \la S' \ra S'' \ra S'''.$
  Let $\hatw{T}$ be the pullback of $\hatw{S} \ra
  S$ and $S' \ra S.$ Hence we have \'etale morphisms
  \begin{equation}
    \label{eq:etale-zig-zag-from-T-to-S'''}
    T \xla{\alpha} \hatw{T} \xra{\beta} S'''
  \end{equation}
  and $t$ is in the image of $\alpha$ (since $S' \ra S$ is
  surjective).
  The ideal sheaves $\theta\inv(I_E) \cdot \mathcal{O}_T$ on $T$
  and $(x^\mu)$ on $S'''$ have the same inverse
  image ideal sheaf on $\hatw{T}$ (which also comes from the
  ideal sheaf $(x^\mu)$ on $S$). 
  Correspondingly, we have $\alpha\inv(T')=
  \beta\inv(S''' \setminus \mathcal{V}(x^\mu))$ (recall that
  $\hat{\sigma} \colon  T' \sira X \times Y$).
  Note also that $\alpha^*(\pr_2 \comp \theta)= \beta^*(v)$ as
  functions $\hatw{T} \ra \DA^1.$
  
  Lemma~\ref{l:monom-x-minus-monom-y-reduced} tells us that
  $L$ is a reduced scheme of finite type, that the ideal
  $(x^\mu)$ does not vanish on any irreducible component of $L$
  and that $\mathcal{V}(x^m) \supset L^\sing$ (the singular locus
  of each component is contained in $\mathcal{V}(x^\mu),$ and
  different components do not intersect outside
  $\mathcal{V}(x^\mu)$).
  This just means that $(L, (x^\mu))$ satisfies
  condition~\ref{enum:non-zero}. Hence the same is true for
  $(S''', (x^\mu)).$

  From 
  \cite[Prop.~I.4.9]{knutson-algebraic-spaces}
  we see that $\hatw{T}$ and $\alpha(\hatw{T})$ are reduced
  schemes. This implies that $\hatw{T}$ and $T$ (let $t$ vary) are
  quasi-projective varieties. 

  We claim that $T$ is irreducible. Obviously, $L
  \setminus \mathcal{V}(x^\mu)$ is open and 
  dense in $L.$ 
  By Lemma~\ref{l:open-morphism-inverse-image-dense} below,
  applied to $\hatw{T} \xra{\beta} S''' \ra L,$ we see that  
  $\beta\inv(S''' \setminus \mathcal{V}(x^\mu))$ is open and dense in
  $\hatw{T}.$ Recall that
  $\alpha\inv(T')=
  \beta\inv(S''' \setminus \mathcal{V}(x^\mu)).$ We obtain that
  $\alpha(\alpha\inv(T'))$ is open and dense in
  $\alpha(\hatw{T}).$ In particular, $t$ is in the closure of
  $T'$ in $T.$ Since $t \in \theta\inv(E)=T \setminus T'$ was
  arbitrary and $T' \sira X \times Y$ is irreducible this proves
  that $T$ is irreducible.

  Now it is clear that
  $\theta\inv(I_E) \cdot \mathcal{O}_T$ does not vanish
  $T,$ and certainly we have $T^\sing \subset \theta\inv(E)=
  \mathcal{V}(\theta\inv(I_E)\cdot \mathcal{O}_T).$
  This proves that 
  $(T, \theta\inv(I_E) \cdot \mathcal{O}_T)$
  satisfies condition~\ref{enum:non-zero}.

  Hence we can apply
  Remark~\ref{rem:resolution-and-principalization} 
  to $(T, \theta\inv(I_E) \cdot \mathcal{O}_T)$ and obtain a
  morphism $\gamma \colon  c_{\theta\inv(I_E) \cdot \mathcal{O}_T}(T)
  \ra T.$ Define
  $Z:=c_{\theta\inv(I_E) \cdot \mathcal{O}_T}(T)$
  and 
  \begin{equation*}
    h \colon  Z \xra{\gamma} T \xra{\theta}
    \DP^1 \times \DA^1 \xra{\pr_2} \DA^1.
  \end{equation*}
  Then $h \colon Z \ra \DA^1$ is a projective morphism and $Z$ is a
  smooth quasi-projective variety.
  By construction
  $\gamma$ induces an isomorphism $\gamma\inv(T') \sira T'.$
  Using the isomorphism
  $\hatw{\sigma} \colon  T' \sira X \times Y$ we hence find an
  open embedding $X \times Y \hra Z.$ We claim that this datum
  satisfies conditions
  \ref{enum:compactification-diagram}
  -
  \ref{enum:compactification-intersections-proper-and-smooth}.
  Indeed,
  \ref{enum:compactification-diagram}
  and \ref{enum:compactification-boundary-snc}
  hold by construction.
  All 
  $D_{i_1 \dots i_p}:=D_{i_1} \cap \dots \cap D_{i_p}$ are smooth
  quasi-projective varieties
  since the $D_i$ are the irreducible components of the support
  of a snc divisor, and all
  morphisms $h_{i_1 \dots i_p} \colon  D_{i_1 \dots i_p}:=D_{i_1} \cap \dots \cap
  D_{i_p} \ra \DA^1$ induced by $h$    
  are
  projective since
  $h \colon  Z \ra \DA^1$ is projective.

  Hence we need to show 
  condition~\ref{enum:compactification-critical-points}
  and the smoothness part of condition
  \ref{enum:compactification-intersections-proper-and-smooth},
  or, equivalently, that
  condition~\ref{enum:NoCrit-Sm}
  holds for the triple
  $(T, \theta\inv(I_E) \cdot \mathcal{O}_T, \pr_2 \comp \theta).$
  
  From Remark~\ref{rem:resolution-and-principalization} we see
  that this can be checked locally on $T,$ and even on an
  \'etale covering of $T$ (we use that the
  map~\eqref{eq:principalize-resolve} commutes with \'etale
  morphisms). 
  The zig-zag \eqref{eq:etale-zig-zag-from-T-to-S'''}
  of \'etale maps and the fact that we already know that $(S''',
  (x^\mu))$ satisfies condition~\ref{enum:non-zero} shows that it
  is enough to show 
  condition~\ref{enum:NoCrit-Sm}
  for
  $(S''', (x^\mu), v).$

  Consider $L$ with the ideal sheaf $(x^\mu)$ and the structure
  morphism $\can \colon  L \ra \Spec k.$
  Let $\gamma_L \colon  c(L) = c_{(x^\mu)}(L) \ra L$ be the morphism from
  Remark~\ref{rem:resolution-and-principalization}.
  Let $\sum_{j=1}^r m_j F_j$ (with pairwise distinct $F_j$'s and
  all $m_j>0$) be the snc divisor corresponding to
  $(\gamma_L^*(x^\mu)).$ Then 
  for every
  tuple $(j_1, \dots, j_q)$ of indices (with $q \geq 1$) the
  intersection $F_{j_1} \cap \dots \cap F_{j_q}$ is regular,
  i.\,e.\ $\can \comp \gamma_L \colon  F_{j_1} \cap \dots \cap F_{j_q} \ra
  \Spec k$ is smooth.

  Note that $(S''', (x^\mu), v)$ is obtained from $(L, (x^\mu),
  \can)$  via base change along the smooth
  morphism $\Spec k[v] \ra \Spec k.$ 
  So the two squares in the diagram 
  \begin{equation*}
    \xymatrix{
      {c_{(x^m)}(S''')} \ar[d]^-{\gamma_{S'''}} \ar[r] &
      {c_{(x^\mu)}(L)} \ar[d]^-{\gamma_L}\\
      {S'''} \ar[r] \ar[d]^-{v} & {L} \ar[d]^-{\can}\\
      {\DA^1=\Spec k[v]} \ar[r] & {\Spec k.}
    }
  \end{equation*}
  are pullback diagrams (we use that the
  map~\eqref{eq:principalize-resolve} commutes with smooth
  morphisms, cf.\ diagram \eqref{eq:c-smooth-bc}).
  Let $F'_j = \Spec k[v] \times F_j.$
  Then $\sum_{j=1}^r m_j F'_j$ 
  is the snc divisor corresponding to
  $(\gamma_{S'''}^*(x^\mu)),$ and it is 
  obvious that condition~\ref{enum:NoCrit-Sm}
  holds for
  $(S''', (x^\mu), v)$: the morphism $v \comp \gamma_{S'''}$ is
  smooth (since $\can \comp \gamma_L$ is smooth) and hence has
  no critical point at all, and all morphisms 
  $v \comp \gamma_L \colon  F'_{j_1} \cap \dots \cap F'_{j_q} \ra
  \Spec k[v]$ are smooth 
  since they are obtained from $\can \comp \gamma_L \colon  F_{j_1} \cap
  \dots \cap F_{j_q} \ra \Spec k$ by base change.
\end{proof}

\begin{lemma}
  \label{l:monom-x-minus-monom-y-reduced}
  Let 
  \begin{equation*}
    p:= x^\mu-y^\nu := x_1^{\mu_1} x_2^{\mu_2} \cdots x_s^{\mu_s} -
    y_1^{\nu_1} \cdots y_t^{\nu_t} 
  \end{equation*}
  be a polynomial in $k[x,y]:= k[x_1,\dots, x_s, y_1, \dots,
  y_t]$ with 
  $s,t>0$ and all $\mu_i >0$ and all $\nu_j>0.$
  Let $d=\gcd(\mu_1, \mu_2,
  \dots, \mu_s, \nu_1, \dots, \nu_t).$ Then
  \begin{equation}
    \label{eq:factorization-p}
    p=\prod_{\zeta \in \sqrt[d]{1}} (x^{\mu/d} - \zeta y^{\nu/d})
  \end{equation}
  is the factorization of $p$ into irreducibles in $k[x,y]$
  (where $\sqrt[d]{1}$ denotes the set of all $d$-th roots of
  unity in $k$);
  obviously, all
  factors are 
  distinct and appear with multiplicity one.
  Here we use the shorthand notation $x^{\mu/d}=x_1^{\mu_1/d} \cdots
  x_s^{\mu_s/d},$ and similarly for $y^{\nu/d}.$
  In particular,
  $p$ is irreducible in $k[x,y]$ if $d=1.$
  (If $s, t \leq n$ then the above factorization into
  irreducibles obviously is also a factorization into
  irreducibles in $k[x_1,\dots, x_n, y_1, \dots, y_n].$)
\end{lemma}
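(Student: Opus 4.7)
The plan is to deduce \eqref{eq:factorization-p} from the classical identity $a^d - b^d = \prod_{\zeta \in \sqrt[d]{1}}(a - \zeta b)$, and then to prove that each factor is irreducible in $k[x,y]$ by a torus/primitive-lattice-vector argument.

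First I would substitute $a := x^{\mu/d}$ and $b := y^{\nu/d}$ into the classical identity to read off \eqref{eq:factorization-p} at once; distinctness and multiplicity one of the factors are immediate from the pairwise distinct scalar coefficients $-\zeta$ of the $y^{\nu/d}$ summand. After dividing the original exponent tuples by $d$ the problem reduces to the following primitive case: for $\alpha \in \DZ_{>0}^s$, $\beta \in \DZ_{>0}^t$ with $\gcd(\alpha_1,\dots,\alpha_s,\beta_1,\dots,\beta_t)=1$ and any $\zeta \in k^*$, the binomial $f := x^\alpha - \zeta y^\beta$ is irreducible in $k[x,y]$.

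The main step is a torus localization argument. I would pass to the Laurent polynomial ring $R := k[x_1^{\pm 1},\dots,x_s^{\pm 1},y_1^{\pm 1},\dots,y_t^{\pm 1}] = k[\DZ^{s+t}]$ and set $u := x^{-\alpha}y^\beta \in R^*$, so that $f = x^\alpha(1 - \zeta u)$ in $R$. The exponent vector of $u$, namely $v := (-\alpha_1,\dots,-\alpha_s,\beta_1,\dots,\beta_t) \in \DZ^{s+t}$, is primitive by hypothesis and therefore extends to a $\DZ$-basis of $\DZ^{s+t}$; the associated $k$-algebra automorphism of $R$ identifies $u$ with a single Laurent coordinate, so $1-\zeta u$ (and hence $f$, up to the unit $x^\alpha$) is irreducible in $R$.

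Finally I would descend irreducibility from $R$ to $k[x,y]$. If $f = gh$ is a factorization in $k[x,y]$ with $g,h$ non-units, then in $R$ one of $g,h$ is a unit, hence a scalar multiple of a Laurent monomial; since both lie in $k[x,y]$, this factor must actually be a scalar multiple of a genuine monomial $m$ dividing $f$ in $k[x,y]$. But a monomial $m$ dividing $f$ must divide each of the monomial summands $x^\alpha$ and $\zeta y^\beta$ separately, which forces $m$ to be a constant---a contradiction. The parenthetical final claim follows because irreducibility is preserved under polynomial ring extensions. The only non-routine ingredient is the primitive-vector-extends-to-a-basis fact, which I expect to be the hardest step only in a purely notational sense: it is a standard consequence of Smith normal form.
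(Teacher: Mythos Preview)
Your proof is correct and takes a genuinely different route from the paper's.

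The paper argues as follows: assuming $d=1$, take an irreducible factor $f$ of $p$ and act on $k[x,y]$ by the finite abelian group $Z=\prod_i \DZ_{\mu_i}\times\prod_j \DZ_{\nu_j}$ via roots of unity on each variable. The product $F$ over the $Z$-orbit of $[f]$ is shown to be $Z$-invariant, hence lies in $k[x,y]^Z=k[x_1^{\mu_1},\dots,y_t^{\nu_t}]$; there $p$ is visibly irreducible (linear in $x_1^{\mu_1}$ with coprime coefficients), so $F$ equals $p$ up to a scalar. A degree count then shows that the orbit length $l$ divides every $\mu_i$ and $\nu_j$, forcing $l=1$ and hence $f=p$.

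Your argument instead localizes to the Laurent ring $R=k[\DZ^{s+t}]$, uses primitivity of the exponent vector $(-\alpha,\beta)$ to extend it to a lattice basis, and thereby identifies $1-\zeta u$ with a degree-one element in one Laurent coordinate, which is prime in $R$ since $R/(1-\zeta u)$ is a Laurent polynomial ring over $k$ and hence a domain. The descent step---a monomial dividing $x^\alpha-\zeta y^\beta$ must divide both monomial summands and so be constant---is clean and correct.

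Both approaches exploit the same underlying symmetry (the torus acting on $\DA^{s+t}$), but the paper works with a finite quotient of it and invariant theory, while you work with the full torus via the group algebra $k[\DZ^{s+t}]$ and a lattice automorphism. Your route is shorter and more conceptual (essentially a toric-geometry observation), at the cost of invoking Smith normal form and the description of units in Laurent polynomial rings; the paper's route is entirely elementary but longer.
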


The proof of this lemma was motivated by a proof of its special
case $s=t=1$ on Stackexchange by Qiaochu Yuan.
We thank Jan B\"uthe for a discussion of the general case.

\begin{proof}
  From $T^d-1=\prod_{\zeta \in \sqrt[d]{1}} (T-\zeta)$
  we obtain by substituting $T=\frac UV$ that 
  $U^d-V^d=\prod_{\zeta \in \sqrt[d]{1}} (U-\zeta V).$
  From $p=(x^{\mu/d})^d-(y^{\nu/d})^d$ we hence obtain
  formula~\eqref{eq:factorization-p},
  and it is enough to show that each factor
  $(x^{\mu/d} - \zeta y^{\nu/d})$ is irreducible in $k[x,y].$

  For this it is enough to show that $p$ is irreducible if $d=1$
  (since then also any polynomial 
  $x^\mu-\lambda y^\nu$ will be irreducible for $\lambda \in
  k^\times$: put $y'_1:=\sqrt[\nu_1]{\lambda} \;y_1;$
  alternatively, adapt the
  following proof so that it works directly for
  $x^\mu-\lambda y^\nu$).
 
  Let $f$ be an irreducible factor of $p$ in $k[x,y].$ 
  The group $\DZ_{\mu_1}$ acts on $k[x,y]$ by algebra
  automorphisms such that the generator $1$ of $\DZ_{\mu_1}$ maps
  $x_1$ to $\zeta_{\mu_1}x_1$ where $\zeta_{\mu_1}$ is a
  fixed primitive $\mu_1$-th root of unity. 
  By combining the analog commuting actions on the other
  variables we obtain an action of
  $Z:= \DZ_{\mu} \times \DZ_\nu:= \DZ_{\mu_1} \times
  \dots \times 
  \DZ_{\mu_s} \times \DZ_{\nu_1} \times \dots \times \DZ_{\nu_t}$ 
  on $k[x,y].$
  
  Note that $p \in k[x,y]^Z=k[x_1^{\mu_1}, \dots, y_t^{\nu_t}].$
  Any element of the $Z$-orbit $Z.f$ of $f$ also is an
  irreducible factor of $p.$ 
  Some of these irreducible factors might be associated.
  Let $F$ be the
  product of all these irreducible factors up to
  $k^\times$-multiples. 
  (More precisely we mean the following: the group $Z$ acts on
  $\DP(k[x,y]),$ and the multiplication of $k[x,y]$ induces a
  multiplication on $\DP(k[x,y])$ which is compatible with the
  $Z$-action. Let $F \in k[x,y]$ be an element such that $[F] =
  \prod_{g \in Z.[f]} g$ in $\DP(k[x,y]).$)
  Then $F | p.$

  It is clear that $z.F \in
  k^\times F$ for all $z \in Z.$ We claim that in fact $F \in
  k[x,y]^Z.$ 

  Let $\rho \colon  Z \ra k^\times$ be the morphism of groups such that
  $z.F= \rho(z)F$ for all $z \in Z.$ 
  If we apply the element $z_1:=(1,0, 0, \dots, 0) \in Z$
  to the monomial $x^\alpha y^\beta$ we obtain 
  $\zeta_{\mu_1}^{\alpha_1} x^\alpha y^\beta.$
  If this monomial $x^\alpha y^\beta$ appears with non-zero
  coefficient in $F$ we must have
  $\rho(z_1)=\zeta_{\mu_1}^{\alpha_1}.$
  Hence if another monomial 
  $x^{\alpha'} y^{\beta'}$ also appears with non-zero
  coefficient in $F,$ then
  $\zeta_{\mu_1}^{\alpha_1}=\zeta_{\mu_1}^{\alpha'_1},$ or
  equivalently, $\alpha_1-\alpha'_1 \in \DZ \mu_1.$
  This implies that we can write $F=x_1^{\gamma} G$ with
  $G \in k[x_1^{\mu_1}, x_2, \dots, x_n, y_1, \dots, y_n],$ for
  some $\gamma \in \DN$ (for example the smallest exponent of
  $x_1$ that appears in a monomial that appears in $F$ with
  non-zero coefficient). Since $F|p$ this implies that
  $x_1^{\gamma} | p$ which is obviously only possible if
  $\gamma=0.$ 
  We can iterate this argument and eventually see that $F \in
  k[x_1^{\mu_1}, x_2^{\mu_2}, \dots, x_s^{\mu_s}, y_1^{\nu_1}, \dots,
  y_t^{\nu_t}]=k[x,y]^Z,$ proving our claim.

  Hence we have $F|p$ in $k[x,y]^Z.$
  Write $a_1:= x_1^{\mu_1}, \dots, a_s:=x_s^{\mu_s},$ and
  $b_1:= y_1^{\nu_1}, \dots, b_t:= y_t^{\nu_t}.$
  Then $p=a-b:=a_1 \dots a_s - b_1 \dots b_t$ and this element
  is irreducible in $k[x,y]^Z=k[a,b]$ (it is linear in $a_1$ and
  the coefficient $a_2 \dots a_s$ of $a_1$ and the constant
  coefficient $b_1 \dots b_t$ have greatest common divisor $1$).
  Since $F$ is not a unit this implies that $F=p$ up to a
  multiple in $k^\times.$

  Denote by $\deg_{x_i}(g)$ the degree
  of an element $g \in k[x,y]$ in $x_i.$
  Let $l$ be the cardinality of the orbit of $[f]$ in
  $\DP(k[x,y]),$ i.\,e.\ $F$ is the product of $l$ irreducible
  elements obtained from $f.$ Then 
  \begin{equation*}
    \mu_i=\deg_{x_i}(p)=\deg_{x_i}(F)=l \deg_{x_i}(f).
  \end{equation*}
  This and the same argument for the degrees in the $y_j$'s show
  that 
  $l$ is a common divisor of all the $\mu_i$ and $\nu_j.$

  If $d=1$ 
  we obtain $l=1,$ i.\,e.\ $F=f$ up
  to a multiple in $k^\times.$ Hence $F$ and $p$ are irreducible in
  $k[x,y].$ 
\end{proof}

\begin{lemma}
  \label{l:open-morphism-inverse-image-dense}
  Let $f \colon  X \ra Y$ be an open morphism of Noetherian schemes.
  If $V \subset Y$ is open and dense, then $f\inv(V)$ is open and
  dense in $X.$
\end{lemma}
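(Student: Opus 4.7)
The plan is to prove this by a purely topological argument on the underlying spaces, so the Noetherian hypothesis plays no essential role (it is presumably in force because we are working with schemes of this type throughout). First I note that $f^{-1}(V)$ is open in $X$ simply because $f$ is continuous and $V$ is open; openness of $f$ is irrelevant for this part.

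For density, I would argue by the standard characterization: a subset is dense iff it meets every non-empty open set. So let $U \subset X$ be a non-empty open subset; the task is to show $U \cap f^{-1}(V) \neq \emptyset$. Here is where the openness of $f$ enters: $f(U)$ is an open subset of $Y$, and it is non-empty since $U$ is non-empty. Because $V$ is dense in $Y$, the intersection $f(U) \cap V$ is non-empty. Pick any point $y \in f(U) \cap V$ and any preimage $x \in U$ of $y$ under $f$; then $x \in U$ and $f(x) = y \in V$, so $x \in U \cap f^{-1}(V)$, as required.

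There is really no obstacle here — the statement and its proof are formal topology. The only subtlety worth flagging is that the argument works at the level of underlying topological spaces and uses the fact that a morphism of schemes in particular yields a continuous map, so one does not need to worry about generic points versus closed points for this density argument (density of $V$ in $Y$ as a topological space is what is used, and that is what the word \emph{dense} means throughout the paper).
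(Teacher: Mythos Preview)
Your proof is correct and in fact cleaner than the paper's. The paper argues via irreducible components: for each irreducible component $C$ of $X$ it takes the open complement $C^\circ = C \setminus \bigcup_{C'\neq C} C'$ (non-empty and open because $X$ is Noetherian), observes that $f(C^\circ)$ is open and non-empty hence meets $V$, and then uses that a non-empty open subset of the irreducible set $C$ is dense in $C$, so $C \subset \overline{f^{-1}(V)}$. Your argument bypasses irreducible components entirely by directly testing against an arbitrary non-empty open $U \subset X$; this is purely topological and, as you note, does not use the Noetherian hypothesis at all. The paper's approach genuinely needs Noetherianness (for $C^\circ$ to be open and non-empty), so your route is strictly more general and also shorter.
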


\begin{proof}
  Let $C$ be an irreducible component of $X.$ Let $C^\circ$ be
  obtained from $C$ by removing all points that lie in an
  irreducible component distinct from $C.$ Then $C^\circ$ is open
  in $X$ and non-empty, so $f(C^\circ)$ is open and non-empty
  and hence contains a point of $V.$
  Then $f\inv(V) \cap C^\circ$ is open in $C$ and non-empty,
  and hence dense in $C.$ This implies that $C \subset
  \ol{f\inv(V)}.$  
\end{proof}

\def\cprime{$'$} \def\cprime{$'$} \def\cprime{$'$} \def\cprime{$'$}
  \def\Dbar{\leavevmode\lower.6ex\hbox to 0pt{\hskip-.23ex \accent"16\hss}D}
  \def\cprime{$'$} \def\cprime{$'$}
\providecommand{\bysame}{\leavevmode\hbox to3em{\hrulefill}\thinspace}
\providecommand{\MR}{\relax\ifhmode\unskip\space\fi MR }
\providecommand{\MRhref}[2]{%
  \href{http://www.ams.org/mathscinet-getitem?mr=#1}{#2}
}
\providecommand{\href}[2]{#2}


\end{document}